\pdfoutput=1
\documentclass[11pt,letterpaper,onecolumn]{article}
\usepackage{arxiv-setting}
\usepackage[letterpaper,margin=1in]{geometry}
\renewcommand{\epsilon}{\varepsilon}

\newcommand{\toromanlower}[1]{\ifcase#1 \or i\or ii\or iii\or iv\or v\or vi\or vii\or viii\or ix\or x\fi}

\title{Majority Dynamics on Assortative Sparse Stochastic Block Models\blfootnote{Author names are listed in alphabetical order.}}
\author{
Ioana Dumitriu%
\thanks{\mbox{\scriptsize Department of Mathematics, University of California San Diego, La Jolla, CA 92093, USA; \texttt{idumitriu@ucsd.edu}.}}
\and
Muchen Ju\orcidlink{0009-0004-6131-6911}%
\thanks{\mbox{\scriptsize Department of Mathematics, University of Pennsylvania, Philadelphia, PA 19104, USA; \texttt{muchenju@sas.upenn.edu}.}}
\and
Hai-Xiao Wang\orcidlink{0000-0003-2730-1439}%
\thanks{\mbox{\scriptsize Department of Applied Mathematics, University of Washington, Seattle, WA 98195, USA; \texttt{haixwang@uw.edu}.}}
}

\date{}

\begin{document}

\maketitle

\begin{abstract}
    Majority dynamics is a two-opinion process in which each vertex repeatedly
    updates to the majority opinion among its neighbors. We study this process on a
    resampled sparse binary stochastic block model in the assortative regime. At
    each time step, a graph is sampled from the
    current opinion partition: vertices with the same opinion are joined with
    probability \(\alpha=a\log N/N\), while vertices with differing opinions are
    joined with probability \(\beta=b\log N/N\), where \(a>b>1\). Let
    \(\gB_t\) and \(\gR_t\) denote the blue and red camps at time \(t\). We show that
    the weighted advantage \(\widetilde{\Delta}_{t}=b|\gB_t|-a|\gR_t|\), rather than the
    unweighted advantage \(\Delta_{t}=|\gB_t|-|\gR_t|\) alone, governs the pace to unanimity.
    
      Our results, which hold with high probability as \(N\to\infty\), identify three regimes for blue unanimity under the initial blue advantage, i.e., $\Delta_0>0$: constant time, subpolynomial time, and polynomial time. First, when \(\widetilde{\Delta}_0 \gtrsim -N/\sqrt{\log N}\), blue unanimity occurs within three updates. Second, when \(\widetilde{\Delta}_0 < 0\) and \(|\widetilde{\Delta}_0| = o(N)\), blue unanimity occurs within \(N^{o(1)}\) updates. Furthermore, when \(\widetilde{\Delta}_0 < 0\), \(|\widetilde{\Delta}_0| = O(N)\), and \(\Delta_0\gg\sqrt{N/\log N}\), blue unanimity still occurs within \(N^{I_0+o(1)}\) updates, where
    \[I_0=\big(\ReLU(\sqrt{a|\gR_0|/N}-\sqrt{b|\gB_0|/N})\big)^2\]
    and \(\ReLU(x)=\max\{x,0\}\). Conversely, away from the weighted threshold,
    when \(|\gB_0|/|\gR_0|\le a/b-\kappa\) and \(\Delta_0>0\), \(N^{I_0 - o(1)}\) updates are necessary for blue unanimity. Our analysis relies on the detailed estimates for one-vertex flip probabilities in sparse binomial differences, which could be of independent interest.
\end{abstract}

\noindent \textbf{Keywords:} Majority Dynamics; Stochastic Block Models; Sparse Random Graphs.

\newpage
\setcounter{tocdepth}{1}
\tableofcontents
\newpage


\section{Introduction}\label{sec:introduction}

\emph{Majority dynamics} has been frequently used as a model for studying the evolution of opinions on networks; see the survey \cite{mossel2017opinion}. Mathematically, let \(\gV=[N]\) denote the set of individuals, and \(\rvy_t\in\{\pm1\}^{\gV}\) record the opinions of individuals at time \(t \geq 0\), where \(+1\) denotes \emph{blue} and \(-1\) denotes \emph{red}. The two opinion classes are denoted by
\begin{align}
\gB_t\coloneqq\{v\in\gV:\ervy_t(v)=+1\},
\qquad
\gR_t\coloneqq\{v\in\gV:\ervy_t(v)=-1\}.
\label{eqn:blue-red-camps}
\end{align}
At update \(t+1\), the opinion of each individual is determined by its interactions with its neighbors in the graph \(\gG_{t+1}=(\gV,\gE_{t+1})\). Denote the number of blue and red neighbors of \(v\) in \(\gG_{t+1}\) by
\begin{align}
N^{\gB}_{t}(v)=\#\{w\in\gB_t:\{v,w\}\in\gE_{t+1}\},
\qquad
N^{\gR}_{t}(v)=\#\{w\in\gR_t:\{v,w\}\in\gE_{t+1}\}.
\label{eqn:neighbor-count}
\end{align}
All vertices update synchronously by comparing these two neighbor counts:
\begin{align}
\ervy_{t+1}(v)
=
\begin{cases}
+1, & N^{\gB}_{t}(v)>N^{\gR}_{t}(v),\\
-1, & N^{\gB}_{t}(v)<N^{\gR}_{t}(v),\\
\ervy_t(v), & N^{\gB}_{t}(v)=N^{\gR}_{t}(v).
\end{cases}
\label{eqn:opinion-update}
\end{align}
Thus a vertex retains its current opinion when the neighbor vote is tied. We
say that blue \emph{unanimity} occurs at time \(T\) if
\(\gR_T=\emptyset\), while red unanimity occurs if \(\gB_T=\emptyset\).

The central problem is to understand how long it takes for blue unanimity to
occur, as a function of the initial configuration and the rule generating the
update graphs at each update. When the update graph is an Erd\H{o}s--R\'enyi graph
\(\gG(N,p)\), the ``power of few'' phenomenon, in which a small initial advantage
can cause one opinion to win with high probability after only a few
updates, has been extensively studied in the literature. In the dense regime,
where \(p\) remains constant or decays slowly with \(N\), the works in
\cite{benjamini2016convergence, fountoulakis2020resolution, tran2023reaching,
berkowitz2020central} prove rapid stabilization and show that constant-size or
even one-vertex advantages can affect the winning probability. The
asymptotically exact winning probability was calculated explicitly in
\cite{sah2024majority}. In the sparse regime, isolated vertices
below the connectivity threshold\footnote{With high probability as \(N\to\infty\), \(\gG(N,p)\) has
isolated vertices when \(p\le(1-\epsilon)\log(N)/N\) for some \(\epsilon>0\).}
create an immediate obstruction to unanimity; above the connectivity threshold,
\cite{zehmakan2020opinion, chakraborti2023majority, tran2025power, kim2025new,
jaffe2025new} show that sparse random graphs can still amplify small initial
majorities to consensus, and subsequent work progressively lowers the required
edge density for this sparse ``power of few'' behavior.

In the Erd\H{o}s--R\'enyi setting, every pair of vertices is treated homogeneously, so the update graph carries no community structure tied to the current opinions. Instead, we focus on the case where the update graph is resampled at every update from a binary \emph{Stochastic Block Model} (SBM), whose two communities are precisely the current blue and red opinion classes, formally defined below.
\begin{definition}[Binary Stochastic Block Model]
Let \(\rvy\in\{\pm1\}^{N}\) denote the membership vector. Independently for each distinct pair of vertices \(u,v\in[N]\), include the edge \(\{u,v\}\) with probability \(\alpha\) if \(\ervy(u)=\ervy(v)\), and with probability \(\beta\) otherwise.
\end{definition}
As a consequence, the graph distribution itself reflects the evolving opinion
partition: pairs with the same current opinion and pairs with different current
opinions are assigned different edge probabilities. Previous work
\cite{wang2022consensus} studies opinion evolution under SBM update graphs in
the dense regime. The present paper instead focuses on the \emph{critical
sparse} regime, where the connectivity scale and the fluctuation scale interact
with the evolving opinion imbalance.
\begin{assumption}[Sparse assortative SBM]
\label{ass:sparse-assortative-sbm}
There are constants \(a>b>0\) such that
\begin{align}
\alpha=a\log(N)/N,
\qquad
\beta=b\log(N)/N.
\label{eqn:alpha-beta}
\end{align}
\end{assumption}

We now specify the stochastic process used throughout the paper. For each
\(N\), fix an initial configuration \(\rvy_0\), and let
\begin{align}
\mathcal F_t
\coloneqq
\sigma(\rvy_0,\gG_1,\ldots,\gG_t),
\qquad t\ge0,
\label{eqn:natural-filtration}
\end{align}
where \(\mathcal F_0=\sigma(\rvy_0)\). Recursively, conditional on
\(\mathcal F_t\), sample the edge indicators of \(\gG_{t+1}\) independently,
with
\begin{align}
\P\left(\{u,v\}\in\gE_{t+1}\,\middle|\,\mathcal F_t\right)
=
\begin{cases}
\alpha, & \ervy_t(u)=\ervy_t(v),\\
\beta, & \ervy_t(u)\ne\ervy_t(v),
\end{cases}
\qquad u\ne v.
\label{eqn:conditional-update-graph-law}
\end{align}
Having sampled \(\gG_{t+1}\), obtain \(\rvy_{t+1}\) by applying
\eqref{eqn:opinion-update} simultaneously at every vertex. Thus each update
graph is sampled afresh from the SBM determined by the current opinion
partition. The graphs are not independent without conditioning, because their
laws depend on the evolving configurations, but \((\rvy_t)_{t\ge0}\) is a
time-homogeneous Markov chain. Unless stated otherwise, all probabilities are
with respect to the update graphs for the fixed initial configuration.

As noted above, isolated vertices immediately obstruct unanimity. We therefore
impose the following convenient condition to ensure connectivity uniformly over
the evolving opinion partitions.
\begin{assumption}[Connectivity]
\label{ass:connectivity}
In addition to Assumption~\ref{ass:sparse-assortative-sbm}, assume that \(b>1\).
\end{assumption}
The assumption above ensures that the SBM is connected with high probability
for any partition imbalance that may arise during the dynamics. This condition
is slightly stronger than the connectivity condition for an SBM with a given
partition, derived in Lemma~\ref{lem:connectivity-sparse-sbm}.

\subsection{Main Results}

To illustrate the main results, we first introduce the unweighted advantage by
\begin{align}
\Delta_t\coloneqq |\gB_t|-|\gR_t|.
\label{eqn:unweighted-advantage}
\end{align}
To account for the different within- and across-community edge probabilities, we define the weighted advantage by
\begin{align}
\widetilde{\Delta}_t\coloneqq b|\gB_t|-a|\gR_t|.
\label{eqn:weighted-advantage}
\end{align}
\begin{remark}\label{rem:weighted-advantage-orientation}
    The weighted advantage is oriented toward the initial blue advantage and is used under our standing assumption \(\Delta_0>0\). If \(\Delta_0<0\), all our results hold after interchanging the color labels and defining \(\widetilde{\Delta}_t\coloneqq b|\gR_t|-a|\gB_t|\); the corresponding conclusions then concern red unanimity.
\end{remark}
Since \(|\gB_t|+|\gR_t|=N\), the two camp sizes can be recovered from either advantage as
\begin{subequations}
\begin{align}
|\gB_t|
&=\frac{N+\Delta_t}{2}
=\frac{aN+\widetilde{\Delta}_t}{a+b} \label{eqn:blue-size-from-advantage},\\
|\gR_t|
&=\frac{N-\Delta_t}{2}
=\frac{bN-\widetilde{\Delta}_t}{a+b} \label{eqn:red-size-from-advantage},
\end{align}
\end{subequations}
which further implies the affine relation between the two advantages:
\begin{align}
\widetilde{\Delta}_t
&=\frac{a+b}{2}\Delta_t-\frac{a-b}{2}N,
\qquad
\Delta_t
=\frac{2\widetilde{\Delta}_t+(a-b)N}{a+b}.
\label{eqn:affine-delta-tilde-delta}
\end{align}

Our first result identifies nested weighted-advantage thresholds for constant-time blue unanimity: a weighted advantage bounded below by a constant multiple of
\(-N/\sqrt{\log N}\) guarantees unanimity within three updates, while progressively stronger positive advantages reduce the convergence time to two updates and, ultimately, one. Under our standing regime \(a>b>1\), we define
\begin{align}
r_{*} \coloneqq r_{*}(a, b) =
\left(
\frac{
\sqrt{b(a+b-1)}-\sqrt a
}{
a+b
}
\right)^2.
\label{eqn:r-star}
\end{align}
Since \(a>b>1\) under Assumptions~\ref{ass:sparse-assortative-sbm} and~\ref{ass:connectivity}, we have \(0<r_{*}<1/2\). Let \(\Phi\) denote the CDF of the standard normal distribution. We further define
\begin{align}
K_2(a,b)
\coloneqq
\sqrt{\frac{2ab}{a+b}}\,
\Phi^{-1}\left(1-\frac{a+b}{b}r_{*}\right).
\label{eqn:K2-def}
\end{align}
\begin{remark}
\(K_2(a,b)\) is not necessarily positive. For example, when
\(a=9\), \(b=8\), we have
\[
r_*(9,8)\approx0.23916,
\qquad
K_2(9,8)\approx-0.05996.
\]
Thus \(K>K_2(a,b)\) may hold even when \(K<0\). In that case, the
corresponding hypothesis permits a slightly negative weighted advantage of
order \(N/\sqrt{\log N}\).
\end{remark}

\begin{theorem}[Constant-time blue unanimity]
\label{thm:three-day-unanimity}
Under Assumptions~\ref{ass:sparse-assortative-sbm} and
\ref{ass:connectivity}, the following statements hold.
\begin{enumerate}[label=\textup{(\roman*)}]
\item \textup{(Three updates.)} For every fixed \(H<\infty\), if
\begin{align}
\widetilde{\Delta}_0\ge -H\frac{N}{\sqrt{\log N}},
\label{eqn:three-day-unanimity-condition}
\end{align}
then there exist constants \(c_H=c_H(a,b,H)>0\) and \(\xi=\xi(a,b)>0\) such that,
\[
\P(\gR_3=\emptyset)
\ge
1-2\exp(-c_HN)-2\exp\big(-\big(\log N\big)^2\big)-2N^{-\xi},
\]
for all sufficiently large \(N\).
\item \textup{(Two updates.)} If, for some \(K> K_2(a,b)\), we have
\begin{align}
\widetilde{\Delta}_0\ge K\frac{N}{\sqrt{\log N}},
\label{eqn:two-day-unanimity-condition}
\end{align}
then there exists \(\xi=\xi(a,b,K)>0\) such that, for all sufficiently large
\(N\),
\[
\P(\gR_2=\emptyset)
\ge
1-2\exp\big(-\big(\log N\big)^2\big)-2N^{-\xi}.
\]
\item \textup{(One update.)} If, for some fixed $r$ satisfying \(0<r<r_*\), we have
\begin{align}
\widetilde{\Delta}_0\ge \left[b-(a+b)r\right]N,
\label{eqn:one-day-unanimity-condition}
\end{align}
then there exists \(\xi=\xi(a,b,r)>0\) such that, for all sufficiently large \(N\),
\[
\P(\gR_1=\emptyset)
\ge
1-2N^{-\xi}.
\]
\end{enumerate}
\end{theorem}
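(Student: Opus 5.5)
The plan is to analyze one update at a time through the \emph{one-vertex flip probabilities}. Conditional on \(\mathcal F_t\), a vertex \(v\) has \(N^{\gB}_t(v)\) and \(N^{\gR}_t(v)\) independent binomials. For \(v\in\gR_t\) the parameters are \((|\gB_t|,\beta)\) and \((|\gR_t|-1,\alpha)\); for \(v\in\gB_t\) they are \((|\gB_t|-1,\alpha)\) and \((|\gR_t|,\beta)\). The quantity to control is \(p_{\mathrm{R}}(t)=\P(N^{\gB}_t(v)\le N^{\gR}_t(v)\mid\mathcal F_t)\) for \(v\in\gR_t\), i.e.\ the probability that a red vertex stays red, and the analogous \(p_{\mathrm{B}}(t)\) for a blue vertex turning red. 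The means are \(\E N^{\gB}_t(v)=b|\gB_t|\log N/N\) and \(\E N^{\gR}_t(v)\approx a|\gR_t|\log N/N\). Hence the drift of \(N^{\gB}-N^{\gR}\) at a red vertex is \(\widetilde{\Delta}_t\log N/N\), which is why \(\widetilde{\Delta}\) is the right statistic. At a blue vertex the drift is \((a|\gB_t|-b|\gR_t|)\log N/N\), which is of order \(\log N\) and positive whenever \(\Delta_t>0\).

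For a sum of order-\(\log N\) Poisson-like variables, I would use the Chernoff/large-deviation estimate
\[
\P\bigl(\mathrm{Bin}(m_1,p_1)\le \mathrm{Bin}(m_2,p_2)\bigr)=N^{-(\sqrt{\mu_1}-\sqrt{\mu_2})^2/\log N+o(1)}
\qquad\text{when }\mu_1>\mu_2,
\]
where \(\mu_i=m_ip_i\). In the regime where the means differ by \(O(\sqrt{\log N})\), I would instead use the CLT, giving a probability of about \(\Phi(-\text{drift}/\sqrt{\text{variance}})\), with variance \(\approx(b|\gB_t|+a|\gR_t|)\log N/N\).

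\textbf{One update (iii).} Write \(|\gR_0|=rN\) with \(r<r_*\) fixed. A union bound gives \(\E|\gR_1|\le |\gR_0|\,p_{\mathrm{R}}(0)+|\gB_0|\,p_{\mathrm{B}}(0)\). Using the exponent formula, \(|\gR_0|p_{\mathrm R}(0)\le N^{1-(\sqrt{b(1-r)}-\sqrt{ar})^2+o(1)}\) and \(|\gB_0|p_{\mathrm B}(0)\le N^{1-(\sqrt{a(1-r)}-\sqrt{br})^2+o(1)}\). Both exponents are negative exactly when \(\sqrt{b(1-r)}-\sqrt{ar}>1\); the first is the binding one since \(a>b\). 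Solving \(\sqrt{b(1-r)}-\sqrt{ar}=1\) as a quadratic in \(\sqrt r\) should give precisely \(r=r_*\) as in \eqref{eqn:r-star}. Markov's inequality then gives \(\P(\gR_1\neq\emptyset)\le N^{-\xi}\). The factor \(2\) in the bound absorbs the two contributions. The subtle ties and the rule of keeping one's opinion are harmless because they are included in \(\le\).

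\textbf{Two updates (ii).} Here \(\widetilde{\Delta}_0=K N/\sqrt{\log N}\), so the red-vertex drift is \(K\sqrt{\log N}\) against a standard deviation of \(\sqrt{(b|\gB_0|+a|\gR_0|)\log N/N}\). At \(\widetilde{\Delta}_0\approx0\) we have \(|\gB_0|\approx aN/(a+b)\) and \(|\gR_0|\approx bN/(a+b)\), so the variance is \(\approx\frac{2ab}{a+b}\log N\). By a Berry--Esseen bound, a red vertex stays red with probability \(\Phi(-K/\sqrt{2ab/(a+b)})+o(1)\). Blue vertices flip with probability \(N^{-c}\). Hence \(|\gR_1|/N\) concentrates; by bounded differences/Chernoff, with failure probability \(e^{-(\log N)^2}\), it satisfies
\[
|\gR_1|/N\le \tfrac{b}{a+b}\,\Phi\bigl(-K/\sqrt{2ab/(a+b)}\bigr)+o(1).
\]
The condition \(K>K_2\) is exactly \(\tfrac{b}{a+b}\Phi(-K\sqrt{(a+b)/(2ab)})<r_*\). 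Indeed, \(\Phi^{-1}(1-\tfrac{a+b}{b}r_*)\) inverts this relation. We can therefore apply (iii) conditionally on \(\mathcal F_1\), via the Markov property. Both \(\widetilde{\Delta}_1\) and \(|\gR_1|\) are determined by \(|\gR_1|\), since \(|\gB_1|+|\gR_1|=N\).

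\textbf{Three updates (i).} If \(\widetilde{\Delta}_0\ge -HN/\sqrt{\log N}\), the red-vertex stay probability is at most \(\Phi(H')+o(1)<1\). Blue vertices still essentially never flip, since \(\Delta_0>0\) forces the blue-vertex drift to be of order \(\log N\). So \(|\gR_1|\le(1-c)|\gR_0|\) for a constant \(c=c(a,b,H)>0\), with exponentially small failure probability \(e^{-c_HN}\) by a Chernoff bound on a sum of \(\Theta(N)\) independent indicators. This moves \(\widetilde{\Delta}_1\) up by \(\Theta(N)\), so \(\widetilde{\Delta}_1\ge \epsilon N\gg KN/\sqrt{\log N}\) for any fixed \(K\), and we apply (ii) from time \(1\). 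This needs \(|\gR_0|=\Theta(N)\), which holds since \(\widetilde{\Delta}_0\le bN\) forces \(|\gR_0|\ge bN/(a+b)-o(N)\).

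The main obstacle I expect is making the local-limit/CLT step of (ii) uniform and sharp. It requires a Berry--Esseen estimate for differences of sparse binomials with means of order \(\log N\), together with control of the near-tie event, and it must give the exact constant \(K_2\). A secondary obstacle is the careful derivation of the large-deviation exponent in (iii) uniformly in \(r\).
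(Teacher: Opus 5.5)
Your three-stage architecture matches the paper's. It uses Chernoff exponents plus a union bound to kill a red camp of density below $r_*$ (Lemma~\ref{lem:third-day-extinction}), and a Berry--Esseen step to push $|\gR_1|$ below $rN$ for some $bq_K/(a+b)<r<r_*$ (Lemma~\ref{lem:second-day-reduction}). For (i) it adds one linear-gain step followed by (ii) (Lemma~\ref{lem:linear-jump-one-step-bound}). Your derivation of $r_*$ and your reading of $K>K_2$ are correct.

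\textbf{Concentration of $|\gR_1|$.} This step, which you need in both (i) and (ii), fails as written. The indicators $\indi{v\in\gR_1}$ are \emph{not} independent: any two vertices share the edge between them, and each red--red edge enters two red votes. So a Chernoff bound for independent indicators does not apply. Naive bounded differences also fails. Over the $\binom N2$ edge variables, each with Lipschitz constant $2$, McDiarmid gives only $\exp(-O(u^2/N^2))$, which is useless even at $u=cN$. Vertex exposure has unbounded worst-case differences.

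The paper's remedy is that $\indi{v\in\gR_{t+1}}$ is a Boolean function of the edges at $v$, and each edge is read by exactly two such functions. The read-$k$ Chernoff bound with $k=2$ (Lemma~\ref{lem:R1-read-two-concentration}) then bounds the conditional deviation probability by $2e^{-u^2/N}$. Taking $u\asymp N$ gives the $e^{-c_HN}$ term, and $u=\sqrt N\log N$ gives the $e^{-(\log N)^2}$ term. A cheaper patch also suffices: conditioning on the shared edge shows every pairwise covariance is $O(\log N/N)$. Chebyshev then gives failure probability $O(\log N/N)$ for deviations of order $N$, which can be absorbed into $2N^{-\xi}$ after shrinking $\xi$.

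\textbf{The size of $\gR_0$ in (i).} Your lower bound on $|\gR_0|$ is backwards. The inequality $\widetilde{\Delta}_0\le bN$ only says $|\gR_0|\ge0$. The hypothesis $\widetilde{\Delta}_0\ge -HN/\sqrt{\log N}$ gives only the \emph{upper} bound $|\gR_0|\le (b/(a+b)+o(1))N$. The red camp may be $o(N)$, even a single vertex. In that case your contraction $|\gR_1|\le(1-c)|\gR_0|$ neither holds with failure probability $e^{-c_HN}$ nor yields a $\Theta(N)$ gain.

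You need the case split from the proof of Lemma~\ref{lem:linear-jump-one-step-bound}. If $|\gR_0|\le\eta_0N$ for a small $\eta_0=\eta_0(a,b)$, then $p_0^{\gB}=o(1)$ gives $\E[|\gR_1|\mid\rvy_0]\le\eta_0N+o(N)$, so $\widetilde{\Delta}_1\ge cN$ after concentration. Alternatively, $\widetilde{\Delta}_0\ge(b-(a+b)\eta_0)N$ already holds, so you can apply (ii) at time $0$ and use absorption. Otherwise both camps are linear, and your argument goes through with constants depending on $\eta_0$ and $H$.

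\textbf{The step you flagged as the main obstacle.} This one is routine. The variance is $\Theta(\log N)$ and the third moments are $O(\log N)$, so Berry--Esseen has error $O((\log N)^{-1/2})$ uniformly, and ties are irrelevant. Since $K_2$ enters only through the strict inequality $bq_K/(a+b)<r<r_*$, $o(1)$ accuracy in $q_0^{\gR}$ is enough. For the full hypothesis $\widetilde{\Delta}_0\ge KN/\sqrt{\log N}$, not just equality, note that the standardized red drift stays at least $K\sqrt{(a+b)/(2ab)}-o(1)$, while $|\gR_0|\le(b/(a+b)+o(1))N$.
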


Beyond the constant-time regime, when \(-\widetilde{\Delta}_0\ll N\), the initial weighted disadvantage can be overcome by accumulating almost-linear one-step gains, leading to the following subpolynomial-time blue unanimity result.

\begin{theorem}[Subpolynomial-time blue unanimity]
\label{thm:super-polylog-unanimity}
Under Assumptions~\ref{ass:sparse-assortative-sbm} and
\ref{ass:connectivity}, let \(h_N\to\infty\) satisfy
\(h_N=o(\sqrt{\log N})\). Suppose that
\[
\widetilde{\Delta}_0\ge -h_N\frac{N}{\sqrt{\log N}}.
\]
For some constant $A$ depending on $a, b$, define the time horizon by
\begin{align}
T_N\coloneqq\left\lceil\exp(Ah_N^2)\right\rceil.
\label{eqn:super-polylog-time-horizon}
\end{align}
Then there exist constants \(c,C,\xi>0\), depending only on \(a,b\), such that,
for all sufficiently large \(N\),
\[
\P\left(\gR_{T_N+3}=\emptyset\right)\ge1-2\exp(Ah_N^2)\exp\left(-cN\frac{\exp(-C h_N^2)}{(1+h_N)^2}\right)-4\exp\left(-(\log N)^2\right)-2N^{-\xi}.
\]
In particular, the right-hand side is \(1-O(N^{-\xi+o(1)})\).
\end{theorem}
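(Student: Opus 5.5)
The plan is to show that, as long as \(\widetilde{\Delta}_t\) stays in the window \([-h_N N/\sqrt{\log N},\,-H N/\sqrt{\log N}]\) for a large fixed \(H\), the weighted advantage increases at each step by a deterministic drift. This drift is small, but it is large compared with the fluctuations. Once \(\widetilde{\Delta}_t\ge -HN/\sqrt{\log N}\), Theorem~\ref{thm:three-day-unanimity}(i) applied from time \(t\) (using the Markov property) finishes the proof in three more updates; this accounts for the \(+3\) in \(T_N+3\).

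\textbf{Step 1: one-step drift.} Conditional on \(\mathcal F_t\), the quantities \(|\gB_{t+1}|\) and \(|\gR_{t+1}|\) are sums of independent indicators. Vertex \(v\) ends blue iff \(N^{\gB}_t(v)-N^{\gR}_t(v)\) is positive, or zero and \(v\) was already blue. For a red vertex, \(N^{\gB}_t\sim\mathrm{Bin}(|\gB_t|,\beta)\) and \(N^{\gR}_t\sim\mathrm{Bin}(|\gR_t|-1,\alpha)\), with means \(b|\gB_t|\log N/N\) and \(a|\gR_t|\log N/N\). The difference of these means is exactly \(\widetilde{\Delta}_t\log N/N\), which explains the role of the weighted advantage.

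Write \(\widetilde{\Delta}_t=-hN/\sqrt{\log N}\). Using the paper's sparse binomial-difference flip estimates, or a local CLT on the \(\sqrt{\log N}\) scale, I expect the following. A red vertex flips to blue with probability \(p_R\approx \Phi(-c_1h)\). A blue vertex stays blue with probability close to \(1\), since its mean difference \((a|\gB_t|-b|\gR_t|)\log N/N\) is of order \(\log N\). With \(\Delta_t>0\), and as long as \(|\widetilde\Delta_t|=o(N)\), one has \(|\gB_t|\approx aN/(a+b)\) and \(|\gR_t|\approx bN/(a+b)\). Then
\[
\mathbb E[|\gR_{t+1}|\mid\mathcal F_t]\le |\gR_t|(1-p_R)+o(N^{1-\epsilon}),
\]
so the expected decrease of \(|\gR_t|\) is at least \(c\,N\,e^{-C h^2}\). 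It follows that \(\widetilde{\Delta}\) increases by at least \(cN e^{-Ch^2}\), which I may deliberately weaken to \(c N e^{-Ch^2}/(1+h)^2\) to absorb Mills-ratio factors. The main obstacle is making this lower bound on \(p_R\) uniform in \(h\le h_N\) up to \(h_N=o(\sqrt{\log N})\). This is a moderate-deviation regime, so the Gaussian approximation must hold to relative (not additive) accuracy \(e^{-Ch^2}\). I will use the paper's sharp one-vertex flip estimates there, which give \(\log p_R\ge -Ch^2-O(\log(1+h))\) as long as \(h=o(\sqrt{\log N})\).

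\textbf{Step 2: concentration and iteration.} Conditionally on \(\mathcal F_t\), \(|\gR_{t+1}|\) is a sum of independent Bernoullis, since each vertex's vote depends only on its own edges. The dependence through shared edges is handled by the standard splitting: each edge affects two vertices, so McDiarmid applied to the edge variables, or a Chernoff bound after counting, works. Concentration at deviation \(\tfrac12 cNe^{-Ch^2}/(1+h)^2\) fails with probability at most \(2\exp(-cNe^{-Ch_N^2}/(1+h_N)^2)\). On the good event, \(h\) decreases by at least \(\delta(h)=c\sqrt{\log N}\,e^{-Ch^2}/(1+h)^2\ge c e^{-Ch_N^2}/(1+h_N)^2\) per step. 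Hence starting from \(h_0\le h_N\) we reach \(h\le H\) within \(h_N(1+h_N)^2e^{Ch_N^2}/c\le \exp(Ah_N^2)=T_N\) steps for suitable \(A\).

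We also need \(\Delta_t>0\) and \(\widetilde{\Delta}_t\) not to decrease, which the monotone drift gives. Connectivity, and the absence of pathological degrees on each \(\gG_t\) (Assumption~\ref{ass:connectivity}), holds outside an event of probability \(2\exp(-(\log N)^2)\) handled as in Theorem~\ref{thm:three-day-unanimity}. Once \(h\le H\) we can simply stop the drift argument. A union bound over the \(T_N\) steps gives the factor \(2\exp(Ah_N^2)\) in front of the concentration term. Combining this with the failure probability of Theorem~\ref{thm:three-day-unanimity}(i), namely \(2e^{-c_HN}+2e^{-(\log N)^2}+2N^{-\xi}\), yields the stated bound. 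It remains to check that \(1-O(N^{-\xi+o(1)})\) holds: \(h_N=o(\sqrt{\log N})\) gives \(e^{Ah_N^2}=N^{o(1)}\) and \(Ne^{-Ch_N^2}=N^{1-o(1)}\), so the union-bound term is superpolynomially small.
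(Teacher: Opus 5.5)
Your architecture is the paper's: a moderate-deviation drift estimate for $\widetilde{\Delta}_t$ while $u_t=-\widetilde{\Delta}_t\sqrt{\log N}/N$ lies between a fixed constant and $h_N$ (Lemma~\ref{lem:almost-linear-jump-one-step-bound}, built on Corollary~\ref{cor:moderate-deviation-tail-flip-probabilities}). This is followed by a union bound over the first failing step among the first $T_N$, which gives entrance into $\{u_t\le H\}$. The proof then ends with Theorem~\ref{thm:three-day-unanimity}(i) at that stopping time, plus absorption of the all-blue state. You handle blue-to-red flips by the Chernoff bound $p_t^{\gB}\le N^{-(a-b)^2/(a+b)+o(1)}$ rather than Lemma~\ref{lem:flip-ratio-comparison}. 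That is also fine in this window, since the red-to-blue gain is $N^{1-o(1)}$.

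The step that does not work as written is the per-step concentration. Conditionally on $\mathcal F_t$, $|\gR_{t+1}|$ is not a sum of independent indicators, because any two vertex updates share the edge between them. McDiarmid over the $\binom N2$ edge indicators, each with bounded difference $2$, only gives a tail $2\exp(-cs^2/N^2)$. That is trivial at your deviation $s\asymp Ne^{-Ch^2}/(1+h)^2=N^{1-o(1)}$, so it cannot yield the failure probability $2\exp(-cNe^{-Ch_N^2}/(1+h_N)^2)$ you assert.

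The missing input is a tail of order $\exp(-cs^2/N)$. It comes from exactly the structure you point to: each edge is read only by its two endpoint updates. Feeding this into the read-$k$ Chernoff bound (Lemma~\ref{lem:read-k}) with $k=2$ and $r=N$ gives Lemma~\ref{lem:R1-read-two-concentration}, namely $2\exp(-s^2/((a+b)^2N))$ for $\widetilde{\Delta}_{t+1}$. With it, your Step~2 goes through. (A variance-sensitive bounded-differences inequality using edge variances of order $\log N/N$ would give $\exp(-cs^2/(N\log N))$. That suffices for $1-O(N^{-\xi+o(1)})$, but not for the displayed bound when $h_N$ grows slowly.)

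Finally, drop the connectivity/degree event. It is not needed, and the paper supplies no $e^{-(\log N)^2}$ bound for it; its connectivity estimate is only polynomial. The $4\exp(-(\log N)^2)$ in the statement is just $2e^{-c_HN}+2e^{-(\log N)^2}$ from Theorem~\ref{thm:three-day-unanimity}(i). The second of these terms comes from the concentration step in Lemma~\ref{lem:second-day-reduction}, not from connectivity.
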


When the weighted disadvantage is of linear order, the one-step increase is
sublinear and its scale is described by the following large-deviation
exponent:
\begin{align}
I_t\coloneqq I(a|\gR_t|/N, b|\gB_t|/N),
\label{eqn:It-exponent}
\end{align}
where \(\ReLU(x)=\max\{x,0\}\) and the rate function is defined by
\begin{align}
    I(x, y) \coloneqq \big( \ReLU(\sqrt{x}-\sqrt{y}) \big)^2. \label{eqn:LDP-rate-function}
\end{align}

By the affine relation \eqref{eqn:affine-delta-tilde-delta}, the increments of
the weighted and unweighted advantages differ only by the fixed factor
\((a+b)/2\). We may therefore track the same amplification through
\(\Delta_t\). The theorem below assumes explicitly that blue initially has
the unweighted advantage and that this advantage satisfies
\(\Delta_0\gg\sqrt{N/\log N}\).

\begin{theorem}[Polynomial-time blue unanimity]
\label{thm:polynomial-minimal-bias}
Under Assumptions~\ref{ass:sparse-assortative-sbm} and \ref{ass:connectivity}, suppose
\[
\Delta_0>0,
\qquad
\Delta_0\gg\sqrt{N/\log N}.
\]
Then for every \(\varepsilon>0\), there exist constants \(c=c(a,b,\varepsilon)>0\) and \(\xi=\xi(a,b)>0\) such that, for all sufficiently large \(N\),
\[
\P\left(\gR_{\lceil N^{I_0+\varepsilon}\rceil}=\emptyset\right)
\ge
1-
\exp\left(-c\frac{\Delta_0^2\log N}{N}\right)-4N^{-\xi}.
\]
\end{theorem}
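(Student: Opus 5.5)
We argue by a one-step drift analysis for \(\Delta_t\), followed by a handoff to the two earlier theorems once the weighted disadvantage becomes small. If \(\widetilde{\Delta}_0\ge -h_N N/\sqrt{\log N}\) for a slowly growing \(h_N\), Theorem~\ref{thm:super-polylog-unanimity} already gives unanimity within \(N^{o(1)}\) updates. Likewise, if \(I_0=0\) (equivalently \(b|\gB_0|\ge a|\gR_0|\), i.e.\ \(\widetilde{\Delta}_0\ge0\)), Theorem~\ref{thm:three-day-unanimity}(i) applies directly. So we may assume \(\widetilde{\Delta}_0\le -h_N N/\sqrt{\log N}\) and \(I_0>0\).

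\textbf{Step 1 (one-vertex flip estimates).} Conditional on \(\mathcal F_t\), a red vertex has \(N^{\gB}_t\sim\mathrm{Bin}(|\gB_t|,\beta)\) and \(N^{\gR}_t\sim\mathrm{Bin}(|\gR_t|-1,\alpha)\); these have means \(b|\gB_t|\log N/N\) and \(a|\gR_t|\log N/N\). The probability that the red vertex turns blue is \(p^{R\to B}_t=N^{-I_t+o(1)}\), by the sparse binomial-difference large-deviation bound with rate \((\sqrt{x}-\sqrt y)^2\). Symmetrically, a blue vertex compares \(\mathrm{Bin}(|\gB_t|-1,\alpha)\) against \(\mathrm{Bin}(|\gR_t|,\beta)\), so it flips with probability \(p^{B\to R}_t=N^{-J_t+o(1)}\), where \(J_t=I(b|\gR_t|/N,a|\gB_t|/N)\). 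Since \(|\gB_t|>|\gR_t|\) and \(a>b\), we get \(J_t>I_t\), and the sharp flip estimates should give \(p^{R\to B}_t|\gR_t|-p^{B\to R}_t|\gB_t|\gtrsim N^{1-I_t-o(1)}\). The task here is to extract this quantitatively from the flip-probability estimates the paper develops, keeping \(o(1)\) uniform over the range of \(|\gB_t|/N\) visited.

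\textbf{Step 2 (drift and concentration).} Given \(\mathcal F_t\), the vertex updates are not independent, since edges are shared. They are, however, functions of independent edge indicators, and each vertex's update depends only weakly on any single edge. We would show \(\Delta_{t+1}-\Delta_t\) concentrates around its conditional mean \(2(p^{R\to B}_t|\gR_t|-p^{B\to R}_t|\gB_t|)\), with fluctuations of order \(\sqrt{N\cdot N^{-I_t}}\). We would prove this by second-moment or Efron--Stein bounds: flips of two vertices are nearly independent because they share only one potential edge.

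The hypothesis \(\Delta_0\gg\sqrt{N/\log N}\) has two uses. It keeps \(\Delta_t\) away from \(0\), so blue retains its unweighted edge. It also ensures the drift margin is not swamped by the Gaussian-scale fluctuations. The constant-time regime shows how this works: the \(\sqrt{\log N}\) scaling of fluctuations in \(\Delta\) is what the binomial CLT yields. The error term \(\exp(-c\Delta_0^2\log N/N)\) should come exactly from the first step, where \(\Delta_1\) must not fall by more than \(\Delta_0/2\). After that, \(\Delta_t\) is monotone up to negligible errors and a union bound over \(N^{I_0+\varepsilon}\) steps costs only \(N^{-\xi}\).

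\textbf{Step 3 (summing the gains).} Since \(I_t\) is decreasing in \(|\gB_t|\), it can only decrease along the trajectory, so each step gains at least \(N^{1-I_0-\varepsilon/2}\). After \(N^{I_0+\varepsilon}\) steps, the total gain exceeds \(N\) unless \(\widetilde{\Delta}_t\) has already reached \(-h_N N/\sqrt{\log N}\) (or \(0\)). At that point we restart the chain by the Markov property and invoke Theorem~\ref{thm:super-polylog-unanimity} or Theorem~\ref{thm:three-day-unanimity}(i), at a cost of \(N^{o(1)}\) further steps. The main obstacle is Step 1 combined with Step 2: we need a uniform lower bound on the net drift that beats the fluctuation. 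This matters most near \(I_t\to0\), where \(J_t-I_t\) is small and the polynomial estimates lose their precision. We would handle that region by switching to the subpolynomial theorem as soon as \(|\widetilde{\Delta}_t|\le \delta N\) for small \(\delta\), and choosing \(\delta\) in terms of \(\varepsilon\).
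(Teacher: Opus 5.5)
\textbf{There is a genuine gap: the per-step drift mechanism in your Steps 1--3 breaks down exactly in the regime the hypothesis $\Delta_0\gg\sqrt{N/\log N}$ is meant to cover.}

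First, the net drift is not $\gtrsim N^{1-I_t-o(1)}$. Use the correct blue-to-red exponent $J_t=I(a|\gB_t|/N,\,b|\gR_t|/N)$; your arguments are swapped, and as written $J_t=0$. A direct computation then gives $J_t-I_t=(a-b)\Delta_t/N$ whenever $I_t>0$. So for $\Delta_t=N^{1/2+o(1)}$ the two flip directions differ only by a factor $1-N^{-1/2+o(1)}$, and estimates of the form $N^{-I+o(1)}$ cannot even detect the sign of the net drift. The true drift is of order $\theta_t|\gR_t|p_t^{\gR}$ with $\theta_t=\min\{1,\Delta_t\log N/N\}$. The paper gets the factor $\theta_t$ from an exact term-by-term binomial comparison (Lemma~\ref{lem:flip-ratio-comparison}), not from large-deviation asymptotics.

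Second, with the correct drift the one-step signal-to-noise ratio is about $\theta_t^2\nu_t\approx\theta_t^2N^{1-I_t}$. This tends to $0$ for $\Delta_t=N^{1/2+o(1)}$, where $I_t\approx\frac12(\sqrt a-\sqrt b)^2$. Even with $\theta_t=1$, per-step progress is meaningless once $I_t>1$, since then a typical update flips no vertex at all. Hence ``each step gains at least $N^{1-I_0-\varepsilon/2}$'' is false, $\Delta_t$ is not monotone up to negligible errors, and your claim that $I_t$ only decreases along the trajectory fails. Doubling $\Delta_0$ takes roughly $\Delta_0N^{I_0-1}/\theta_0$ updates, and over that span the accumulated noise has variance of order $\Delta_0/\theta_0=N/\log N$, only slightly smaller than $\Delta_0^2$. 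Independently, Chebyshev-type bounds from second-moment or Efron--Stein estimates are too weak to survive a union bound over $N^{I_0+\varepsilon}$ steps once $I_0\ge1/2$.

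\textbf{What is missing is a cumulative, gambler's-ruin type argument.} The paper proceeds as follows.
\begin{enumerate}
\item It proves a Bernstein-type conditional MGF bound $\E[e^{-\lambda\xi_t}\mid\mathcal F_t]\le e^{K\lambda^2\nu_t}$ for the read-$2$ flip sums, via Finner's inequality. The variance proxy is the expected flip count $\nu_t$, not $N$.
\item From this it forms the exponential supermartingale $W$.
\item It runs dyadic doubling blocks of length $L\asymp\Delta_sN^{\overline I_s+\eta-1}/\theta_s$, over which the drift accumulates to a large multiple of $\Delta_s$.
\item Choosing $\lambda\asymp\theta_s$ makes the compensator $K\lambda^2\nu_t$ at most $\frac{\lambda}{2}\mu_t$, because $\mu_t\ge c\theta_t\nu_t$.
\item The supermartingale bound, at time $L$ and with optional stopping at the lower exit, controls both ``no doubling within $L$ updates'' and ``falling to $\chi\Delta_s$ first'' by $\exp(-c\Delta_s\theta_s)$.
\item A rate cap $\overline I_s$ over the corridor $[\chi\Delta_s,2\Delta_s]$ replaces your monotonicity of $I_t$.
\end{enumerate}
The error term $\exp(-c\Delta_0^2\log N/N)$ is this bound at the first dyadic level, where $\Delta_s\theta_s=\Delta_0^2\log N/N$. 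It is not a first-step estimate.

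Two smaller points. The delicate region is small $\Delta_t$, not $I_t\to0$: near $\widetilde{\Delta}_t=0$ one has $J_t-I_t\approx(a-b)^2/(a+b)$. Also, handing off at $|\widetilde{\Delta}_t|\le\delta N$ with fixed $\delta$ violates the hypothesis $h_N=o(\sqrt{\log N})$ of Theorem~\ref{thm:super-polylog-unanimity}. The window must shrink, e.g.\ to $|\widetilde{\Delta}_t|\le N/\sqrt{\log\log N}$ as in the paper.
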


The preceding theorem gives a polynomial-time upper bound. For the lower bound, define the initial camp ratio by
\begin{align}
\rho_0\coloneqq\frac{|\gB_0|}{|\gR_0|}.
\label{eqn:initial-camp-ratio}
\end{align}
The next result shows that
the exponent \(I_0\) is essentially unavoidable when the initial camp ratio
remains a fixed distance below the weighted threshold \(a/b\).

\begin{theorem}[Polynomial-time lower bound]
\label{thm:polynomial-lower-bound}
Under Assumptions~\ref{ass:sparse-assortative-sbm} and \ref{ass:connectivity}, suppose
\[
1<\rho_0\le a/b-\kappa,
\]
where \(\kappa>0\) is fixed. Then for every fixed \(\varepsilon>0\), there is \(C=C(a,b,\kappa,\varepsilon)>0\) such that, for all sufficiently large \(N\),
\[
\P\left(\gR_{\lfloor N^{I_0-\varepsilon}\rfloor}\neq\emptyset\right)
\ge 1-CN^{-\varepsilon/2}.
\]
\end{theorem}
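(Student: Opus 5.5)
We prove the lower bound by showing that, with high probability, the red camp cannot shrink fast enough. The key quantity is the one-step probability that a red vertex turns blue. Condition on \(\mathcal F_t\) and fix \(v\in\gR_t\). Then
\[
N^{\gB}_t(v)\sim\mathrm{Bin}(|\gB_t|,\beta),
\qquad
N^{\gR}_t(v)\sim\mathrm{Bin}(|\gR_t|-1,\alpha),
\]
and the two counts are independent. The means are \(b|\gB_t|\log N/N\) and \(a|\gR_t|\log N/N\). A Chernoff bound for the difference of two independent Poisson-like variables (the sparse binomial difference estimate the paper develops) gives
\[
\P\big(N^{\gB}_t(v)\ge N^{\gR}_t(v)\,\big|\,\mathcal F_t\big)\le N^{-I_t+o(1)}.
\]
To see the exponent, optimize over \(\lambda\) in \(\E e^{\lambda(X-Y)}\approx\exp\big(\log N\,[y(e^\lambda-1)+x(e^{-\lambda}-1)]\big)\) with \(x=a|\gR_t|/N\) and \(y=b|\gB_t|/N\). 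The optimum is \(e^\lambda=\sqrt{x/y}\), which gives exactly \(-(\sqrt x-\sqrt y)^2\log N\).

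The hypothesis \(\rho_0\le a/b-\kappa\) means \(a|\gR_0|-b|\gB_0|\ge c_\kappa N\). Hence \(I_0\) is bounded below by a positive constant, and \(I\) is uniformly continuous in a neighborhood of \((a|\gR_0|/N,b|\gB_0|/N)\).

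The argument runs over the horizon \(T=\lfloor N^{I_0-\varepsilon}\rfloor\). Fix a small \(\eta>0\) and define the stopping time \(\tau\), the first time \(t\) at which \(|\gR_t|<|\gR_0|-\eta N\) or \(|\gB_t|>|\gB_0|+\eta N\). Before \(\tau\), continuity of \(I\) gives \(I_t\ge I_0-\varepsilon/4\) once \(\eta=\eta(a,b,\kappa,\varepsilon)\) is small. So each red vertex flips to blue with probability at most \(N^{-I_0+\varepsilon/4+o(1)}\) per step. By linearity of expectation, the expected number of red-to-blue flips over all \(t<T\wedge\tau\) is at most
\[
T\,N\,N^{-I_0+\varepsilon/4+o(1)}\le N^{1-3\varepsilon/4+o(1)}.
\]
The camp sizes can also change through blue-to-red flips. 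Those only increase \(|\gR_t|\) and decrease \(|\gB_t|\), which makes \(I_t\) larger, not smaller. Therefore the net decrease of \(|\gR_t|\) up to time \(T\wedge\tau\) is bounded by the cumulative red-to-blue flips.

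Apply Markov's inequality to this cumulative count. It exceeds \(\eta N\) with probability at most \(N^{-3\varepsilon/4+o(1)}/\eta\le CN^{-\varepsilon/2}\). On the complementary event, \(\tau>T\). Indeed, \(|\gR_t|\) can only fall below \(|\gR_0|-\eta N\) through more than \(\eta N\) red-to-blue flips. Likewise \(|\gB_t|\) can only exceed \(|\gB_0|+\eta N\) through such flips, since \(|\gB_t|-|\gB_0|=|\gR_0|-|\gR_t|\). So on this event \(|\gR_T|\ge|\gR_0|-\eta N>0\), which is the claim.

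The formal way to handle the stopping time is to sum \(\E[\#\text{flips at }t\,;\,t<\tau]\). Each term is controlled because \(\{t<\tau\}\in\mathcal F_t\), so no martingale concentration is needed.

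The main obstacle is the uniform one-vertex upper tail bound
\[
\P\big(\mathrm{Bin}(n_1,\beta)\ge \mathrm{Bin}(n_2,\alpha)\big)\le N^{-I(an_2/N,\,bn_1/N)+o(1)},
\]
which must hold uniformly over \((n_1,n_2)\) in the relevant compact range. Ties also count as non-flips because of the tie rule, so the event \(\ge\) is an upper bound for the flip event. I would prove this bound directly by the exponential Chebyshev computation above, using \(1+u\le e^u\) for the binomial moment generating functions. This yields the bound \(\exp(-\log N(\sqrt x-\sqrt y)^2(1+o(1)))\) without needing Poisson approximation, and the \(o(1)\) term is uniform on compacts. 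The paper presumably records this bound as one of its flip-probability estimates, and I would invoke it there.
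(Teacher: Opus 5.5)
Your proposal is correct and follows essentially the same route as the paper. Your stopping time on $|\gR_t|$ is the paper's $\zeta$ with $\sigma=2\eta$, since $\Delta_t=N-2|\gR_t|$. You control $I_t\ge I_0-O(\varepsilon)$ before the stopping time by continuity together with monotonicity of $I$, then sum the exact Chernoff bound on $p_t^{\gR}$ using the tower property over $\{t<\zeta\}\in\mathcal F_t$, and conclude with Markov's inequality. The only bookkeeping left implicit is choosing $\eta<b/(a+b)<|\gR_0|/N$, so that $|\gR_0|-\eta N>0$.
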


We summarize our results in a phase diagram, displayed in Figure~\ref{fig:phase-diagram}.
\begin{figure}[H]
\centering
\begin{tikzpicture}[
    x=1cm,
    y=1cm,
    axis/.style={-{Latex[length=4mm, width=3mm]}, thick},
    tick/.style={black, line width=0.8pt},
    guide/.style={black!65, densely dashed, line width=0.55pt},
    regime/.style={line width=4pt},
    unsure/.style={regime, gray!85},
    poly/.style={regime, blue!70!black},
    polylog/.style={regime, green!55!black},
    3day/.style={regime, magenta!80!violet},
    2day/.style={regime, orange!85!black},
    1day/.style={regime, cyan!80!blue},
    toplabel/.style={font=\scriptsize, align=center, anchor=south},
    botlabel/.style={font=\scriptsize, align=center, anchor=north}
]

\def\xA{-2.0}
\def\xB{-1.0}
\def\xC{0.0}
\def\xD{1.0}
\def\xE{5.4}
\def\xF{7.0}
\def\xG{8.6}
\def\xH{9.8}
\def\xI{12.6}
\def\xJ{14.0}
\def\yTopLabel{0.68}
\def\yBottomLabel{-0.68}

\draw[axis] (\xB,0) -- (\xJ+0.4,0);

\draw[unsure]      (\xB,0) -- (\xD,0);
\draw[poly] (\xD,0) -- (\xE,0);
\draw[polylog]  (\xE,0) -- (\xF,0);
\draw[3day]    (\xF,0) -- (\xH,0);
\draw[2day]    (\xH,0) -- (\xI,0);
\draw[1day]    (\xI,0) -- (\xJ,0);

\foreach \x in {\xC, \xG} {
    \draw[guide, -{Latex[length=1.5mm, width=1.2mm]}]
        (\x,0.20) -- (\x,0.68);
}
\foreach \x in {\xD, \xF, \xH, \xI} {
    \draw[guide, -{Latex[length=1.5mm, width=1.2mm]}]
        (\x,-0.20) -- (\x,-0.68);
}
\foreach \x in {\xC, \xD, \xF, \xG, \xH, \xI} {
    \draw[tick] (\x,0.2) -- (\x,-0.2);
}

\node[toplabel] at (\xC,\yTopLabel) {$\rho_0 =1$};
\node[toplabel] at ({(\xD+\xE)/2},\yTopLabel) {$1<\rho_0 < a/b-\kappa$};
\node[toplabel] at (\xG,\yTopLabel) {$\rho_0 =a/b$};

\draw[decorate, decoration={brace, amplitude=5pt, mirror=false}]
    (\xD,0.25) -- (\xE,0.25);
\draw[decorate, decoration={brace, amplitude=5pt, mirror=false}]
    (\xE,0.25) -- (\xG,0.25);
\node[toplabel, font=\tiny, xshift=-6pt] at ({(\xE+\xG)/2},\yTopLabel)
    {$\rho_0=a/b-o(1)$};


\node[botlabel] at ({\xF},\yBottomLabel) {$\frac{\widetilde{\Delta}_0}{N} = \frac{-H}{\sqrt{\log N}}$};
\node[botlabel] at ({\xH},\yBottomLabel) {$\frac{\widetilde{\Delta}_0}{N} = \frac{K_2(a,b)}{\sqrt{\log N}}$};
\node[botlabel] at ({\xI},\yBottomLabel)
    {$\frac{\widetilde{\Delta}_0}{N}=b-(a+b)r_{*}$};

\draw[decorate, decoration={brace, amplitude=5pt, mirror}]
    (\xB,-0.25) -- (\xD,-0.25);
\node[botlabel] at ({(\xB+\xD)/2},\yBottomLabel)
    {$\frac{\Delta_0}{N} \lesssim\frac{1}{\sqrt{N\log N}}$};
\draw[decorate, decoration={brace, amplitude=5pt, mirror}]
    (\xD,-0.25) -- (\xE,-0.25);
\node[botlabel] at ({(\xD+\xE)/2},\yBottomLabel)
    {$\frac{\Delta_0}{N} \gg \frac{1}{\sqrt{N\log N}}$};

    \begin{scope}[shift={(0,-2.85)}]

    \draw[1day] (0.0,0.25) -- (0.7,0.25);
    \node[font=\scriptsize, anchor=west] at (0.85,0.25) {Thm. \ref{thm:three-day-unanimity} (iii): $1$ update};

    \draw[2day] (4.7,0.25) -- (5.4,0.25);
    \node[font=\scriptsize, anchor=west] at (5.55,0.25) {Thm. \ref{thm:three-day-unanimity} (ii): $2$ updates};

    \draw[3day] (10.0,0.25) -- (10.7,0.25);
    \node[font=\scriptsize, anchor=west] at (10.85,0.25) {Thm. \ref{thm:three-day-unanimity} (i): $3$ updates};

    \draw[polylog] (0.0,-0.45) -- (0.7,-0.45);
    \node[font=\scriptsize, anchor=west] at (0.85,-0.45) {Thm. \ref{thm:super-polylog-unanimity}: subpolynomial};

    \draw[poly] (4.7,-0.45) -- (5.4,-0.45);
    \node[font=\scriptsize, anchor=west] at (5.55,-0.45) {Thms. \ref{thm:polynomial-minimal-bias}--\ref{thm:polynomial-lower-bound}: polynomial};

    \draw[unsure] (10.0,-0.45) -- (10.7,-0.45);
    \node[font=\scriptsize, anchor=west] at (10.85,-0.45) {unexplored};

    \end{scope}

\end{tikzpicture}
\caption{Schematic phase diagram summarizing the main results. The lengths of the regimes are illustrative only. The initial camp ratio \(\rho_0\) is defined in \eqref{eqn:initial-camp-ratio}. The advantages \(\Delta_0\) and \(\widetilde{\Delta}_0\) are defined in \eqref{eqn:unweighted-advantage} and \eqref{eqn:weighted-advantage}, respectively.}
\label{fig:phase-diagram}
\end{figure}
\begin{remark}\label{rem:community-detection-exponent}
    If a symmetric community-size restriction is further imposed, the exponent $I_0$ in
\eqref{eqn:It-exponent} reduces to $I = I(a,b)=\frac12(\sqrt a-\sqrt b)^2$. In the context of community detection, it is well known that exact recovery\footnote{Every vertex is correctly classified into one of the two communities.} can be achieved with high probability if and only if $I>1$; see \cite{Abbe2016ExactRI, Mossel2016ConsistencyTF}. When exact recovery is impossible, i.e., $I < 1$, the \emph{information-theoretic} lower bound on the expected mismatch ratio
is $N^{-I+ o(1)}$; see \cite{Zhang2016MinimaxRO, Abbe2020EntrywiseEA}.
\end{remark}

\subsection{Numerical Experiments}

We conduct numerical experiments to validate the theoretical predictions of the main results.

\paragraph{Constant-Time Transition.}
Theoretical predictions in \Cref{thm:three-day-unanimity} and \Cref{thm:super-polylog-unanimity} are illustrated in Figure~\ref{fig:experiment1}. The first two panels show the predicted sharp rise in constant-time success as \(H_{\mathrm{exp}}\) increases. The three-update transition occurs earlier than the two-update
transition because it permits an additional amplification step, while the
median blue-unanimity time
\(\tau_B\coloneqq\inf\{t\ge0:\gR_t=\emptyset\}\) increases smoothly below the three-update threshold,
consistently with the subpolynomial-time regime.

\begin{figure}[H]
    \centering
    \begin{subfigure}[t]{0.32\textwidth}
        \centering
        \includegraphics[width=\textwidth]{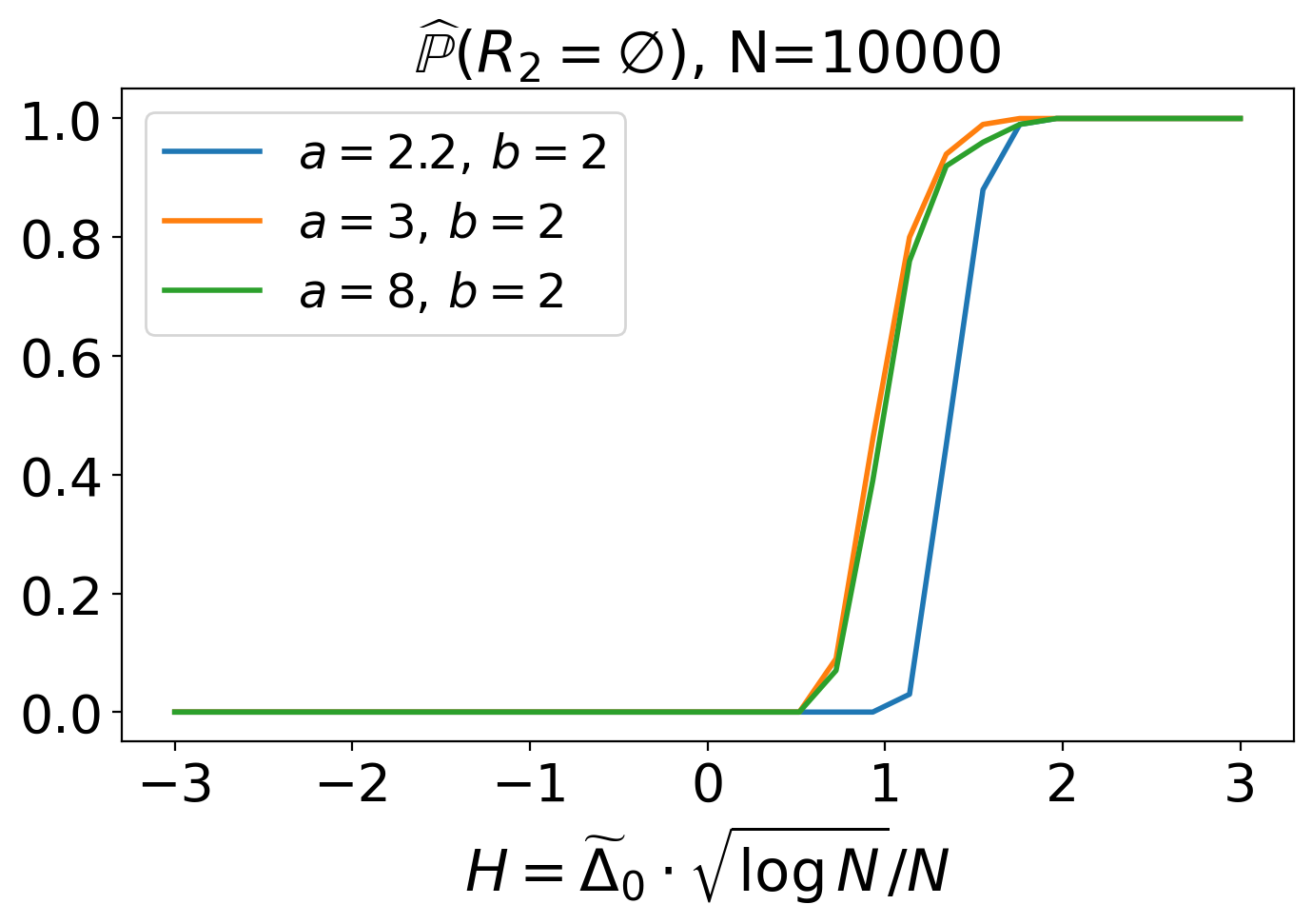}
    \end{subfigure}
    \hfill
    \begin{subfigure}[t]{0.32\textwidth}
        \centering
        \includegraphics[width=\textwidth]{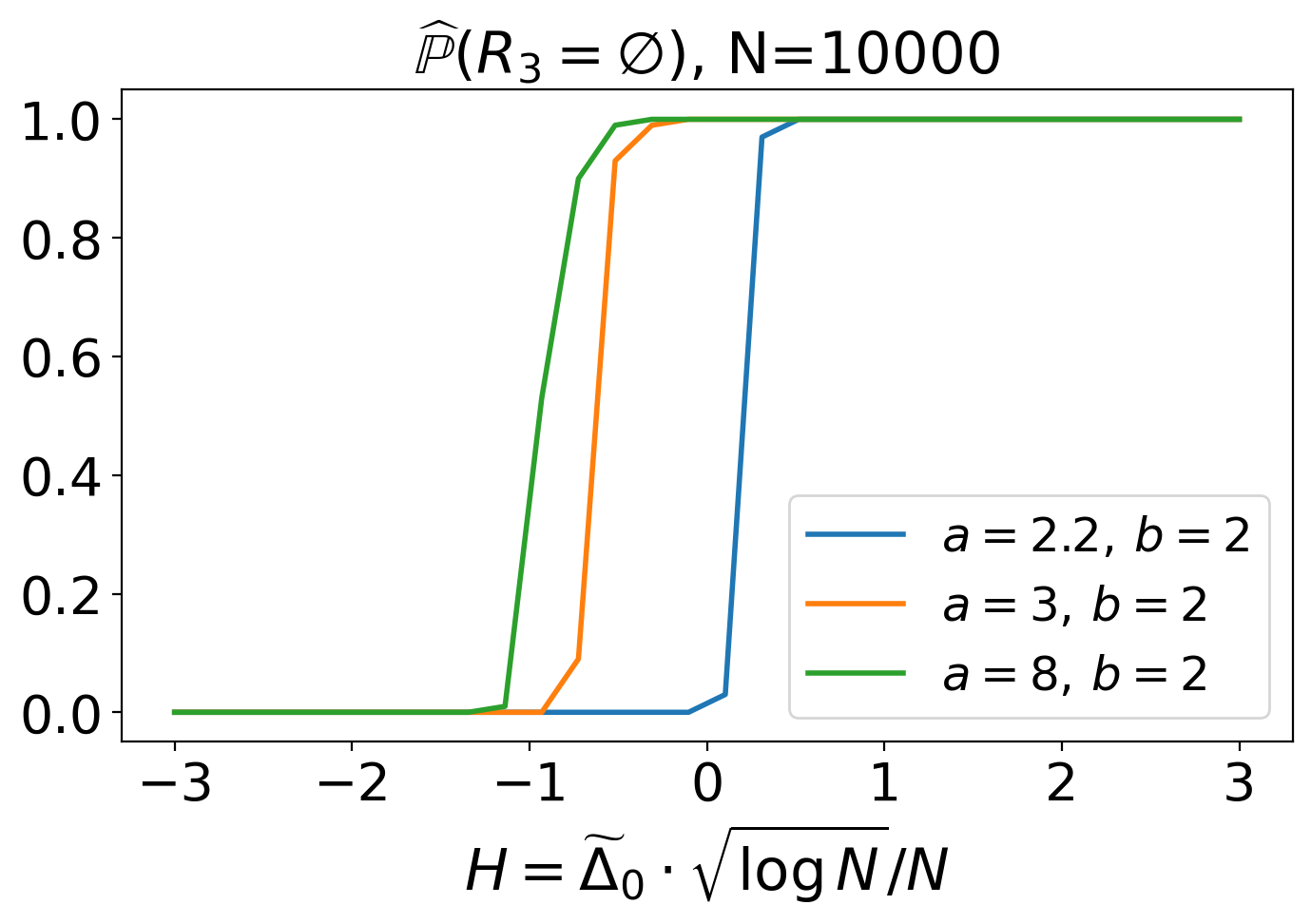}
    \end{subfigure}
    \hfill
    \begin{subfigure}[t]{0.32\textwidth}
        \centering
        \includegraphics[width=\textwidth]{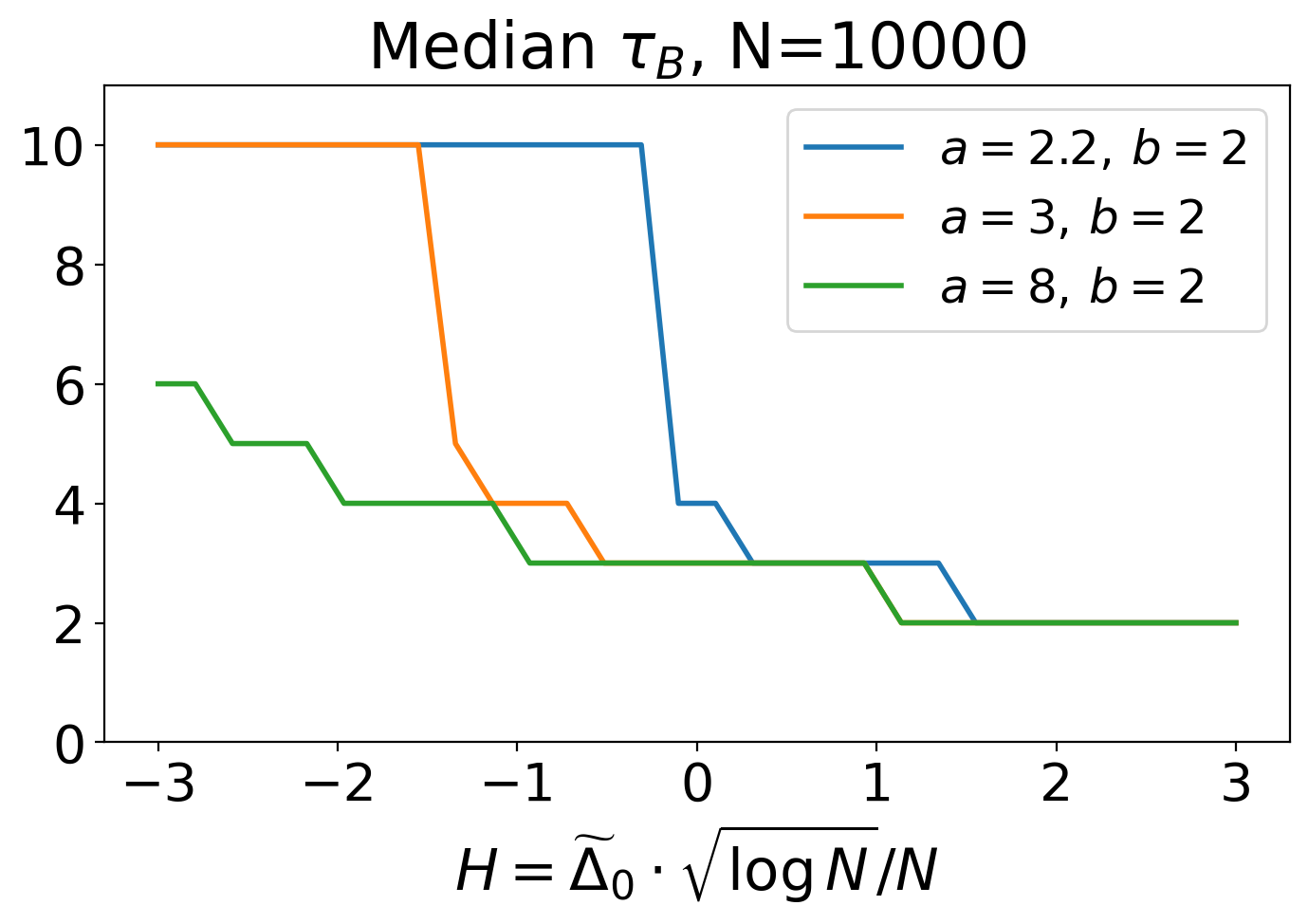}
    \end{subfigure}
    \caption{We take \(N=10^4\), \(b=2\), and vary \(a\in\{2.2,3,8\}\). We parametrize the initial condition by \(H_{\mathrm{exp}}\coloneqq\widetilde{\Delta}_0\sqrt{\log N}/N\), labeled \(H\) in the panels. Thus \(H_{\mathrm{exp}}=-H\) on the boundary \(\widetilde{\Delta}_0=-HN/\sqrt{\log N}\) in Theorem~\ref{thm:three-day-unanimity}\textup{(i)}. The first two panels report the
    estimated probabilities of blue unanimity by \(t=2\) and \(t=3\),
    respectively; the third reports the median \(\tau_B\).}
    \label{fig:experiment1}
\end{figure}

\paragraph{Polynomial-Time Exponent.}
The polynomial-time convergence exponent predicted by
\Cref{thm:polynomial-minimal-bias} and
\Cref{thm:polynomial-lower-bound} is tested in
Figure~\ref{fig:experiment2}. Since
\(\rho_0=|\gB_0|/|\gR_0|\) and
\(|\gB_0|+|\gR_0|=N\), we have
\(|\gR_0|/N=(1+\rho_0)^{-1}\) and
\(|\gB_0|/N=\rho_0(1+\rho_0)^{-1}\). Substitution into
\eqref{eqn:It-exponent} gives
\[
I_0
=\frac{\bigl(\ReLU(\sqrt a-\sqrt{b\rho_0})\bigr)^2}{1+\rho_0}
=\frac{(\sqrt a-\sqrt{b\rho_0})^2}{1+\rho_0},
\]
where the second equality uses \(1<\rho_0<a/b\). The empirical exponent
follows \(I_0\) across this regime, with visible finite-\(N\) deviations near
the transition points. The trajectories give the complementary picture: the
red camp typically decays slowly before the final extinction step, matching
the polynomial-time behavior predicted by the upper and lower bounds.

\begin{figure}[H]
    \centering
    \begin{subfigure}[t]{0.48\textwidth}
        \centering
        \includegraphics[width=\textwidth]{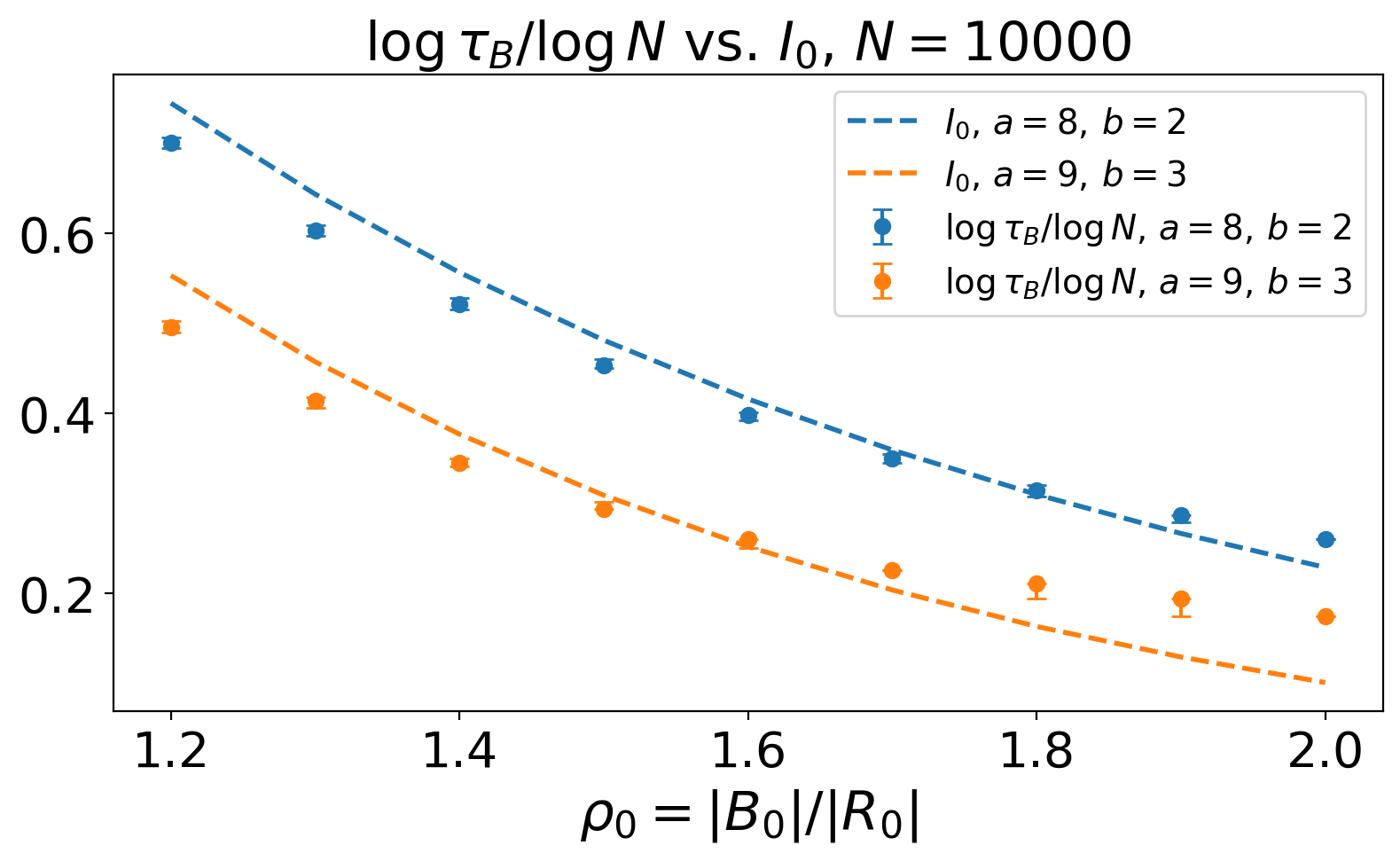}
    \end{subfigure}
    \hfill
    \begin{subfigure}[t]{0.48\textwidth}
        \centering
        \includegraphics[width=\textwidth]{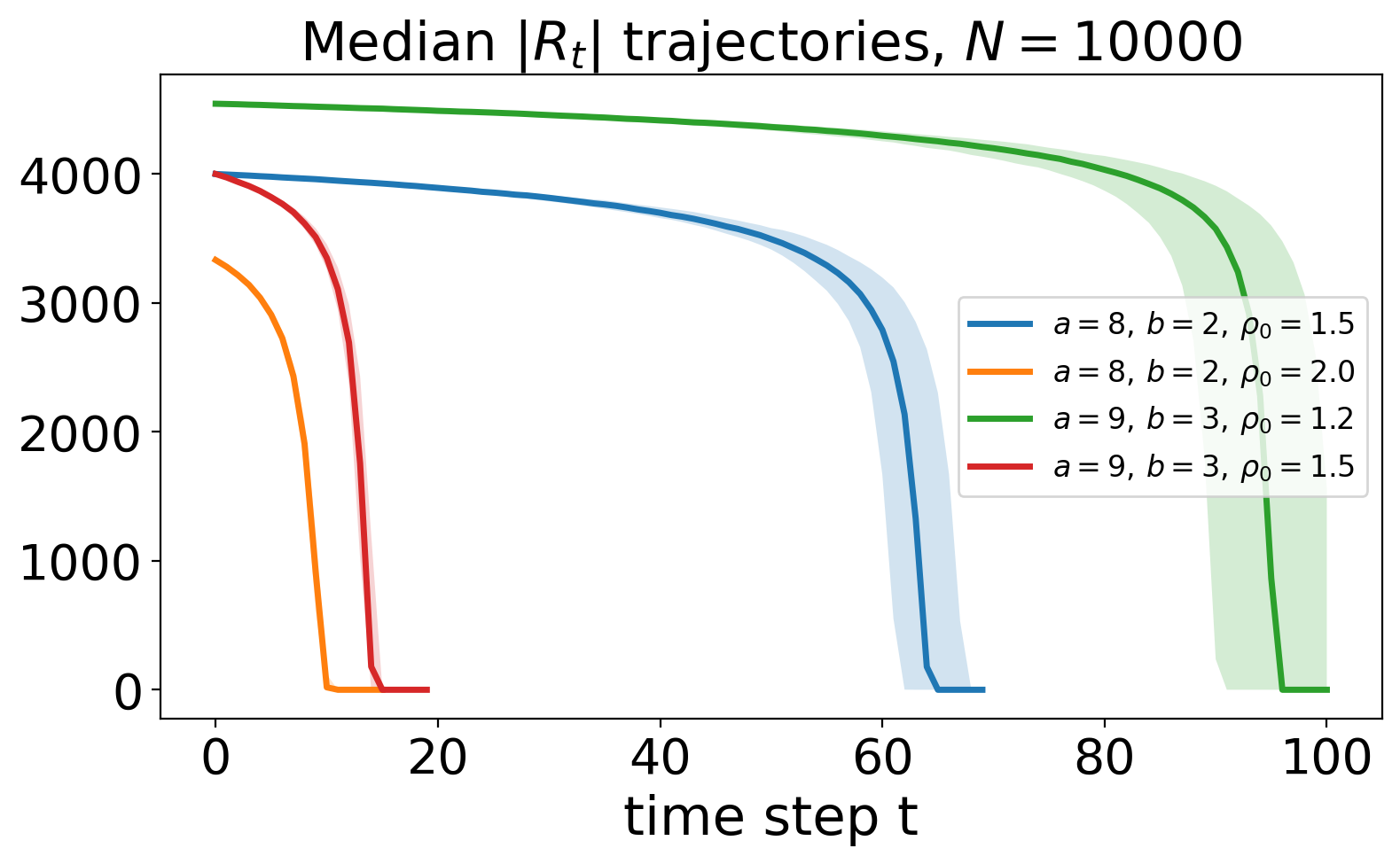}
    \end{subfigure}
    \caption{We take \(N=10^4\),
    \((a,b)\in\{(8,2),(9,3)\}\), and vary the initial camp ratio \(\rho_0\).
    The left panel compares the median empirical exponent
    \(\log\tau_B/\log N\) with
    \(I_0\) and includes the
    \(10\%\text{--}90\%\) quantile interval. The right panel plots the median
    trajectories of \(|\gR_t|\).}
    \label{fig:experiment2}
\end{figure}

\subsection{Notation}
All asymptotic statements are taken as \(N\to\infty\), and all logarithms are
natural. We write \([N]=\{1,\ldots,N\}\), use \(\indi{E}\) for the indicator of
an event \(E\), and denote probability, expectation, and variance by
\(\P\), \(\E\), and \(\Var\), respectively. The constants \(c,C>0\) may change from line to line and may depend on the fixed parameters under consideration, but never on \(N\). For real
sequences \(u_N,v_N\), we write \(u_N\lesssim v_N\) if
\(u_N\le Cv_N\) for all sufficiently large \(N\), and write
\(u_N\gtrsim v_N\) for the reverse inequality. We write
\(u_N\asymp v_N\) when both comparisons hold. For eventually nonnegative
sequences \(u_N,v_N\), the relations \(u_N\ll v_N\) and \(u_N\gg v_N\)
mean \(u_N/v_N\to0\) and \(u_N/v_N\to\infty\), respectively. When \(u_N\)
may be signed and only its magnitude is relevant, we write
\(|u_N|\ll v_N\) or \(|u_N|\gg v_N\) explicitly.
We use \(O,\Omega,\Theta,o,\omega\) with their standard meanings.

\subsection{Organization}
The rest of the paper is organized as follows.
Section~\ref{sec:one-step-evolution-advantages} collects the one-step
estimates for the weighted and unweighted advantages used throughout the
paper. The proofs of
Theorems~\ref{thm:three-day-unanimity},
\ref{thm:super-polylog-unanimity},
\ref{thm:polynomial-minimal-bias}, and
\ref{thm:polynomial-lower-bound} are contained in
Sections~\ref{sec:constant-days-to-unanimity},~\ref{sec:polylogarithmic-days-to-unanimity},~\ref{sec:polynomial-days-to-unanimity}, and \ref{sec:necessary-conditions}, respectively. The proofs of supporting lemmas and technical results are collected in the appendix.

\section{One-Step Evolution of the Advantages}
\label{sec:one-step-evolution-advantages}

The proofs of our main results rely heavily on a detailed analysis of the
one-step evolution of the advantages. We first introduce the necessary notation
before stating the main lemmas. The proofs of all lemmas in this section are
deferred to Appendix~\ref{app:deferred-one-step-evolution-advantages}.

For a blue vertex \(u\in\gB_t\) and a red vertex \(v\in\gR_t\), we define the
degree differences at \(u\) and \(v\), respectively, by
\begin{align}
    \rD_{t}^{\gB}(u) \coloneqq N^{\gR}_{t}(u) - N^{\gB}_{t}(u), \qquad
    \rD_{t}^{\gR}(v) \coloneqq N^{\gB}_{t}(v) - N^{\gR}_{t}(v).
    \label{eqn:degree-differences}
\end{align}
Recall the update rule in \eqref{eqn:opinion-update}. Conditioned on \(\rvy_t\), \(u\) flips to red at update \(t+1\) if and only if \(\rD_t^\gB(u)>0\), while \(v\) flips to blue if and only if \(\rD_t^\gR(v)>0\). Since these conditional laws depend only on the camp sizes, we omit the vertex label and write
\begin{subequations}
\begin{align}
    \rD_{t}^{\gB}\stackrel d=&\, \Bin(|\gR_{t}|,\beta)-\Bin(|\gB_{t}|-1,\alpha),
    \label{eqn:D-Bt-distribution}\\
    \rD_{t}^{\gR} \stackrel d=&\, \Bin(|\gB_{t}|,\beta)-\Bin(|\gR_{t}|-1,\alpha).
    \label{eqn:D-Rt-distribution}
\end{align}
\end{subequations}
The corresponding one-vertex flip and non-flip probabilities at update $t+1$ are
\begin{subequations}
\begin{align}
    p_{t}^{\gB} \coloneqq &\, \P(\rD_{t}^{\gB}>0\mid\rvy_{t}), \qquad
    q_{t}^{\gB} \coloneqq \P(\rD_{t}^{\gB}\le0\mid\rvy_{t}),
    \label{eqn:p-q-B}\\
    p_{t}^{\gR} \coloneqq &\, \P(\rD_{t}^{\gR}>0\mid\rvy_{t}), \qquad
    q_{t}^{\gR} \coloneqq \P(\rD_{t}^{\gR}\le0\mid\rvy_{t}),
    \label{eqn:p-q-R}
\end{align}
\end{subequations}
where $p_{t}^{\gB}$ and $p_{t}^{\gR}$ are the probabilities that a blue vertex flips to red and a red vertex flips to blue, respectively, while $q_{t}^{\gB}$ and $q_{t}^{\gR}$ are the corresponding non-flip probabilities.
Furthermore, we denote the numbers of blue-to-red and red-to-blue flips at update \(t+1\) by
\begin{align}
\gBR_t \coloneqq &\, \left|\gB_t\cap\gR_{t+1}\right|,
\qquad
\gRB_t \coloneqq \left|\gR_t\cap\gB_{t+1}\right|.
\label{eqn:gBR-gRB-definition}
\end{align}

Each red-to-blue flip increases \(\Delta_t\) by \(2\) and
\(\widetilde{\Delta}_t\) by \(a+b\), while each blue-to-red flip produces the
opposite changes. Thus the balance between \(\gRB_t\) and \(\gBR_t\)
determines the one-step drift of both advantages, motivating the aggregate
flip estimates below.

\subsection{Bounds on the Expected Number of Flips}

We begin by comparing the expected flip counts in the two directions.
Lemma~\ref{lem:flip-ratio-comparison} shows that, when \(a>b\), a blue
majority makes the expected number of red-to-blue flips quantitatively larger
than the expected number of blue-to-red flips, thereby producing positive
drift toward blue unanimity.
\begin{lemma}[Flip ratio comparison]
    \label{lem:flip-ratio-comparison}
    Under Assumptions~\ref{ass:sparse-assortative-sbm} and
    \ref{ass:connectivity}, fix a deterministic time \(t\ge0\), condition on
    \(\rvy_t\), and suppose that \(|\gB_t|\ge|\gR_t|\ge1\). Define
    \(\theta_t\) by
    \begin{align}
    \theta_{t} \coloneqq \min\{1,\Delta_{t} \log (N)/N\}. \label{eqn:theta-definition}
    \end{align}
    Then there exists a constant \(c_0=c_0(a,b)>0\) such that
    \begin{align*}
    \frac{|\gB_{t}|p_{t}^{\gB}}{|\gR_{t}|p_{t}^{\gR}}
    &\le
    1-c_0\theta_t,
    \end{align*}
    for all sufficiently large \(N\).
\end{lemma}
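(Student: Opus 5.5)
The plan is to compare \(p_t^{\gB}\) and \(p_t^{\gR}\) through a common random variable, and then exploit log-concavity. Write \(n_B=|\gB_t|\), \(n_R=|\gR_t|\) and \(\Delta=\Delta_t=n_B-n_R\ge0\). Splitting off the \(\Delta\) extra vertices in each binomial, by independence we can realize
\[
\rD_t^{\gB}\stackrel d= S-W,\qquad \rD_t^{\gR}\stackrel d= S+V,
\]
where \(S\coloneqq\Bin(n_R,\beta)-\Bin(n_R-1,\alpha)\), \(W\sim\Bin(\Delta,\alpha)\) and \(V\sim\Bin(\Delta,\beta)\) are independent. With \(f(k)\coloneqq\P(S>k)\) this gives \(p_t^{\gB}=\E f(W)\) and \(p_t^{\gR}=\E f(-V)\).

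Since \(S\) is a difference of independent binomials, it is a convolution of log-concave integer laws and hence log-concave. Its tail \(f\) is therefore log-concave on the integers, so \(f(k)/f(k-1)\) is nonincreasing in \(k\). Setting \(r\coloneqq f(-1)/f(0)=1+\P(S=0)/\P(S>0)\), this yields
\[
f(k)\le f(0)r^{-k}\quad (k\ge0),\qquad f(-k)\ge f(0)r^{k}\quad (k\ge 0).
\]
Taking expectations over \(W\) and \(V\):
\[
\frac{p_t^{\gR}}{p_t^{\gB}}\ \ge\ \frac{\E r^{V}}{\E r^{-W}}
=\frac{(1+\beta(r-1))^{\Delta}}{(1-\alpha(1-1/r))^{\Delta}}
\ge \exp\Big(c\,\Delta\frac{\log N}{N}\big[b(r-1)+a(1-1/r)\big]\Big).
\]
Thus everything reduces to a uniform lower bound \(r\ge 1+c_1\), i.e.
\[
\P(S=0)\ \ge\ c_1\,\P(S>0),
\]
uniformly over \(1\le n_R\le N/2\). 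This local-versus-tail comparison is the main obstacle.

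To prove it, I would use \(\E S=(b-a)n_R\log N/N+O(\alpha)<0\), so \(\{S>0\}\) is a large-deviation event. I would exponentially tilt \(S\) by \(e^{sS}\) with \(e^{s}=\sqrt{a/b}\) (the minimizer of \(b e^{s}+a e^{-s}\)). Under the tilt the law is symmetric-ish about \(0\), and then \(\P(S=k)\) for \(k\ge1\) decays at least geometrically in \(k\) with ratio about \(\sqrt{b/a}<1\). Summing gives \(\P(S>0)\le C\,\P(S=0)\). When \(n_R\log N/N\) is bounded, so the variables are Poisson-like, I would make this precise by direct Poisson or binomial point-probability comparisons. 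In the regime \(n_R\log N/N\to\infty\) I would use a local limit theorem for the tilted law. I expect the uniformity across these regimes to be the delicate part.

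Finally, we need to beat the factor \(n_B/n_R=1+\Delta/n_R\). If \(n_R\ge\delta N\) for a small constant \(\delta\), then \(\log(n_B/n_R)\le\Delta/(\delta N)\). This is negligible against \(c_2\Delta\log N/N\) from the display above, which gives
\[
\frac{n_B\,p_t^{\gB}}{n_R\,p_t^{\gR}}\le \exp(-c\min\{\Delta\log N/N,\,1\})\le 1-c_0\theta_t.
\]
Here the cap at \(1\) is obtained by bounding \(\Delta\) below by \(N/\log N\) whenever it exceeds that.

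If instead \(n_R<\delta N\), then \(\theta_t=1\) and I would argue crudely. We have \(p_t^{\gR}\ge\P(\Bin(n_B,\beta)>\Bin(n_R-1,\alpha))\ge 1/2\), since \(b n_B\gg a n_R\) for \(\delta\) small. On the other hand \(p_t^{\gB}\le \P(\Bin(n_B-1,\alpha)\le 2\delta' \log N)+\P(\Bin(n_R,\beta)>2\delta'\log N)\le N^{-a(1-o_\delta(1))}\). Hence \(n_Bp_t^{\gB}/(n_Rp_t^{\gR})\le 2N^{1-a+o(1)}\to0\), using \(a>1\).
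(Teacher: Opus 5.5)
\textbf{Sign error in the log-concavity step.} The inequality \(f(-k)\ge f(0)r^{k}\) is wrong, and the main display depends on it. Concavity of \(\log f\) means \(\log f\) lies \emph{below} the line through \((-1,\log f(-1))\) and \((0,\log f(0))\) on \emph{both} sides of \([-1,0]\). So you do get \(f(k)\le f(0)r^{-k}\) for \(k\ge0\), but on the left you get \(f(-k)\le f(0)r^{k}\), not \(\ge\). Your lower bound fails in general: since \(f\le 1\) and \(r>1\), we have \(f(0)r^k>1\ge f(-k)\) for large \(k\). Already for \(|\gR_t|=1\), where \(S\) is Bernoulli\((\beta)\), \(f(0)=\beta\) and \(r=1/\beta\), it fails at \(k=2\). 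Consequently \(p_t^{\gR}/p_t^{\gB}\ge \E r^{V}/\E r^{-W}\) is unjustified, and \(f(0)\E r^{V}\) can exceed \(1\ge p_t^{\gR}\). The step can be repaired. Discard \(V\) and use only monotonicity, \(p_t^{\gR}=\E f(-V)\ge f(0)\), together with the correct bound \(p_t^{\gB}=\E f(W)\le f(0)\E r^{-W}=f(0)\bigl(1-\alpha(1-1/r)\bigr)^{\Delta_t}\). This gives \(p_t^{\gR}/p_t^{\gB}\ge \exp\bigl(a(1-1/r)\Delta_t\log N/N\bigr)\), which is all your later case analysis uses.

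\textbf{The key estimate is only sketched.} Even after the repair, everything hinges on \(\P(S=0)\ge c_1\P(S>0)\) uniformly for \(|\gR_t|\ge\delta N\), and you only outline this. After tilting, it amounts to \(\P_*(S=k)\le C\,\P_*(S=0)\) for all \(k\ge1\). That needs a uniform local limit theorem, or a mode-versus-variance comparison for log-concave lattice laws, for the tilted difference of binomials. As it stands this is a plan rather than a proof.

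\textbf{How the paper avoids this.} The paper uses an exact summand-by-summand comparison instead. Write \(p_t^{\gB}\) and \(p_t^{\gR}\) as double sums over \(i>j\) of products of binomial point masses. The \((i,j)\) term of \(p_t^{\gB}\) divided by the \((i,j)\) term of \(p_t^{\gR}\) (whenever the latter is nonzero; otherwise both vanish) equals \(\frac{|\gR_t|}{|\gB_t|}\prod_{\ell=j+1}^{i-1}\frac{|\gR_t|-\ell}{|\gB_t|-\ell}\bigl(\frac{1-\alpha}{1-\beta}\bigr)^{\Delta_t}\). This is at most \(\frac{|\gR_t|}{|\gB_t|}\bigl(\frac{1-\alpha}{1-\beta}\bigr)^{\Delta_t}\). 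Hence \(\frac{|\gB_t|p_t^{\gB}}{|\gR_t|p_t^{\gR}}\le\bigl(\frac{1-\alpha}{1-\beta}\bigr)^{\Delta_t}\le e^{-(a-b)\Delta_t\log N/N}\). The factor \(|\gB_t|/|\gR_t|\) cancels exactly. So the paper needs no case split on \(|\gR_t|\), no tilting, no local limit theorem, and no use of \(a>1\).
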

    
Conditioning on \(\rvy_t\), we denote the expected total number of vertices that flip at update \(t+1\) by
\begin{align}
\nu_t
&\coloneqq
\E\left[\gRB_t+\gBR_t\,\middle|\,\rvy_t\right].
\label{eqn:nu-definition}
\end{align}
We denote the conditional expected increments of the unweighted and weighted advantages by
\begin{align}
\mu_t
&\coloneqq
\E[\Delta_{t+1}-\Delta_t\mid\rvy_t],
&
\widetilde{\mu}_t
&\coloneqq
\E[\widetilde{\Delta}_{t+1}-\widetilde{\Delta}_t\mid\rvy_t].
\label{eqn:mu-tilde-mu-definition}
\end{align}
By linearity of expectation and \eqref{eqn:affine-delta-tilde-delta}, these
quantities satisfy
\begin{align}
    \widetilde{\mu}_t=\frac{a+b}{2}\mu_t.
    \label{eqn:mu-tilde-mu-linearity}
\end{align}

Lemma~\ref{lem:one-step-difference-bound} establishes the tail-free
comparisons between the expected advantage increments and the expected total
number of flips through the one-vertex flip probabilities.

\begin{lemma}[Tail-free one-step comparison]
\label{lem:one-step-difference-bound}
Under Assumptions~\ref{ass:sparse-assortative-sbm} and
\ref{ass:connectivity}, fix a deterministic time \(t\ge0\), condition on
\(\rvy_t\), and suppose that \(|\gB_t|\ge|\gR_t|\asymp N\). There are
constants \(c,C>0\), depending only on \(a,b\), such that
\begin{align}
c|\gR_t|p_t^\gR\theta_t
\le \mu_t\le C|\gR_t|p_t^\gR,
\qquad
c|\gR_t|p_t^\gR
\le \nu_t\le C|\gR_t|p_t^\gR,
\label{eqn:mu-nu-tail-free-bound}
\end{align}
where \(\theta_t\) is defined in \eqref{eqn:theta-definition}. Consequently,
\begin{align}
\mu_t/\nu_t\ge c\theta_t.
\label{eqn:mu-nu-comparison-bound}
\end{align}
\end{lemma}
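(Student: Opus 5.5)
By linearity, $\E[\gRB_t\mid\rvy_t]=|\gR_t|p_t^\gR$ and $\E[\gBR_t\mid\rvy_t]=|\gB_t|p_t^\gB$. Hence
\[
\mu_t=2\bigl(|\gR_t|p_t^\gR-|\gB_t|p_t^\gB\bigr),\qquad \nu_t=|\gR_t|p_t^\gR+|\gB_t|p_t^\gB .
\]
Every bound in \eqref{eqn:mu-nu-tail-free-bound} then comes from controlling the ratio $|\gB_t|p_t^\gB/(|\gR_t|p_t^\gR)$ from above and below, with no tail estimates needed.

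\textbf{Upper bounds and lower bound for $\nu_t$.}
Lemma~\ref{lem:flip-ratio-comparison} gives $|\gB_t|p_t^\gB\le(1-c_0\theta_t)|\gR_t|p_t^\gR\le|\gR_t|p_t^\gR$. This immediately yields $\nu_t\le 2|\gR_t|p_t^\gR$ and $\mu_t\le 2|\gR_t|p_t^\gR$. The bound $\nu_t\ge|\gR_t|p_t^\gR$ holds trivially, since both summands are nonnegative.

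\textbf{Lower bound for $\mu_t$.}
Using Lemma~\ref{lem:flip-ratio-comparison} again,
\[
\mu_t=2|\gR_t|p_t^\gR\Bigl(1-\frac{|\gB_t|p_t^\gB}{|\gR_t|p_t^\gR}\Bigr)\ge 2c_0\theta_t|\gR_t|p_t^\gR .
\]
The hypotheses $|\gB_t|\ge|\gR_t|\asymp N$ ensure that $|\gR_t|\ge1$, so Lemma~\ref{lem:flip-ratio-comparison} applies. They also make the statement's constants depend only on $a,b$. If $p_t^\gR=0$, every quantity vanishes and the inequalities are trivial.

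\textbf{Consequence \eqref{eqn:mu-nu-comparison-bound}.}
Dividing, $\mu_t/\nu_t\ge 2c_0\theta_t|\gR_t|p_t^\gR/(2|\gR_t|p_t^\gR)=c_0\theta_t$, provided $p_t^\gR>0$. This positivity holds for finite $N$ because $\Bin(|\gB_t|,\beta)$ can exceed $\Bin(|\gR_t|-1,\alpha)$ with positive probability.

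\textbf{Where the difficulty lies.}
Given Lemma~\ref{lem:flip-ratio-comparison}, the argument is essentially routine; all the real difficulty sits in that lemma. There one needs sharp local large-deviation asymptotics for sparse binomial differences, comparing $\P(\rD^\gB_t>0)$ with $\P(\rD^\gR_t>0)$ to relative precision $\theta_t$. If that lemma were not available, I would prove it by writing each flip probability via exact saddle-point (tilted-measure) estimates $\P(\rD>0)\asymp N^{-I}\cdot(\text{polylog correction})$. I would then compare the exponents $I(a|\gR_t|/N,b|\gB_t|/N)$ and $I(a|\gB_t|/N,b|\gR_t|/N)$ together with the prefactors $|\gB_t|$ and $|\gR_t|$. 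In the near-balanced case, the prefactor and exponent shifts of order $\Delta_t\log N/N$ produce exactly the $c_0\theta_t$ gain.
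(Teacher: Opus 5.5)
Your proof is correct and follows the paper's own argument. Both write $\mu_t=2(|\gR_t|p_t^\gR-|\gB_t|p_t^\gB)$ and $\nu_t=|\gR_t|p_t^\gR+|\gB_t|p_t^\gB$, then read off all four bounds and the ratio bound from Lemma~\ref{lem:flip-ratio-comparison}.

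Your closing aside about proving that lemma is off, though it does not affect this proof. Estimates of the form $p\asymp N^{-I}\cdot(\text{polylog})$ cannot give the relative precision $1-c_0\theta_t$ once $\theta_t\to0$. The paper instead compares the two binomial-difference sums exactly, term by term, which yields $|\gB_t|p_t^\gB/(|\gR_t|p_t^\gR)\le((1-\alpha)/(1-\beta))^{\Delta_t}$.
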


The tail-free comparison reduces the aggregate one-step analysis to estimating
the one-vertex flip probability \(p_t^{\gR}\). Its magnitude varies with the
current configuration and determines whether the resulting advantage increment
is linear, almost linear, or sublinear in \(N\).

\subsection{Scale of the Advantage Increments}

We distinguish these scales through the signal-to-noise ratios of the
one-vertex degree differences. For this purpose, define their conditional
means and variances by
\begin{subequations}
\begin{align}
    m_{t}^{\gB} \coloneqq &\, \E\left[\rD_{t}^{\gB}\mid \rvy_{t}\right],
    \qquad
    v_{t}^{\gB} \coloneqq \Var\left(\rD_{t}^{\gB}\mid \rvy_{t}\right),
    \label{eqn:m-v-B}\\
    m_{t}^{\gR} \coloneqq &\,\E\left[\rD_{t}^{\gR}\mid \rvy_{t}\right],
    \qquad
    v_{t}^{\gR} \coloneqq \Var\left(\rD_{t}^{\gR}\mid \rvy_{t}\right).
    \label{eqn:m-v-R}
\end{align}
\end{subequations}
For each color \(\gC\in\{\gB,\gR\}\), the magnitude of
\(\lvert m_t^{\gC}\rvert/\sqrt{v_t^{\gC}}\) selects the appropriate Gaussian,
moderate-deviation, or large-deviation estimate from
Appendix~\ref{sec:flip-probability-analysis}. Substituting that estimate into
Lemma~\ref{lem:one-step-difference-bound} yields bounds for the conditional
mean, and Lemma~\ref{lem:R1-read-two-concentration} converts them into
high-probability bounds for the realized increment. 

First, when \(\lvert m_t^{\gC}\rvert/\sqrt{v_t^{\gC}}\) is bounded, the advantage increments grow linearly.
\begin{lemma}[Linear-jump]
\label{lem:linear-jump-one-step-bound}
Under Assumptions~\ref{ass:sparse-assortative-sbm} and
\ref{ass:connectivity}, fix a deterministic time \(t\ge0\), condition on
\(\rvy_t\), and suppose that \(\Delta_t\ge0\). For every fixed
\(H<\infty\), if
\[
\widetilde{\Delta}_t\ge-H\frac{N}{\sqrt{\log N}},
\]
then there exists a constant \(c_H=c_H(a,b,H)>0\) such that
\(0\le\nu_t\le N\), and
\begin{align}
c_HN-\widetilde{\Delta}_t
&\le \widetilde{\mu}_t\le bN-\widetilde{\Delta}_t,
&
\frac{2}{a+b}\left(c_HN-\widetilde{\Delta}_t\right)
&\le \mu_t
\le \frac{2}{a+b}\left(bN-\widetilde{\Delta}_t\right).
\label{eqn:linear-jump-expectation-bound}
\end{align}
Moreover, with probability at least \(1-2\exp(-c_HN)\),
\begin{align}
c_HN\le\widetilde{\Delta}_{t+1}\le bN,
\qquad
c_HN\le\Delta_{t+1}\le N,
\label{eq:linear-jump-bound}
\end{align}
for all sufficiently large \(N\).
\end{lemma}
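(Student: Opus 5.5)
Write $\widetilde{\Delta}_{t+1}-\widetilde{\Delta}_t=(a+b)(\gRB_t-\gBR_t)$ and $\gB_t$, $\gR_t$ for the current camps. Conditional on $\rvy_t$, the flip indicators are not independent, since an edge between $u$ and $v$ enters both degree differences. Even so, $\gRB_t$ is a sum of $|\gR_t|$ indicators each with mean $p_t^\gR$, and $\gBR_t$ is a sum of $|\gB_t|$ indicators each with mean $p_t^\gB$. Hence
\[
\widetilde{\mu}_t=(a+b)\big(|\gR_t|p_t^\gR-|\gB_t|p_t^\gB\big),
\qquad
\nu_t=|\gR_t|p_t^\gR+|\gB_t|p_t^\gB\le N .
\]
The upper bounds are then immediate. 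We have $\widetilde{\mu}_t\le(a+b)|\gR_t|=bN-\widetilde{\Delta}_t$ by \eqref{eqn:red-size-from-advantage}, and dividing by $(a+b)/2$ via \eqref{eqn:mu-tilde-mu-linearity} gives the bound on $\mu_t$.

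For the lower bound, note that $c_HN-\widetilde{\Delta}_t\le\widetilde{\mu}_t$ is equivalent to
\[
(a+b)\big(|\gR_t|q_t^\gR+|\gB_t|p_t^\gB\big)\le (a+b)|\gR_t|-c_HN+\ldots .
\]
Rewriting with \eqref{eqn:blue-size-from-advantage}, the target becomes: the expected value of $\widetilde{\Delta}_{t+1}=b|\gB_{t+1}|-a|\gR_{t+1}|$ is at least $c_HN$. It therefore suffices to show $\E|\gR_{t+1}|\le (b/(a+b)-c')N$, i.e. $|\gR_t|q_t^\gR+|\gB_t|p_t^\gB\le (b/(a+b)-c')N$. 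There are two steps.
\begin{itemize}
\item[(1)] \textbf{Bounding $p_t^\gB$.} $\rD_t^\gB$ has mean $\beta|\gR_t|-\alpha|\gB_t|=\frac{\log N}{N}(b|\gR_t|-a|\gB_t|)\le -c\log N$, because $\Delta_t\ge0$ and $a>b$ give $a|\gB_t|-b|\gR_t|\ge(a-b)N/2$. Its variance is $O(\log N)$, so a Chernoff/large-deviation bound gives $p_t^\gB\le N^{-\xi}=o(1)$.
\item[(2)] \textbf{Bounding $q_t^\gR$.} $\rD_t^\gR$ has mean $m_t^\gR=\frac{\log N}{N}(b|\gB_t|-a|\gR_t|)+O(\log N/N)=\widetilde{\Delta}_t\log N/N\ge -H\sqrt{\log N}$. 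Its variance is $v_t^\gR\asymp\log N$, so $m_t^\gR/\sqrt{v_t^\gR}\ge -H'$. The binomial-difference CLT (Berry--Esseen, since the variance diverges; this is the Gaussian regime of Appendix~\ref{sec:flip-probability-analysis}) yields $p_t^\gR\ge \Phi(-H')-o(1)\ge c''>0$, hence $q_t^\gR\le 1-c''$.
\end{itemize}

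I then split into two cases. If $\widetilde{\Delta}_t\le \delta N$ for a small constant $\delta$, then $|\gR_t|\ge (b-\delta)N/(a+b)\asymp N$. Combining (1) and (2), $\E|\gR_{t+1}|\le(1-c'')|\gR_t|+o(N)\le (b/(a+b)-c''')N$, as needed. If $\widetilde{\Delta}_t> \delta N$, then $\widetilde{\mu}_t\ge -(a+b)|\gB_t|p_t^\gB\ge -o(N)$. In this case we need $\widetilde{\Delta}_t+\widetilde{\mu}_t\ge c_HN$, which holds when $c_H\le\delta/2$. The constant $\mu_t$ version follows by \eqref{eqn:mu-tilde-mu-linearity}.

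Finally, the high-probability statement \eqref{eq:linear-jump-bound}. The upper bounds $\widetilde{\Delta}_{t+1}\le bN$ and $\Delta_{t+1}\le N$ are deterministic. For the lower bounds I need $|\gR_{t+1}|$ concentrated within $c N$ of its mean with probability $1-2e^{-cN}$. The number of red-to-blue flips (and of blue-to-red flips) is a function of the independent edge indicators. Changing one edge affects at most two vertices, but edge-exposure McDiarmid has $\sim N^2$ coordinates, which is too many. Instead I would use vertex exposure on the $N$ vertices: each vertex's revealed edge set changes at most the vertex itself plus its neighbors. This gives bounded differences only on the event that the maximum degree is $O(\log N)$, and that event fails with probability only $N^{-\omega(1)}$, not $e^{-cN}$. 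So I would instead use the typical-bounded-differences inequality, or the Lemma~\ref{lem:R1-read-two-concentration} read-two concentration referenced in the paper: each edge is read by exactly two flip indicators, so a read-$k$ Chernoff bound gives deviation $cN$ with probability $\le 2\exp(-c'N)$. This concentration step is the main obstacle. Once it is in place, $\widetilde{\Delta}_{t+1}\ge c_HN$ after shrinking $c_H$, and $\Delta_{t+1}\ge c_HN$ follows from \eqref{eqn:affine-delta-tilde-delta}, since $\Delta_{t+1}=(2\widetilde{\Delta}_{t+1}+(a-b)N)/(a+b)\ge 2c_HN/(a+b)$, again shrinking $c_H$.
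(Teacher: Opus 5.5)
Your proposal is correct and follows essentially the same route as the paper: reduce the lower bound to $\E[\widetilde{\Delta}_{t+1}\mid\rvy_t]\ge c_HN$. You do this by showing $p_t^{\gB}=o(1)$ and $p_t^{\gR}\ge c_H$ via the Berry--Esseen window, splitting into a small-red-camp case and a linear-red-camp case, and then passing to high probability with the read-$2$ concentration of Lemma~\ref{lem:R1-read-two-concentration}, exactly as the paper does. The only difference is minor: in the linear-red-camp case the paper uses the tail-free comparison $\widetilde{\mu}_t\ge c|\gR_t|p_t^{\gR}$ from Lemma~\ref{lem:one-step-difference-bound}, whereas you bound $\E|\gR_{t+1}|\le(1-c'')|\gR_t|+o(N)$ directly and so never need the flip-ratio lemma.
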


Next, when \(\lvert m_t^{\gC}\rvert/\sqrt{v_t^{\gC}}\) lies in the
moderate-deviation range, i.e.,
\(1\lesssim\lvert m_t^{\gC}\rvert/\sqrt{v_t^{\gC}}\ll\sqrt{\log N}\), the
advantage increments grow almost linearly.
\begin{lemma}[Almost-linear-jump]
\label{lem:almost-linear-jump-one-step-bound}
Under Assumptions~\ref{ass:sparse-assortative-sbm} and
\ref{ass:connectivity}, fix a deterministic time \(t\ge0\), condition on
\(\rvy_t\), and suppose that \(\Delta_t\ge0\). Let \(h_N\to\infty\) satisfy
\(h_N=o(\sqrt{\log N})\), and define
\[
u_t\coloneqq-\widetilde{\Delta}_t\sqrt{\log N}/N.
\]
For fixed \(0<\ell\le L<\infty\), suppose that
\(\ell\le u_t\le Lh_N\). Then \(|\gB_t|,|\gR_t|\asymp N\),
\(\Delta_t\asymp N\), \(\theta_t=1\), and there are constants \(c,C>0\),
depending only on \(a,b,\ell,L\), such that
\begin{align}
cN\frac{\exp(-Cu_t^2)}{1+u_t}
&\le\mu_t,\widetilde{\mu}_t,\nu_t
\le CN\frac{\exp(-cu_t^2)}{1+u_t}.
\label{eqn:mu-nu-moderate-bound}
\end{align}
Moreover, conditionally on \(\rvy_t\), with probability at least
\(
1-2\exp\left(-cN\exp(-Cu_t^2)(1+u_t)^{-2}\right)
\),
\begin{align}
cN\frac{\exp(-Cu_t^2)}{1+u_t}
&\le\widetilde{\Delta}_{t+1}-\widetilde{\Delta}_t
\le CN\frac{\exp(-cu_t^2)}{1+u_t},
\notag\\
cN\frac{\exp(-Cu_t^2)}{1+u_t}
&\le\Delta_{t+1}-\Delta_t
\le CN\frac{\exp(-cu_t^2)}{1+u_t}.
\label{eq:almost-linear-jump-bound}
\end{align}
In particular, if \(u_t\asymp h_N\), the one-step increase has scale
\(N\exp\{-\Theta(h_N^2)\}\), up to the displayed Mills-ratio factor.
\end{lemma}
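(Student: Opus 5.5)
The plan has three parts: translate the hypothesis into camp sizes, estimate the one-vertex flip probabilities by moderate deviations, and then feed these into Lemma~\ref{lem:one-step-difference-bound} and a concentration bound.

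\textbf{Camp sizes.} By \eqref{eqn:blue-size-from-advantage}--\eqref{eqn:red-size-from-advantage}, the hypothesis \(\ell\le u_t\le Lh_N=o(\sqrt{\log N})\) gives \(\widetilde{\Delta}_t=-u_tN/\sqrt{\log N}=o(N)\). Hence
\[
|\gB_t|=\tfrac{aN}{a+b}+o(N),\qquad |\gR_t|=\tfrac{bN}{a+b}+o(N),\qquad \Delta_t=\tfrac{a-b}{a+b}N+o(N)\asymp N.
\]
In particular \(\Delta_t\log N/N\to\infty\), so \(\theta_t=1\). This settles the structural claims.

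\textbf{Flip probabilities.} I will estimate \(p_t^{\gR}\) and \(p_t^{\gB}\) by moderate deviations. For a red vertex, \(m_t^{\gR}=\beta|\gB_t|-\alpha(|\gR_t|-1)=(b|\gB_t|-a|\gR_t|)\log N/N+O(\log N/N)\), which equals \(\widetilde{\Delta}_t\log N/N+o(1)=-u_t\sqrt{\log N}+o(1)\). Its variance is \(v_t^{\gR}=(b|\gB_t|+a|\gR_t|)\log N/N\,(1+o(1))=\tfrac{2ab}{a+b}\log N\,(1+o(1))\). So \(|m_t^{\gR}|/\sqrt{v_t^{\gR}}\asymp u_t\), which lies in \([c\ell, CLh_N]\), and \(h_N=o(\sqrt{\log N})\).

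The main obstacle is a two-sided moderate-deviation bound for a difference of two sparse binomials:
\[
\P(\rD_t^{\gR}>0)\ \asymp\ \frac{1}{1+x}\,e^{-x^2(1+o(1))/2},\qquad x=\frac{|m_t^{\gR}|}{\sqrt{v_t^{\gR}}}.
\]
I will take this from the moderate-deviation estimate in Appendix~\ref{sec:flip-probability-analysis}. If I had to prove it directly, I would use an exponential tilt of the Poisson-like sum, with Bahadur--Rao or local-CLT refinements valid because \(x=o(\sqrt{v})\). I only need constants \(c,C\) in the exponent, so \(e^{-Cu_t^2}/(1+u_t)\lesssim p_t^{\gR}\lesssim e^{-cu_t^2}/(1+u_t)\) suffices. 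For blue vertices, \(m_t^{\gB}=(b|\gR_t|-a|\gB_t|)\log N/N\asymp-\log N\), so \(p_t^{\gB}\) is polynomially small. In any case Lemma~\ref{lem:flip-ratio-comparison} controls it relative to \(|\gR_t|p_t^{\gR}\).

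\textbf{Expectations.} Plugging these bounds into Lemma~\ref{lem:one-step-difference-bound} with \(\theta_t=1\) and \(|\gR_t|\asymp N\) gives \eqref{eqn:mu-nu-moderate-bound} for \(\mu_t\) and \(\nu_t\). The bound for \(\widetilde{\mu}_t\) then follows from \eqref{eqn:mu-tilde-mu-linearity}.

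\textbf{Concentration.} Conditionally on \(\rvy_t\), \(\gRB_t\) and \(\gBR_t\) are sums of vertex indicators. These indicators are weakly dependent, because the update graph shares edges between vertices. I will use the concentration result Lemma~\ref{lem:R1-read-two-concentration}, a read-two Chernoff-type bound, separately for \(\gRB_t\) and \(\gBR_t\) with relative deviation \(1/2\). This gives deviation probabilities of order \(\exp(-c\,\E[\cdot])\) or \(\exp(-c\,\E[\cdot]^2/N)\).

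Because the expectation is only \(Ne^{-\Theta(u_t^2)}/(1+u_t)\), the safe form is the variance-free bound \(\exp(-c\mu_t^2/N)\). That is exactly \(\exp(-cN e^{-2Cu_t^2}(1+u_t)^{-2})\), matching the stated probability after renaming \(C\). On this event, \(\Delta_{t+1}-\Delta_t=2(\gRB_t-\gBR_t)\) lies within a factor of \(2\) of \(\mu_t\), using \(\mu_t\ge c\nu_t\) so that the fluctuation of \(\gBR_t\) cannot cancel the drift. Multiplying by \((a+b)/2\) gives the \(\widetilde{\Delta}\) increment.

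Finally, if \(u_t\asymp h_N\), substituting into \eqref{eqn:mu-nu-moderate-bound} yields the scale \(Ne^{-\Theta(h_N^2)}\) up to the Mills-ratio factor.
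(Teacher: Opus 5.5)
Your proposal is correct and takes essentially the same route as the paper. The camp-size identities give \(|\gB_t|,|\gR_t|,\Delta_t\asymp N\) and \(\theta_t=1\); Corollary~\ref{cor:moderate-deviation-tail-flip-probabilities} gives \(p_t^{\gR}\asymp e^{-\Theta(u_t^2)}/(1+u_t)\), and your Gaussian form \(e^{-x^2(1+o(1))/2}\) agrees with \(N^{-I(A_t^{\gR},B_t^{\gB})}\) in this range. Lemma~\ref{lem:one-step-difference-bound} then turns this into the mean bounds, and read-\(2\) concentration at the scale of the drift transfers them to the realized increment.

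The only difference is that the paper applies Lemma~\ref{lem:R1-read-two-concentration} once to \(\widetilde{\Delta}_{t+1}=bN-(a+b)|\gR_{t+1}|\), rather than separately to \(\gRB_t\) and \(\gBR_t\). That is slightly cleaner: it needs no union bound, and it avoids your ``relative deviation \(1/2\)'' phrasing. Taken literally for \(\gBR_t\), a deviation relative to the tiny mean \(\E[\gBR_t\mid\rvy_t]\) would not give a small failure probability, so the deviation must be at the absolute scale of \(\mu_t\), as your final step in fact uses.
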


Finally, consider the large-deviation regime
\(
-m_t^{\gR}/\sqrt{v_t^{\gR}}
=
\Theta(\sqrt{\log N})
\),
where the advantage increments grow sublinearly.

\begin{lemma}[Sublinear-jump]
\label{lem:sublinear-jump-one-step-bound}
Under Assumptions~\ref{ass:sparse-assortative-sbm} and
\ref{ass:connectivity}, fix a deterministic time \(t\ge0\), condition on
\(\rvy_t\), and suppose that \(\Delta_t\ge0\). For some fixed \(c_*>0\),
assume that
\begin{align}
\widetilde{\Delta}_t\le-c_*N.
\label{eqn:sublinear-jump-condition}
\end{align}
For every fixed \(\eta>0\), there are constants \(c,C>0\), depending only on
\(a,b,c_*,\eta\), such that
\begin{subequations}
\begin{align}
c\theta_t N^{1-I_t-\eta}
&\le\mu_t\le CN^{1-I_t+\eta},
\label{eqn:mu-large-deviation-bound}\\
c\theta_t N^{1-I_t-\eta}
&\le\widetilde{\mu}_t\le CN^{1-I_t+\eta},
\label{eqn:tilde-mu-large-deviation-bound}\\
cN^{1-I_t-\eta}
&\le\nu_t\le CN^{1-I_t+\eta},
\label{eqn:nu-large-deviation-bound}
\end{align}
\end{subequations}
where \(I_t\) is defined in \eqref{eqn:It-exponent}. Moreover, conditionally on \(\rvy_t\), with probability at least
\(
1
-2\exp\big(-c\theta_t^2N^{1-2I_t-2\eta}\big)
-2\exp\big(-cN^{1-2I_t+2\eta}\big)
\), we have
\begin{align}
c\theta_t N^{1-I_t-\eta}
&\le\widetilde{\Delta}_{t+1}-\widetilde{\Delta}_t
\le CN^{1-I_t+\eta},
\notag\\
c\theta_t N^{1-I_t-\eta}
&\le\Delta_{t+1}-\Delta_t
\le CN^{1-I_t+\eta}.
\label{eq:sublinear-jump-bound}
\end{align}
\end{lemma}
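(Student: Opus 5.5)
The plan is to reduce everything to a sharp estimate on the one-vertex flip probability \(p_t^{\gR}\), plug it into Lemma~\ref{lem:one-step-difference-bound}, and then apply a read-two concentration bound.

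\emph{Step 0 (sizes).} First I check that \(|\gB_t|\ge|\gR_t|\asymp N\), so that Lemma~\ref{lem:one-step-difference-bound} applies. The inequality \(|\gB_t|\ge|\gR_t|\) is \(\Delta_t\ge0\). By \eqref{eqn:red-size-from-advantage} and \eqref{eqn:sublinear-jump-condition},
\[
|\gR_t|=\frac{bN-\widetilde{\Delta}_t}{a+b}\ge\frac{(b+c_*)N}{a+b},
\]
so \(|\gR_t|\asymp N\).

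\emph{Step 1 (one-vertex LDP).} By \eqref{eqn:D-Rt-distribution}, \(\rD_t^{\gR}=X-Y\) with
\[
X\sim\Bin(|\gB_t|,\beta),\qquad Y\sim\Bin(|\gR_t|-1,\alpha)
\]
independent. Their means are \(y\log N\) and \(x\log N+O(\log N/N)\), where \(x=a|\gR_t|/N\) and \(y=b|\gB_t|/N\). Condition \eqref{eqn:sublinear-jump-condition} says exactly \(x-y\ge c_*\log N\cdot N/N\cdot(1/N)\)-free, i.e. \(x\ge y+c_*\). Hence \(\sqrt x-\sqrt y\) is bounded below, we are in the large-deviation regime, and \(I_t=(\sqrt x-\sqrt y)^2\). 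I aim to show \(N^{-I_t-\eta}\le p_t^{\gR}\le N^{-I_t+\eta}\), uniformly over admissible configurations.

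For the upper bound, use Chernoff with \(\lambda>0\) and \(1+u\le e^u\):
\[
\log\E e^{\lambda(X-Y)}\le\log N\,\bigl[y(e^{\lambda}-1)+x(e^{-\lambda}-1)\bigr]+o(1).
\]
The choice \(e^{\lambda}=\sqrt{x/y}\) gives exponent \(-(\sqrt x-\sqrt y)^2\log N\).

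For the lower bound, take \(k=\lceil\sqrt{xy}\log N\rceil\) and bound
\[
p_t^{\gR}\ge\P(X=k+1)\,\P(Y=k).
\]
Stirling's formula for each binomial point probability gives \(N^{-(y-\sqrt{xy}+\sqrt{xy}\log\sqrt{xy/y^2})}\)-type factors. Their product is \(N^{-(\sqrt x-\sqrt y)^2}\), up to \(\mathrm{poly}(\log N)\) factors, which are absorbed into \(N^{\pm\eta}\). The constants stay uniform because \(x,y\) range over a compact set bounded away from \(0\).

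\emph{Step 2 (means).} Substituting Step 1 and \(|\gR_t|\asymp N\) into \eqref{eqn:mu-nu-tail-free-bound} yields \eqref{eqn:mu-large-deviation-bound} and \eqref{eqn:nu-large-deviation-bound}, after renaming \(\eta\). Then \eqref{eqn:mu-tilde-mu-linearity} gives \eqref{eqn:tilde-mu-large-deviation-bound}.

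\emph{Step 3 (concentration).} Conditionally on \(\rvy_t\), write
\[
\Delta_{t+1}-\Delta_t=2\sum_v Z_v,
\]
where \(Z_v=\indi{v\text{ flips}}\) for \(v\in\gR_t\) and \(Z_v=-\indi{v\text{ flips}}\) for \(v\in\gB_t\). Each \(Z_v\) is a function of the independent edge indicators incident to \(v\), and each edge is read by exactly two vertices. A read-\(k\) Hoeffding bound with \(k=2\) (Lemma~\ref{lem:R1-read-two-concentration}) gives deviation probability at most \(\exp(-cs^2/N)\) for a deviation \(s\).

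For the lower tail take \(s=\mu_t/2\ge c\theta_tN^{1-I_t-\eta}\). This gives \(\exp(-c\theta_t^2N^{1-2I_t-2\eta})\) and leaves an increment of at least \(\mu_t/2\). For the upper tail take \(s=N^{1-I_t+\eta}\). This gives \(\exp(-cN^{1-2I_t+2\eta})\) and an increment of at most \((C+1)N^{1-I_t+\eta}\). The bound for \(\widetilde{\Delta}\) follows from the affine relation \eqref{eqn:affine-delta-tilde-delta}.

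\emph{Main obstacle.} I expect the hardest step to be the uniform lower bound in Step 1. The Stirling estimate of the binomial point masses must hold with error \(N^{o(1)}\) uniformly in \(|\gB_t|,|\gR_t|\), including the \(-1\) shift in \(|\gR_t|-1\) and the integer rounding of \(k\). I would handle this with explicit two-sided Stirling bounds, using that \(k\asymp\log N\) and \(k\ll\) the number of trials.
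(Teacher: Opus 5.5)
Your proposal is correct and follows the paper's proof step for step. You get linear camp sizes from $\Delta_t\ge0$ and \eqref{eqn:sublinear-jump-condition}, then the estimate $N^{-I_t-\eta}\le p_t^{\gR}\le N^{-I_t+\eta}$, then substitute into Lemma~\ref{lem:one-step-difference-bound}, and finish with read-two concentration (Lemma~\ref{lem:R1-read-two-concentration}) using deviations of order $\theta_tN^{1-I_t-\eta}$ and $N^{1-I_t+\eta}$.

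The only difference is that you prove the one-vertex rate estimate yourself, with a Chernoff bound above and the point-mass bound $\P(X=k+1)\P(Y=k)$ plus Stirling below. The paper instead cites Corollary~\ref{cor:large-deviation-tail-flip-probabilities}, whose lower bound goes through G\"artner--Ellis. Your route is valid, more elementary, and gives a polylogarithmic rather than an $N^{o(1)}$ prefactor. The garbled phrase in Step~1 should simply read $x-y=-\widetilde{\Delta}_t/N\ge c_*$.
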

Below, we briefly explain why it is sufficient to identify the exponent of the
red-to-blue flip probability \(p_t^{\gR}\) with \(I_t\) in
\eqref{eqn:It-exponent}. The main reason is that the blue-to-red flip
probability \(p_t^{\gB}\) is not of larger polynomial order than
\(p_t^{\gR}\). In principle, \(p_t^{\gR}\) and \(p_t^{\gB}\) involve two
different large-deviation exponents. However,
Lemma~\ref{lem:one-step-difference-bound} gives
\[
\nu_t\asymp|\gR_t|p_t^{\gR},
\qquad
c|\gR_t|p_t^{\gR}\theta_t
\le\mu_t\le
C|\gR_t|p_t^{\gR}.
\]
Thus \(p_t^{\gR}\) determines the one-step scale, while \(p_t^{\gB}\) enters
only through the comparison already incorporated into the tail-free bound.
To verify that \(p_t^{\gB}\) is not of a larger polynomial order, define
\[
A_t^\gR\coloneqq\frac{a(|\gR_t|-1)}{N},
\qquad
B_t^\gB\coloneqq\frac{b|\gB_t|}{N},
\qquad
A_t^\gB\coloneqq\frac{a(|\gB_t|-1)}{N},
\qquad
B_t^\gR\coloneqq\frac{b|\gR_t|}{N},
\]
and let
\[
I_t^\gR\coloneqq I(A_t^\gR,B_t^\gB),
\qquad
I_t^\gB\coloneqq I(A_t^\gB,B_t^\gR).
\]
Corollary~\ref{cor:large-deviation-tail-flip-probabilities} gives
\[
p_t^\gR=N^{-I_t^\gR+o(1)},
\qquad
p_t^\gB=N^{-I_t^\gB+o(1)}.
\]
Replacing \(|\gR_t|-1\) by \(|\gR_t|\) changes the first rate by
\(O(N^{-1/2})\), and hence
\[
I_t^\gR=I_t+O(N^{-1/2}).
\]

It remains to compare \(I_t^\gB\) with \(I_t\). Write
\(x=|\gB_t|/N\) and \(y=|\gR_t|/N\), and first omit the \(-1\) correction
in \(A_t^\gB\). Since \(\Delta_t\ge0\), we have \(x\ge y\), so
\[
\bigl(\sqrt{ax}-\sqrt{by}\bigr)
-\bigl(\sqrt{ay}-\sqrt{bx}\bigr)
=
(\sqrt a+\sqrt b)(\sqrt x-\sqrt y)
\ge0.
\]
Because \(z\mapsto (z_+)^2\) is nondecreasing,
\[
I(ax,by)\ge I(ay,bx)=I_t.
\]
Restoring the \(-1\) correction in \(A_t^\gB\) changes the rate by at most
\(O(N^{-1/2})\), and therefore
\[
I_t^\gB\ge I_t-O(N^{-1/2}).
\]
Consequently, the rate function \(I_t\) from \eqref{eqn:It-exponent} captures the red-to-blue flip scale, while the
blue-to-red probability cannot introduce a larger polynomial scale. This
justifies stating the preceding bounds solely in terms of \(I_t\).

\section{Constant-Time Blue Unanimity}\label{sec:constant-days-to-unanimity}

We present the proof of Theorem~\ref{thm:three-day-unanimity} in this section.
The proofs of the supporting lemmas are deferred to
Appendix~\ref{app:deferred-constant-days}.

The proof proceeds backward from unanimity. We first identify a red-camp size
below which all remaining red vertices disappear in one update. We then show
that a sufficiently positive weighted advantage reaches this extinction
region in one update. The linear-jump estimate supplies one additional step
under the weakest hypothesis of
Theorem~\ref{thm:three-day-unanimity}. To proceed, we introduce the following
lemmas.

\begin{lemma}[A small red camp disappears in one update]
\label{lem:third-day-extinction}
Fix a deterministic time \(t\ge0\). Suppose that, for some \(0<r<r_*\) with \(r_*\) defined in \eqref{eqn:r-star}, the following holds:
\begin{align}
|\gR_t|\le rN.
\label{eqn:third-day-extinction-condition}
\end{align}
Then there exists \(\xi=\xi(a,b,r)>0\) such that, for all sufficiently large
\(N\),
\begin{align}
\P\left(
\gR_{t+1}\neq\emptyset
\,\middle|\,
\rvy_{t}
\right)
\le
2N^{-\xi}.
\label{eq:third-day-extinction-prob}
\end{align}
\end{lemma}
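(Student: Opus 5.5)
The plan is a union bound over all vertices, with one Chernoff estimate per vertex. Condition on \(\rvy_t\), write \(y\coloneqq|\gR_t|/N\le r\), and assume \(|\gR_t|\ge1\); otherwise the event \(\gR_{t+1}\neq\emptyset\) needs a blue vertex to flip, and that case is covered by the second term below. By the update rule \eqref{eqn:opinion-update}, with ties keeping the current opinion, \(\gR_{t+1}\neq\emptyset\) happens only if some red vertex has \(\rD_t^{\gR}\le0\) or some blue vertex has \(\rD_t^{\gB}>0\). Hence
\[
\P(\gR_{t+1}\neq\emptyset\mid\rvy_t)\le |\gR_t|\,q_t^{\gR}+|\gB_t|\,p_t^{\gB}.
\]
It therefore suffices to show \(q_t^{\gR}\le N^{-1-2\xi}\) and \(p_t^{\gB}\le N^{-1-2\xi}\), uniformly over \(0\le y\le r\).

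For both tails I would use the exact, non-asymptotic Chernoff bound for a difference of independent binomials. Let \(X\sim\Bin(n_1,p_1)\) and \(Y\sim\Bin(n_2,p_2)\) be independent, with means \(\mu_X,\mu_Y\). Using \(1+p(e^{s}-1)\le e^{p(e^s-1)}\), for \(\lambda\ge0\),
\[
\P(X\le Y)\le \E e^{\lambda(Y-X)}\le\exp\big(\mu_X(e^{-\lambda}-1)+\mu_Y(e^{\lambda}-1)\big).
\]
Choosing \(e^{\lambda}=\sqrt{\mu_X/\mu_Y}\) gives \(\P(X\le Y)\le\exp(-(\sqrt{\mu_X}-\sqrt{\mu_Y})^2)\) whenever \(\mu_X\ge\mu_Y\). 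In the degenerate case \(\mu_Y=0\), letting \(\lambda\to\infty\) gives \(e^{-\mu_X}\), which is the same formula. The \(-1\) corrections in \eqref{eqn:D-Bt-distribution}--\eqref{eqn:D-Rt-distribution} only shrink \(\mu_Y\), so they only help.

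For a red vertex I take \(X=\Bin(|\gB_t|,\beta)\) and \(Y=\Bin(|\gR_t|-1,\alpha)\). This gives
\[
q_t^{\gR}\le N^{-f(y)},\qquad f(y)\coloneqq\big(\sqrt{b(1-y)}-\sqrt{ay}\big)^2 .
\]
For a blue vertex I take \(X=\Bin(|\gB_t|-1,\alpha)\) and \(Y=\Bin(|\gR_t|,\beta)\), where \(\P(D^{\gB}>0)\le\P(X\le Y)\). This gives \(p_t^{\gB}\le N^{-g(y)+o(1)}\) with \(g(y)=(\sqrt{a(1-y)}-\sqrt{by})^2\). The \(o(1)\) absorbs the shift \(|\gB_t|-1\), which changes \(\mu_X\) by \(O(\log N/N)\).

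The key computation, and the only real obstacle, is identifying \(r_*\) as the root of \(f=1\). Put \(s=\sqrt y\). The equation \(\sqrt{b(1-y)}=1+\sqrt a\,s\) becomes
\[
(a+b)s^2+2\sqrt a\,s+(1-b)=0,
\]
whose positive root is \(s=(\sqrt{b(a+b-1)}-\sqrt a)/(a+b)=\sqrt{r_*}\). Since \(\sqrt{b(1-y)}-\sqrt{ay}\) is strictly decreasing on \([0,1/2]\) and positive there, \(f\) is decreasing. So \(f(y)\ge f(r)=:1+4\xi\) for all \(y\le r\), with \(\xi>0\) depending only on \(a,b,r\).

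For the blue side, \(g(y)\ge f(y)\) because
\[
\big(\sqrt{a(1-y)}-\sqrt{by}\big)-\big(\sqrt{b(1-y)}-\sqrt{ay}\big)=(\sqrt a+\sqrt b)\big(\sqrt{1-y}-\sqrt y\big)\ge0
\]
for \(y\le r<1/2\). This mirrors the comparison \(I_t^{\gB}\ge I_t\) made after Lemma~\ref{lem:sublinear-jump-one-step-bound}.

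Combining, for large \(N\),
\[
\P(\gR_{t+1}\neq\emptyset\mid\rvy_t)\le yN\cdot N^{-1-4\xi}+N\cdot N^{-1-4\xi+o(1)}\le 2N^{-\xi}.
\]
The only care needed is that the Chernoff bound is uniform in \(y\), including the endpoints \(|\gR_t|\in\{0,1\}\). The closed-form exponent above handles this without invoking any asymptotic large-deviation statement.
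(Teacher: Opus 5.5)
Your proof is correct and follows essentially the same route as the paper. The shared steps are the union bound $|\gR_t|q_t^{\gR}+|\gB_t|p_t^{\gB}$, the Chernoff bounds of the form $N^{-I(\cdot,\cdot)}$, the fact that $(\sqrt{b(1-r)}-\sqrt{ar})^2>1$ for $r<r_*$, and the comparison of the blue-to-red exponent with the red exponent. Two small differences: you supply the root computation for $r_*$, which the paper takes for granted, and you compare exponents via the unsquared difference where the paper uses the squared identity $(a-b)(1-2r)\ge0$.

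One cosmetic slip: $\sqrt{b(1-y)}-\sqrt{ay}$ is positive only for $y<b/(a+b)<1/2$, not on all of $[0,1/2]$. You only need positivity on $[0,r]$, where it is at least $\sqrt{b(1-r)}-\sqrt{ar}>1$, so nothing breaks.
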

For \(K>K_2(a,b)\) with \(K_2(a,b)\) defined in \eqref{eqn:K2-def}, we define
\begin{align}
q_K \coloneqq \Phi\left(-K/\sqrt{2ab/(a+b)}\right).
\label{eqn:qK-def}
\end{align}
The definition of \(K_2(a,b)\) ensures that \(bq_K/(a+b)<r_{*}\) with $r_{*}$ defined in \eqref{eqn:r-star}, so one can choose an extinction density strictly between these two quantities.

\begin{lemma}[One-step reduction below the extinction threshold]
\label{lem:second-day-reduction}
Fix a deterministic time \(t\ge0\), and condition on \(\rvy_t\). Suppose that,
for some \(K> K_2(a,b)\), we have
\begin{align}
\widetilde{\Delta}_t
\ge
K\frac{N}{\sqrt{\log N}}.
\label{eqn:second-day-reduction-condition}
\end{align}
Then, for any \(r\) satisfying
\begin{align}
\frac{b}{a+b}q_K<r<r_{*},
\label{eqn:second-day-extinction-density-condition}
\end{align}
we have, for all sufficiently large \(N\),
\begin{align}
\P\left(
|\gR_{t+1}|\le rN
\,\middle|\,
\rvy_t
\right)
\ge
1-2\exp\big(-\big(\log N\big)^2\big).
\label{eqn:second-day-reduction-inequality}
\end{align}
\end{lemma}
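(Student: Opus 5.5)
The plan is to write
\[
|\gR_{t+1}| = \bigl(|\gR_t|-\gRB_t\bigr)+\gBR_t ,
\]
and to show that, conditionally on \(\rvy_t\), the first term is at most \(\bigl(\tfrac{b}{a+b}q_K+o(1)\bigr)N\) in expectation while the second is \(o(N)\). Fix \(\delta>0\) with \(\tfrac{b}{a+b}q_K+3\delta<r\), which is possible by \eqref{eqn:second-day-extinction-density-condition}. It then suffices to prove
\[
\E\bigl[|\gR_{t+1}|\,\big|\,\rvy_t\bigr]\le (r-\delta)N
\]
and a concentration bound at deviation \(\delta N\).

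\textbf{Step 1 (blue-to-red flips).} By \eqref{eqn:affine-delta-tilde-delta}, the hypothesis \eqref{eqn:second-day-reduction-condition} gives \(\Delta_t\ge \tfrac{a-b}{a+b}N-o(N)\), so \(|\gB_t|\ge N/2\). The mean of \(\rD_t^{\gB}\) is
\[
m_t^{\gB}=\tfrac{\log N}{N}\bigl(b|\gR_t|-a(|\gB_t|-1)\bigr)\le -c\log N ,
\]
while \(v_t^{\gB}\le C\log N\). A Chernoff bound for sparse binomial differences, or the large-deviation estimate of Corollary~\ref{cor:large-deviation-tail-flip-probabilities} (using \(I_t^{\gB}>0\) in this regime), gives \(p_t^{\gB}\le N^{-c'}\). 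Hence \(\E[\gBR_t\mid\rvy_t]=|\gB_t|p_t^{\gB}\le N^{1-c'}=o(N)\).

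\textbf{Step 2 (red vertices that stay red).} Here \(\E[|\gR_t|-\gRB_t\mid\rvy_t]=|\gR_t|q_t^{\gR}\). The mean and variance are
\[
m_t^{\gR}=\widetilde{\Delta}_t\log N/N+O(\log N/N),\qquad v_t^{\gR}=(b|\gB_t|+a|\gR_t|)\log N/N+O(\log^2 N/N).
\]
I split into two cases according to a large constant \(M\).

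If \(\widetilde{\Delta}_t\ge MN/\sqrt{\log N}\), then \(m_t^{\gR}/\sqrt{v_t^{\gR}}\ge M/\sqrt{a+b}\). Chebyshev gives \(q_t^{\gR}\le v_t^{\gR}/(m_t^{\gR})^2\le (a+b)/M^2\), so \(|\gR_t|q_t^{\gR}\le \delta N\) once \(M\) is large. This suffices since \(r>0\).

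If \(KN/\sqrt{\log N}\le\widetilde{\Delta}_t\le MN/\sqrt{\log N}\), then \eqref{eqn:blue-size-from-advantage}--\eqref{eqn:red-size-from-advantage} give
\[
|\gR_t|=\tfrac{b}{a+b}N+O(N/\sqrt{\log N}),\qquad |\gB_t|=\tfrac{a}{a+b}N+O(N/\sqrt{\log N}),
\]
so \(v_t^{\gR}=\tfrac{2ab}{a+b}\log N\,(1+o(1))\) and \(m_t^{\gR}/\sqrt{v_t^{\gR}}\ge K/\sqrt{2ab/(a+b)}-o(1)\). The degree difference is a sum of \(N-1\) independent centred Bernoulli-type terms whose third absolute moments sum to \(O(\log N)\). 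Berry--Esseen therefore gives an error \(O(1/\sqrt{\log N})\), uniformly over the range, and hence \(q_t^{\gR}\le \Phi(-m_t^{\gR}/\sqrt{v_t^{\gR}})+o(1)\le q_K+o(1)\). Multiplying, \(|\gR_t|q_t^{\gR}\le \tfrac{b}{a+b}q_KN+o(N)\).

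Combining the two steps, \(\E[|\gR_{t+1}|\mid\rvy_t]\le(\tfrac{b}{a+b}q_K+2\delta)N\le (r-\delta)N\) for large \(N\).

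\textbf{Step 3 (concentration).} The indicators \(\indi{v\in\gR_{t+1}}\) are not independent, since each edge of \(\gG_{t+1}\) is shared by two endpoints. Each edge indicator influences at most two vertices' decisions, so I will apply the bounded-differences concentration of Lemma~\ref{lem:R1-read-two-concentration}, or a read-two/typical-bounded-differences argument after truncating degrees at \(O(\log N)\). This gives a tail at deviation \(\delta N\) of order \(\exp(-c\delta^2N/\mathrm{polylog}\,N)\le \exp(-(\log N)^2)\), with the degree-truncation failure probability also absorbed into \(\exp(-(\log N)^2)\); the factor \(2\) in the statement accounts for these two terms.

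The main obstacle is Step 2. It needs a normal approximation that holds uniformly in the sparse regime, together with careful bookkeeping of the \(O(N/\sqrt{\log N})\) shifts in camp sizes, so that the limiting variance is exactly \(\tfrac{2ab}{a+b}\log N\) and the threshold \(q_K\) appears with only an \(o(1)\) error. The case split at \(M\) avoids needing any monotonicity of \(q_t^{\gR}\) in \(\widetilde{\Delta}_t\).
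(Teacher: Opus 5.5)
Your proposal is correct and follows essentially the paper's argument: the same bound $\E[|\gR_{t+1}|\mid\rvy_t]=|\gR_t|q_t^{\gR}+|\gB_t|p_t^{\gB}$, a Chernoff bound giving $p_t^{\gB}=o(1)$, a Berry--Esseen comparison giving $q_t^{\gR}\le q_K+o(1)$, and the read-$2$ bound of Lemma~\ref{lem:R1-read-two-concentration}. That lemma already gives $2\exp(-\delta^2N)\le 2\exp(-(\log N)^2)$ at deviation $\delta N$, so drop the degree-truncation alternative, which at level $O(\log N)$ would only fail with polynomially small probability. The only real difference is the case split: you handle $\widetilde{\Delta}_t\ge MN/\sqrt{\log N}$ by Chebyshev, whereas the paper splits on $|\gR_t|\le\rho N$ (using $q_t^{\gR}\le 1$) versus $|\gR_t|>\rho N$ for an intermediate density $\rho$, and in the latter case uses that $v_t^{\gR}$ decreases in $\widetilde{\Delta}_t$ to get $m_t^{\gR}/\sqrt{v_t^{\gR}}\ge K/\sqrt{2ab/(a+b)}-o(1)$ with no upper cutoff.
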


The preceding lemmas form a nested constant-time mechanism.
Lemma~\ref{lem:third-day-extinction} eliminates a sufficiently small red camp
in one update, while Lemma~\ref{lem:second-day-reduction} reaches that
extinction region in one update from a sufficiently positive weighted
advantage. Under the weakest hypothesis, the linear-jump estimate
\eqref{eq:linear-jump-bound} first creates such an advantage, adding one
further update. We now combine these three steps to prove
Theorem~\ref{thm:three-day-unanimity}.

\begin{proof}[Proof of Theorem~\ref{thm:three-day-unanimity}]
We prove the three claims in reverse order, beginning with the strongest
initial condition.

For part~\textup{(iii)}, assume
\eqref{eqn:one-day-unanimity-condition}. By
\eqref{eqn:red-size-from-advantage}, condition
\eqref{eqn:third-day-extinction-condition} holds at time \(t=0\).
Lemma~\ref{lem:third-day-extinction} therefore yields
\[
\P(\gR_1\neq\emptyset)\le 2N^{-\xi}
\]
for some \(\xi=\xi(a,b,r)>0\). This proves part~\textup{(iii)}.

For part~\textup{(ii)}, assume \eqref{eqn:two-day-unanimity-condition} and
choose \(r\) satisfying
\eqref{eqn:second-day-extinction-density-condition}. Let \(\mathcal E_1\) be the event that
\eqref{eqn:third-day-extinction-condition} holds at time \(t=1\).
Condition \eqref{eqn:two-day-unanimity-condition} is precisely
\eqref{eqn:second-day-reduction-condition} at time \(t=0\), so
Lemma~\ref{lem:second-day-reduction}, applied at time \(t=0\), gives
\[
\P(\mathcal E_1^c)
\le 2\exp\big(-\big(\log N\big)^2\big).
\]
On \(\mathcal E_1\), Lemma~\ref{lem:third-day-extinction}, applied at time
\(t=1\), gives a constant \(\xi=\xi(a,b,K)>0\) such that
\[
\P(\gR_2\neq\emptyset\mid\rvy_1)\le2N^{-\xi}.
\]
Taking conditional expectations and splitting according to
\(\mathcal E_1\) completes the proof of part~\textup{(ii)}:
\begin{align*}
\P(\gR_2\neq\emptyset)
&\le \P(\mathcal E_1^c)
+\E\left[
\indi{\mathcal E_1}
\P(\gR_2\neq\emptyset\mid\rvy_1)
\right]\\
&\le
2\exp\big(-\big(\log N\big)^2\big)+2N^{-\xi}.
\end{align*}

Finally, we prove part~\textup{(i)}. Fix \(H<\infty\) and assume
\eqref{eqn:three-day-unanimity-condition}. Fix \(K>K_2(a,b)\) and choose
\(r\) satisfying \eqref{eqn:second-day-extinction-density-condition}. Let
\(\mathcal E_1\) be the event
that \eqref{eqn:second-day-reduction-condition} holds at time \(t=1\), and
let \(\mathcal E_2\) be the event that
\eqref{eqn:third-day-extinction-condition} holds at time \(t=2\).
By \eqref{eqn:affine-delta-tilde-delta} and
\eqref{eqn:three-day-unanimity-condition}, \(\Delta_0>0\) for all sufficiently
large \(N\). Hence the linear-jump bound \eqref{eq:linear-jump-bound} applies at
time \(t=0\), and there is a constant \(c_H=c_H(a,b,H)>0\) such that
\[
\P(\mathcal E_1^c)\le2\exp(-c_HN).
\]
Here we used that the right-hand side of
\eqref{eqn:second-day-reduction-condition} is \(o(N)\), so the lower bound in
\eqref{eq:linear-jump-bound} implies \(\mathcal E_1\) for all sufficiently
large \(N\).
Conditional on \(\mathcal E_1\), Lemma~\ref{lem:second-day-reduction} applies
at time \(t=1\). By the tower property,
\begin{align*}
\P(\mathcal E_1\cap\mathcal E_2^c)
&=
\E\left[
\indi{\mathcal E_1}
\P(\mathcal E_2^c\mid\rvy_1)
\right] \le 2\exp\big(-\big(\log N\big)^2\big).
\end{align*}
Conditional on \(\mathcal E_2\), Lemma~\ref{lem:third-day-extinction} applies
at time \(t=2\), so for some \(\xi=\xi(a,b)>0\), the tower property gives
\begin{align*}
\P(\mathcal E_2\cap\{\gR_3\neq\emptyset\})
&=
\E\left[
\indi{\mathcal E_2}
\P(\gR_3\neq\emptyset\mid\rvy_2)
\right] \le 2N^{-\xi}.
\end{align*}
The event \(\{\gR_3\neq\emptyset\}\) is contained in the union of
\(\mathcal E_1^c\), \(\mathcal E_1\cap\mathcal E_2^c\), and
\(\mathcal E_2\cap\{\gR_3\neq\emptyset\}\). Consequently,
\[
\P(\gR_3\neq\emptyset)
\le
2\exp(-c_H N)
+2\exp\big(-\big(\log N\big)^2\big)
+2N^{-\xi}.
\]
This proves part~\textup{(i)} and completes the proof.
\end{proof}

\section{Subpolynomial-Time Blue Unanimity}
\label{sec:polylogarithmic-days-to-unanimity}

Throughout this section, work under the assumptions of
Theorem~\ref{thm:super-polylog-unanimity}. The proofs of the supporting lemmas
are deferred to Appendix~\ref{app:deferred-super-polylog}.

With the filtration defined in \eqref{eqn:natural-filtration}, define the
stopping time by
\[
\tau\coloneqq\inf\{t\ge0:u_t\le1\}, \qquad u_t\coloneqq-\widetilde{\Delta}_t\frac{\sqrt{\log N}}{N}.
\]
Thus \(\tau\) is the entrance time into the constant-time window.

The first step is to control how long the accumulated almost-linear gains take
to reach this window. Lemma~\ref{lem:constant-window-entrance} shows that the moderate-disadvantage phase ends by the time horizon \(T_N\)
defined in \eqref{eqn:super-polylog-time-horizon}, with an exponentially decaying failure probability.

\begin{lemma}[Entrance into the constant-time window]
\label{lem:constant-window-entrance}
There exist constants \(A,c,C>0\), depending only on \(a,b\), such that, for
all sufficiently large \(N\), the time horizon \(T_N\) defined in
\eqref{eqn:super-polylog-time-horizon} obeys
\[
\P(\tau>T_N)
\le
2\exp(Ah_N^2)
\exp\left(
-cN\frac{\exp(-C h_N^2)}{(1+h_N)^2}
\right).
\]
\end{lemma}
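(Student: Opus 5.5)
We iterate Lemma~\ref{lem:almost-linear-jump-one-step-bound} on the event that \(u_t\) stays in a window \([1,Lh_N]\), and show that the accumulated drift pushes \(u_t\) down to \(1\) within \(T_N\) steps. First we fix the window. At time \(0\) we have \(u_0\le h_N\) by hypothesis; if \(u_0\le1\) then \(\tau=0\) and there is nothing to prove. Take \(\ell=1\) and \(L=1\), so the lemma applies whenever \(1\le u_t\le h_N\) (and \(\Delta_t\ge0\), which follows from \(u_t\le h_N=o(\sqrt{\log N})\) via \eqref{eqn:affine-delta-tilde-delta}, since \(\widetilde\Delta_t\ge -o(N)\) forces \(\Delta_t\ge (a-b)N/(a+b)-o(N)>0\)). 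On the good event of that lemma at time \(t\), the increment \(\widetilde\Delta_{t+1}-\widetilde\Delta_t\) is positive, so \(u_{t+1}<u_t\le h_N\). Hence the window's upper boundary is never crossed upward while the good events hold. This monotonicity is what lets us use a fixed \(L\).

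Next comes the quantitative drift. On the good event with \(1\le u_t\le h_N\), the lower bound in \eqref{eq:almost-linear-jump-bound} gives
\[
u_t-u_{t+1}=\frac{\sqrt{\log N}}{N}\bigl(\widetilde\Delta_{t+1}-\widetilde\Delta_t\bigr)\ge c\sqrt{\log N}\,\frac{\exp(-Cu_t^2)}{1+u_t}\ge c\sqrt{\log N}\,\frac{\exp(-Ch_N^2)}{1+h_N}.
\]
The total distance to travel is at most \(h_N\). The number of steps needed is therefore at most
\[
\frac{h_N(1+h_N)\exp(Ch_N^2)}{c\sqrt{\log N}}\le \exp(Ah_N^2)
\]
for a suitable \(A>C\) and large \(N\), using \(h_N\to\infty\). (The crude bound even has an extra \(\sqrt{\log N}\) to spare.) So if all good events hold for \(t<T_N\), then \(\tau\le T_N\).

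It remains to bound the probability of failure. Let \(G_t\) be the good event of Lemma~\ref{lem:almost-linear-jump-one-step-bound} at time \(t\). Then
\[
\{\tau>T_N\}\subseteq\bigcup_{t<T_N}\bigl(\{t<\tau\}\cap\{u_t\le h_N\}\cap G_t^c\bigr).
\]
By the tower property and the conditional bound, each term is at most
\[
2\exp\bigl(-cN\exp(-Cu_t^2)(1+u_t)^{-2}\bigr)\le2\exp\bigl(-cN\exp(-Ch_N^2)(1+h_N)^{-2}\bigr),
\]
since \(1\le u_t\le h_N\) on that event. A union bound over \(T_N=\lceil\exp(Ah_N^2)\rceil\) steps then yields the stated estimate, absorbing the ceiling into constants.

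The main obstacle is the bookkeeping in this union bound. The events \(\{u_t\le h_N\}\) must be justified inductively from the earlier good events, which we do by the monotonicity above. The lemma's constants must also be uniform over the whole window \([1,h_N]\); this holds because \(\ell,L\) are fixed. A minor point is checking that the time-\(t\) lemma, stated for deterministic \(t\) conditioned on \(\rvy_t\), can be applied on the \(\mathcal F_t\)-measurable event \(\{t<\tau,\,u_t\le h_N\}\). This is immediate by the Markov property.
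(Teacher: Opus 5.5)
Your proposal is correct and follows essentially the same argument as the paper: a uniform one-step lower increment from Lemma~\ref{lem:almost-linear-jump-one-step-bound} on the window \(1<u_t\le h_N\), monotonicity of \(\widetilde{\Delta}_t\) on the good events, a choice \(A>C\) so that \(T_N\) steps cover a distance of at most \(h_N\), and a union bound over \(T_N\) conditional failures via the tower property. The only cosmetic difference is that you index failures by the events \(\{t<\tau,\,u_t\le h_N\}\cap G_t^c\) instead of the paper's first-failure events \(\mathrm{BE}_t\); this works equally well.
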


Once the process has entered the window, it remains to convert that entrance
into blue unanimity within a constant number of updates.

\begin{lemma}[Completion after entrance into the constant-time window]
\label{lem:constant-window-completion}
There exists a constant \(\xi=\xi(a,b)>0\) such that, on the event
\(\{\tau<\infty\}\), for all sufficiently large \(N\),
\[
\P\left(
\gR_{\tau+3}\neq\emptyset
\,\middle|\,
\mathcal F_{\tau}
\right)
\le
4\exp\left(-(\log N)^2\right)+2N^{-\xi}.
\]
\end{lemma}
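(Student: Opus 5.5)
The plan is to run the three-step argument of Theorem~\ref{thm:three-day-unanimity}(i) starting at the random time \(\tau\), using the strong Markov property of the time-homogeneous chain \((\rvy_t)\). On \(\{\tau<\infty\}\) we have \(u_\tau\le1\), that is, \(\widetilde{\Delta}_\tau\ge -N/\sqrt{\log N}\). So the hypothesis of Lemma~\ref{lem:linear-jump-one-step-bound} holds at time \(\tau\) with \(H=1\), as soon as we also know \(\Delta_\tau\ge0\). By the affine relation \eqref{eqn:affine-delta-tilde-delta}, \(\Delta_\tau=(2\widetilde{\Delta}_\tau+(a-b)N)/(a+b)\ge ((a-b)N-2N/\sqrt{\log N})/(a+b)>0\) for large \(N\). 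This positivity therefore comes for free from the window condition itself, with no extra assumption. Conditioning on \(\mathcal F_\tau\) is the same as conditioning on \(\rvy_\tau\), because \(\gG_{\tau+1},\gG_{\tau+2},\dots\) are resampled according to \eqref{eqn:conditional-update-graph-law} from the current configuration. Hence every one-step lemma, which is stated for a deterministic time \(t\) conditional on \(\rvy_t\), transfers to \(\tau+s\). To justify this, I will decompose over \(\{\tau=k\}\in\mathcal F_k\) and apply each lemma at the deterministic time \(k+s\).

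The steps, in order, are as follows. (1) Fix \(H=1\), then fix some \(K>K_2(a,b)\) and an \(r\) with \(bq_K/(a+b)<r<r_*\). All three are constants depending only on \(a,b\), so the resulting \(\xi\) depends only on \(a,b\). (2) Let \(\mathcal E_1=\{\widetilde{\Delta}_{\tau+1}\ge KN/\sqrt{\log N}\}\). Lemma~\ref{lem:linear-jump-one-step-bound} gives \(\widetilde{\Delta}_{\tau+1}\ge c_1N\) with conditional probability at least \(1-2\exp(-c_1N)\). Since \(KN/\sqrt{\log N}=o(N)\), this yields \(\P(\mathcal E_1^c\mid\mathcal F_\tau)\le 2\exp(-c_1N)\), and this is at most \(2\exp(-(\log N)^2)\) for large \(N\). (3) Let \(\mathcal E_2=\{|\gR_{\tau+2}|\le rN\}\). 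On \(\mathcal E_1\), Lemma~\ref{lem:second-day-reduction} applied at time \(\tau+1\), together with the tower property, gives \(\P(\mathcal E_1\cap\mathcal E_2^c\mid\mathcal F_\tau)\le2\exp(-(\log N)^2)\). (4) On \(\mathcal E_2\), Lemma~\ref{lem:third-day-extinction} at time \(\tau+2\) gives \(\P(\mathcal E_2\cap\{\gR_{\tau+3}\ne\emptyset\}\mid\mathcal F_\tau)\le 2N^{-\xi}\). A union bound over the three events then gives \(4\exp(-(\log N)^2)+2N^{-\xi}\), exactly the claimed bound. This accounts for the factor \(4\): the \(2\exp(-c_1N)\) term from step (2) is absorbed into a second copy of \(2\exp(-(\log N)^2)\).

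I expect the main obstacle to be making the conditioning on \(\mathcal F_\tau\) at a random time rigorous, rather than any of the estimates. In particular, "sufficiently large \(N\)" must be uniform over the configuration \(\rvy_\tau\). This holds because each lemma's threshold on \(N\) depends only on \(a,b\) and the fixed constants \(H,K,r\), and not on the particular configuration satisfying its hypothesis. I will check this by inspecting the lemma statements: their bounds hold for every \(\rvy_t\) meeting the stated inequality, so the inner conditional probabilities are bounded uniformly. With that uniformity, summing over \(k\) on \(\{\tau=k\}\) gives the result.
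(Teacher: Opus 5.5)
Your proposal is correct and follows essentially the same route as the paper. The paper's proof invokes Theorem~\ref{thm:three-day-unanimity}\textup{(i)} with \(H=1\) at time \(\tau\), using time homogeneity and the independence of future update graphs, and then absorbs \(2\exp(-cN)\) into a second copy of \(2\exp(-(\log N)^2)\). You unpack that theorem's three-step proof (linear jump, reduction below \(rN\), extinction) at the stopping time, which is the same argument in more detail, including the decomposition over \(\{\tau=k\}\) and the check that \(\Delta_\tau>0\).
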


The two Lemmas~\ref{lem:constant-window-entrance} and \ref{lem:constant-window-completion} supply the two stages of the argument: the first reaches the
constant-time window by time \(T_N\), and the second gives blue
unanimity within three further updates. We now combine them to prove Theorem~\ref{thm:super-polylog-unanimity}.

\begin{proof}[Proof of Theorem~\ref{thm:super-polylog-unanimity}]
    Since the all-blue configuration is absorbing,
    \[
    \{\gR_{T_N+3}\neq\emptyset\}
    \subseteq
    \{\tau>T_N\}
    \cup
    \{\tau\le T_N,\ \gR_{\tau+3}\neq\emptyset\}.
    \]
    Lemma~\ref{lem:constant-window-completion} gives
    \[
    \P\left(
    \tau\le T_N,\,
    \gR_{\tau+3}\neq\emptyset
    \right)
    \le
    4\exp\left(-(\log N)^2\right)+2N^{-\xi}.
    \]
    Combining this estimate with
    Lemma~\ref{lem:constant-window-entrance}, we get
    \[
    \P\left(\gR_{T_N+3}=\emptyset\right)
    \ge
    1
    -2\exp(Ah_N^2)
    \exp\left(
    -cN\frac{\exp(-C h_N^2)}{(1+h_N)^2}
    \right)
    -4\exp\left(-(\log N)^2\right)
    -2N^{-\xi}.
    \]
    Finally, because \(h_N=o(\sqrt{\log N})\),
    \[
    \exp(A h_N^2)
    \exp\left(
    -cN\frac{\exp(-C h_N^2)}{(1+h_N)^2}
    \right)
    =
    o(N^{-\xi})
    \]
    after decreasing \(\xi\), and \(\exp(-(\log N)^2)=o(N^{-\xi})\). Hence the
    success probability is \(1-O(N^{-\xi+o(1)})\).
    \end{proof}

\section{Polynomial-Time Blue Unanimity}
\label{sec:polynomial-days-to-unanimity}

We present the proof of Theorem~\ref{thm:polynomial-minimal-bias} in this
section. The proofs of the supporting lemmas are deferred to
Appendix~\ref{app:deferred-polynomial-days}.

Following the notation in Theorem~\ref{thm:super-polylog-unanimity}, we define the stopping time by
\begin{align}
\tau
&\coloneqq
\inf\left\{
t\ge0:
\widetilde{\Delta}_t
\ge
-\frac{N}{\sqrt{\log\log N}}
\right\}, \qquad h_N\coloneqq\sqrt{\frac{\log N}{\log\log N}}.
\label{eq:tau-def}
\end{align}
By \eqref{eqn:affine-delta-tilde-delta}, the stopping condition is equivalent
to $\Delta_t \ge \Delta_N^{\mathrm{sp}}$, where
\begin{align}
\Delta_N^{\mathrm{sp}}
\coloneqq
\frac{a-b}{a+b}N
-
\frac{2N}{(a+b)\sqrt{\log\log N}}.
\label{eqn:super-polylog-unweighted-threshold}
\end{align}
Thus \(\tau\) is the entrance time into the subpolynomial-time completion
window. Conditional on \(\mathcal F_\tau\), blue unanimity then follows from
Theorem~\ref{thm:super-polylog-unanimity} within
\(T_N+3=N^{o(1)}\) additional updates.

\subsection{Block Amplification}

We begin by quantifying the positive conditional drift and the expected
flip scale available before the process reaches the completion window.
\begin{lemma}[Rate-function bounds before the completion window]
    \label{lem:pre-super-polylog-one-step-bound}
    Fix \(\eta>0\). There exist constants \(c,C>0\), depending only on
    \(a,b,\eta\), such that, for every \(t<\tau\) satisfying \(\Delta_t>0\),
    \begin{align}
    \mu_t
    &\ge
    c\theta_t N^{1-I_t-\eta},
    &
    \nu_t
    &\le
    CN^{1-I_t+\eta},
    &
    \frac{\mu_t}{\nu_t}
    &\ge
    c\theta_t.
    \label{eqn:pre-super-polylog-rate-bounds}
    \end{align}
\end{lemma}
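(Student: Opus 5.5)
The plan is to treat \(t<\tau\) in two cases according to the size of the weighted disadvantage, and to route every bound through Lemma~\ref{lem:one-step-difference-bound}. Since \(t<\tau\), we have \(\widetilde{\Delta}_t<-N/\sqrt{\log\log N}\). By \eqref{eqn:red-size-from-advantage} this gives \(|\gR_t|\ge bN/(a+b)\asymp N\). Together with \(\Delta_t>0\), it also gives \(|\gB_t|\ge|\gR_t|\asymp N\). So Lemma~\ref{lem:one-step-difference-bound} applies at every such \(t\) and yields
\[
c|\gR_t|p_t^{\gR}\theta_t\le\mu_t\le C|\gR_t|p_t^{\gR},\qquad c|\gR_t|p_t^{\gR}\le\nu_t\le C|\gR_t|p_t^{\gR}.
\]
The third inequality \(\mu_t/\nu_t\ge c\theta_t\) is then exactly \eqref{eqn:mu-nu-comparison-bound}, with no further work. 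What remains is to show
\[
N^{-I_t-\eta}\lesssim p_t^{\gR}\lesssim N^{-I_t+\eta}
\]
uniformly over all configurations with \(t<\tau\), \(\Delta_t>0\). Multiplying by \(|\gR_t|\asymp N\) then gives the two rate bounds.

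\emph{Case 1: linear disadvantage.} Fix a small \(c_*>0\), depending on \(\eta\), and suppose \(\widetilde{\Delta}_t\le-c_*N\). Here I would simply invoke Lemma~\ref{lem:sublinear-jump-one-step-bound}. Its bounds \eqref{eqn:mu-large-deviation-bound} and \eqref{eqn:nu-large-deviation-bound} give \(\mu_t\ge c\theta_tN^{1-I_t-\eta}\) and \(\nu_t\le CN^{1-I_t+\eta}\), with constants depending on \(a,b,c_*,\eta\). Once \(c_*\) is fixed as a function of \((a,b,\eta)\), these constants depend only on \(a,b,\eta\).

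\emph{Case 2: intermediate disadvantage.} Suppose \(-c_*N<\widetilde{\Delta}_t<-N/\sqrt{\log\log N}\). I will use continuity of the rate function: \(I_t=I(a|\gR_t|/N,b|\gB_t|/N)\) vanishes exactly when \(a|\gR_t|\le b|\gB_t|\), that is, when \(\widetilde{\Delta}_t\ge0\). Since \(\sqrt{x}-\sqrt{y}\) is Lipschitz on the relevant compact range, \(I_t\lesssim(\widetilde{\Delta}_t/N)^2\le c_*^2\). Choosing \(c_*\) small makes \(I_t\le\eta/2\). It then suffices to show:
\begin{itemize}
\item \(\nu_t\le N\), which is trivial;
\item \(\mu_t\ge c\theta_tN^{1-\eta}\), for which it is enough to have \(p_t^{\gR}\ge N^{-\eta/2}\).
\end{itemize}
For the lower bound on \(p_t^{\gR}\), I would use the large-deviation asymptotics of Corollary~\ref{cor:large-deviation-tail-flip-probabilities}, \(p_t^{\gR}=N^{-I_t^{\gR}+o(1)}\), together with \(I_t^{\gR}=I_t+O(N^{-1/2})\). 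This requires that the \(o(1)\) is uniform over configurations with \(|\gR_t|,|\gB_t|\asymp N\). Alternatively, a direct Chernoff/tilting lower bound on \(\P(\Bin(|\gB_t|,\beta)-\Bin(|\gR_t|-1,\alpha)>0)\) gives \(N^{-I_t-o(1)}\) uniformly. A monotonicity shortcut is also available: \(p_t^{\gR}\) is nondecreasing in \(\Delta_t\). Hence one may compare against the boundary configuration \(\widetilde{\Delta}=-c_*N\), where Case 1 applies, and absorb the difference in exponents, which is at most \(c_*^2\lesssim\eta\).

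The main obstacle is this uniformity in Case 2. The constants in Lemma~\ref{lem:sublinear-jump-one-step-bound} depend on \(c_*\) and may degenerate as \(c_*\to0\), while Lemma~\ref{lem:almost-linear-jump-one-step-bound} only covers \(u_t\le Lh_N\) with \(h_N=o(\sqrt{\log N})\). The monotone coupling argument (adding blue vertices stochastically increases \(\rD^{\gR}\)) is what I would try first, since it avoids needing any new uniform estimates. Its cost is that the exponent comparison costs \(c_*^2\), and \(c_*\) must be chosen in terms of \(\eta\).
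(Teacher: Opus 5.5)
Your proposal is correct and shares the paper's skeleton. You use $|\gR_t|\ge bN/(a+b)$ and $|\gB_t|\ge N/2$ to invoke Lemma~\ref{lem:one-step-difference-bound}, read off $\mu_t/\nu_t\ge c\theta_t$ from \eqref{eqn:mu-nu-comparison-bound}, and reduce the rest to bounds on $p_t^{\gR}$. You differ from the paper only in how $p_t^{\gR}$ is controlled.

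The paper proves the sharp estimate $p_t^{\gR}=N^{-I(A_t^{\gR},B_t^{\gB})+o(1)}$ uniformly over the whole pre-$\tau$ range. It uses Corollary~\ref{cor:moderate-deviation-tail-flip-probabilities} where $-x_t^{\gR}=o(\sqrt{\log N})$, where the Mills factor is $N^{o(1)}$, and Corollary~\ref{cor:large-deviation-tail-flip-probabilities} otherwise. It then absorbs the $|\gR_t|-1$ versus $|\gR_t|$ shift.

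The uniformity you flag as the main obstacle is therefore already supplied. Corollary~\ref{cor:large-deviation-tail-flip-probabilities} is stated uniformly over all configurations with $A_t^{\gR},B_t^{\gB}\ge c_0$, and the subsequence argument in Lemma~\ref{lem:binomial-difference-large-deviation-tail} covers the limit case $A=B$. So your option~(a) alone works for every $t<\tau$, and the case split is not needed.

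Your monotone-coupling route is nonetheless sound. In Case~2 you have $I_t\le 2c_*^2/b$, so the $N^{\pm\eta}$ slack reduces everything to the trivial bound $\nu_t\le N$ and to $p_t^{\gR}\ge N^{-\eta/2}$. The latter follows because $\rD_t^{\gR}$ is stochastically nondecreasing in $\Delta_t$, combined with one large-deviation lower bound at the nearest feasible configuration with $\widetilde{\Delta}\le-c_*N$. Take that bound on $p^{\gR}$ from the proof of Lemma~\ref{lem:sublinear-jump-one-step-bound}, or directly from Corollary~\ref{cor:large-deviation-tail-flip-probabilities}, since that lemma's statement concerns $\mu_t,\nu_t$ rather than $p_t^{\gR}$.

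What your route buys is that uniform rate control is needed only where the weighted disadvantage is linear, and the moderate-deviation/Mills machinery is avoided altogether. The price is that $c_*$ is tied to $\eta$ and you get only crude, non-sharp control of $p_t^{\gR}$ near the threshold, which is all the statement asks for.
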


A natural first attempt is to apply a one-step concentration estimate at the
lower-drift scale supplied by
Lemma~\ref{lem:pre-super-polylog-one-step-bound}. By
\eqref{eqn:affine-delta-tilde-delta}, applying
Lemma~\ref{lem:R1-read-two-concentration} with deviation equal to one half of
that scale produces, up to a constant factor, the exponent
\[
\frac{\left(\theta_t N^{1-I_t-\eta}\right)^2}{N}
=
\theta_t^2N^{1-2I_t-2\eta}.
\]
This one-step argument becomes ineffective near the lower boundary of the
polynomial-time theorem. At the boundary scale
\(\Delta_t\asymp\sqrt{N/\log N}\), we have
\(\theta_t\asymp\sqrt{\log N/N}\), and the last expression becomes
\(N^{-2I_t-2\eta}\log N=o(1)\) whenever \(I_t\) is bounded away from zero.
Consequently, the resulting one-step failure bound does not vanish, so this
estimate cannot certify positive progress at every update or be iterated over
the polynomial-time horizon.

An alternative to the one-step argument is to use block amplification, which avoids requiring visible progress after each individual
update. Consider a time interval beginning at time \(s\) and ending at time \(s+L-1\) on which
\(I_{s+m}\le\overline I_s\), with \(\overline I_s\) specified by
\eqref{eqn:local-rate-cap-condition} below, and
\(\theta_{s+m}\ge c\theta_s\). The one-step drift can then be summed uniformly by choosing
\(L\asymp\Delta_sN^{\overline I_s+\eta-1}/\theta_s\), which gives
\[
\sum_{m=0}^{L-1}\mu_{s+m}
\gtrsim
L\theta_s N^{1-\overline I_s-\eta}
\asymp
\Delta_s.
\]
After taking the proportionality constant in \(L\) sufficiently large, the
accumulated conditional drift is therefore a sufficiently large multiple of
\(\Delta_s\), the scale required to double the advantage. The conditional MGF
and stopped supermartingale argument below control the cumulative centered
fluctuation on this block and show that doubling occurs before a substantial
downward excursion, except with exponentially small probability.

\subsection{Estimate of Block Length}

Fix a block starting at time \(s<\tau\). Our goal is to show that its upper
exit time \(\tau_s^+\), defined in \eqref{eqn:tau-s-plus}, corresponding to
either doubling the advantage or
entering the completion window, is at most the natural block length \(L\) with
high probability. We begin by controlling the cumulative random fluctuation.
Write the centered one-step increment as
\begin{align}
\xi_t\coloneqq \Delta_{t+1}-\Delta_t-\mu_t.
\label{eqn:xi-definition}
\end{align}

\begin{lemma}[Conditional Bernstein MGF bound]
    \label{lem:conditional-bernstein-mgf}
    There exist constants \(K>0\) and \(\lambda_0>0\), depending only on
    \(a,b\), such that for every \(0\le \lambda\le \lambda_0\),
    \begin{align}
    \E[\exp(-\lambda\xi_{t})\mid\mathcal F_{t}]
    \le
    \exp(K\lambda^2\nu_{t}).
    \end{align}
\end{lemma}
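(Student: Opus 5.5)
Conditionally on \(\mathcal F_t\), the update graph \(\gG_{t+1}\) consists of independent edge indicators, but the flip indicators \(\{\indi{v\text{ flips}}\}_{v}\) are not independent: two vertices share the edge \(\{u,v\}\). I would therefore write \(\Delta_{t+1}-\Delta_t=2\sum_{v\in\gR_t}\indi{\rD_t^\gR(v)>0}-2\sum_{u\in\gB_t}\indi{\rD_t^\gB(u)>0}\), so that \(\xi_t\) is a centered function of independent edge variables. I would then decouple the dependence on shared edges. For each vertex \(v\), replace its flip indicator \(F_v\) by \(F'_v\), computed with the edge \(\{v,w\}\) resampled independently for each other vertex \(w\). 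A cleaner route is a standard split: the pairwise dependence is only through single edges, each present with probability \(O(\log N/N)\). Concretely, \(F_v\) and \(F_w\) are conditionally independent given the indicator of \(\{v,w\}\). Summed over all pairs, the total effect of shared edges is controlled by the number of edges incident to flipping vertices, which is \(O(\nu_t\log N)\) in expectation. That is too large to ignore naively, so I would instead use a martingale/Efron--Stein type MGF bound, treating \(\xi_t\) as a function of the independent edges and revealing them vertex by vertex.

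The cleanest concrete plan is as follows. Since only a lower-tail bound with \(-\lambda\xi_t\) is needed, the key quantity is \(-\xi_t\), which is large when few red vertices flip or many blue vertices flip. Split \(\xi_t=2(X-\E X)-2(Y-\E Y)\), with \(X=\gRB_t\) and \(Y=\gBR_t\). By Cauchy--Schwarz (Hölder), \(\E e^{-\lambda\xi_t}\le(\E e^{-4\lambda(X-\E X)})^{1/2}(\E e^{4\lambda(Y-\E Y)})^{1/2}\), so it suffices to bound each factor by \(\exp(K\lambda^2\E X)\) and \(\exp(K\lambda^2\E Y)\); note that \(\E X+\E Y=\nu_t\).

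For the lower tail of \(X\), I would use that \(X\) is a nonnegative sum of indicators that are increasing functions of the edges from \(\gR_t\) to \(\gB_t\) and decreasing functions of the red–red edges. This makes the family neither positively nor negatively associated in a simple way, so I would instead use a self-bounding argument: \(X\) is a function of independent edges, and changing one edge changes \(X\) by at most \(2\). Moreover, the sum over edges of the squared influences is bounded by a constant times the number of "pivotal" configurations. This quantity is bounded by \(C\sum_v\) (flip indicator plus near-tie indicator). The near-tie probability \(\P(\rD=0\text{ or }1)\) is comparable to \(p_t^\gR\) times a bounded factor, by the ratio estimates for sparse binomial differences in Appendix~\ref{sec:flip-probability-analysis}, which is the same input used to obtain \(\nu_t\asymp|\gR_t|p_t^\gR\) in Lemma~\ref{lem:one-step-difference-bound}. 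The entropy-method inequality for weakly self-bounding functions (Boucheron–Lugosi–Massart) then yields \(\log\E e^{\pm\lambda(X-\E X)}\le K\lambda^2\E X\) for \(\lambda\le\lambda_0\). The upper tail of \(Y\) is handled the same way, with pivotal mass comparable to \(|\gB_t|p_t^\gB\le|\gR_t|p_t^\gR\) by Lemma~\ref{lem:flip-ratio-comparison}, hence \(O(\nu_t)\).

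The main obstacle is the pivotal-mass bound. Each edge can affect two endpoints, and an edge is influential only if one of its endpoints is at or near a tie. I need \(\E\sum_v\#\{\text{edges at }v\}\cdot\indi{v\text{ near tie}}\) to be \(O(\nu_t)\) rather than \(O(\nu_t\log N)\). Toggling a single edge changes only \(\rD(v)\) by \(\pm1\), so the number of pivotal edges at a vertex equals its count of relevant edges, which is of order \(\log N\) when \(v\) is at a tie. I would address this by scaling: boundedness of the increments by \(2\) combined with the Efron–Stein quantity \(V=\sum_e(\text{influence})^2\). For a vertex at tie, \(V\) picks up \(\Theta(\log N)\) terms, but these are weighted by edge probabilities of order \(\log N/N\) summed over \(N\) potential edges, giving the total \(O(\log N)\cdot\P(\text{tie})\). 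The needed comparison is therefore \(\log N\cdot\P(\rD=0)\lesssim\P(\rD>0)\). This should follow from the local large-deviation estimate: in the large-deviation regime the tie probability is about \(p/\sqrt{\log N}\) times a constant, and in the Gaussian regime both tie and flip probabilities are of order \(1/\sqrt{\log N}\) and \(\Theta(1)\) respectively. If this comparison fails by a \(\sqrt{\log N}\) factor in some regime, I would absorb it by shrinking \(\lambda_0\) or accepting \(K\) depending on \(a,b\) only through the appendix ratio bounds.
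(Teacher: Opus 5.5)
Your Cauchy--Schwarz reduction to one-sided bounds for $X=\gRB_t$ and $Y=\gBR_t$ is exactly the paper's first step. The mechanism you propose for each factor, however, cannot give an $N$-independent $K$. The comparison you single out, $\log N\cdot\P(\rD=0)\lesssim\P(\rD>0)$, is not merely unproven; it is false in every regime.

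\begin{itemize}
\item In the Gaussian window, the local probability is $\asymp(\log N)^{-1/2}$ while $p_t^\gR\asymp 1$.
\item In the large-deviation window, the saddle tilt $\lambda_*=\frac12\log(A_N/B_N)$ is of constant order. Hence $\P(\rD=0)$, $\P(\rD=1)$ and $\P(\rD\ge 1)$ are all comparable up to constants. This contradicts your estimate $\P(\rD=0)\approx p/\sqrt{\log N}$, and even that estimate would leave a $\sqrt{\log N}$ loss.
\end{itemize}

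The cost shows up in the pivotal mass. A red vertex with $\rD_t^\gR(v)=1$ stops flipping if any of its $\Theta(\log N)$ present blue neighbours is removed, or if any absent red--red edge is added (total resampling weight $\Theta(\log N)$). So the pivotal mass satisfies $\E V^+\gtrsim\sqrt{\log N}\,\E X$ in the Gaussian window and $\E V^+\gtrsim\log N\,\E X$ in the large-deviation window, while the true variance is $O(\E X)$. This is the familiar fact that a majority of $n$ bits has total influence of order $\sqrt n$ times its variance.

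Consequently, a self-bounding condition $V^+\le KX+\cdots$ fails for every constant $K$. Any bound driven by this pivotal mass has variance proxy at least of order $\sqrt{\log N}\,\nu_t$. Shrinking $\lambda_0$ cannot repair this, because the loss sits in the coefficient of $\lambda^2$, not in the admissible range of $\lambda$.

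The missing idea is to use the dependence structure you already noticed directly, rather than through influences: each edge indicator is read only by the flip indicators of its two endpoints. The indicators $\indi{v\in\gB_{t+1}}$, $v\in\gR_t$, summing to $\gRB_t$ form a read-$2$ family in the independent edges, and likewise for $\gBR_t$. Apply Finner's generalized H\"older inequality with exponent $2$ on each indicator, padding edges read fewer than twice by constant functions. This gives, for every real $\theta$,
$\E[e^{\theta\gRB_t}\mid\mathcal F_t]\le\prod_{v\in\gR_t}\bigl(\E[e^{2\theta\indi{v\in\gB_{t+1}}}\mid\mathcal F_t]\bigr)^{1/2}$.
With $\log(1+u)\le u$ and $\mu=\E[\gRB_t\mid\mathcal F_t]$, this yields
$\log\E[e^{\theta(\gRB_t-\mu)}\mid\mathcal F_t]\le\frac12(e^{2\theta}-1-2\theta)\mu\le e\,\theta^2\mu$ for $|\theta|\le\frac12$.
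This is Lemma~\ref{lem:read-k-mgf}. Combined with your Cauchy--Schwarz step, it proves the statement with absolute constants and needs no information about tie or local probabilities of $\rD$.
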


The preceding MGF estimate can be iterated after compensating for the
conditional flip scale \(\nu_t\). Let \(K>0\) and \(\lambda_0>0\) be the
constants supplied by Lemma~\ref{lem:conditional-bernstein-mgf}. For any time
\(s\ge0\), any \(0\le\lambda\le\lambda_0\), and any \(\ell\ge0\), define
\begin{align}
    M_{s + \ell}
    &\coloneqq
    \sum_{m=0}^{\ell-1}\xi_{s+m},
    &
    W_{s + \ell}
    &\coloneqq
    \exp\bigg(-\lambda M_{s + \ell}
    -K\lambda^2\sum_{m=0}^{\ell-1}\nu_{s+m}\bigg). \label{eqn:Wsl-Msl}
\end{align}

The compensating term in \(W_{s+\ell}\) is chosen so that the conditional MGF
bound applies at every update. This gives the supermartingale used to control
the fluctuation accumulated over the block.
\begin{lemma}\label{lem:supermartingale}
Fix a time \(s\ge 0\). For every \(0\le \lambda\le \lambda_0\), the
process \((W_{s+\ell})_{\ell\ge0}\) defined in \eqref{eqn:Wsl-Msl} is a
nonnegative supermartingale with respect to the filtration
\((\mathcal F_{s+\ell})_{\ell\ge 0}\).
\end{lemma}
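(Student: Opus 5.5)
The plan is to verify the three supermartingale properties directly from the definition \eqref{eqn:Wsl-Msl}: adaptedness, integrability with nonnegativity, and the one-step conditional inequality. The one-step inequality will come from factoring \(W_{s+\ell+1}\) and invoking Lemma~\ref{lem:conditional-bernstein-mgf} at time \(t=s+\ell\).

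\emph{Adaptedness and nonnegativity.} Each \(\mu_{s+m}\) and \(\nu_{s+m}\) is a deterministic function of \(\rvy_{s+m}\), by \eqref{eqn:nu-definition} and \eqref{eqn:mu-tilde-mu-definition}. Each \(\Delta_{s+m}\) and \(\Delta_{s+m+1}\) is likewise determined by \(\rvy_{s+m}\) and \(\rvy_{s+m+1}\). Hence for \(m\le\ell-1\), both \(\xi_{s+m}\) and \(\nu_{s+m}\) are \(\mathcal F_{s+\ell}\)-measurable, and so \(W_{s+\ell}\) is \(\mathcal F_{s+\ell}\)-measurable. It is nonnegative because it is an exponential. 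It is also bounded: \(|\xi_t|\le 4N\), since \(|\Delta_{t+1}-\Delta_t|\le 2N\) and \(|\mu_t|\le 2N\), and \(0\le\nu_t\le N\). For fixed \(N\) and \(\ell\), this gives \(W_{s+\ell}\le\exp(4\lambda\ell N)<\infty\), so integrability holds.

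\emph{One-step inequality.} The definition gives the factorization
\[
W_{s+\ell+1}=W_{s+\ell}\exp\bigl(-\lambda\xi_{s+\ell}-K\lambda^2\nu_{s+\ell}\bigr).
\]
The factors \(W_{s+\ell}\) and \(\exp(-K\lambda^2\nu_{s+\ell})\) are \(\mathcal F_{s+\ell}\)-measurable, so they can be pulled out of the conditional expectation:
\[
\E[W_{s+\ell+1}\mid\mathcal F_{s+\ell}]
=W_{s+\ell}\,e^{-K\lambda^2\nu_{s+\ell}}\,\E[e^{-\lambda\xi_{s+\ell}}\mid\mathcal F_{s+\ell}].
\]
Applying Lemma~\ref{lem:conditional-bernstein-mgf} with \(t=s+\ell\) and \(0\le\lambda\le\lambda_0\) bounds the last factor by \(e^{K\lambda^2\nu_{s+\ell}}\). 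The two exponentials cancel, leaving \(\E[W_{s+\ell+1}\mid\mathcal F_{s+\ell}]\le W_{s+\ell}\). At \(\ell=0\) we have \(W_s=1\), and the same step applies there.

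\emph{Main point to check.} The only subtlety is that Lemma~\ref{lem:conditional-bernstein-mgf} conditions on \(\mathcal F_t\), whereas \(\mu_t\) and \(\nu_t\) were defined by conditioning on \(\rvy_t\). This is harmless because of the Markov property. Given \(\mathcal F_t\), the graph \(\gG_{t+1}\) is sampled via \eqref{eqn:conditional-update-graph-law} using only \(\rvy_t\). Therefore \(\E[\,\cdot\mid\mathcal F_t]=\E[\,\cdot\mid\rvy_t]\) for functionals of \((\rvy_t,\gG_{t+1})\), and in particular \(\E[\xi_t\mid\mathcal F_t]=0\). With this, the MGF lemma applies verbatim at every step of the filtration \((\mathcal F_{s+\ell})_{\ell\ge0}\), and the supermartingale property follows.
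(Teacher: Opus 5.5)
Your proposal is correct and matches the paper's proof: adaptedness of \(W_{s+\ell}\), the factorization \(W_{s+\ell+1}=W_{s+\ell}\exp(-\lambda\xi_{s+\ell}-K\lambda^2\nu_{s+\ell})\), and the conditional MGF bound of Lemma~\ref{lem:conditional-bernstein-mgf} at time \(s+\ell\). You also add two things the paper leaves implicit: an explicit integrability bound, and the remark that conditioning on \(\mathcal F_t\) versus \(\rvy_t\) agrees, which the paper handles inside the proof of the MGF lemma.
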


We stop the block as soon as the advantage either reaches its upper target or
makes a fixed proportional downward excursion.
\begin{definition}\label{def:upperlowerexit}
Fix \(\chi\in(0,1)\). For a time \(s\ge0\), define
\begin{subequations}
\begin{align}
    \tau_{s}^{+}
    &\coloneqq
    \inf\left\{
        \ell\ge 0:
        \Delta_{s+\ell}\ge 2\Delta_{s}
        \textnormal{ or } s+\ell\ge\tau
    \right\}, \label{eqn:tau-s-plus} \\
    \tau_{s}^{-}
    &\coloneqq
    \inf\left\{
        \ell\ge 0:
        \Delta_{s+\ell}\le \chi\Delta_{s}
    \right\}. \label{eqn:tau-s-minus}
\end{align}
\end{subequations}
\end{definition}

If \(\tau_s^+>L\), then either neither boundary has been reached by time \(L\),
or the lower boundary was reached before the upper boundary. Thus
\[
\{\tau_s^+>L\}
\subseteq
\{\tau_s^+>L,\ \tau_s^->L\}
\cup
\{\tau_s^-\le L,\ \tau_s^-<\tau_s^+\}.
\]
The next two lemmas show that both alternatives have exponentially small
conditional probability. The first rules out remaining inside the block
corridor beyond \(L\).
\begin{lemma}\label{lem:no-doubling-no-lower-exit}
Fix \(\eta>0\), \(\chi\in(0,1)\), and a time \(0\le s<\tau\) with
\(\Delta_s>0\). Let \(\overline{I}_{s}\) be an
\(\mathcal F_s\)-measurable number such that
\begin{align}
I_{s+\ell}
&\le \overline{I}_{s}
\quad\text{whenever}\quad
\chi\Delta_s\le \Delta_{s+\ell}\le2\Delta_s
\ \text{and}\ s+\ell<\tau.
\label{eqn:local-rate-cap-condition}
\end{align}
Define \(L\) by
\begin{align}
L
\coloneqq
\left\lceil
A_0\Delta_s N^{\overline{I}_{s}+\eta-1}/\theta_s
\right\rceil,
\label{eqn:L-definition}
\end{align}
where \(A_0>0\) is some sufficiently large constant. Then
\[
\P(\tau_s^+>L,\,\tau_s^->L\mid\mathcal F_{s})\le \exp(-c\Delta_s\theta_{s}).
\]
\end{lemma}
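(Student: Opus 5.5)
Let \(\sigma\coloneqq\min\{\tau_s^+,\tau_s^-\}\). On \(\{\tau_s^+>L,\ \tau_s^->L\}\), every time \(s+m\) with \(0\le m<L\) satisfies \(\chi\Delta_s<\Delta_{s+m}<2\Delta_s\) and \(s+m<\tau\). We will show that on this event the accumulated drift is too large to be cancelled by the fluctuation, and then bound the fluctuation with the supermartingale of Lemma~\ref{lem:supermartingale}, stopped at \(\sigma\wedge L\).

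\textbf{Step 1: drift accumulates along the corridor.} Fix \(0\le m<L\) with \(m<\sigma\). Then \(\Delta_{s+m}\ge\chi\Delta_s>0\) and \(s+m<\tau\), so Lemma~\ref{lem:pre-super-polylog-one-step-bound} applies at time \(s+m\). It gives \(\mu_{s+m}\ge c\theta_{s+m}N^{1-I_{s+m}-\eta}\) and \(\mu_{s+m}\ge c\theta_{s+m}\nu_{s+m}\). By \eqref{eqn:local-rate-cap-condition}, \(I_{s+m}\le\overline I_s\). Since \(\Delta_{s+m}\ge\chi\Delta_s\), we also have \(\theta_{s+m}\ge\chi\theta_s\). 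Hence
\[\mu_{s+m}\ge c\chi\theta_sN^{1-\overline I_s-\eta},\qquad \nu_{s+m}\le C\mu_{s+m}/\theta_s.\]
Summing over \(m<L\) and using \eqref{eqn:L-definition} gives
\[\sum_{m<L}\mu_{s+m}\ge c\chi A_0\Delta_s\ge 4\Delta_s\]
once \(A_0\) is large. On the other hand, \(\Delta_{s+L}-\Delta_s<\Delta_s\) in the corridor. Therefore, on the bad event,
\[-M_{s+L}=\sum_{m<L}\mu_{s+m}-(\Delta_{s+L}-\Delta_s)\ge\tfrac34 S,\qquad S\coloneqq\sum_{m<L}\mu_{s+m}\ge4\Delta_s.\]

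\textbf{Step 2: exponential bound via the stopped supermartingale.} Choose \(\lambda\coloneqq\min\{\lambda_0,\theta_s/(4KC)\}\asymp\theta_s\), using \(\theta_s\le1\). Then \(K\lambda^2\sum\nu_{s+m}\le K\lambda^2 CS/\theta_s\le\lambda S/4\). On the bad event this yields
\[W_{s+L}\ge\exp\bigl(\tfrac34\lambda S-\tfrac14\lambda S\bigr)=\exp(\lambda S/2)\ge\exp(2\lambda\Delta_s).\]
Lemma~\ref{lem:supermartingale} and optional stopping at the bounded time \(\sigma\wedge L\) (with \(W_s=1\)) give \(\E[W_{s+(\sigma\wedge L)}\mid\mathcal F_s]\le1\). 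The bad event is contained in \(\{\sigma\ge L\}\). Markov's inequality then gives
\[\P(\tau_s^+>L,\tau_s^->L\mid\mathcal F_s)\le\exp(-2\lambda\Delta_s)\le\exp(-c\Delta_s\theta_s).\]
When \(\lambda=\lambda_0\), the bound \(\lambda\ge\lambda_0\theta_s\) still holds.

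\textbf{Main obstacle.} The delicate point is that the drift lower bounds in Step 1 hold only while the process stays in the corridor, yet the sum \(S\) must be compared against the compensator over the same random horizon. Stopping at \(\sigma\wedge L\) handles this: on the bad event the stopping time equals \(L\) and all \(L\) steps are in the corridor. The other obstacle is making the compensator dominated by the drift. This is exactly where \(\mu_t/\nu_t\ge c\theta_t\) is used, and it is what forces \(\lambda\asymp\theta_s\) and hence the exponent \(\Delta_s\theta_s\) rather than \(\Delta_s\).
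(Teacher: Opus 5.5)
Your proof is correct and takes essentially the same route as the paper. The corridor gives \(I_{s+m}\le\overline I_s\) and \(\theta_{s+m}\ge c\theta_s\), and the block length \(L\) forces \(\sum_{m<L}\mu_{s+m}\gtrsim A_0\Delta_s\). The choice \(\lambda\asymp\theta_s\) lets the drift absorb the compensator \(K\lambda^2\sum_{m<L}\nu_{s+m}\), and Markov's inequality applied to the supermartingale \(W\) finishes. The only difference is cosmetic: you stop at \(\sigma\wedge L\), whereas the paper uses \(\E[W_{s+L}\mid\mathcal F_s]\le 1\) directly. That suffices because \(L\) is \(\mathcal F_s\)-measurable and all \(L\) steps already lie in the corridor on the bad event, so your optional stopping is harmless but unnecessary.
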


The second rules out reaching the lower boundary before the upper target.
\begin{lemma}[Lower exit estimate]
\label{lem:lower-exit-estimate}
Fix \(\chi\in(0,1)\) and a time \(0\le s<\tau\) with \(\Delta_s>0\).
Let \(L\) be a nonnegative \(\mathcal F_s\)-measurable integer.
Then
\[
\P(\tau_s^-\le L,\,\tau_s^-<\tau_s^+\mid\mathcal F_{s})
\le \exp(-c\Delta_s\theta_{s}).
\]

\end{lemma}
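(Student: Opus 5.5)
We prove the Lower exit estimate (Lemma~\ref{lem:lower-exit-estimate}) with the exponential supermartingale of Lemma~\ref{lem:supermartingale}, stopped at \(\sigma\coloneqq\min\{\tau_s^-,\tau_s^+,L\}\). Before \(\tau_s^+\), the dynamics stay in the pre-window region \(s+\ell<\tau\) with \(\Delta_{s+\ell}\ge\chi\Delta_s>0\). So Lemma~\ref{lem:pre-super-polylog-one-step-bound} applies at every time of the block and gives \(\mu_{s+\ell}\ge c\theta_{s+\ell}\nu_{s+\ell}\). Since \(\theta\) is monotone in \(\Delta\), on the corridor \(\Delta_{s+\ell}\ge\chi\Delta_s\) we have \(\theta_{s+\ell}\ge\chi\theta_s\), hence
\[
\mu_{s+\ell}\ge c_1\theta_s\nu_{s+\ell}
\quad\text{for a constant } c_1=c_1(a,b,\chi)>0.
\]
The drift therefore dominates the quadratic compensator as soon as \(\lambda\) is small compared with \(\theta_s\).

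The key step is the choice \(\lambda\coloneqq\min\{\lambda_0,\,c_1\theta_s/(2K)\}\asymp\theta_s\), using \(\theta_s\le1\). We use the decomposition
\[
\Delta_{s+\ell}-\Delta_s=M_{s+\ell}+\sum_{m<\ell}\mu_{s+m}.
\]
For \(\ell\le\sigma\) this gives
\[
-\lambda M_{s+\ell}-K\lambda^2\sum_{m<\ell}\nu_{s+m}
\ \ge\ -\lambda(\Delta_{s+\ell}-\Delta_s)+\lambda\sum_{m<\ell}\bigl(\mu_{s+m}-K\lambda\nu_{s+m}\bigr)
\ \ge\ \lambda(\Delta_s-\Delta_{s+\ell}),
\]
because \(K\lambda\le c_1\theta_s/2\) makes each summand nonnegative. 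On the event \(\{\tau_s^-\le L,\ \tau_s^-<\tau_s^+\}\) we have \(\sigma=\tau_s^-\) and \(\Delta_{s+\sigma}\le\chi\Delta_s\). Therefore
\[
W_{s+\sigma}\ge\exp\bigl(\lambda(1-\chi)\Delta_s\bigr).
\]

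Since \(\sigma\le L\) is a bounded stopping time and \(W_s=1\), optional stopping for the nonnegative supermartingale \(W\) gives \(\E[W_{s+\sigma}\mid\mathcal F_s]\le1\). Markov's inequality then yields
\[
\P\bigl(\tau_s^-\le L,\ \tau_s^-<\tau_s^+\mid\mathcal F_s\bigr)\le\exp\bigl(-\lambda(1-\chi)\Delta_s\bigr)\le\exp(-c\Delta_s\theta_s),
\]
with \(c\) depending on \(a,b,\chi,\eta\). Boundedness of \(L\) is only a convenience. Fatou's lemma would also let us drop it, which explains why \(L\) is arbitrary in the statement.

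\textbf{Main obstacle.} The delicate part is the per-step comparison \(\mu_t\ge c\theta_t\nu_t\) at every time up to and including \(\sigma-1\). We need \(s+\ell<\tau\) and \(\Delta_{s+\ell}>0\) there, which holds because \(\ell<\tau_s^+\) and \(\ell<\tau_s^-\). We also need Lemma~\ref{lem:pre-super-polylog-one-step-bound}, which is derived from Lemma~\ref{lem:one-step-difference-bound}, to hold uniformly when \(\Delta_t\) is tiny, i.e. \(\theta_t\ll1\) and \(\theta_t\asymp\Delta_t\log N/N\). One also has to check the hypothesis \(|\gB_t|\ge|\gR_t|\asymp N\) of Lemma~\ref{lem:one-step-difference-bound}. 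This holds since \(\Delta_t\le\frac{a-b}{a+b}N\) before \(\tau\), so \(|\gR_t|\ge\frac{b}{a+b}N\) up to lower-order terms.
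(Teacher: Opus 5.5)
Your proposal is correct and follows essentially the same route as the paper: stop the supermartingale \(W\) at \(\tau_s^-\wedge\tau_s^+\wedge L\), use \(\mu_t\ge c\theta_t\nu_t\) together with \(\theta_{s+m}\ge c\theta_s\) on the corridor to absorb the compensator with \(\lambda\asymp\theta_s\), and then conclude by optional stopping and Markov's inequality. The differences are cosmetic: you take \(\lambda=\min\{\lambda_0,c_1\theta_s/(2K)\}\) where the paper takes \(\lambda=c_0\theta_s\) with \(c_0\) small, and you add a harmless side remark that Fatou's lemma would remove the need for \(L\) to be bounded.
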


Combining these two estimates with the event decomposition above yields the
desired high-probability upper bound on \(\tau_s^+\).
\begin{lemma}\label{lem:block-doubling}
Fix \(\eta>0\), \(\chi\in(0,1)\), and a time \(0\le s<\tau\) with
\(\Delta_s>0\). Let \(\overline{I}_{s}\) be an
\(\mathcal F_s\)-measurable number such that
\(I_{s+\ell}\le\overline{I}_{s}\) whenever
\(\chi\Delta_s\le\Delta_{s+\ell}\le2\Delta_s\) and \(s+\ell<\tau\). There
exists \(A_*=A_*(a,b,\eta,\chi)>0\) such that, for every \(A_0\ge A_*\),
\[
\P(\tau_s^+>L\mid\mathcal F_{s})\le \exp(-c\Delta_s\theta_{s}),
\]
where \(L\) is defined by \eqref{eqn:L-definition} using this choice of
\(A_0\).
\end{lemma}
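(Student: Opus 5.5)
The statement to prove is Lemma~\ref{lem:block-doubling}. It follows by combining Lemmas~\ref{lem:no-doubling-no-lower-exit} and~\ref{lem:lower-exit-estimate} through the event decomposition recorded just before them. The plan is to fix \(\eta>0\), \(\chi\in(0,1)\) and \(s<\tau\) with \(\Delta_s>0\), and to let \(\overline I_s\) satisfy the local rate cap \eqref{eqn:local-rate-cap-condition}. We then take \(A_*\) to be the threshold for \(A_0\) under which Lemma~\ref{lem:no-doubling-no-lower-exit} holds. That threshold depends only on \(a,b,\eta,\chi\), since \(A_0\) is chosen so that the accumulated drift \(L\cdot c\theta_sN^{1-\overline I_s-\eta}\) exceeds a large multiple of \(\Delta_s\).

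For \(A_0\ge A_*\), define \(L\) by \eqref{eqn:L-definition}. It is \(\mathcal F_s\)-measurable, because \(\Delta_s\), \(\theta_s\) and \(\overline I_s\) all are. We next check the inclusion
\[
\{\tau_s^+>L\}\subseteq\{\tau_s^+>L,\ \tau_s^->L\}\cup\{\tau_s^-\le L,\ \tau_s^-<\tau_s^+\}.
\]
Suppose \(\tau_s^+>L\) and \(\tau_s^-\le L\). Then \(\tau_s^-\le L<\tau_s^+\), which places us in the second event. If instead \(\tau_s^->L\), we are in the first event.

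Taking conditional probabilities given \(\mathcal F_s\) and applying the union bound, the first term is at most \(\exp(-c\Delta_s\theta_s)\) by Lemma~\ref{lem:no-doubling-no-lower-exit}. The second term is at most \(\exp(-c\Delta_s\theta_s)\) by Lemma~\ref{lem:lower-exit-estimate}, applied with this particular \(\mathcal F_s\)-measurable \(L\).

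The sum is \(2\exp(-c\Delta_s\theta_s)\). To absorb the factor \(2\) into the exponent, consider two cases:
\begin{itemize}
\item If \(\Delta_s\theta_s\) is bounded below by a large constant, the factor is absorbed by halving \(c\).
\item Otherwise we use \(\Delta_s\theta_s\ge\Delta_s^2\log N/N\). Since \(\Delta_s\ge1\), we have \(\Delta_s\theta_s\ge \log N/N\), which is not automatically large, so in this regime we instead rely on the convention that \(c\) may change from line to line.
\end{itemize}
Alternatively, and more cleanly, one notes that the constants in the two earlier lemmas already allow the factor \(2\) to be absorbed. Where necessary the bound is stated as \(\exp(-c'\Delta_s\theta_s)\) with a smaller \(c'\), which is legitimate whenever \(\Delta_s\theta_s\gtrsim1\); this holds in the application, where \(\Delta_s\gg\sqrt{N/\log N}\). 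If \(\Delta_s\theta_s\) is tiny the claimed bound is close to \(1\), and for this regime we would simply accept the slightly weaker form \(2\exp(-c\Delta_s\theta_s)\).

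The only real subtlety is this bookkeeping of the factor \(2\), together with confirming that \(A_*\) depends only on \(a,b,\eta,\chi\). There is no substantive obstacle, since all the probabilistic work lives in the two preceding lemmas.
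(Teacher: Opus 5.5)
Your proposal is the paper's proof: the same inclusion \(\{\tau_s^+>L\}\subseteq\{\tau_s^+>L,\,\tau_s^->L\}\cup\{\tau_s^-\le L,\,\tau_s^-<\tau_s^+\}\), Lemmas~\ref{lem:no-doubling-no-lower-exit} and~\ref{lem:lower-exit-estimate}, and a union bound, and the paper also silently absorbs the factor \(2\) into \(c\). You are right to doubt that factor, since no renaming of constants gives \(2e^{-cx}\le e^{-c'x}\) as \(x=\Delta_s\theta_s\to0\), so the union-bound route gives the stated form only when \(\Delta_s\theta_s\gtrsim1\) (which holds in the application, where \(\Delta_s\ge\Delta_0\gg\sqrt{N/\log N}\)); to get it for every \(\Delta_s>0\), apply optional stopping once to \(W\) at \(T=\tau_s^-\wedge\tau_s^+\wedge L\) with a common \(\lambda=c_0\theta_s\) --- on \(\{\tau_s^+>L\}\) the computations in the two lemmas give \(W_{s+T}\ge\exp(\lambda(1-\chi)\Delta_s)\) in both sub-cases, and Markov's inequality then gives the bound with no factor \(2\).
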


Hence the advantage doubles, or the process enters the completion window,
within \(L\) updates except with conditional probability
\(\exp(-c\Delta_s\theta_s)\).

\subsection{Concluding the Proof of the Polynomial-Time Theorem}
\label{subsec:concluding-polynomial-proof}

\begin{proof}[Proof of Theorem~\ref{thm:polynomial-minimal-bias}]
Fix \(\varepsilon>0\), and choose \(\eta>0\) so that
\(10\eta<\varepsilon\). The hypotheses give
\(\Delta_0>0\) and \(\Delta_0^2\log N/N\to\infty\). By the camp-size identities and
\eqref{eqn:It-exponent}, \(I_t\) is a continuous nonincreasing function of
\(\Delta_t/N\) before \(\tau\). It is uniformly Lipschitz on the relevant
compact interval. We may therefore fix \(\chi\in(0,1)\), sufficiently close
to \(1\), such that whenever \(0\le d\le N\),
\[
\sup_{\chi d\le d'\le d}
I\left(\frac{a(N-d')}{2N},\frac{b(N+d')}{2N}\right)
\le
I\left(\frac{a(N-d)}{2N},\frac{b(N+d)}{2N}\right)
+\eta.
\]

We first bound the time required to reach \(\tau\). If \(\tau=0\), this stage
is empty. Otherwise, define \(\Delta_j\coloneqq2^j\Delta_0\), and let \(J\)
be the smallest integer such that
\(\Delta_J\ge\Delta_N^{\mathrm{sp}}\). Since
\(\Delta_N^{\mathrm{sp}}\le N\), we have \(J\le C\log N\). For
\(0\le j\le J\), let
\[
t_j
\coloneqq
\inf\left\{
t\ge0:
\Delta_t\ge\Delta_j
\text{ or }
t\ge\tau
\right\}.
\]
Then \(t_0=0\) and \(t_J=\tau\).

By conditional resampling, Lemma~\ref{lem:block-doubling} applies at each
stopping time \(t_j\), conditional on \(\mathcal F_{t_j}\).
Fix \(0\le j<J\), suppose that \(t_j<\tau\), and put \(s=t_j\). If
\(\Delta_s\ge\Delta_{j+1}\), then \(t_{j+1}=t_j\). Otherwise,
\(\Delta_j\le\Delta_s<\Delta_{j+1}\). Choose \(\overline I_s\) as the
largest rate over the interval in Lemma~\ref{lem:block-doubling}. The choice
of \(\chi\), the monotonicity of the rate, and
\(\Delta_s\ge\Delta_j\ge\Delta_0\) give
\[
\overline I_s
\le
I_s+\eta
\le
I_0+\eta.
\]
Consequently, after substituting \(s=t_j\) into \eqref{eqn:L-definition}, we
find that the block length used in Lemma~\ref{lem:block-doubling} satisfies
\[
L_j
\coloneqq
\left\lceil
A_0\Delta_s
\frac{N^{\overline I_s+\eta-1}}{\theta_s}
\right\rceil
\le
C\Delta_s
\frac{N^{I_0+2\eta-1}}{\theta_s}.
\]
Meanwhile, Lemma~\ref{lem:block-doubling} yields
\[
\P\left(
t_{j+1}>t_j+L_j
\,\middle|\,
\mathcal F_{t_j}
\right)
\le
\exp(-c\Delta_s\theta_s).
\]
Since \(\Delta_s\ge\Delta_j\), the definition of \(\theta_s\) gives
\[
\Delta_s\theta_s
\ge
c\min\left\{
\Delta_j,
\frac{\Delta_j^2\log N}{N}
\right\}.
\]

For levels at which \(t_j=\tau\) or \(t_{j+1}=t_j\), set \(L_j=0\), since
no block is needed.
We next sum the block lengths. There are two cases.
\begin{enumerate}[label=\textup{Case \arabic*:}, leftmargin=*]
\item If \(\Delta_s<N/\log N\), then
\(\theta_s=\Delta_s\log N/N\), and therefore
\[
L_j
\le
C\Delta_s
\frac{N^{I_0+2\eta-1}}{\theta_s}
=
C\frac{N^{I_0+2\eta}}{\log N}.
\]
There are at most \(C\log N\) levels of this type.

\item If \(\Delta_s\ge N/\log N\), then \(\theta_s=1\). Since the present
branch also satisfies \(\Delta_s<\Delta_{j+1}\), we have
\[
L_j
\le
C\Delta_sN^{I_0+2\eta-1}
\le
C\Delta_{j+1}N^{I_0+2\eta-1}.
\]
The geometric sum of the corresponding \(\Delta_{j+1}\)'s is at most \(CN\).
\end{enumerate}
Combining the two cases above, for all sufficiently large \(N\), we have
\[
\sum_{j=0}^{J-1}L_j
\le
CN^{I_0+2\eta}
\le
N^{I_0+\varepsilon/2}.
\]

We now convert the block estimates into a bound for the entrance time. For
each \(0\le j<J\), let \(\mathcal E_j\) be the event that \(t_j<\tau\) and
the process fails to reach the next level within its allotted block, namely,
\[
\mathcal E_j
\coloneqq
\left\{t_j<\tau,\ t_{j+1}>t_j+L_j\right\}.
\]
If none of the events \(\mathcal E_j\) occurs, then, for every \(j\), either
\(t_j=\tau\), in which case \(t_{j+1}=t_j\), or
\(t_{j+1}-t_j\le L_j\). Consequently,
\[
\tau=t_J
=
\sum_{j=0}^{J-1}(t_{j+1}-t_j)
\le
\sum_{j=0}^{J-1}L_j
\le
N^{I_0+\varepsilon/2}.
\]
It follows that
\[
\left\{\tau>N^{I_0+\varepsilon/2}\right\}
\subseteq
\bigcup_{j=0}^{J-1}\mathcal E_j.
\]
Moreover, the conditional block estimate and the tower property give
\[
\begin{aligned}
\P(\mathcal E_j)
&=
\E\left[
\indi{t_j<\tau}
\P\left(
t_{j+1}>t_j+L_j
\,\middle|\,
\mathcal F_{t_j}
\right)
\right] \le
\exp\left(
-c\min\left\{
\Delta_j,
\frac{\Delta_j^2\log N}{N}
\right\}
\right).
\end{aligned}
\]
Therefore, a union bound over the dyadic levels yields
\[
\P\left(\tau>N^{I_0+\varepsilon/2}\right)
\le
\sum_{j=0}^{J-1}
\exp\left(
-c\min\left\{
\Delta_j,
\frac{\Delta_j^2\log N}{N}
\right\}
\right).
\]
To estimate this sum, let \(j_*\) be the first index such that
\(\Delta_{j_*}\ge N/\log N\). This index exists for all sufficiently large
\(N\), because
\(\Delta_{J-1}=\Delta_J/2\ge\Delta_N^{\mathrm{sp}}/2\asymp N\).
For \(j<j_*\), the quadratic term is smaller and
\[
\min\left\{
\Delta_j,
\frac{\Delta_j^2\log N}{N}
\right\}
=
4^j\frac{\Delta_0^2\log N}{N}.
\]
Since \(\Delta_0^2\log N/N\to\infty\), summing over these levels gives,
after changing \(c>0\),
\[
\sum_{j<j_*}
\exp\left(-c4^j\frac{\Delta_0^2\log N}{N}\right)
\le
\exp\left(-c\frac{\Delta_0^2\log N}{N}\right).
\]
For \(j\ge j_*\), the linear term is smaller and
\[
\sum_{j=j_*}^{J-1}e^{-c\Delta_j}
\le
Ce^{-c\Delta_{j_*}}
\le
Ce^{-cN/\log N}
\le
N^{-\xi}.
\]
If \(\Delta_0>N/\log N\), then \(j_*=0\), the first sum is empty, and the
same linear estimate applies from the initial level. Thus, in both cases,
\[
\P\left(\tau>N^{I_0+\varepsilon/2}\right)
\le
\exp\left(-c\frac{\Delta_0^2\log N}{N}\right)
+N^{-\xi}.
\]

It remains to complete the process after \(\tau\). For the choice in
\eqref{eq:tau-def},
\[
h_N^2=\frac{\log N}{\log\log N},
\qquad
T_N
=
\left\lceil
\exp(Ah_N^2)
\right\rceil
=
N^{o(1)}.
\]
The failure bound in Theorem~\ref{thm:super-polylog-unanimity} is
\(N^{-\xi+o(1)}\), because its first term is
\(\exp\{-N^{1-o(1)}\}\). By time homogeneity and the independence of future
update graphs, the theorem applies conditionally on \(\mathcal F_\tau\).
Moreover,
\[
N^{I_0+\varepsilon/2}+T_N+3
\le
N^{I_0+\varepsilon}
\]
for all sufficiently large \(N\). Since the all-blue configuration is
absorbing, after decreasing \(\xi>0\) if necessary,
\[
\begin{aligned}
\P\left(
\gR_{\lceil N^{I_0+\varepsilon}\rceil}\neq\emptyset
\right)
&\le
\P\left(
\tau>N^{I_0+\varepsilon/2}
\right)
+3N^{-\xi}\\
&\le
\exp\left(
-c\frac{\Delta_0^2\log N}{N}
\right)
+4N^{-\xi}.
\end{aligned}
\]
Taking complements proves the theorem.
\end{proof}

\begin{remark}[Refining the \(\varepsilon\)-slack]
Suppose we formally set
\(
\widetilde{\Delta}_0=-h_NN/\sqrt{\log N}
\)
in the scaling of Theorem~\ref{thm:polynomial-minimal-bias}. Then
\[
I_0
=
\left(
\sqrt{\frac{a|\gR_0|}{N}}
-
\sqrt{\frac{b|\gB_0|}{N}}
\right)^2
=
\frac{\widetilde{\Delta}_0^2/N^2}
{\left(
\sqrt{a|\gR_0|/N}
+
\sqrt{b|\gB_0|/N}
\right)^2}
=
O\!\left(\frac{h_N^2}{\log N}\right).
\]
Consequently,
\[
N^{I_0+\varepsilon}
=
N^\varepsilon N^{I_0}
=
N^\varepsilon\exp\bigl(O(h_N^2)\bigr).
\]
Thus the factor \(\exp(O(h_N^2))\) matches the time scale in
Theorem~\ref{thm:super-polylog-unanimity}, while \(N^\varepsilon\) is the
additional slack in Theorem~\ref{thm:polynomial-minimal-bias}. The proof of Theorem~\ref{thm:polynomial-minimal-bias} makes this transition explicit;
removing the remaining \(\varepsilon\)-slack from the block-amplification
argument is an open problem.
\end{remark}

\section{Polynomial-Time Lower Bound for Blue Unanimity}
\label{sec:necessary-conditions}

We present the proof of Theorem~\ref{thm:polynomial-lower-bound} in this
section. The proofs of the supporting lemmas are deferred to
Appendix~\ref{app:deferred-polynomial-lower-bound}.

The argument stops the process when the unweighted advantage has increased by
a fixed linear amount. Before that time, the large-deviation exponent remains
close to its initial value \(I_0\), so the expected number of red-to-blue flips
up to time \(N^{I_0-\varepsilon}\) is sublinear in \(N\). Reaching blue
unanimity, however, requires a linear increase in the unweighted advantage.

\subsection{Rate Control Below an Intermediate Level}

Let \(x_0\coloneqq\Delta_{0}/N\), and define
\(x_*\coloneqq(a-b)/(a+b)\). The condition
\(1<\rho_0\le a/b-\kappa\) implies
\(0<x_0\le x_\kappa\), where \(x_\kappa\) is defined in
\eqref{eqn:x-kappa-definition} below:
\begin{align}
x_\kappa\coloneqq(a/b-\kappa-1)/(a/b-\kappa+1).
\label{eqn:x-kappa-definition}
\end{align}
Since \(\kappa>0\), \(x_\kappa<x_*\). For \(x<x_*\), define
\[
\mathcal I(x)
\coloneqq
\left(\sqrt{a(1-x)/2}-\sqrt{b(1+x)/2}\right)^2.
\]
Thus \(I_{0}=\mathcal I(x_0)\).

\begin{lemma}\label{lem:local-exponent-control}
Fix \(\varepsilon>0\), and set \(\eta\coloneqq\varepsilon/2\). There exists a
constant \(0<\sigma=\sigma(a,b,\kappa,\varepsilon)<1-x_0\) such that whenever
\(\Delta_{t}\le \Delta_{0}+\sigma N\), we have \(I_{t}\ge I_{0}-\eta\).
\end{lemma}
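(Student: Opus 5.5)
The plan is to reduce the statement to a uniform continuity estimate for the deterministic function \(\mathcal I\) on a compact interval. First, using the camp-size identities \eqref{eqn:blue-size-from-advantage}–\eqref{eqn:red-size-from-advantage}, we have \(|\gR_t|/N=(1-x)/2\) and \(|\gB_t|/N=(1+x)/2\) with \(x=\Delta_t/N\). Hence
\[
I_t=I\!\left(\tfrac{a(1-x)}{2},\tfrac{b(1+x)}{2}\right)=\left(\ReLU\!\left(\sqrt{a(1-x)/2}-\sqrt{b(1+x)/2}\right)\right)^2 .
\]
This coincides with \(\mathcal I(x)\) for \(x\le x_*\), since the inner difference is nonnegative exactly when \(a(1-x)\ge b(1+x)\), i.e.\ \(x\le x_*\). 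The function \(g(x)\coloneqq\sqrt{a(1-x)/2}-\sqrt{b(1+x)/2}\) is strictly decreasing on \([-1,1]\), so \(I_t=(\ReLU(g(x)))^2\) is nonincreasing in \(x\). It follows that whenever \(\Delta_t\le\Delta_0+\sigma N\), i.e.\ \(x\le x_0+\sigma\), monotonicity gives \(I_t\ge(\ReLU(g(x_0+\sigma)))^2\). The task is therefore to choose \(\sigma\) with \((\ReLU(g(x_0+\sigma)))^2\ge\mathcal I(x_0)-\eta\), uniformly in \(x_0\in(0,x_\kappa]\) and depending only on \(a,b,\kappa,\varepsilon\).

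For this, first restrict to \(\sigma\le(x_*-x_\kappa)/2\). Then \(x_0+\sigma\le (x_*+x_\kappa)/2<x_*\), so \(g(x_0+\sigma)\) is strictly positive and the ReLU can be dropped. On the compact interval \([0,(x_*+x_\kappa)/2]\subset[0,1)\), both \(g\) and \(\mathcal I=g^2\) are smooth. The singularity of \(\sqrt{1-x}\) sits at \(x=1\), which is excluded because \(x_*<1\). Hence \(\mathcal I\) has a Lipschitz constant \(\Lambda=\Lambda(a,b,\kappa)\) there; explicitly one can bound \(|\mathcal I'|\le 2|g||g'|\) with \(|g'|\le \tfrac{\sqrt a}{2\sqrt{2(1-x)}}+\tfrac{\sqrt b}{2\sqrt{2(1+x)}}\). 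Taking
\[
\sigma\coloneqq\min\{(x_*-x_\kappa)/2,\ \eta/\Lambda\}
\]
gives \(\mathcal I(x_0+\sigma)\ge\mathcal I(x_0)-\Lambda\sigma\ge\mathcal I(x_0)-\eta\), which is the claim.

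It remains to check the side condition \(0<\sigma<1-x_0\). Since \(x_0\le x_\kappa\) and \(\sigma\le(x_*-x_\kappa)/2<1-x_\kappa\), this holds. Here we use \(x_*<1\), which follows from \(b>0\), and \(x_\kappa<x_*\), which follows from \(\kappa>0\).

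The only point needing care is that \(\sigma\) must not depend on \(x_0\) or \(N\). This is why we use a Lipschitz bound on a fixed compact interval bounded away from \(x=1\), rather than local continuity at \(x_0\). One also has to handle the ReLU correctly: monotonicity in \(x\) of the full expression \((\ReLU(g))^2\) covers all \(x\le x_0+\sigma\), including negative \(x\), so no lower restriction on \(\Delta_t\) is needed. Everything else is routine calculus.
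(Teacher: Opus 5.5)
Your proposal is correct and follows essentially the same route as the paper: write $I_t=\mathcal I(\Delta_t/N)$, use that this is nonincreasing in $x$, and apply a Lipschitz bound for $\mathcal I$ on the fixed compact interval $[0,(x_*+x_\kappa)/2]$ with $\sigma=\min\{(x_*-x_\kappa)/2,\eta/\Lambda\}$. The paper's additional term $(1-x_\kappa)/2$ in the minimum is redundant, as your direct check of $\sigma<1-x_0$ shows.
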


Lemma~\ref{lem:local-exponent-control} allows us to replace the evolving exponent by the fixed exponent \(I_0\) until the process crosses the intermediate level used below.

\subsection{Counting Red-to-Blue Flips}

Fix \(\varepsilon>0\), let \(\sigma\) be supplied by
Lemma~\ref{lem:local-exponent-control}, and define
\[
\zeta
\coloneqq
\inf\{t\ge0:\Delta_t\ge\Delta_0+\sigma N\},
\qquad
T
\coloneqq
\left\lfloor N^{I_0-\varepsilon}\right\rfloor.
\]
The first estimate bounds the total number of red-to-blue flips accumulated
before the stopping time \(\zeta\).

\begin{lemma}\label{lem:red-to-blue-flip-count-before-zeta}
There exists \(C=C(a,b,\kappa,\varepsilon)>0\) such that
\[
\E\left[
\sum_{t=0}^{T-1}\gRB_t\indi{t<\zeta}
\right]
\le
CN^{1-\varepsilon/2}.
\]
\end{lemma}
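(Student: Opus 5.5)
The plan is to bound the summand term by term, conditioning on \(\mathcal F_t\). Since \(\{t<\zeta\}\in\mathcal F_t\), the tower property gives
\[
\E\left[\gRB_t\indi{t<\zeta}\right]
=\E\left[\indi{t<\zeta}\,\E[\gRB_t\mid\mathcal F_t]\right]
=\E\left[\indi{t<\zeta}\,|\gR_t|\,p_t^{\gR}\right],
\]
because conditionally on \(\rvy_t\) each red vertex flips with probability \(p_t^{\gR}\). It therefore suffices to show the deterministic bound
\[
\indi{t<\zeta}\,|\gR_t|\,p_t^{\gR}\le CN^{1-I_0+\varepsilon/2}
\]
for every \(t\), uniformly over configurations. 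Summing over \(t<T\le N^{I_0-\varepsilon}\) then gives \(CN^{1-\varepsilon/2}\), as claimed.

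To prove the pointwise bound, consider a time \(t<\zeta\), so that \(\Delta_t<\Delta_0+\sigma N\). If \(\Delta_t<0\) or blue is otherwise weak, I would not need a good exponent. The crude bound \(|\gR_t|p_t^{\gR}\le N\) only suffices if \(I_0\le \varepsilon/2\), so this case needs care.

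The cleaner route is to use monotonicity. The law of \(\rD_t^{\gR}\) in \eqref{eqn:D-Rt-distribution} is stochastically increasing in \(|\gB_t|\), that is, in \(\Delta_t\). Hence \(p_t^{\gR}\) is bounded by its value at the largest admissible blue size, which corresponds to \(\Delta=\lfloor\Delta_0+\sigma N\rfloor\). By the choice of \(\sigma<1-x_0\) this still has \(|\gR|\asymp N\). For such a configuration, Lemma~\ref{lem:local-exponent-control} together with the Lipschitz continuity of \(\mathcal I\) gives the exponent
\[
I(a|\gR|/N,b|\gB|/N)\ge I_0-\eta, \qquad \eta=\varepsilon/2.
\]

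Next I would invoke the large-deviation upper tail for the flip probability, Corollary~\ref{cor:large-deviation-tail-flip-probabilities}, in the form
\[
p^{\gR}\le N^{-I^{\gR}+o(1)},
\]
uniformly over configurations whose densities lie in a compact set. Here \(I^{\gR}\) differs from the rate \(I\) by \(O(N^{-1/2})\). Combining this with \(|\gR_t|\le N\) yields \(|\gR_t|p_t^{\gR}\le N^{1-I_0+\eta+o(1)}\).

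To get a clean constant, I would shrink \(\eta\) slightly, for example by applying Lemma~\ref{lem:local-exponent-control} with \(\varepsilon/3\) in place of \(\varepsilon/2\), so that the \(o(1)\) term is absorbed. Alternatively, I would use the rate-function upper bound in Lemma~\ref{lem:sublinear-jump-one-step-bound}, which states \(\nu_t\le CN^{1-I_t+\eta}\). This applies since \(x_0+\sigma<x_*\) keeps \(\widetilde{\Delta}\le -c_*N\) once \(\sigma\) is small. That lemma requires \(\Delta_t\ge0\), however, so the monotone comparison is still needed to cover \(\Delta_t<0\).

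The main obstacle is uniformity. The Chernoff-type bound on \(p_t^{\gR}\) must hold with constants independent of the particular configuration, including configurations where \(\Delta_t\) has dropped well below \(\Delta_0\) or become negative. The monotone coupling in \(|\gB_t|\) settles this cleanly: it reduces everything to a single worst-case configuration at level \(\Delta_0+\sigma N\), where the explicit Chernoff bound
\[
\P\bigl(\Bin(n_1,\beta)-\Bin(n_2,\alpha)>0\bigr)\le \exp\Bigl(-\log N\bigl(\sqrt{an_2/N}-\sqrt{bn_1/N}\bigr)^2\Bigr)
\]
gives the exponent directly. This bound follows by optimizing \(\E e^{s\rD}\) with \(1+x\le e^x\).
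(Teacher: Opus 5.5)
Your final route is correct and is essentially the paper's argument. The tower property reduces each term to $\E[\indi{t<\zeta}|\gR_t|p_t^{\gR}]$. The exact Chernoff bound \eqref{eq:binomial-difference-chernoff-upper} gives $p_t^{\gR}\le N^{-I(A_t^{\gR},B_t^{\gB})}\le CN^{-I_t}$, where the shift from $|\gR_t|-1$ to $|\gR_t|$ costs only a bounded factor. Lemma~\ref{lem:local-exponent-control} gives $I_t\ge I_0-\varepsilon/2$, and summing over $t<T$ finishes.

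The only real difference is your stochastic-domination step, which moves everything to a worst-case configuration at level $\Delta_0+\sigma N$. It is valid but unnecessary. The exact Chernoff bound is already uniform over all configurations. Lemma~\ref{lem:local-exponent-control} already covers every $\Delta_t\le\Delta_0+\sigma N$, negative values included, because $\mathcal I$ is decreasing on $[-1,x_*)$. So the uniformity obstacle you describe never actually arises.

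Be aware that your two fallbacks for absorbing the $o(1)$ do not prove the lemma as stated. Corollary~\ref{cor:large-deviation-tail-flip-probabilities} and Lemma~\ref{lem:sublinear-jump-one-step-bound} lose a factor $N^{o(1)}$ or $N^{\eta}$, so they give only $N^{1-\varepsilon/2+o(1)}$ or $CN^{1-\varepsilon/2+\eta}$. Re-applying Lemma~\ref{lem:local-exponent-control} with a smaller parameter produces a smaller $\sigma$, and hence a different stopping time from the $\zeta$ fixed in the statement. That would be harmless for Theorem~\ref{thm:polynomial-lower-bound}, but not for this lemma. It is precisely the $o(1)$-free Chernoff bound of your last paragraph that yields the clean constant $CN^{1-\varepsilon/2}$.
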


Since crossing the intermediate level requires a linear net increase in
\(\Delta_t\), the preceding expectation estimate gives a polynomially small
upper bound on the probability of an early crossing.
\begin{lemma}\label{lem:zeta-is-late}
There exists \(C=C(a,b,\kappa,\varepsilon)>0\) such that
\(\P(\zeta\le T)\le C N^{-\varepsilon/2}\).
\end{lemma}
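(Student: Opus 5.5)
The plan is to convert the expectation bound of Lemma~\ref{lem:red-to-blue-flip-count-before-zeta} into a probability bound through Markov's inequality. The deterministic link is that any increase of the unweighted advantage must be produced by red-to-blue flips.

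\textbf{Step 1: a pathwise inequality.} For every \(t\),
\[
\Delta_{t+1}-\Delta_t=2\gRB_t-2\gBR_t\le 2\gRB_t .
\]
On the event \(\{\zeta\le T\}\) we telescope from time \(0\) to time \(\zeta\). Since \(\Delta_\zeta\ge\Delta_0+\sigma N\), this gives
\[
\sigma N\le\Delta_\zeta-\Delta_0=\sum_{t=0}^{\zeta-1}(\Delta_{t+1}-\Delta_t)\le 2\sum_{t=0}^{\zeta-1}\gRB_t .
\]
Every index \(t\) in the last sum satisfies \(t<\zeta\) and \(t\le\zeta-1\le T-1\). The sum is therefore bounded by
\[
S\coloneqq\sum_{t=0}^{T-1}\gRB_t\indi{t<\zeta}.
\]
All summands are nonnegative, so extending the range to \(T-1\) costs nothing. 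We conclude that \(\{\zeta\le T\}\subseteq\{S\ge\sigma N/2\}\). If \(\zeta=0\), the event cannot occur, because \(\sigma>0\); this case is consistent with the inclusion.

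\textbf{Step 2: Markov's inequality.} Lemma~\ref{lem:red-to-blue-flip-count-before-zeta} gives \(\E S\le CN^{1-\varepsilon/2}\). Hence
\[
\P(\zeta\le T)\le\P(S\ge\sigma N/2)\le\frac{2\E S}{\sigma N}\le\frac{2C}{\sigma}N^{-\varepsilon/2}.
\]
Here \(\sigma=\sigma(a,b,\kappa,\varepsilon)\) comes from Lemma~\ref{lem:local-exponent-control}, so the constant \(2C/\sigma\) depends only on \((a,b,\kappa,\varepsilon)\), as the statement requires.

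\textbf{Where care is needed.} No step is genuinely hard. The only points to check are the index bookkeeping in Step 1 and the fact that \(\sigma\) is a fixed positive constant independent of \(N\). All the analytic content, namely the rate control that keeps \(I_t\ge I_0-\eta\) before \(\zeta\), is already contained in Lemma~\ref{lem:red-to-blue-flip-count-before-zeta}.
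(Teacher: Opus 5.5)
Your proposal is correct and follows essentially the same route as the paper: the pathwise bound \(\sigma N\le\Delta_\zeta-\Delta_0\le 2\sum_{t=0}^{T-1}\gRB_t\indi{t<\zeta}\) on \(\{\zeta\le T\}\), followed by Markov's inequality and Lemma~\ref{lem:red-to-blue-flip-count-before-zeta}. Your additional remarks on the index range and the case \(\zeta=0\) are consistent with the paper's argument.
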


\subsection{Concluding the Proof of the Polynomial-Time Lower Bound}

The rate-control and flip-count estimates now reduce the theorem to a direct
comparison between the intermediate level and blue unanimity.
\begin{proof}[Proof of Theorem~\ref{thm:polynomial-lower-bound}]
Fix \(\varepsilon>0\), and use the corresponding \(\sigma\), \(\zeta\), and
\(T\) defined above. Lemma~\ref{lem:local-exponent-control} gives
\(\Delta_0+\sigma N<N\). On the event \(\{\gR_T=\emptyset\}\), blue unanimity
implies \(\Delta_T=N\), so the definition of \(\zeta\) gives
\[
\{\gR_T=\emptyset\}
\subseteq
\{\zeta\le T\}.
\]
Consequently, Lemma~\ref{lem:zeta-is-late} yields
\[
\P(\gR_T=\emptyset)
\le
\P(\zeta\le T)
\le
CN^{-\varepsilon/2}.
\]
Taking complements and recalling the definition of \(T\), we obtain
\[
\P\left(
\gR_{\lfloor N^{I_0-\varepsilon}\rfloor}\neq\emptyset
\right)
\ge
1-CN^{-\varepsilon/2},
\]
which proves the theorem.
\end{proof}

\section{Future Directions}
We conclude this paper with some possible future directions.

\paragraph{Effect of Spatial Locality.}
In the SBM, vertices in the same camp have identical
neighborhood laws. However, many interactions in real-world networks are
instead spatially local. Thus a vertex samples opinions primarily from nearby vertices and its update depends
on the local, rather than only the global, opinion imbalance. \emph{Random geometric graphs} (RGGs) provide a natural model for this constraint. Recent work
\cite{gaudio2024exact, gaudio2025exact} studies community detection in the
\emph{Geometric Stochastic Block Model} (GSBM) and derives the \emph{information-theoretic} threshold for exact recovery. In view of Remark~\ref{rem:community-detection-exponent},
this raises a parallel question for majority dynamics: which features of the
local geometry, such as the connection radius, local density fluctuations, and
spatial bottlenecks, determine whether an initial advantage can be amplified to reach unanimity? A central question is whether spatially coherent minority regions can survive even when the global weighted advantage favors the majority, thereby changing both the threshold for unanimity and the time required to reach it.

\paragraph{Effect of Degree Heterogeneity.}
The Erd\H{o}s--R\'enyi model and the standard SBM treat
vertices within the same community as statistically interchangeable, and
therefore do not capture the substantial degree heterogeneity commonly
observed in real networks. The degree-corrected stochastic block model
\cite{Dasgupta2004Spectral} addresses this limitation by assigning each vertex
an individual connectivity parameter while retaining the underlying community
structure. This extension is particularly natural for majority dynamics:
high-degree vertices base their updates on more observations and can influence
more neighbors, so degree heterogeneity may change both the direction and the
speed of consensus. Community detection under degree correction has been
studied in \cite{Gao2018CommunityDI}, where the minimax rate for community detection has been derived. A corresponding question for majority
dynamics is whether unanimity still occurs, and how its threshold and
convergence time depend on the distribution of vertex degrees.



\paragraph{The Disassortative Regime.}
The assortative condition \(a>b\) creates positive feedback: vertices interact
more often with those sharing their current opinion, and a sufficiently strong
advantage can therefore reinforce itself. It is natural to ask what replaces
this mechanism in the disassortative regime \(b>a\), which models
heterophilous networks whose interactions occur more frequently across the two
opinion camps. Such cross-camp exposure creates negative feedback, because a
vertex is more likely to observe the opinion opposite to its own. In the
idealized extreme \(a=0\), every non-isolated vertex has only opposite-colored
neighbors and hence flips at the next update, suggesting an alternating
two-cycle rather than convergence. Understanding when this tendency produces
oscillation, persistent disagreement, or eventual unanimity is a natural
counterpart to the assortative theory developed here.

\paragraph{Relaxing the Connectivity Condition.}
Our main results assume \(b>1\), a convenient condition that guarantees
connectivity with high probability uniformly over the possible imbalance
between the two opinion camps. This condition is stronger than what is needed
for a fixed sparse stochastic block model. As preparation for a future
extension, Lemma~\ref{lem:connectivity-sparse-sbm} derives a sharper
connectivity criterion that depends explicitly on the two camp sizes and the
parameters \(a,b\). The remaining challenge is to incorporate this
state-dependent criterion into the dynamics, because the opinion partition,
and hence the connectivity threshold of the newly sampled graph, changes from
one update to the next. This raises the question of whether unanimity can still
be reached when the process temporarily passes through a disconnected regime,
or whether persistent disconnection allows different components to retain
opposing opinions.

\paragraph{Weighted Self-Opinions.}
The present update rule gives a vertex's current opinion no additional weight
beyond the votes supplied by its neighbors. In many opinion-formation models,
however, agents exhibit inertia or stubbornness and require a sufficiently
strong opposing majority before changing their state. This effect can be
modeled by the modified rule
\[
\ervy_{t+1}(v)
=
\operatorname{sign}\!\left(
N_t^{\gB}(v)-N_t^{\gR}(v)+s\,\ervy_t(v)
\right),
\]
where \(s\in\mathbb R\) is the weight assigned to the current opinion. A
positive value of \(s\) favors persistence, whereas a negative value favors
switching; equivalently, the modification introduces a weighted self-loop at
each vertex. It is natural to ask how this local inertia changes the threshold
for unanimity, the convergence time, and the possibility that a minority
opinion persists indefinitely.

\newpage
\appendix
\phantomsection
\addcontentsline{toc}{section}{Appendices}

\section{Flip Probability Estimates for Sparse Binomial Differences}\label{sec:flip-probability-analysis}
Let \(\rD=\rX-\rY\) be the difference of two independent Binomial random
variables, with
    \begin{align}
    \rX\sim\Bin(s,\beta),\qquad \rY\sim\Bin(r,\alpha), \label{eqn:X-Y-distribution}
\end{align}
where \(r=r(N)\le N\) and \(s=s(N)\le N\). Define the ratios
\begin{align}
    A_{N} \coloneqq \frac{ar}{N}, \qquad B_{N} \coloneqq \frac{bs}{N}. \label{eqn:A-B-ratios}
\end{align}
The probabilities \(\alpha\) and \(\beta\) lie in the critical regime: both
are of order \(\log N/N\). Thus there are constants \(a,b>0\) such that
\begin{align}
    \alpha=a \cdot \frac{\log N}{N}, \qquad \beta=b \cdot \frac{\log N}{N}. \label{eqn:alpha-beta-critical}
\end{align}
Denote the mean and variance of the random variable \(\rD\) by
\begin{align}
    \mu_{N} \coloneqq \E\rD, \qquad \sigma^2_{N} \coloneqq \Var(\rD). \label{eqn:mu-sigma-squared}
\end{align}
Below, we approximate \(\P(\rD>0)\) in three different regimes according to
the magnitude of \(\mu_N/\sigma_N\).

\subsection{Gaussian Window: Berry--Esseen Approximation}
\noindent
The estimates in this subsection are useful in the bounded Gaussian window, where
the standardized mean \(\mu_N/\sigma_N\) is bounded. In this regime, the
Berry--Esseen error is small enough to approximate the tail probability directly
using the standard normal distribution function; in particular, when
\(|\mu_N|/\sigma_N=O(1)\), it gives a bounded-window approximation for
\(\P(\rD>0)\).

\begin{lemma}\label{lem:gaussian-sparse-binomial-difference}
    Consider the problem described in
    \eqref{eqn:X-Y-distribution}--\eqref{eqn:mu-sigma-squared}. Suppose that
    \(A_N+B_N\ge c_0\) for some constant \(c_0>0\). Then, uniformly over such \(r,s\), we have
    \[
    \sup_{x\in\mathbb R}\left|
    \P\left(\frac{\rD-\mu_N}{\sigma_N}\le x\right)-\Phi(x)\right|
    \le C/\sqrt{\log N},
    \]
    where \(C=C(a,b,c_0)>0\) is some constant.
    \end{lemma}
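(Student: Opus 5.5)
Write $\rD$ as a sum of $s+r$ independent summands: $\rD=\sum_{i=1}^{s}\xi_i-\sum_{j=1}^{r}\zeta_j$, where $\xi_i\sim\mathrm{Bernoulli}(\beta)$ and $\zeta_j\sim\mathrm{Bernoulli}(\alpha)$. The plan is to apply the classical Berry--Esseen theorem for non-identically distributed independent summands:
\[
\sup_x\left|\P\left(\frac{\rD-\mu_N}{\sigma_N}\le x\right)-\Phi(x)\right|\le C_0\,\frac{\sum_k\E|X_k-\E X_k|^3}{\sigma_N^3},
\]
where $C_0$ is absolute and $X_k$ ranges over the $\pm$ Bernoulli summands. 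Everything then reduces to bounding the third absolute central moments from above and the variance from below.

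For a Bernoulli$(p)$ variable, $\E|X-p|^3=p(1-p)\bigl((1-p)^2+p^2\bigr)\le p(1-p)$. Summing, the numerator is at most
\[
\sum_k \E|X_k-\E X_k|^3\le s\beta(1-\beta)+r\alpha(1-\alpha)=\sigma_N^2,
\]
since the variance of the difference is exactly the sum of these Bernoulli variances. The ratio is therefore at most $C_0/\sigma_N$, and it remains to show $\sigma_N^2\gtrsim\log N$ uniformly in $r,s$.

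Because $\alpha,\beta\to0$, for large $N$ we have $1-\alpha,1-\beta\ge1/2$. Hence
\[
\sigma_N^2\ge\tfrac12\bigl(s\beta+r\alpha\bigr)=\tfrac12\,(B_N+A_N)\log N\ge\tfrac{c_0}{2}\log N.
\]
This gives the bound $C_0\sqrt{2/c_0}\,/\sqrt{\log N}$, uniformly over all admissible $r,s\le N$. For the finitely many small $N$, where $1-\alpha$ might not exceed $1/2$ (or $\alpha$ might even exceed $1$), we enlarge $C$, using that the supremum is always at most $1$.

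There is no real obstacle. The only point needing care is the uniformity: the constant must not depend on $r,s$ individually but only on $c_0$. That is ensured because the Lyapunov-ratio argument uses only the lower bound $A_N+B_N\ge c_0$, and the degenerate cases $r=0$ or $s=0$ are covered by the same computation.
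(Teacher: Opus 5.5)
Your proposal is correct and follows the paper's argument: Berry--Esseen for the independent centered Bernoulli summands, a per-summand third-moment bound, and the variance lower bound $\sigma_N^2\ge \tfrac{c_0}{2}\log N$. The one difference is in the numerator. You bound $\E|X-p|^3\le p(1-p)$, so the numerator is at most $\sigma_N^2$ and your constant depends only on $c_0$; the paper bounds it by $p$ and uses $\beta s+\alpha r\le (a+b)\log N$, so its constant depends on $a,b,c_0$.
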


    \begin{proof}[Proof of Lemma~\ref{lem:gaussian-sparse-binomial-difference}]
    Since \(A_N+B_N\ge c_0\) and \(\alpha,\beta=O(\log N/N)\), for all sufficiently large \(N\),
    \[
    \sigma_N^2
    =
    \beta(1-\beta)s+\alpha(1-\alpha)r
    \ge
    \frac{c_0}{2}\log N.
    \]
    We realize the two binomial variables on a common probability space. Let
    \(\xi_1,\ldots,\xi_s\) be i.i.d.\ \(\Ber(\beta)\), let
    \(\zeta_1,\ldots,\zeta_r\) be i.i.d.\ \(\Ber(\alpha)\), and assume that
    the two families are independent. Then
    \[
    \rD-\mu_N
    =
    \sum_{i=1}^{s}(\xi_i-\beta)
    -
    \sum_{j=1}^{r}(\zeta_j-\alpha).
    \]
    The summands on the right-hand side are independent and have mean zero.
    Their total variance is exactly
    \[
    \sum_{i=1}^{s}\beta(1-\beta)
    +
    \sum_{j=1}^{r}\alpha(1-\alpha)
    =
    \sigma_N^2 \leq
    (a+b)\log N.
    \]
    We next bound the total absolute third moment. For
    \(\xi\sim\Ber(\beta)\), we have
    \[
    \E\abs{\xi-\beta}^{3}
    =
    \beta(1-\beta)^3
    +
    (1-\beta)\beta^3
    =
    \beta(1-\beta)\bigl((1-\beta)^2+\beta^2\bigr)
    \le \beta,
    \]
    while the same estimate applies to \(-(\zeta-\alpha)\). Hence
    \[
    \sum_{i=1}^{s}\E\abs{\xi_i-\beta}^{3}
    +
    \sum_{j=1}^{r}\E\abs{\zeta_j-\alpha}^{3}
    \le
    \beta s+\alpha r \leq (a+b)\log N,
    \]
    where the last inequality follows from \(r,s\le N\) and
    \eqref{eqn:alpha-beta-critical}. Applying
    Lemma~\ref{lem:berry-esseen} to the centered summands above gives
    \[
    \sup_{x\in\mathbb R}
    \left|
    \P\left(
    \frac{\rD-\mu_N}{\sigma_N}
    \le x
    \right)
    -
    \Phi(x)
    \right|
    \le
    \gC_{\mathrm{BE}}
    \frac{\beta s+\alpha r}{\sigma_N^3} \leq \frac{C}{\sqrt{\log N}},
    \]
    where the last inequality uses \(\beta s+\alpha r\le(a+b)\log N\) and the
    lower bound on \(\sigma_N^2\).
    \end{proof}

    \begin{corollary}[Gaussian estimates for one-vertex flip probabilities]
    \label{cor:gaussian-flip-probabilities}
Recall \(p_{t}^{\gB}\), \(q_{t}^{\gB}\) in \eqref{eqn:p-q-B}, \(p_{t}^{\gR}\), \(q_{t}^{\gR}\) in \eqref{eqn:p-q-R}, \(m_{t}^{\gB}\), \(v_{t}^{\gB}\) in \eqref{eqn:m-v-B} and \(m_{t}^{\gR}\), \(v_{t}^{\gR}\) in \eqref{eqn:m-v-R}. Define the following quantities:
\begin{align}
x_{t}^{\gR}\coloneqq m_{t}^{\gR}/\sqrt{v_{t}^{\gR}}, \qquad x_{t}^{\gB}\coloneqq m_{t}^{\gB}/\sqrt{v_{t}^{\gB}}.
\end{align}
Condition on \(\rvy_{t}\) satisfying \(|\gR_{t}|\ge c_0N\) and \(|\gB_{t}|\ge c_0N\) for some constant \(c_0>0\). Then, uniformly over such configurations,
    \begin{subequations}
    \begin{align}
    p_{t}^{\gR}
    &=
    \Phi(x_{t}^{\gR})+O(1/\sqrt{\log N}), \label{eq:gaussian-pR}\\
    q_{t}^{\gR}
    &=
    \Phi(-x_{t}^{\gR})+O(1/\sqrt{\log N}), \label{eq:gaussian-qR}\\
    p_{t}^{\gB}
    &=
    \Phi(x_{t}^{\gB})+O(1/\sqrt{\log N}), \label{eq:gaussian-pB}\\
    q_{t}^{\gB}
    &=
    \Phi(-x_{t}^{\gB})+O(1/\sqrt{\log N}). \label{eq:gaussian-qB}
    \end{align}
    \end{subequations}
\end{corollary}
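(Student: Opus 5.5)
The plan is to deduce the corollary directly from Lemma~\ref{lem:gaussian-sparse-binomial-difference}, applied separately to each of the two degree differences. Take \(p_t^\gR\) first. Conditional on \(\rvy_t\), by \eqref{eqn:D-Rt-distribution} we have \(\rD_t^\gR\stackrel d=\rX-\rY\) with \(\rX\sim\Bin(s,\beta)\) and \(\rY\sim\Bin(r,\alpha)\), where \(s=|\gB_t|\) and \(r=|\gR_t|-1\). Both are at most \(N\), as Lemma~\ref{lem:gaussian-sparse-binomial-difference} requires. The hypothesis \(|\gR_t|,|\gB_t|\ge c_0N\) gives
\[
A_N+B_N=\frac{a(|\gR_t|-1)+b|\gB_t|}{N}\ge (a+b)c_0-\frac aN\ge \frac{(a+b)c_0}{2}
\]
for large \(N\). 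So the lemma applies with constant \((a+b)c_0/2\), uniformly over all such configurations. Here \(\mu_N=m_t^\gR\) and \(\sigma_N^2=v_t^\gR\), and \(\sigma_N>0\).

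Next, write \(Z=(\rD_t^\gR-m_t^\gR)/\sqrt{v_t^\gR}\). Then
\[
q_t^\gR=\P(\rD_t^\gR\le 0)=\P\bigl(Z\le -x_t^\gR\bigr)=\Phi(-x_t^\gR)+O(1/\sqrt{\log N})
\]
by the uniform Kolmogorov bound evaluated at the point \(x=-x_t^\gR\). This proves \eqref{eq:gaussian-qR}. Taking complements,
\[
p_t^\gR=1-q_t^\gR=1-\Phi(-x_t^\gR)+O(1/\sqrt{\log N})=\Phi(x_t^\gR)+O(1/\sqrt{\log N}),
\]
using the symmetry \(1-\Phi(-x)=\Phi(x)\). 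This proves \eqref{eq:gaussian-pR}. No continuity correction or lattice issue arises, because the lemma controls the CDF at every real point, including the atom at \(0\).

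The blue estimates \eqref{eq:gaussian-pB} and \eqref{eq:gaussian-qB} follow identically from \eqref{eqn:D-Bt-distribution}, now with \(s=|\gR_t|\) and \(r=|\gB_t|-1\). The lower bound on \(A_N+B_N\) holds by the same computation.

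The constant \(C=C(a,b,c_0)\) from the lemma does not depend on the configuration, so the \(O(\cdot)\) terms are uniform. There is no real obstacle; the only care needed is checking the hypotheses of Lemma~\ref{lem:gaussian-sparse-binomial-difference}, namely \(r,s\le N\), the \(-1\) shift in the size of the own camp, and the lower bound \(A_N+B_N\ge c>0\) coming from the linear camp sizes.
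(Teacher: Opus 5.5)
Your proposal is correct and is essentially the paper's proof. You apply Lemma~\ref{lem:gaussian-sparse-binomial-difference} with \((s,r)=(|\gB_t|,|\gR_t|-1)\) for red and \((|\gR_t|,|\gB_t|-1)\) for blue, evaluate the uniform Kolmogorov bound at \(x=-x_t^{\gR}\) (resp.\ \(x=-x_t^{\gB}\)) to get the \(q\)-estimates, and take complements to get the \(p\)-estimates. Your check that \(A_N+B_N\ge (a+b)c_0/2\) plays the same role as the paper's lower bound \(v_t^{\gR}\ge (ac_0/4)\log N\).
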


    \begin{proof}[Proof of Corollary~\ref{cor:gaussian-flip-probabilities}]
    Throughout the proof, we condition on \(\rvy_t\). All probabilities and
    expectations below are conditional on this configuration.

    We first consider a fixed red vertex. Since \(\alpha,\beta=O(\log N/N)\), for all sufficiently large \(N\) we have \(1-\alpha\ge1/2\), \(1-\beta\ge1/2\), and \(|\gR_t|-1\ge c_0N/2\). Hence the conditional variance \(v_t^{\gR}\) is at least \(\frac{ac_0}{4}\log N\). Applying Lemma~\ref{lem:gaussian-sparse-binomial-difference} with
    \(s=|\gB_t|\), \(r=|\gR_t|-1\), \(\mu_N=m_t^{\gR}\), and \(\sigma_N^2=v_t^{\gR}\) gives
    \[
    \sup_{x\in\R}
    \left|
    \P\left(
    \frac{\rD_t^{\gR}-m_t^{\gR}}{\sqrt{v_t^{\gR}}}
    \le x
    \,\middle|\,
    \rvy_t
    \right)
    -
    \Phi(x)
    \right|
    \le
    \frac{C}{\sqrt{\log N}},
    \]
    where \(C=C(a,b,c_0)\). Taking \(x=-x_t^{\gR}\) gives
    \[
    q_t^{\gR}
    =
    \Phi(-x_t^{\gR})+O\left(\frac1{\sqrt{\log N}}\right),
    \qquad
    p_t^{\gR}
    =
    \Phi(x_t^{\gR})+O\left(\frac1{\sqrt{\log N}}\right).
    \]
    The same argument applied to \(\rD_t^{\gB}\), with
    \(s=|\gR_t|\) and \(r=|\gB_t|-1\), proves \eqref{eq:gaussian-pB} and \eqref{eq:gaussian-qB}.
\end{proof}

\subsection{Large-Deviation Tail: Rate-Function Exponents}
\noindent
The estimates in this subsection are useful on the logarithmic tail scale,
typically when \(|\mu_N|/\sigma_N\) is of order \(\sqrt{\log N}\). In this
range the polynomial prefactor is not tracked; the relevant information is the
exponent given by the rate function \(I(\cdot,\cdot)\) defined in
\eqref{eqn:LDP-rate-function}.

\begin{lemma}[Large-deviation tail estimates for a difference of sparse binomials] \label{lem:binomial-difference-large-deviation-tail}
Consider the problem described in
\eqref{eqn:X-Y-distribution}--\eqref{eqn:mu-sigma-squared}. Suppose that
\(A_N\ge c_0\) and \(B_N\ge c_0\) for some constant \(c_0>0\). Then,
uniformly over all such \(r,s\), we have
\begin{subequations}
\begin{align}
\P(\rD>0)
&\,= N^{-I(A_N, B_N) + o(1)} \label{eqn:binomial-difference-upper-tail}\\
\P(\rD\le 0)
&\,= N^{-I(B_N, A_N) + o(1)}.
\label{eqn:binomial-difference-lower-tail}
\end{align}
\end{subequations}
    \end{lemma}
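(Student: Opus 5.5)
We prove the two estimates with the exponential-moment (Chernoff) method, using an explicit change of measure for the matching lower bound. Let \(L\coloneqq\log N\). Since \(\rX,\rY\) are independent binomials with success probabilities of order \(L/N\) and \(r,s\le N\),
\[
\log\E e^{\lambda\rD}
=s\log(1+\beta(e^\lambda-1))+r\log(1+\alpha(e^{-\lambda}-1))
=L\bigl[B_N(e^\lambda-1)+A_N(e^{-\lambda}-1)\bigr]+O(L^2/N),
\]
uniformly for \(\lambda\) in a compact set. The bracket is \(\psi(\lambda)\coloneqq B_Ne^\lambda+A_Ne^{-\lambda}-A_N-B_N\), which is convex with minimum at \(e^{\lambda^*}=\sqrt{A_N/B_N}\). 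Its minimum value is \(-(\sqrt{A_N}-\sqrt{B_N})^2\). The hypotheses \(A_N,B_N\ge c_0\) and \(A_N,B_N\le\max(a,b)\) keep \(\lambda^*\) in a compact set, so the error term above is uniform.

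\textbf{Upper bound.} For \eqref{eqn:binomial-difference-upper-tail}, Markov's inequality gives \(\P(\rD>0)\le\inf_{\lambda\ge0}\E e^{\lambda\rD}\). The two cases give the following.
\begin{itemize}
\item If \(A_N\ge B_N\), then \(\lambda^*\ge0\), and we get \(N^{-(\sqrt{A_N}-\sqrt{B_N})^2+o(1)}\).
\item If \(A_N<B_N\), taking \(\lambda=0\) gives the trivial bound \(1=N^{-I(A_N,B_N)}\).
\end{itemize}
The bound for \(\P(\rD\le0)\) follows symmetrically with \(\lambda\le0\), or by applying the same argument to \(-\rD=\rY-\rX\) with the roles of \(A_N,B_N\) swapped. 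The only subtlety is that \(-\rD\ge0\) rather than \(>0\) is needed. The Chernoff bound for \(\P(-\rD\ge0)\) is the same.

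\textbf{Lower bound.} When \(I=0\), the claim is that the probability is \(N^{o(1)}\). We expect this to follow from the bounded-window Gaussian estimate Lemma~\ref{lem:gaussian-sparse-binomial-difference} when \(|\mu_N|/\sigma_N=O(1)\). When the mean is favorably large, the probability is \(\ge1/2\) by Chebyshev's inequality. The intermediate range \(1\ll|\mu_N|/\sigma_N\ll\sqrt L\) near \(A_N\approx B_N\) is handled by the same tilting argument described next.

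For \(A_N>B_N\), we lower bound \(\P(\rD>0)\ge\P(\rX=k+1,\rY=k)\) with \(k\approx L\sqrt{A_NB_N}\) chosen as an integer. This is the point where the tilted means \(LB_Ne^{\lambda^*}\) and \(LA_Ne^{-\lambda^*}\) coincide. Stirling's formula and the Poisson approximation to binomials (valid since \(k=O(L)\) and \(k^2/N\to0\)) give
\[
\P(\rX=k+1)\P(\rY=k)=\exp\bigl(-L[B_N+A_N-2\sqrt{A_NB_N}]+O(\log L)\bigr)=N^{-(\sqrt{A_N}-\sqrt{B_N})^2+o(1)}.
\]
In the optimization, the terms \(k\log(LB_N/k)\) and \(k\log(LA_N/k)\) combine to \(k\log(L^2A_NB_N/k^2)+2k\). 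This vanishes to leading order at \(k=L\sqrt{A_NB_N}\) and leaves exactly \(-L(A_N+B_N)+2L\sqrt{A_NB_N}\). The rounding of \(k\) to an integer and the shift by \(+1\) cost only factors of \(O(L)\), which are \(N^{o(1)}\).

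\textbf{Main obstacle.} The main obstacle is uniformity: all \(o(1)\) terms must hold uniformly over \(r,s\) with \(A_N,B_N\in[c_0,\max(a,b)]\). This is guaranteed because every error is \(O(\log L/L)\) in the exponent, with constants depending only on \(a,b,c_0\). A secondary point is the transition case \(A_N\approx B_N\) in the lower bound. There the point-mass bound still gives \(N^{-I+o(1)}\), since \(I\) is small and \(k\approx LA_N\) is admissible, so the same computation covers it uniformly.
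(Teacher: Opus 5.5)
Your proof is correct, but your lower bound follows a different route from the paper's.

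\textbf{Upper bound.} The paper's upper bound is the same Chernoff argument as yours. It uses $\log(1+x)\le x$ instead of a Taylor expansion, which gives the exact inequality $\P(\rD>0)\le N^{-I(A_N,B_N)}$ with no error term. That exact form is reused later, e.g.\ in the proof of Lemma~\ref{lem:third-day-extinction}.

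\textbf{Paper's lower bound.} The paper passes to subsequences along which $A_N\to A$ and $B_N\to B$. It applies a cited G\"artner--Ellis left-tail lemma to $\rY-\rX$ at level $-\delta\log N$ and lets $\delta\downarrow0$. It treats the cases $A>B$, $A=B$, $A<B$ separately, the last via the Chernoff bound on the complementary tail. Uniformity comes from compactness.

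\textbf{Your lower bound.} You bound $\P(\rD>0)$ from below by the single point mass $\P(\rX=k+1)\P(\rY=k)$ at $k\approx\sqrt{A_NB_N}\log N$, evaluated with the Poisson approximation and Stirling. This is essentially the local form of the exponential tilt and is fully elementary. It gives an explicit error of $O(\log\log N/\log N)$ in the exponent, uniformly over $A_N,B_N\in[c_0,\max\{a,b\}]$. It needs neither a subsequence argument nor an external large-deviation theorem.

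\textbf{What each route buys.} Your point mass only reproduces $(\sqrt{A_N}-\sqrt{B_N})^2$, which exceeds $I(A_N,B_N)=0$ when $A_N<B_N$. So you need a separate Gaussian input in that regime. The paper's argument covers all cases within one framework, at the cost of a purely qualitative $o(1)$.

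\textbf{Simplifying the $I=0$ regime.} $I(A_N,B_N)=0$ means $\mu_N=(B_N-A_N)\log N\ge0$. Then Lemma~\ref{lem:gaussian-sparse-binomial-difference} directly gives
\[
\P(\rD>0)\ge\Phi(\mu_N/\sigma_N)-C/\sqrt{\log N}\ge 1/2-o(1).
\]
No Chebyshev range needs to be split off. Your ``intermediate range'' remark belongs to the case $A_N>B_N$, and your point-mass computation already covers it uniformly.
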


    \begin{proof}[Proof of Lemma~\ref{lem:binomial-difference-large-deviation-tail}]
    We first prove the upper bounds in \eqref{eqn:binomial-difference-upper-tail} and \eqref{eqn:binomial-difference-lower-tail}. For every \(\theta\ge0\), using independence, Markov's inequality, and
    \(\log(1+x)\le x\), we get
    \begin{align*}
    \P(\rX-\rY>0)
    &\le
    \E e^{\theta(\rX-\rY)}\\
    &=
    \left(1+\beta(e^\theta-1)\right)^s
    \left(1+\alpha(e^{-\theta}-1)\right)^r\\
    &\le
    \exp\left\{
    \log N\left[B_N(e^\theta-1)+A_N(e^{-\theta}-1)\right]
    \right\}.
    \end{align*}
    The minimum of \(B_N(e^\theta-1)+A_N(e^{-\theta}-1)\) over
    \(\theta\ge0\) is \(-(\sqrt{A_N}-\sqrt{B_N})^2\) if \(B_N<A_N\), and is \(0\) if \(B_N\ge A_N\). Consequently, we have
    \begin{align}
    \P(\rX-\rY>0)
    \le N^{-I(A_N,B_N)},
    \label{eq:binomial-difference-chernoff-upper}
    \end{align}
    which proves the upper bound in \eqref{eqn:binomial-difference-upper-tail}. Applying the same argument to \(\rY-\rX\) yields the upper bound in \eqref{eqn:binomial-difference-lower-tail}, i.e.,
    \begin{align}
    \P(\rX-\rY\le0)
    =
    \P(\rY-\rX\ge0)
    \le
    N^{-I(B_N,A_N)}.
    \label{eq:binomial-difference-chernoff-lower}
    \end{align}

    It remains to prove the matching lower bounds, uniformly on
    \(A_N,B_N\ge c_0\). Since \(A_N\in[c_0,a]\) and \(B_N\in[c_0,b]\), it suffices to argue along an arbitrary subsequence on which \(A_N\to A\) and \(B_N\to B\), with \(A,B>0\). We first prove the lower bound in \eqref{eqn:binomial-difference-upper-tail}. Denote \(\rS_N\coloneqq \rY-\rX\). For every fixed \(\theta\in\R\), we have
    \begin{align*}
    \frac{1}{\log N}\log \E e^{\theta \rS_N}
    &=
    \frac{r}{\log N}\log\left(1+\alpha(e^\theta-1)\right)
    +
    \frac{s}{\log N}\log\left(1+\beta(e^{-\theta}-1)\right)\\
    &\longrightarrow
    A(e^\theta-1)+B(e^{-\theta}-1)
    \eqqcolon \Lambda(\theta).
    \end{align*}
    The quadratic error from expanding the two logarithms is
    \[
    O\left(\frac{r\alpha^2+s\beta^2}{\log N}\right)
    =
    O\left(\frac{\log N}{N}\right)
    =
    o(1).
    \]
    Thus Lemma~\ref{lem:gartner-ellis-left-tail} applies to \(\rS_N\) with
    rate \(\log N\).
\begin{itemize}
    \item If \(B<A\), then \(\Lambda'(0)=A-B>0\). For every fixed \(\delta>0\),
    Lemma~\ref{lem:gartner-ellis-left-tail} gives
    \begin{align*}
    \lim_{N\to\infty}
    \frac{1}{\log N}\log \P(\rS_N\le -\delta\log N)
    =
    -J(-\delta),
    \qquad
    J(x)\coloneqq \sup_{\theta\in\R}\{x\theta-\Lambda(\theta)\}.
    \end{align*}
    Since \(\{\rS_N\le -\delta\log N\}\subseteq\{\rX-\rY>0\}\) for all
    sufficiently large \(N\), letting \(\delta\downarrow0\) gives
    \[
    \liminf_{N\to\infty}
    \frac{1}{\log N}\log \P(\rX-\rY>0)
    \ge -J(0).
    \]
    The value at zero is
    \[
    J(0)
    =
    -\inf_{\theta\in\R}\Lambda(\theta)
    =
    (\sqrt A-\sqrt B)^2
    =
    I(A,B),
    \]
    where the minimizing point satisfies \(e^\theta=\sqrt{B/A}\). Combining
    this with \eqref{eq:binomial-difference-chernoff-upper} gives the desired
    logarithmic asymptotic when \(B<A\).

    \item If \(A=B\), then for every \(\delta>0\), the same argument as in the previous case applies with
    \(-\delta<0=\Lambda'(0)\), and \(J(-\delta)\to J(0)=0\) as
    \(\delta\downarrow0\). The Chernoff upper bound gives the matching upper tail exponent \(0\).

    \item If \(B>A\), then \(I(A,B)=0\), and
    \eqref{eq:binomial-difference-chernoff-lower} gives
    \(\P(\rX-\rY\le0)=o(1)\). Hence \(\P(\rX-\rY>0)=1-o(1)\), which is the desired exponent \(0\).
\end{itemize}
Therefore, along the chosen subsequence,
    \[
    \P(\rX-\rY>0)
    =
    N^{-I(A,B)+o(1)}.
    \]
    Since \(I(A_N,B_N)\to I(A,B)\), this proves \eqref{eqn:binomial-difference-upper-tail} uniformly.

    The proof of \eqref{eqn:binomial-difference-lower-tail} is the same with the
    roles of \(\rX\) and \(\rY\) interchanged. Equivalently, apply the preceding
    argument to \(\rT_N\coloneqq \rX-\rY\). Then
    \[
    \frac{1}{\log N}\log \E e^{\theta \rT_N}
    \longrightarrow
    B(e^\theta-1)+A(e^{-\theta}-1).
    \]
    The same three-case analysis as in the preceding argument implies
    \[
    \P(\rX-\rY\le0)
    =
    N^{-I(B,A)+o(1)}
    =
    N^{-I(B_N,A_N)+o(1)}.
    \]
    This proves the second estimate uniformly along every convergent
    subsequence and completes the proof.
    \end{proof}

    \begin{corollary}[Large-deviation tail bounds for one-vertex flip probabilities] \label{cor:large-deviation-tail-flip-probabilities}
    Fix \(t\ge 0\), and condition on \(\rvy_{t}\). Recall the distributions of
    \(\rD_{t}^{\gR}\) and \(\rD_{t}^{\gB}\) from
    \eqref{eqn:D-Rt-distribution} and \eqref{eqn:D-Bt-distribution}, and the
    flip probabilities \(p_t^{\gB},q_t^{\gB},p_t^{\gR},q_t^{\gR}\) from
    \eqref{eqn:p-q-B} and \eqref{eqn:p-q-R}. Recall
    \(A_t^\gR,B_t^\gB,A_t^\gB,B_t^\gR\) from
    Lemma~\ref{lem:sublinear-jump-one-step-bound} and the function \(I(x,y)\)
    from \eqref{eqn:LDP-rate-function}. Then uniformly over all configurations satisfying \(A_{t}^{\gB}\ge c_0\), \(B_{t}^{\gR}\ge c_0\), we have
\begin{subequations}
\begin{align}
p_t^{\gB}
&=
N^{-I(A_{t}^{\gB},B_{t}^{\gR})+o(1)}
\label{eq:blue-to-red-flip-rate}\\
q_t^{\gB}
&=
N^{-I(B_{t}^{\gR},A_{t}^{\gB})+o(1)}.
\label{eq:blue-stays-blue-rate}
\end{align}
\end{subequations}
Similarly, uniformly over all configurations satisfying \(A_{t}^{\gR}\geq c_0\), \(B_{t}^{\gB}\geq c_0\),
    \begin{subequations}
    \begin{align}
    p_t^{\gR}
    &=
    N^{-I(A_{t}^{\gR},B_{t}^{\gB})+o(1)}\,,
    \label{eq:red-to-blue-flip-rate}\\
    q_t^{\gR}
    &=
    N^{-I(B_{t}^{\gB},A_{t}^{\gR})+o(1)}.
    \label{eq:red-stays-red-rate}
    \end{align}
    \end{subequations}
    \end{corollary}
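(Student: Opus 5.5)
The corollary follows by specialising Lemma~\ref{lem:binomial-difference-large-deviation-tail} to the conditional laws \eqref{eqn:D-Bt-distribution} and \eqref{eqn:D-Rt-distribution}. Throughout we condition on \(\rvy_t\). Under this conditioning the edge indicators between a fixed vertex and the remaining vertices are independent Bernoulli variables, with probability \(\alpha\) or \(\beta\) according to the current colours. So each one-vertex degree difference is exactly a difference of two independent binomials of the form \eqref{eqn:X-Y-distribution}.

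For a red vertex, I take \(\rX=N_t^{\gB}(v)\sim\Bin(|\gB_t|,\beta)\) and \(\rY=N_t^{\gR}(v)\sim\Bin(|\gR_t|-1,\alpha)\). This is the choice \(s=|\gB_t|\) and \(r=|\gR_t|-1\), both at most \(N\). The ratios in \eqref{eqn:A-B-ratios} then become \(A_N=a(|\gR_t|-1)/N=A_t^{\gR}\) and \(B_N=b|\gB_t|/N=B_t^{\gB}\). The hypothesis \(A_t^{\gR},B_t^{\gB}\ge c_0\) is exactly the hypothesis of Lemma~\ref{lem:binomial-difference-large-deviation-tail}. Since \(p_t^{\gR}=\P(\rD>0)\) and \(q_t^{\gR}=\P(\rD\le0)\), \eqref{eqn:binomial-difference-upper-tail} gives \eqref{eq:red-to-blue-flip-rate}, and \eqref{eqn:binomial-difference-lower-tail} gives \eqref{eq:red-stays-red-rate}.

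The blue case is symmetric. I take \(s=|\gR_t|\) and \(r=|\gB_t|-1\), so \(A_N=A_t^{\gB}\) and \(B_N=B_t^{\gR}\). The same two displays of the lemma then give \eqref{eq:blue-to-red-flip-rate} and \eqref{eq:blue-stays-blue-rate}.

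The one point needing care is uniformity of the \(o(1)\) over configurations. The lemma is stated uniformly over all pairs \((r,s)\) with \(A_N,B_N\ge c_0\), and \(p_t^{\gC}\) depends on the configuration only through the camp sizes. Hence the \(o(1)\) term is a function of \(N\) alone, bounded by the lemma's supremum over admissible \((r,s)\), and is uniform over configurations. Because the lemma is applied directly with the exact parameters \(r=|\gR_t|-1\) and \(r=|\gB_t|-1\), no separate correction for the \(-1\) is needed. I do not expect a real obstacle; the only step requiring verification is that the conditional independence structure of the SBM edges yields exactly the two independent binomials.
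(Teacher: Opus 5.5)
Your proposal is correct and matches the paper's proof: it also applies Lemma~\ref{lem:binomial-difference-large-deviation-tail} with \(s=|\gR_t|\), \(r=|\gB_t|-1\) for a blue vertex and \(s=|\gB_t|\), \(r=|\gR_t|-1\) for a red vertex, which identifies \((A_N,B_N)\) with \((A_t^{\gB},B_t^{\gR})\) and \((A_t^{\gR},B_t^{\gB})\) respectively. Your explicit remark on uniformity adds a point the paper leaves implicit: the flip probabilities depend on the configuration only through the camp sizes, so the lemma's uniform \(o(1)\) carries over.
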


    \begin{proof}[Proof of Corollary~\ref{cor:large-deviation-tail-flip-probabilities}]
    Condition on \(\rvy_{t}\). By \eqref{eqn:D-Bt-distribution}, applying
    Lemma~\ref{lem:binomial-difference-large-deviation-tail} with
    \(s=|\gR_{t}|\) and \(r=|\gB_{t}|-1\) identifies the general parameters as
    \(B_N=B_t^{\gR}\) and \(A_N=A_t^{\gB}\). This gives
    \eqref{eq:blue-to-red-flip-rate} and \eqref{eq:blue-stays-blue-rate}.

    Similarly, by \eqref{eqn:D-Rt-distribution}, applying
    Lemma~\ref{lem:binomial-difference-large-deviation-tail} with
    \(s=|\gB_{t}|\) and \(r=|\gR_{t}|-1\) identifies
    \(B_N=B_t^{\gB}\) and \(A_N=A_t^{\gR}\). This gives
    \eqref{eq:red-to-blue-flip-rate} and \eqref{eq:red-stays-red-rate}.
    \end{proof}

    \subsection{Moderate-Deviation Tail: Tilt and Mills Bounds}
    \noindent
    The estimates in this subsection are useful when the relevant flip probability is
    an intermediate Gaussian tail. More precisely, they cover the range
    \(1\lesssim |\mu_N|/\sigma_N\ll\sqrt{\log N}\), retaining both the
    rate-function exponent \(N^{-I(\cdot,\cdot)}\) and the Mills-ratio prefactor
    \((1+|\mu_N|/\sigma_N)^{-1}\).
    
        \begin{lemma}[Moderate-deviation tail approximation]
            \label{lem:moderate-deviation-tail-approximation}
        Consider the problem described in
        \eqref{eqn:X-Y-distribution}--\eqref{eqn:mu-sigma-squared}.
        Fix \(\ell>0\), and let \(h_N\to\infty\) satisfy
        \(h_N=o(\sqrt{\log N})\). Suppose \(A_N,B_N\ge c_0\) and
        \[
        \ell\le x_N\coloneqq -\frac{\mu_N}{\sigma_N}\le h_N.
        \]
        Then, uniformly over all such \(r,s\),
        \begin{align}
        c\,
        N^{-I(A_N,B_N)}
        \frac{x_N}{1+x_N^2}
        \le
        \P(\rD>0)
        \le
        C\,
        N^{-I(A_N,B_N)}
        \frac1{x_N}.
        \label{eq:moderate-deviation-tail-general}
        \end{align}
        Here the rate function \(I(x,y)\) is defined in
        \eqref{eqn:LDP-rate-function}, and the constants \(c,C>0\) depend only on
        \(a,b,c_0,\ell\).
        \end{lemma}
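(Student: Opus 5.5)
We will prove both bounds by an exponential change of measure (tilting) at the Chernoff-optimal parameter, followed by a local CLT / Berry--Esseen analysis under the tilted law. First, $x_N\ge\ell>0$ forces $\mu_N<0$, i.e.\ $B_N<A_N$ up to $O(\log N/N)$ corrections. Moreover $\mu_N\approx(B_N-A_N)\log N$ and $\sigma_N^2\asymp\log N$, so $x_N\asymp(A_N-B_N)\sqrt{\log N}$ and $A_N-B_N\asymp x_N/\sqrt{\log N}=o(1)$. Set $e^{\theta^*}=\sqrt{A_N/B_N}$, so $\theta^*\asymp A_N-B_N\asymp x_N/\sqrt{\log N}$. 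We also have the expansion
\[
I(A_N,B_N)\log N=\frac{(A_N-B_N)^2\log N}{(\sqrt{A_N}+\sqrt{B_N})^2}=\frac{x_N^2}{2}(1+o(1))\,,
\]
although we will never need this expansion beyond the identity $\log \E e^{\theta^*\rD}=-I(A_N,B_N)\log N+O(\log^2N/N)$. That identity follows from the exact binomial MGF and $\log(1+z)=z+O(z^2)$, as in the proof of Lemma~\ref{lem:binomial-difference-large-deviation-tail}.

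Next, define the tilted law $\widetilde\P(\rD=k)=e^{\theta^*k}\P(\rD=k)/\E e^{\theta^*\rD}$. Under $\widetilde\P$ the summands remain independent Bernoullis, with parameters $\tilde\beta=\beta e^{\theta^*}/(1+\beta(e^{\theta^*}-1))$ and $\tilde\alpha$ defined analogously. The tilted mean is $\widetilde\E\rD=(B_Ne^{\theta^*}-A_Ne^{-\theta^*})\log N+O(\log^2 N/N)=O(\log^2N/N)$, since $\theta^*$ is the stationary point, and the tilted variance is $\tilde\sigma^2\asymp\log N$. This gives the exact representation
\[
\P(\rD>0)=N^{-I(A_N,B_N)}e^{o(1)}\,\widetilde\E\big[e^{-\theta^*\rD}\mathbf 1\{\rD\ge1\}\big].
\]
It remains to show that the tilted expectation lies between $c\,x_N/(1+x_N^2)$ and $C/x_N$.

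For the upper bound, we split into unit lattice windows. A local limit bound (Lemma~\ref{lem:gaussian-sparse-binomial-difference} applied under $\widetilde\P$, or a direct local CLT for sums of Bernoullis) gives $\widetilde\P(\rD=k)\le C/\tilde\sigma$ uniformly in $k$. Hence the tilted expectation is at most $\frac{C}{\sqrt{\log N}}\sum_{k\ge1}e^{-\theta^*k}\le\frac{C}{\sqrt{\log N}\,\theta^*}\asymp C/x_N$, using $\theta^*\le C$. For the lower bound, we restrict to $1\le\rD\le \tilde\sigma/x_N$, where $\tilde\sigma/x_N\ge$ a constant multiple of $1$ only when $x_N\lesssim\sqrt{\log N}$, which holds. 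On that window $e^{-\theta^*\rD}\ge e^{-C}$.

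The main obstacle is the lower bound, because it requires a lower local estimate for $\widetilde\P(1\le\rD\le M)$ with $M=\max\{1,\tilde\sigma/x_N\}$. Berry--Esseen only yields $\Phi$-differences up to an error $C/\sqrt{\log N}$, which is too coarse when $M/\tilde\sigma\asymp 1/x_N$ is small. We will therefore use a genuine local CLT for lattice sums of independent Bernoullis, with aperiodicity coming from the $\Theta(\log N)$ nontrivial summands, giving $\widetilde\P(\rD=k)\ge c/\tilde\sigma$ for $|k|\le \tilde\sigma$. This yields a lower bound of order $M/\tilde\sigma\gtrsim 1/x_N\cdot\min\{1,x_N/\tilde\sigma\}$. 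When $x_N\ll\sqrt{\log N}$ this is at least $c/x_N\ge c\,x_N/(1+x_N^2)$. Should the local CLT be awkward to cite, the fallback is a Bahadur--Rao style comparison of $\widetilde\P(\rD=k)$ with $\widetilde\P(\rD=k\pm1)$ via the ratio of binomial probabilities. Throughout, all constants are uniform because $A_N,B_N\in[c_0,\max(a,b)]$ and $x_N\ge\ell$.
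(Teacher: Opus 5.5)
Your argument is correct in substance, and its first stage matches the paper's exactly. The tilt $e^{\theta^*}=\sqrt{A_N/B_N}$ is the paper's $\lambda_*$, and the factorization $\P(\rD>0)=N^{-I(A_N,B_N)}e^{o(1)}\,\widetilde\E[e^{-\theta^*\rD}\mathbf{1}\{\rD\ge1\}]$ is the same.

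The routes diverge in how the tilted expectation is bounded.
\begin{itemize}
\item The paper treats both directions at once. It writes the tilted expectation as a Stieltjes integral of $e^{-a_Ny}\mathbf{1}\{y\ge b_N\}$ against the tilted distribution function, and replaces that function by $\Phi$ via Berry--Esseen and bounded-variation integration by parts. It then moves the lattice threshold $b_N$ to $0$ and evaluates $\int_0^\infty e^{-a_Ny}\phi(y)\,dy=e^{a_N^2/2}\Phi(-a_N)$ with Mills bounds. This pins down the prefactor up to a factor $1+o(1)$.
\item For the upper bound, you bound point masses by $C/\tilde\sigma$ and sum a geometric series. This bound does follow from Berry--Esseen, since its error has the same order $1/\sqrt{\log N}$.
\item For the lower bound, you restrict to a window on which $e^{-\theta^*\rD}$ is bounded below.
\end{itemize}
Your route avoids the integration-by-parts and threshold bookkeeping. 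It gives only the order of magnitude, but that is all the lemma claims.

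The local CLT you invoke for the lower bound is unnecessary, and the reason you give for it is wrong. Take $M=\tilde\sigma/x_N$, which tends to infinity. Berry--Esseen under $\widetilde\P$ gives
\[
\widetilde\P(1\le\rD\le M)\ge\Phi\Bigl(\frac{\lfloor M\rfloor-\widetilde\E\rD}{\tilde\sigma}\Bigr)-\Phi\Bigl(\frac{-\widetilde\E\rD}{\tilde\sigma}\Bigr)-\frac{2C}{\sqrt{\log N}}\ge\frac{c}{x_N}-\frac{2C}{\sqrt{\log N}}.
\]
Here $1/\sqrt{\log N}=o(1/x_N)$ precisely because $x_N\le h_N=o(\sqrt{\log N})$. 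This is the same hypothesis that makes the paper's additive error negligible against $a_N/(1+a_N^2)$. The window mass $1/x_N$ is small, but in this regime it is never comparable to the Berry--Esseen error.

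Two smaller slips. First, your expression $\frac1{x_N}\min\{1,x_N/\tilde\sigma\}$ equals $1/\tilde\sigma$ here, which is not $\gtrsim 1/x_N$; what you want is simply $M/\tilde\sigma=1/x_N$. Second, if you keep the local CLT, you stated its lower bound only for $|k|\le\tilde\sigma$, while $M$ can be as large as $\tilde\sigma/\ell$.

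With these fixes, $\theta^*\rD\le\theta^*\tilde\sigma/x_N=O(1)$ on the window. You then get
\[
\P(\rD>0)\ge c\,N^{-I(A_N,B_N)}/x_N\ge c\,N^{-I(A_N,B_N)}\,x_N/(1+x_N^2).
\]
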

        \begin{proof}[Proof of Lemma~\ref{lem:moderate-deviation-tail-approximation}]
        Since \(\mu_N=(B_N-A_N)\log N\), the condition
        \(-\mu_N/\sigma_N\ge\ell>0\) implies \(A_N>B_N\). Moreover,
        \[
        \sigma_N^2
        =
        \{A_N+B_N+o(1)\}\log N,
        \]
        so \(-\mu_N/\sigma_N\le h_N\) gives
        \[
        0<A_N-B_N
        =
        \frac{-\mu_N}{\log N}
        \le
        C\frac{h_N}{\sqrt{\log N}}
        =
        o(1).
        \]
        Consequently, a direct Berry--Esseen approximation under \(\P\) has absolute error
        \(O((\log N)^{-1/2})\).

        However, in the present moderate-deviation range, this
        error need not be small relative to \(\P(\rD>0)\). We therefore use an
        exponential change of measure: the normalization of the tilted law
        extracts the large-deviation factor, while under the tilted law the
        threshold \(0\) lies in the central window and can be analyzed by
        Berry--Esseen and Mills bounds.

        To introduce the tilted measure $\P_{*}$, we choose the saddle point
        \[
        \lambda_*
        \coloneqq
        \frac12\log\frac{A_N}{B_N}>0.
        \]
        This choice balances the leading sparse intensities:
        \[
        B_Ne^{\lambda_*}
        =
        A_Ne^{-\lambda_*}
        =
        \sqrt{A_NB_N}.
        \]
        Since \(A_N-B_N=o(1)\) and \(A_N,B_N\ge c_0\), we have
        \(\lambda_*=O(A_N-B_N)=o(1)\).
        Define the tilted probability measure \(\P_*\) by
        \[
        \frac{\de\P_*}{\de\P}
        =
        \frac{e^{\lambda_*\rD}}{\E e^{\lambda_*\rD}}.
        \]
        Then the desired probability can be factored as
        \begin{align}
        \P(\rD>0)
        &=
        \E e^{\lambda_*\rD}\,
        \E_*\left[e^{-\lambda_*\rD}\indi{\rD>0}\right].
        \label{eq:moderate-change-of-measure-factorization}
        \end{align}
        Define the tilted mean, variance, and scaled tilting parameter by
        \[
        \mu_*\coloneqq\E_*\rD,
        \qquad
        \sigma_*^2\coloneqq\Var_*(\rD),
        \qquad
        a_N\coloneqq\lambda_*\sigma_*,
        \]
        and let \(\phi(y)=(2\pi)^{-1/2}e^{-y^2/2}\) denote the standard normal
        density.
        We first state the two estimates that drive the argument and verify
        them afterward:
        \begin{subequations}
        \begin{align}
        \E e^{\lambda_*\rD}
        =&\,
        N^{-I(A_N,B_N)}e^{o(1)},
        \label{eq:moderate-saddle-mgf}\\
        \E_*\left[e^{-\lambda_*\rD}\indi{\rD>0}\right]
        =&\,
        \int_0^\infty e^{-a_Ny}\phi(y)\,\de y
        +
        o\left(\frac{a_N}{1+a_N^2}\right),
        \label{eq:moderate-tilted-expectation}
        \end{align}
        \end{subequations}
        Moreover,
        \[
        a_N=(1+o(1))x_N.
        \]

        Recall that the standard Mills bounds are
        \[
        \frac{x}{1+x^2}\phi(x)
        \le
        \Phi(-x)
        \le
        \frac{\phi(x)}{x},
        \qquad x>0.
        \]
        Completing the square and applying these bounds gives
        \begin{align}
        \int_0^\infty e^{-a_Ny}\phi(y)\,\de y
        &=
        e^{a_N^2/2}\Phi(-a_N), \notag\\
        \frac{1}{\sqrt{2\pi}}\frac{a_N}{1+a_N^2}
        &\le
        \int_0^\infty e^{-a_Ny}\phi(y)\,\de y
        \le
        \frac{1}{\sqrt{2\pi}}\frac1{a_N}.
        \label{eq:moderate-mills-integral}
        \end{align}
        Substituting
        \eqref{eq:moderate-saddle-mgf}--\eqref{eq:moderate-tilted-expectation}
        into \eqref{eq:moderate-change-of-measure-factorization}, and using
        \(a_N\asymp x_N\), yields
        \[
        c\,
        N^{-I(A_N,B_N)}
        \frac{x_N}{1+x_N^2}
        \le
        \P(\rD>0)
        \le
        C\,
        N^{-I(A_N,B_N)}
        \frac1{x_N},
        \]
        which proves \eqref{eq:moderate-deviation-tail-general}. It remains to
        verify the two estimates used above.

        \medskip
            \noindent\emph{Proof of \eqref{eq:moderate-saddle-mgf}.}
        For any \(\lambda=o(1)\), independence gives
        \begin{align*}
        \log\E e^{\lambda\rD}
        &=
        s\log\{1+\beta(e^\lambda-1)\}
        +
        r\log\{1+\alpha(e^{-\lambda}-1)\}\\
        &=
        \log N\left[
        B_N(e^\lambda-1)+A_N(e^{-\lambda}-1)
        \right]\\
        &\quad+
        O\left(
        s\beta^2(e^\lambda-1)^2
        +
        r\alpha^2(e^{-\lambda}-1)^2
        \right).
        \end{align*}
        At \(\lambda=\lambda_*\), the error is
        \(O(\log^2N\,\lambda_*^2/N)=o(1)\). The definition of \(\lambda_*\)
        therefore gives
        \begin{align*}
        \log\E e^{\lambda_*\rD}
        &=
        \log N\left[
        B_N\left(\sqrt{A_N/B_N}-1\right)
        +
        A_N\left(\sqrt{B_N/A_N}-1\right)
        \right]+o(1)\\
        &=
        -(\sqrt{A_N}-\sqrt{B_N})^2\log N+o(1)\\
        &=
        -I(A_N,B_N)\log N+o(1),
        \end{align*}
        where the last equality uses \(A_N>B_N\). Exponentiating proves
        \eqref{eq:moderate-saddle-mgf}.

        \medskip
        \noindent\emph{Proof of \eqref{eq:moderate-tilted-expectation}.}
        The density defining \(\P_*\) factorizes over the independent Bernoulli
        summands. Thus the summands remain independent under \(\P_*\). If
        \(\xi\sim\Ber(\beta)\) is a summand of \(\rX\), then
        \[
        \beta_*
        \coloneqq
        \P_*(\xi=1)
        =
        \frac{\beta e^{\lambda_*}}
        {1+\beta(e^{\lambda_*}-1)}.
        \]
        If \(\zeta\sim\Ber(\alpha)\) is a summand of \(\rY\), then its sign in
        \(\rD=\rX-\rY\) is negative, and hence
        \[
        \alpha_*
        \coloneqq
        \P_*(\zeta=1)
        =
        \frac{\alpha e^{-\lambda_*}}
        {1+\alpha(e^{-\lambda_*}-1)}.
        \]
        Consequently, under \(\P_*\),
        \[
        \rD
        \stackrel d=
        \Bin(s,\beta_*)-\Bin(r,\alpha_*).
        \]

        Since \(\lambda_*=o(1)\),
        \[
        \beta_*=\beta e^{\lambda_*}+O(\beta^2|\lambda_*|),
        \qquad
        \alpha_*=\alpha e^{-\lambda_*}+O(\alpha^2|\lambda_*|).
        \]
        It follows that
        \begin{align*}
        \mu_*
        &=
        s\beta_*-r\alpha_*\\
        &=
        \log N\left(
        B_Ne^{\lambda_*}-A_Ne^{-\lambda_*}
        \right)
        +
        O\left(
        (s\beta^2+r\alpha^2)|\lambda_*|
        \right)\\
        &=
        O(\log^2N\,|\lambda_*|/N)
        =
        o(1),
        \end{align*}
        because the leading term vanishes by the choice of \(\lambda_*\).
        Similarly,
        \begin{align*}
        \sigma_*^2
        &=
        s\beta_*(1-\beta_*)+r\alpha_*(1-\alpha_*)\\
        &=
        \{B_Ne^{\lambda_*}+A_Ne^{-\lambda_*}+o(1)\}\log N\\
        &=
        \{2\sqrt{A_NB_N}+o(1)\}\log N
        =
        (1+o(1))\sigma_N^2.
        \end{align*}
        Moreover,
        \[
        \lambda_*
        =
        \frac{A_N-B_N}{2B_N}\{1+o(1)\},
        \qquad
        x_N
        =
        \frac{(A_N-B_N)\log N}{\sigma_N}.
        \]
        Together with the two variance asymptotics, these relations give
        \(a_N=\lambda_*\sigma_*=(1+o(1))x_N\).

        Since \(\lambda_*=o(1)\), the tilted probabilities satisfy
        \(c\log N/N\le\alpha_*,\beta_*\le C\log N/N\). Their total centered
        absolute third moment is at most
        \(C(s\beta_*+r\alpha_*)\le C\log N\), whereas
        \(\sigma_*^2\ge c\log N\). Lemma~\ref{lem:berry-esseen} therefore
        gives
        \[
        \sup_{x\in\R}
        \left|
        \P_*\left(
        \frac{\rD-\mu_*}{\sigma_*}\le x
        \right)
        -
        \Phi(x)
        \right|
        \le
        \frac{C}{\sqrt{\log N}}.
        \]

        Define
        \[
        Z_N\coloneqq\frac{\rD-\mu_*}{\sigma_*},
        \qquad
        b_N\coloneqq\frac{1-\mu_*}{\sigma_*},
        \]
        and let \(F_N\) be the distribution function of \(Z_N\) under
        \(\P_*\). Since \(\rD\) is integer-valued,
        \[
        \{\rD>0\}
        =
        \{\rD\ge1\}
        =
        \{Z_N\ge b_N\}.
        \]
        The estimates above imply
        \[
        b_N=O\left(\frac1{\sqrt{\log N}}\right),
        \qquad
        a_Nb_N=\lambda_*(1-\mu_*)=o(1),
        \qquad
        \lambda_*\mu_*=o(1).
        \]
        Therefore,
        \begin{align}
        \E_*\left[e^{-\lambda_*\rD}\indi{\rD>0}\right]
        &=
        e^{-\lambda_*\mu_*}
        \int_{[b_N,\infty)}e^{-a_Ny}\,\de F_N(y).
        \label{eq:moderate-tilted-stieltjes}
        \end{align}

        The Stieltjes integral in
        \eqref{eq:moderate-tilted-stieltjes} differs from the Gaussian integral
        in \eqref{eq:moderate-tilted-expectation} in two ways. First, its
        integrator is the distribution function \(F_N\) of the tilted lattice
        variable, rather than the standard normal distribution function
        \(\Phi\). Second, its lower endpoint is the lattice threshold \(b_N\),
        rather than \(0\). We control these two errors separately.

        We first replace \(F_N\) by \(\Phi\) while retaining the exact threshold
        \(b_N\). The Berry--Esseen estimate controls the Kolmogorov distance
        between these distribution functions, whereas the integrand has a jump
        at \(b_N\). It is therefore natural to regard the expectation as a
        Stieltjes integral against the bounded-variation function
        \(f_N(y)\coloneqq e^{-a_Ny}\indi{y\ge b_N}\). For all sufficiently
        large \(N\), \(b_N>0\), and
        \(f_N\) jumps from \(0\) to \(e^{-a_Nb_N}\) at \(b_N\) and then
        decreases monotonically to \(0\). Hence its total variation is
        \[
        \operatorname{TV}(f_N)
        =
        e^{-a_Nb_N}+e^{-a_Nb_N}
        =
        2e^{-a_Nb_N}
        =
        O(1).
        \]
        Since \(\Phi\) is
        continuous, the possible lattice atom at the jump point satisfies
        \[
        \P_*(Z_N=b_N)
        \le
        2\sup_{y\in\R}|F_N(y)-\Phi(y)|.
        \]
        Bounded-variation Stieltjes integration by parts, including this
        endpoint term, yields
        \begin{align}
        \left|
        \int_{\R}f_N(y)\,\de F_N(y)
        -
        \int_{\R}f_N(y)\,\de\Phi(y)
        \right|
        &\le
        C\sup_{y\in\R}|F_N(y)-\Phi(y)|
        \operatorname{TV}(f_N) \notag\\
        &=
        O\left(\frac1{\sqrt{\log N}}\right).
        \label{eq:moderate-stieltjes-error}
        \end{align}

        Thus \eqref{eq:moderate-stieltjes-error} is the error incurred by
        replacing the tilted lattice law with its Gaussian approximation,
        without yet changing the event \(\rD>0\).

        After this distributional replacement, the Gaussian integral still
        begins at \(b_N=(1-\mu_*)/\sigma_*\), because the integer-valued event
        \(\rD>0\) is exactly \(\rD\ge1\). The target integral begins at \(0\),
        which is the limiting standardized threshold because
        \(\mu_*=o(1)\) and \(\sigma_*\asymp\sqrt{\log N}\). The cost of removing
        this lattice continuity correction is the integral over the interval
        between \(0\) and \(b_N\):
        \begin{align}
        \left|
        \int_{b_N}^{\infty}e^{-a_Ny}\phi(y)\,\de y
        -
        \int_0^{\infty}e^{-a_Ny}\phi(y)\,\de y
        \right|
        &\le
        \frac{|b_N|e^{a_N|b_N|}}{\sqrt{2\pi}}
        =
        O\left(\frac1{\sqrt{\log N}}\right).
        \label{eq:moderate-lattice-threshold-error}
        \end{align}
        Hence \eqref{eq:moderate-lattice-threshold-error} controls only the
        displacement of the threshold; it is separate from the normal
        approximation error in \eqref{eq:moderate-stieltjes-error}.

        Finally, \(a_N = (1 + o(1)) x_N\ge c\ell\) and
        \(a_N=O(h_N)=o(\sqrt{\log N})\), so
        \[
        \frac{(\log N)^{-1/2}}{a_N/(1+a_N^2)}
        =
        \frac1{a_N\sqrt{\log N}}
        +
        \frac{a_N}{\sqrt{\log N}}
        =
        o(1).
        \]
        Therefore, the sum of the two errors is
        \(o(a_N/(1+a_N^2))\), which is negligible relative to the lower Mills
        scale in \eqref{eq:moderate-mills-integral}.
        Combining this estimate with
        \eqref{eq:moderate-mills-integral} and
        \eqref{eq:moderate-tilted-stieltjes}--\eqref{eq:moderate-lattice-threshold-error},
        and using \(e^{-\lambda_*\mu_*}=1+o(1)\), proves
        \eqref{eq:moderate-tilted-expectation}. This completes the proof.
        \end{proof}
    
        \begin{corollary}[Moderate-deviation tail estimates for one-vertex flip probabilities]
        \label{cor:moderate-deviation-tail-flip-probabilities}
        Recall \(x_t^{\gR}\) and \(x_t^{\gB}\) from
        Corollary~\ref{cor:gaussian-flip-probabilities}. Fix \(\ell>0\), and let \(h_N\to\infty\) satisfy \(h_N=o(\sqrt{\log N})\). Condition on
        \(\rvy_t\) satisfying \(|\gR_t|\ge c_0N\) and \(|\gB_t|\ge c_0N\) for some constant \(c_0>0\). Then the following estimates hold uniformly over all such configurations. If
        \(\ell\le -x_t^{\gR}\le h_N\), then
        \begin{align}
        p_t^{\gR}
        \asymp
        \frac{N^{-I(A_t^{\gR},B_t^{\gB})}}{1+|x_t^{\gR}|}.
        \label{eq:moderate-deviation-pR-rate-function}
        \end{align}
        If \(\ell\le -x_t^{\gB}\le h_N\), then
        \begin{align}
        p_t^{\gB}
        \asymp
        \frac{N^{-I(A_t^{\gB},B_t^{\gR})}}{1+|x_t^{\gB}|}.
        \label{eq:moderate-deviation-pB-rate-function}
        \end{align}
        Here \(A_t^{\gR}\), \(B_t^{\gB}\), \(A_t^{\gB}\), and
        \(B_t^{\gR}\) are defined in
        Lemma~\ref{lem:sublinear-jump-one-step-bound}, and the rate function
        \(I(x,y)\) is defined in \eqref{eqn:LDP-rate-function}. The constants
        in \(\asymp\) depend only on \(a,b,c_0,\ell\).
        \end{corollary}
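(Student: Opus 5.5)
The plan is to derive the corollary directly from Lemma~\ref{lem:moderate-deviation-tail-approximation}, mirroring the way Corollary~\ref{cor:gaussian-flip-probabilities} was derived from Lemma~\ref{lem:gaussian-sparse-binomial-difference}. Condition on \(\rvy_t\). For the red vertex, \eqref{eqn:D-Rt-distribution} shows that \(\rD_t^{\gR}\) is exactly the difference \(\rD=\rX-\rY\) with \(s=|\gB_t|\) and \(r=|\gR_t|-1\). Consequently \(A_N=A_t^{\gR}\), \(B_N=B_t^{\gB}\), \(\mu_N=m_t^{\gR}\), \(\sigma_N^2=v_t^{\gR}\), and \(x_N=-x_t^{\gR}\).

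The next step is to check the hypotheses of the lemma. Since \(|\gR_t|\ge c_0N\) and \(|\gB_t|\ge c_0N\), for large \(N\) we have \(A_t^{\gR}\ge ac_0/2\) and \(B_t^{\gB}\ge bc_0\). Hence the lemma applies with \(c_0\) replaced by \(\min\{a,b\}c_0/2\). The window \(\ell\le x_N\le h_N\) is exactly the assumption \(\ell\le -x_t^{\gR}\le h_N\). The lemma then gives
\[
cN^{-I(A_t^{\gR},B_t^{\gB})}\frac{x_N}{1+x_N^2}\le p_t^{\gR}\le CN^{-I(A_t^{\gR},B_t^{\gB})}\frac{1}{x_N},
\]
with constants uniform over all such configurations.

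The only step needing care is converting both Mills factors into \((1+|x_t^{\gR}|)^{-1}\). Because \(x_N\ge\ell>0\), there are two elementary comparisons:
\[
\frac{x_N}{1+x_N^2}\ge\frac{\min\{\ell,1\}}{2}\cdot\frac{1}{1+x_N},
\qquad
\frac{1}{x_N}\le\frac{1+\ell^{-1}}{1+x_N}.
\]
For the first, use \(1+x_N^2\le(1+x_N)^2\) together with \(x_N/(1+x_N)\ge \min\{\ell,1\}/2\). For the second, use \((1+x_N)/x_N\le 1+\ell^{-1}\). These comparisons yield \eqref{eq:moderate-deviation-pR-rate-function}, with constants depending only on \(a,b,c_0,\ell\).

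The blue case \eqref{eq:moderate-deviation-pB-rate-function} is identical. Use \eqref{eqn:D-Bt-distribution} with \(s=|\gR_t|\), \(r=|\gB_t|-1\), \(A_N=A_t^{\gB}\), \(B_N=B_t^{\gR}\), and \(x_N=-x_t^{\gB}\). No real obstacle is expected; the main point is that the lower Mills bound \(x/(1+x^2)\) degenerates as \(x\to0\), and the fixed floor \(\ell\) is exactly what rules this out.
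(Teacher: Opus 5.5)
Your proposal is correct and follows essentially the same route as the paper: identify \(\rD_t^{\gR}\) (resp.\ \(\rD_t^{\gB}\)) with the general binomial difference, verify \(A_N,B_N\) are bounded below using the linear camp sizes, apply Lemma~\ref{lem:moderate-deviation-tail-approximation}, and convert the two Mills factors \(x/(1+x^2)\) and \(1/x\) into \((1+x)^{-1}\) using the floor \(x\ge\ell\). Your comparison constant \(\min\{\ell,1\}/2\) differs only cosmetically from the paper's \(\min\{\ell/2,1\}\); both are valid.
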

    
        \begin{proof}[Proof of Corollary~\ref{cor:moderate-deviation-tail-flip-probabilities}]
        We first show explicitly how the sharper Mills-ratio estimate
        \eqref{eq:moderate-deviation-tail-general} implies the coarser form in
        the corollary. For every \(x\ge\ell\),
        \begin{align}
        \frac{x}{1+x^2}
        &=
        \frac{1}{1+x}\frac{x(1+x)}{1+x^2}
        \ge
        \frac{c_\ell}{1+x},
        &
        \frac1x
        &=
        \frac{1}{1+x}\frac{1+x}{x}
        \le
        \frac{C_\ell}{1+x},
        \label{eq:moderate-mills-to-coarse}
        \end{align}
        where one may take
        \[
        c_\ell\coloneqq\min\{\ell/2,1\},
        \qquad
        C_\ell\coloneqq1+\ell^{-1}.
        \]
        Indeed, if \(\ell\le x\le1\), then
        \(x(1+x)/(1+x^2)\ge\ell/2\); if \(x\ge1\), then
        \(x(1+x)/(1+x^2)\ge1\). The upper comparison follows directly from
        \((1+x)/x=1+1/x\le1+1/\ell\). Consequently,
        \eqref{eq:moderate-deviation-tail-general} gives, uniformly for
        \(\ell\le x_N\le h_N\),
        \begin{align}
        c'
        \frac{N^{-I(A_N,B_N)}}{1+x_N}
        \le
        \P(\rD>0)
        \le
        C'
        \frac{N^{-I(A_N,B_N)}}{1+x_N},
        \label{eq:moderate-deviation-tail-coarse}
        \end{align}
        where \(c',C'>0\) depend only on the parameters listed in
        Lemma~\ref{lem:moderate-deviation-tail-approximation}.

        We now apply \eqref{eq:moderate-deviation-tail-coarse} to the two
        one-vertex degree differences. Suppose first that
        \(\ell\le-x_t^{\gR}\le h_N\). By
        \eqref{eqn:D-Rt-distribution}, the parameters in the general problem
        are
        \[
        s=|\gB_t|,
        \qquad
        r=|\gR_t|-1,
        \qquad
        A_N=A_t^{\gR},
        \qquad
        B_N=B_t^{\gB},
        \]
        and \(\mu_N=m_t^{\gR}\), \(\sigma_N^2=v_t^{\gR}\). Thus
        \[
        x_N=-\frac{\mu_N}{\sigma_N}
        =-x_t^{\gR}
        =|x_t^{\gR}|.
        \]
        Since both camps have size at least \(c_0N\), for all sufficiently
        large \(N\),
        \[
        A_t^{\gR}=\frac{a(|\gR_t|-1)}{N}\ge\frac{ac_0}{2},
        \qquad
        B_t^{\gB}=\frac{b|\gB_t|}{N}\ge bc_0.
        \]
        Hence the hypotheses of
        Lemma~\ref{lem:moderate-deviation-tail-approximation} hold, and
        \eqref{eq:moderate-deviation-tail-coarse} gives
        \[
        p_t^{\gR}
        =
        \P(\rD_t^{\gR}>0\mid\rvy_t)
        \asymp
        \frac{N^{-I(A_t^{\gR},B_t^{\gB})}}
        {1+|x_t^{\gR}|}.
        \]
        This proves \eqref{eq:moderate-deviation-pR-rate-function}.

        Suppose next that \(\ell\le-x_t^{\gB}\le h_N\). By
        \eqref{eqn:D-Bt-distribution}, now
        \[
        s=|\gR_t|,
        \qquad
        r=|\gB_t|-1,
        \qquad
        A_N=A_t^{\gB},
        \qquad
        B_N=B_t^{\gR},
        \]
        while \(\mu_N=m_t^{\gB}\), \(\sigma_N^2=v_t^{\gB}\), and
        \(x_N=-x_t^{\gB}=|x_t^{\gB}|\). The linear camp-size assumptions
        similarly imply
        \[
        A_t^{\gB}\ge\frac{ac_0}{2},
        \qquad
        B_t^{\gR}\ge bc_0
        \]
        for all sufficiently large \(N\). Applying
        \eqref{eq:moderate-deviation-tail-coarse} therefore gives
        \[
        p_t^{\gB}
        =
        \P(\rD_t^{\gB}>0\mid\rvy_t)
        \asymp
        \frac{N^{-I(A_t^{\gB},B_t^{\gR})}}
        {1+|x_t^{\gB}|},
        \]
        which proves \eqref{eq:moderate-deviation-pB-rate-function}.
        \end{proof}    

\section{Read-\texorpdfstring{\(k\)}{k} Chernoff Bound}
\label{sec:read-k-chernoff-bound}
We first introduce the concept of a read-\(k\) family.
\begin{definition}[Read-\(k\) family]
    Let \(\rX_1,\ldots,\rX_m\) be independent random variables. For each \(j\in[r]\), let \(\varnothing\ne P_j\subseteq[m]\), and let \(f_j\) be a Boolean function of $\{\rX_i\}_{i\in P_j}$. Suppose that each \(\rX_i\) is read by at most \(k\) of the functions \(f_j\), i.e., \(\bigl|\{j\in[r]: i\in P_j\}\bigr|\le k\) for every \(i\in[m]\). Then, the random variables $\{\rY_j \coloneqq f_j(\{\rX_i\}_{i\in P_j})\}_{j\in[r]}$ form a read-\(k\) family.
\end{definition}
We next state the read-\(k\) Chernoff bound.
\begin{lemma}[Read-\(k\) Chernoff bound {\cite[Theorem~1.1]{gavinsky2015tail}}]
    \label{lem:read-k}
     Let \(\{\rY_j\}_{j\in[r]}\) be a read-\(k\) family of indicator random variables with \(p_j \coloneqq\P(\rY_j=1)\), and denote their average by \(p \coloneqq r^{-1}\sum_{j=1}^r p_j\).
    Then, for every \(\varepsilon>0\),
    \begin{subequations}
    \begin{align}
    \P\bigg(
    \sum_{j=1}^r \rY_j\ge (p+\varepsilon)r
    \bigg)
    &\le
    \exp\bigg(
    -\frac{r}{k}\D_{\mathrm{KL}}(p+\varepsilon\|p)
    \bigg),\\
    \P\bigg(
    \sum_{j=1}^r \rY_j\le (p-\varepsilon)r
    \bigg)
    &\le
    \exp\bigg(
    -\frac{r}{k}\D_{\mathrm{KL}}(p-\varepsilon\|p)
    \bigg),
    \end{align}
    \end{subequations}
    where the Kullback-Leibler divergence is defined as
    \[
    \D_{\mathrm{KL}}(q\|p)\coloneqq q\log(q/p)+(1-q)\log((1-q)/(1-p)).
    \]
    \end{lemma}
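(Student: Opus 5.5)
We follow the argument of \cite{gavinsky2015tail}. It reduces the read-\(k\) tail bound to a single moment inequality for products of functions of overlapping sets of independent variables, namely Finner's generalized H\"older inequality. Concretely, if \(\rX_1,\ldots,\rX_m\) are independent and \(g_j\ge0\) depends only on \(\{\rX_i\}_{i\in P_j}\), with each index \(i\) lying in at most \(k\) of the sets \(P_j\), then
\[
\E\prod_{j=1}^r g_j\le\prod_{j=1}^r\bigl(\E g_j^k\bigr)^{1/k}.
\]
We prove this first, by induction on \(m\). Condition on \(\rX_1,\ldots,\rX_{m-1}\), and apply the ordinary H\"older inequality in \(\rX_m\) to the at most \(k\) factors that read \(\rX_m\); padding with constant functions equal to \(1\) gives exactly \(k\) factors with exponents \(1/k\). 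Each resulting factor \((\E_{\rX_m}g_j^k)^{1/k}\) is a function of the remaining variables in \(P_j\), so the family still reads each remaining variable at most \(k\) times. The induction hypothesis then closes the argument. We expect this step to be the main obstacle, chiefly the bookkeeping of exponents after integrating out \(\rX_m\).

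For the upper tail, fix \(q\in(0,1)\) and set
\[
g_j\coloneqq(1-q)^{1-\rY_j}.
\]
Since \(\rY_j\) is an indicator and \(g_j\) is a function of \(\{\rX_i\}_{i\in P_j}\), Finner's inequality applies. We have
\[
\E g_j^k=p_j+(1-p_j)(1-q)^k=1-q'(1-p_j),\qquad q'\coloneqq1-(1-q)^k\in(0,1).
\]
By concavity of \(\log\) (AM--GM), \(\prod_j(1-q'(1-p_j))\le(1-q'(1-p))^r\). This yields the upper estimate
\[
\E\prod_j g_j\le(1-q'(1-p))^{r/k}.
\]
For the lower estimate, let \(Z\coloneqq r-\sum_j\rY_j\) be the number of zeros. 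On the event \(\{\sum_j\rY_j\ge(p+\varepsilon)r\}\) we have \(Z\le(1-p-\varepsilon)r\), and therefore
\[
\E\prod_j g_j=\E(1-q)^{Z}\ge\P\Bigl(\sum_j\rY_j\ge(p+\varepsilon)r\Bigr)(1-q)^{(1-p-\varepsilon)r}.
\]

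Combining the two estimates and writing everything in terms of \(q'\) by means of \((1-q)=(1-q')^{1/k}\) gives
\[
\P\Bigl(\sum_j\rY_j\ge(p+\varepsilon)r\Bigr)\le\Bigl[(1-q'(1-p))(1-q')^{-(1-p-\varepsilon)}\Bigr]^{r/k}.
\]
The bracket is exactly the Chernoff expression for a single \(\Ber(p)\) variable. Minimizing over \(q'\in(0,1)\), the optimizer is
\[
1-q'=\frac{p(1-p-\varepsilon)}{(p+\varepsilon)(1-p)},
\]
and at this point the bracket equals \(\exp(-\D_{\mathrm{KL}}(p+\varepsilon\|p))\). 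This is the first claimed bound. If \(p+\varepsilon>1\), the event is empty and there is nothing to prove.

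For the lower tail, apply the upper-tail bound to the complementary indicators \(1-\rY_j\). These still form a read-\(k\) family, since they read exactly the same variables, and their average success probability is \(1-p\). The upper-tail bound at level \(1-p+\varepsilon\) then gives
\[
\P\Bigl(\sum_j\rY_j\le(p-\varepsilon)r\Bigr)\le\exp\Bigl(-\frac rk\D_{\mathrm{KL}}(1-p+\varepsilon\|1-p)\Bigr).
\]
By the symmetry \(\D_{\mathrm{KL}}(1-x\|1-y)=\D_{\mathrm{KL}}(x\|y)\), the exponent equals \(\D_{\mathrm{KL}}(p-\varepsilon\|p)\), which is the second claimed bound.
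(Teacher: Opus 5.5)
Your proof is correct. The paper gives no argument for this lemma; it imports it as Theorem~1.1 of \cite{gavinsky2015tail}. So your proposal is a self-contained replacement for a citation, not a reconstruction of a proof in the text.

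\textbf{Relation to Lemma~\ref{lem:read-k-mgf}.} The natural comparison is the paper's proof of Lemma~\ref{lem:read-k-mgf}. Since \(g_j=(1-q)^{1-\rY_j}=(1-q)e^{\theta\rY_j}\) with \(\theta=-\log(1-q)\), your product bound \(\E\prod_j g_j\le\prod_j(\E g_j^k)^{1/k}\) is exactly \eqref{eq:read-k-mgf-finner-step} at that \(\theta\), with both sides multiplied by \((1-q)^r\).

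\begin{itemize}
\item \emph{Obtaining the product bound.} The paper quotes Finner's Theorem~\ref{thm:finner-generalized-holder} and pads each coordinate read \(d_i<k\) times with a constant-one function of exponent \(k/(k-d_i)\). You instead prove the inequality by induction on \(m\), padding locally inside the ordinary H\"older step.
\item \emph{Using it.} The paper linearizes each factor with \(\log(1+u)\le u\). That gives a Poisson/Bernstein-type MGF bound in terms of \(\mu\) but discards the KL structure. You keep the exact factors \(1-q'(1-p_j)\), average them by AM--GM, and optimize, which recovers \(\D_{\mathrm{KL}}(p+\varepsilon\|p)/k\) exactly.
\end{itemize}

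Your route therefore shows that both Lemma~\ref{lem:read-k} and Lemma~\ref{lem:read-k-mgf} follow from the one Finner-type inequality the paper already uses. The only cost is proving that inequality by hand.

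\textbf{The induction.} The obstacle you anticipated does not arise. After integrating out \(\rX_m\), each new factor \(h_j=(\E_{\rX_m}g_j^k)^{1/k}\) satisfies \(\E h_j^k=\E g_j^k\) by Fubini, so every exponent stays \(k\). You only need to state the induction hypothesis for arbitrary nonnegative functions, including constants (the case \(P_j=\{m\}\)).

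\textbf{Edge cases.} When \(p=0\) or \(p+\varepsilon=1\), your optimizer gives \(1-q'=0\), which lies outside \((0,1)\). The bound still follows by letting \(q'\uparrow1\) in the inequality, which holds for every \(q'\in(0,1)\). This uses the conventions \(\D_{\mathrm{KL}}(\varepsilon\|0)=\infty\) and \(\D_{\mathrm{KL}}(1\|p)=\log(1/p)\).
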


    \begin{corollary}[Centered concentration for a read-\(k\) sum]
    \label{cor:read-k-chernoff-bound-sum}
    Let \(\{\rY_j\}_{j\in[r]}\) be a read-\(k\) family of indicator random
    variables, and set \(\rS\coloneqq\sum_{j=1}^r\rY_j\). Then, for every
    \(s>0\),
    \begin{align}
    \P\left(\left|\rS-\E \rS\right|\ge s\right)
    \le
    2\exp\left(-\frac{2s^2}{kr}\right).
    \label{eq:read-k-centered-concentration}
    \end{align}
    \end{corollary}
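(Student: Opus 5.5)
The statement to prove is Corollary~\ref{cor:read-k-chernoff-bound-sum}. The plan is to derive it from Lemma~\ref{lem:read-k} by lower-bounding the Kullback--Leibler divergence with Pinsker's inequality,
\[
\D_{\mathrm{KL}}(q\|p)\ge 2(q-p)^2 .
\]
Set \(p=r^{-1}\sum_j p_j\), so that \(\E\rS=pr\). Given \(s>0\), take \(\varepsilon\coloneqq s/r\).

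First I handle the upper tail. If \(p+\varepsilon>1\), then \(\{\rS\ge\E\rS+s\}=\{\rS\ge (p+\varepsilon)r\}\) is empty, because \(\rS\le r\). Otherwise \(p+\varepsilon\in[0,1]\), and Lemma~\ref{lem:read-k} combined with Pinsker gives
\[
\P(\rS-\E\rS\ge s)\le \exp\Bigl(-\tfrac{r}{k}\cdot 2\varepsilon^2\Bigr)=\exp\Bigl(-\tfrac{2s^2}{kr}\Bigr).
\]

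The lower tail is symmetric. The boundary case is now \(p-\varepsilon<0\), where the event is empty because \(\rS\ge0\). In the remaining case the second inequality of Lemma~\ref{lem:read-k} gives the same bound.

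A union bound over the two tails then yields \eqref{eq:read-k-centered-concentration}.

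The only points needing care are the boundary cases \(p\pm\varepsilon\notin[0,1]\), which are handled as above. A secondary point is degenerate \(p\in\{0,1\}\), where the divergence may be infinite; this only strengthens the bound, or the event is trivially empty. There is no substantive obstacle. If one prefers not to cite Pinsker, the inequality \(\D_{\mathrm{KL}}(p+\varepsilon\|p)\ge2\varepsilon^2\) follows from the second-derivative bound \(\partial_q^2\D_{\mathrm{KL}}(q\|p)=1/(q(1-q))\ge4\) together with \(\D_{\mathrm{KL}}(p\|p)=0\) and \(\partial_q\D_{\mathrm{KL}}(q\|p)|_{q=p}=0\).
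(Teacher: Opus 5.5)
Your proposal is correct and follows the paper's proof essentially step for step. The paper also combines Lemma~\ref{lem:read-k} with the lower bound $2(q-p)^2$ on the binary divergence, which it proves via the integral representation $\int_{p\wedge q}^{p\vee q}|q-x|/(x(1-x))\,dx$; this is exactly your Taylor/second-derivative argument. It likewise disposes of the cases $p\pm\varepsilon\notin[0,1]$ and $p\in\{0,1\}$ as empty or trivial events before adding the two tails.
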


    \begin{proof}[Proof of Corollary~\ref{cor:read-k-chernoff-bound-sum}]
    Put \(p\coloneqq r^{-1}\E \rS\) and \(\varepsilon\coloneqq s/r\). If
    \(p\in\{0,1\}\), then every \(\rY_j\) is deterministic and the conclusion
    is immediate. Hence assume \(0<p<1\).

    We first record the relative-entropy estimate used for both tails. For every
    \(q\in(0,1)\),
    \begin{align*}
    \D_{\mathrm{KL}}(q\|p)
    &=
    q\log\frac{q}{p}
    +(1-q)\log\frac{1-q}{1-p}\\
    &=
    \int_{p\wedge q}^{p\vee q}
    \frac{|q-x|}{x(1-x)}\,\de x\\
    &\ge
    4\int_{p\wedge q}^{p\vee q}|q-x|\,\de x
    =
    2(q-p)^2.
    \end{align*}
    By continuity, the same bound holds for \(q\in\{0,1\}\).
    If \(p+\varepsilon>1\), then the event \(\{\rS-\E\rS\ge s\}\) is empty.
    Otherwise, Lemma~\ref{lem:read-k} and the displayed estimate with
    \(q=p+\varepsilon\) give
    \[
    \P(\rS-\E \rS\ge s)
    \le
    \exp\left(-\frac{2r\varepsilon^2}{k}\right)
    =
    \exp\left(-\frac{2s^2}{kr}\right).
    \]
    Similarly, if \(p-\varepsilon<0\), then
    \(\{\rS-\E \rS\le-s\}\) is empty; otherwise, Lemma~\ref{lem:read-k} with
    \(q=p-\varepsilon\) yields the same bound. Adding the two one-sided
    estimates proves \eqref{eq:read-k-centered-concentration}.
    \end{proof}

    With the read-\(k\) Chernoff bound, we can prove read-\(2\) concentration
    for the one-step red-camp size and weighted advantage.
    \begin{lemma}[Read-\(2\) concentration for one-step red camp and weighted advantage]
    \label{lem:R1-read-two-concentration}
    Fix \(t\ge0\) and condition on \(\rvy_t\). Then, for every \(u>0\),
    \begin{subequations}
    \begin{align}
    \P\left(
    \left||\gR_{t+1}|-\E\left[|\gR_{t+1}|\,\middle|\,\rvy_t\right]\right|
    \ge u
    \,\middle|\,
    \rvy_t
    \right)
    &\le
    2\exp(-u^2/N), \label{eq:Rtplus-one-read-two-concentration}\\
    \P\left(
    \left|
    \widetilde{\Delta}_{t+1}
    -
    \E\left[\widetilde{\Delta}_{t+1}\,\middle|\,\rvy_t\right]
    \right|
    \ge u
    \,\middle|\,
    \rvy_t
    \right)
    &\le
    2\exp\left(-\frac{u^2}{(a+b)^2N}\right).
    \label{eq:tilde-delta-read-two-concentration}
    \end{align}
    \end{subequations}
    \end{lemma}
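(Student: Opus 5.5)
We prove both bounds by writing the relevant quantities as sums of indicators that form a read-\(2\) family over the independent edge variables of \(\gG_{t+1}\), and then applying Corollary~\ref{cor:read-k-chernoff-bound-sum} with \(k=2\) and \(r=N\).

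\textbf{Setting up the family.} Condition on \(\rvy_t\). The update graph \(\gG_{t+1}\) is then generated by the independent edge indicators \(\{\rX_{uv}\}_{\{u,v\}}\) with the laws in \eqref{eqn:conditional-update-graph-law}. For each vertex \(v\in\gV\), define
\[
\rY_v\coloneqq\indi{\ervy_{t+1}(v)=-1}.
\]
By the update rule \eqref{eqn:opinion-update}, \(\ervy_{t+1}(v)\) is a deterministic function of \(\ervy_t(v)\), which is fixed under the conditioning, and of the edge indicators \(\{\rX_{vw}\}_{w\ne v}\). So \(\rY_v\) is a Boolean function of \(\{\rX_{vw}:w\ne v\}\). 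Each edge variable \(\rX_{uv}\) is read only by \(\rY_u\) and \(\rY_v\). Hence \(\{\rY_v\}_{v\in\gV}\) is a read-\(2\) family of \(r=N\) indicators.

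\textbf{Bound \eqref{eq:Rtplus-one-read-two-concentration}.} We have \(|\gR_{t+1}|=\sum_v\rY_v\). Corollary~\ref{cor:read-k-chernoff-bound-sum} with \(k=2\) and \(r=N\) gives
\[
2\exp\!\left(-\frac{2u^2}{2N}\right)=2\exp\!\left(-\frac{u^2}{N}\right),
\]
which is exactly the first claim. All probabilities here are conditional on \(\rvy_t\). This is legitimate because the edge variables remain independent under the conditioning, so the read-\(k\) bound applies to the conditional law.

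\textbf{Bound \eqref{eq:tilde-delta-read-two-concentration}.} By \eqref{eqn:red-size-from-advantage}, \(\widetilde{\Delta}_{t+1}=bN-(a+b)|\gR_{t+1}|\), which is a deterministic affine function of \(|\gR_{t+1}|\). Therefore
\[
\widetilde{\Delta}_{t+1}-\E[\widetilde{\Delta}_{t+1}\mid\rvy_t]=-(a+b)\bigl(|\gR_{t+1}|-\E[|\gR_{t+1}|\mid\rvy_t]\bigr).
\]
The event \(\{|\widetilde{\Delta}_{t+1}-\E[\widetilde{\Delta}_{t+1}\mid\rvy_t]|\ge u\}\) is the same as the event that \(|\gR_{t+1}|\) deviates from its conditional mean by at least \(u/(a+b)\). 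Substituting \(u/(a+b)\) into the first bound gives the second claim.

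\textbf{Main point to check.} The only step needing care is the read-\(2\) property. The tie rule in \eqref{eqn:opinion-update} uses \(\ervy_t(v)\), but this is measurable with respect to the conditioning and is not random. So each \(\rY_v\) genuinely depends only on the edges incident to \(v\), and the family is read-\(2\) as claimed.
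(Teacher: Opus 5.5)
Your proposal is correct and follows the paper's proof essentially verbatim. It uses the same read-$2$ family $\{\indi{v\in\gR_{t+1}}\}_{v\in\gV}$ over the conditionally independent edge indicators, Corollary~\ref{cor:read-k-chernoff-bound-sum} with $k=2$ and $r=N$, and the affine identity $\widetilde{\Delta}_{t+1}=bN-(a+b)|\gR_{t+1}|$ with deviation $u/(a+b)$; your remark that the tie rule depends only on the fixed $\ervy_t(v)$ is the same observation the paper makes when noting that ties keep the old color.
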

    \begin{proof}[Proof of Lemma~\ref{lem:R1-read-two-concentration}]
        Throughout the proof, the configuration \(\rvy_t\) is fixed. We write
        \(\P_t(\cdot)\coloneqq\P(\cdot\mid\rvy_t)\) and
        \(\E_t[\cdot]\coloneqq\E[\cdot\mid\rvy_t]\).
    
        For every unordered pair \(\{u,v\}\subset\gV\), let
        \(\rX_{uv}\coloneqq\ermA_{t+1}(u,v)\) be the edge indicator for update \(t+1\). Under
        \(\P_t\), the family \(\{\rX_{uv}:u<v\}\) is independent. The marginal law of
        \(\rX_{uv}\) is Bernoulli with parameter \(\alpha\) if \(u\) and \(v\) have the
        same color in \(\rvy_t\), and Bernoulli with parameter \(\beta\) otherwise.
    
        For each vertex \(v\in\gV\), set
        \(\rY_v\coloneqq\indi{v\in\gR_{t+1}}\). Once \(\rvy_t\) is fixed, the value of
        \(\rY_v\) is a deterministic Boolean function of the incident edge indicators
        \(\{\rX_{uv}:u\in\gV\setminus\{v\}\}\): these indicators determine the number
        of red and blue neighbors of \(v\) at update \(t+1\), and hence determine the updated
        color of \(v\) by the majority rule, with ties resolved by keeping the old color.
        Therefore \(\{\rY_v:v\in\gV\}\) is a read-\(2\) family under \(\P_t\), since an
        edge variable \(\rX_{uv}\) is used only in the two vertex functions defining
        \(\rY_u\) and \(\rY_v\).
    
        Let \(\rS\coloneqq\sum_{v\in\gV}\rY_v=|\gR_{t+1}|\). Applying
        Corollary~\ref{cor:read-k-chernoff-bound-sum} under \(\P_t\), with
        \(r=N\), \(k=2\), and \(s=u\), gives
        \[
        \P_t\left(
        \left|\rS-\E_t \rS\right|\ge u
        \right)
        \le
        2\exp(-u^2/N).
        \]
        This proves \eqref{eq:Rtplus-one-read-two-concentration}. Moreover,
        \eqref{eqn:red-size-from-advantage} gives
        $
        \widetilde{\Delta}_{t+1}=bN-(a+b)|\gR_{t+1}|
        $,
        which implies
        \[
        \left|
        \widetilde{\Delta}_{t+1}
        -
        \E_t[\widetilde{\Delta}_{t+1}]
        \right|
        =
        (a+b)\left||\gR_{t+1}|-\E_t[|\gR_{t+1}|]\right|.
        \]
        Applying \eqref{eq:Rtplus-one-read-two-concentration} with
        \(u/(a+b)\) proves \eqref{eq:tilde-delta-read-two-concentration}.
        \end{proof}
        \begin{lemma}[MGF form of the read-\(k\) bound]
            \label{lem:read-k-mgf}
            Let \(Y_1,\ldots,Y_r\) be a read-\(k\) family of indicator random variables, and
            let \(S\coloneqq\sum_{j=1}^r Y_j\) and
            \(\mu\coloneqq\E S\).
            Then there exist constants \(C>0\) and \(\theta_0>0\), depending only on \(k\),
            such that for every \(|\theta|\le \theta_0\),
            \(\E\exp(\theta(S-\mu))\le\exp(C\theta^2\mu)\).
            \end{lemma}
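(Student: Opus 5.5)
The plan is to derive the MGF bound from the tail bounds in Lemma~\ref{lem:read-k}, converted into a Bernstein-type tail inequality that sees the mean \(\mu\). Integrating that tail bound then gives the sub-Gaussian MGF estimate with variance proxy of order \(\mu\). The Hoeffding-type Corollary~\ref{cor:read-k-chernoff-bound-sum} is not enough, since its variance proxy is \(kr\), not \(\mu\). So I will work directly with the KL form of Lemma~\ref{lem:read-k}.

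\textbf{Step 1: a Bernstein tail.} Put \(p=\mu/r\). The lemma gives
\[
\P(S-\mu\ge u)\le\exp\bigl(-\tfrac{r}{k}\D_{\mathrm{KL}}(p+u/r\,\|\,p)\bigr).
\]
I will use the standard relative-entropy lower bound
\[
\D_{\mathrm{KL}}(p+\varepsilon\|p)\ge \frac{\varepsilon^2}{2(p+\varepsilon/3)},
\]
which follows from the integral representation used in the proof of Corollary~\ref{cor:read-k-chernoff-bound-sum}, with \(x(1-x)\le x\), compared against the Poisson/Bennett function \(h(\varepsilon/p)p\). This yields
\[
\P(S-\mu\ge u)\le\exp\Bigl(-\frac{u^2}{2k(\mu+u/3)}\Bigr).
\]
For the lower tail, \(\D_{\mathrm{KL}}(p-\varepsilon\|p)\ge \varepsilon^2/(2p)\) gives
\[
\P(S-\mu\le -u)\le\exp\bigl(-u^2/(2k\mu)\bigr).
\]

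\textbf{Step 2: integrate the tails.} Write \(Z=S-\mu\) and
\[
\E e^{\theta Z}=1+\theta\E Z+\E\bigl[e^{\theta Z}-1-\theta Z\bigr],
\]
where \(\E Z=0\). Then use
\[
e^{x}-1-x\le \tfrac{x^2}{2}e^{|x|},
\]
so it suffices to bound \(\E[Z^2e^{|\theta||Z|}]\) by \(C\mu\) for \(|\theta|\le\theta_0\). Expressing this as \(\int_0^\infty (2u+|\theta|u^2)e^{|\theta|u}\P(|Z|\ge u)\,\de u\) and inserting Step 1 splits the integral into two ranges. On \(u\le 3\mu\) the tail is \(\exp(-u^2/(8k\mu))\), contributing \(O(\mu)\) once \(|\theta|\le\theta_0\) is small enough that \(e^{|\theta|u}\) is dominated. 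The worry is the exponent \(|\theta|u\le 3\theta_0\mu\), which could be large. I will handle it by noting that on this range \(\exp(|\theta|u-u^2/(8k\mu))\) is still integrable against \(u^2\) with bound \(C\mu\) only if \(|\theta|\lesssim 1\). Completing the square gives a factor \(e^{2k\theta^2\mu}\). I would therefore rather reorganize: bound \(\E e^{\theta Z}\) directly by
\[
1+\int_0^\infty |\theta| e^{|\theta|u}\P(\pm Z\ge u)\,\de u,
\]
after the first-order term is removed. On \(u\ge3\mu\) the tail is \(e^{-cu/k}\), so choosing \(\theta_0<c/(2k)\) makes that piece \(O(\theta^2)\)-small relative to \(e^{-c\mu}\).

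\textbf{Main obstacle.} Step 2 is the delicate part. It must produce exactly \(\exp(C\theta^2\mu)\), not \(1+C\theta^2\mu\) plus an error that fails for large \(\mu\), and the read-\(k\) lemma gives no independence to factor the MGF. The cleanest route I would try first sidesteps the issue with the standard equivalence: a two-sided sub-gamma tail with variance factor \(v=2k\mu\) and scale \(c=k/3\) implies
\[
\log\E e^{\theta Z}\le C\theta^2 v
\]
for \(|\theta|\le 1/(2c)\). I would prove this by the moment method: bound \(\E|Z|^q\le q!\,(Cv)^{q/2}+q!\,(Cc)^q\), then sum the series using \(\E Z=0\), absorbing the \(c^q\) terms into \(\theta^2\mu\) whenever \(\mu\ge1\). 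When \(\mu<1\), \(S\) is a nonnegative integer with \(\P(S\ge u)\le \exp(-cu)\), so directly
\[
\E e^{\theta Z}\le 1+C\theta^2\E[S^2e^{\theta_0 S}]\le 1+C'\theta^2\mu,
\]
using \(\P(S\ge1)\le\mu\).
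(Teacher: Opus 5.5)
There is a genuine gap. Your Step~1 is correct: keeping $x(1-x)\le x$ in the integral representation does give the Bennett/Bernstein tails you state. But neither branch of Step~2, as written, yields $\exp(C\theta^2\mu)$ with $\theta_0$ independent of $\mu$.

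\textbf{Large mean.} For $\mu\ge1$, the moment bound $\E|Z|^q\le q!\,(Cv)^{q/2}+q!\,(Cc)^q$ is too lossy in $v$. Inserted into $\sum_{q\ge2}|\theta|^q\E|Z|^q/q!$, it produces the geometric series $\sum_{q\ge2}(|\theta|\sqrt{Cv})^q$ with $v=2k\mu$. This diverges for every fixed $\theta\ne0$ once $\mu\gtrsim\theta^{-2}$. That is precisely the large-mean regime in which the lemma is applied to $\gRB_t$. You would need the Gaussian-scale bound $\E Z^{2q}\le q!\,(Cv)^q+(2q)!\,(Cc)^{2q}$ instead. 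In fact, the direct tail integration you abandoned already works here. The factor $e^{2k\theta^2\mu}$ from completing the square is harmless: with $x=\theta^2\mu$, the resulting bound is $1+O\bigl((x+x^{5/2})e^{2kx}\bigr)+O(\theta^2)$, and for $\mu\ge1$ this is at most $e^{Cx}$.

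\textbf{Small mean.} For $\mu<1$, the claim $\E[S^2e^{\theta_0S}]\le C\mu$ does not follow from $\P(S\ge1)\le\mu$ and a $\mu$-free tail $\P(S\ge u)\le e^{-cu}$, even together with $\E S=\mu$. For small $\mu$, take the variable equal to $M=\lfloor c^{-1}\log(1/\mu)\rfloor$ with probability $\mu/M$ and to $0$ otherwise. It satisfies all three constraints, yet $\E S^2=\mu M\gg\mu$.

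The Bernstein relaxation in Step~1 discards exactly the $\mu$-dependence needed here. The repair is to keep the Poisson form of the KL bound, $\frac rk\D_{\mathrm{KL}}(m/r\,\|\,\mu/r)\ge\frac1k\bigl(m\log\frac m\mu-m+\mu\bigr)$. This gives $\P(S\ge m)\le(e\mu/m)^{m/k}\le\mu\,(e/m)^{m/k}$ for $m\ge k$. For $m<k$, Markov gives $\P(S\ge m)\le\mu$. Summing yields $\E[S^2e^{\theta_0S}]\le C_k\mu$.

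\textbf{Comparison with the paper.} With both repairs your tail-to-MGF route can be completed, but it is much longer than necessary. The paper obtains exactly the factorization you say is unavailable. It applies Finner's generalized H\"older inequality with exponent $k$ on each $e^{\theta Y_j}$, padding under-read coordinates with constant functions. This gives $\E e^{\theta S}\le\prod_j(\E e^{k\theta Y_j})^{1/k}$ for all real $\theta$. Hence $\log\E e^{\theta(S-\mu)}\le\frac{e^{k\theta}-1-k\theta}{k}\mu\le\frac k2e^{k|\theta|}\theta^2\mu$, uniformly in $\mu$, with $\theta_0=1/k$ and $C=ek/2$. The KL tail of Lemma~\ref{lem:read-k} is itself recoverable from this MGF bound by Chernoff's method. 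That is why inverting the tail back into an MGF costs you so much.
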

        
            \begin{proof}[Proof of Lemma~\ref{lem:read-k-mgf}]
            By the definition of a read-\(k\) family, there are independent random
            variables \(X_1,\ldots,X_m\), nonempty subsets \(P_j\subseteq[m]\), and Boolean
            functions \(f_j\) such that
            \[
            Y_j=f_j\bigl((X_i)_{i\in P_j}\bigr),
            \qquad 1\le j\le r,
            \]
            and each \(i\in[m]\) belongs to at most \(k\) of the sets \(P_j\). Fix
            \(\theta\in\R\), and for each \(j\in[r]\) define the nonnegative
            function
            \[
            g_j\bigl((X_i)_{i\in P_j}\bigr)
            \coloneqq
            \exp\left(
            \theta f_j\bigl((X_i)_{i\in P_j}\bigr)
            \right)
            =
            e^{\theta Y_j}.
            \]
            We now specify the objects used in
            Theorem~\ref{thm:finner-generalized-holder}. For each original
            function \(g_j\), set
            \[
            S_j\coloneqq P_j,
            \qquad
            p_j\coloneqq k,
            \qquad j\in[r].
            \]
            Thus \(S_j\) is the set of coordinates on which \(g_j\) depends,
            and \(p_j=k\) is the corresponding H\"older exponent. For every
            coordinate \(i\in[m]\), let
            \[
            M_i\coloneqq\{j\in[r]:i\in S_j\},
            \qquad
            d_i\coloneqq|M_i|\le k.
            \]
            The original functions contribute
            \[
            \sum_{j\in M_i}\frac1{p_j}=\frac{d_i}{k}
            \]
            to the coordinate sum in Finner's hypothesis. This already equals
            \(1\) when \(d_i=k\). If \(d_i<k\), introduce a new function index
            \(i^\star\) and define
            \[
            g_{i^\star}(X_i)\equiv1,
            \qquad
            S_{i^\star}\coloneqq\{i\},
            \qquad
            p_{i^\star}\coloneqq\frac{k}{k-d_i}.
            \]
            Since \(S_{i^\star}\) contains only coordinate \(i\), this
            auxiliary function changes no other coordinate sum, while at
            coordinate \(i\) it gives
            \[
            \sum_{j\in M_i}\frac1{p_j}
            +\frac1{p_{i^\star}}
            =
            \frac{d_i}{k}+\frac{k-d_i}{k}
            =
            1.
            \]
            Repeating this construction for every \(i\) with \(d_i<k\)
            produces an augmented family satisfying
            \(\sum_{j:i\in S_j}p_j^{-1}=1\) at every coordinate. Moreover,
            since each auxiliary function is identically one and the law of
            \(X_i\) is a probability measure,
            \[
            \left(\E
            g_{i^\star}^{p_{i^\star}}\right)^{1/p_{i^\star}}=1.
            \]

            Since \(S=\sum_{j=1}^rY_j\), applying
            Theorem~\ref{thm:finner-generalized-holder} to the augmented family
            under the product law of \(X_1,\ldots,X_m\) yields
            \begin{align}
            \E e^{\theta S}
            &=
            \E\left[
            \prod_{j=1}^r g_j
            \prod_{\substack{i\in[m]\\d_i<k}}g_{i^\star}
            \right]
            \le
            \prod_{j=1}^r
            \left(\E g_j^k\right)^{1/k}
            \prod_{\substack{i\in[m]\\d_i<k}}
            \left(
            \E g_{i^\star}^{p_{i^\star}}
            \right)^{1/p_{i^\star}} \notag\\
            &=
            \prod_{j=1}^r
            \left(\E g_j^k\right)^{1/k}
            =
            \prod_{j=1}^r
            \left(\E e^{k\theta Y_j}\right)^{1/k}.
            \label{eq:read-k-mgf-finner-step}
            \end{align}
        
            We now evaluate the terms in the product. Denote \(\pi_j\coloneqq\P(Y_j=1)\). Since \(Y_j\) is an indicator variable, we have
            \[
            \E e^{k\theta Y_j}
            =
            1-\pi_j+\pi_j e^{k\theta}
            =
            1+\pi_j(e^{k\theta}-1).
            \]
            The quantity \(\pi_j(e^{k\theta}-1)\) is strictly larger than \(-1\).
            Hence \(\log(1+u)\le u\), valid for every \(u>-1\), and
            \eqref{eq:read-k-mgf-finner-step} give
            \begin{align}
            \log\E e^{\theta S}
            &\le
            \frac1k
            \sum_{j=1}^r
            \log\left(1+\pi_j(e^{k\theta}-1)\right)
            \le
            \frac{e^{k\theta}-1}{k}
            \sum_{j=1}^r\pi_j
            =
            \frac{e^{k\theta}-1}{k}\mu,
            \label{eq:read-k-uncentered-mgf}
            \end{align}
            where the last equality follows from
            \(\mu=\E S=\sum_{j=1}^r\pi_j\).
        
            Centering \(S\) and using \eqref{eq:read-k-uncentered-mgf}, we obtain
            \begin{align}
            \log\E e^{\theta(S-\mu)}
            &=
            -\theta\mu+\log\E e^{\theta S}
            \le
            \frac{e^{k\theta}-1-k\theta}{k}\mu.
            \label{eq:read-k-centered-mgf-preliminary}
            \end{align}
            For every \(x\in\R\), Taylor's formula with integral remainder gives
            \[
            e^x-1-x
            =
            x^2\int_0^1(1-u)e^{ux}\,\de u
            \le
            \frac{x^2}{2}e^{|x|}.
            \]
            Applying this estimate with \(x=k\theta\) to
            \eqref{eq:read-k-centered-mgf-preliminary} yields
            \[
            \log\E e^{\theta(S-\mu)}
            \le
            \frac{k}{2}e^{k|\theta|}\theta^2\mu.
            \]
            We may therefore take
            \(\theta_0=1/k\) and \(C=ek/2\). For every
            \(|\theta|\le\theta_0\),
            \[
            \E e^{\theta(S-\mu)}
            \le
            \exp(C\theta^2\mu),
            \]
            which proves the lemma.
            \end{proof}            

\section{Deferred Proofs in Section~\ref{sec:one-step-evolution-advantages}}
\label{app:deferred-one-step-evolution-advantages}

\begin{lemma}[Expectations and variances of the degree differences]\label{lem:expected-variance-degree-differences}
    Under Assumption~\ref{ass:sparse-assortative-sbm}, the expected degree
    differences at time \(t\) can be expressed as
    \begin{align}
        m_{t}^{\gR} = (\widetilde{\Delta}_{t}+a)\log (N)/N, \qquad m_{t}^{\gB} = -\big((a-b)N + \widetilde{\Delta}_{t} - a\big)\log (N)/N. \label{eqn:expected-degree-differences-R-B}
    \end{align}
Furthermore, the variances of the degree differences at time \(t\) satisfy
\begin{align}
    v_{t}^{\gR} \asymp \log(N), \qquad v_{t}^{\gB} \asymp \log(N). \label{eqn:variance-degree-differences-R-B}
\end{align}
\end{lemma}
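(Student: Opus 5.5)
The plan is a direct computation from the binomial representations \eqref{eqn:D-Rt-distribution} and \eqref{eqn:D-Bt-distribution}, followed by rewriting the camp sizes through \eqref{eqn:blue-size-from-advantage} and \eqref{eqn:red-size-from-advantage}.

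For the means, start with a red vertex. By linearity,
\[
m_t^{\gR}=|\gB_t|\beta-(|\gR_t|-1)\alpha=\bigl(b|\gB_t|-a|\gR_t|+a\bigr)\frac{\log N}{N}=(\widetilde{\Delta}_t+a)\frac{\log N}{N},
\]
where the last step is just the definition \eqref{eqn:weighted-advantage}. For a blue vertex,
\[
m_t^{\gB}=|\gR_t|\beta-(|\gB_t|-1)\alpha=\bigl(b|\gR_t|-a|\gB_t|+a\bigr)\frac{\log N}{N}.
\]
Substituting $|\gB_t|=(aN+\widetilde{\Delta}_t)/(a+b)$ and $|\gR_t|=(bN-\widetilde{\Delta}_t)/(a+b)$ gives
\[
b|\gR_t|-a|\gB_t|=\frac{(b^2-a^2)N-(a+b)\widetilde{\Delta}_t}{a+b}=-(a-b)N-\widetilde{\Delta}_t.
\]
This produces the stated formula for $m_t^{\gB}$.

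For the variances, independence of the two binomials gives
\[
v_t^{\gR}=|\gB_t|\beta(1-\beta)+(|\gR_t|-1)\alpha(1-\alpha),
\]
and symmetrically for $v_t^{\gB}$. The upper bound is immediate: since $|\gB_t|,|\gR_t|\le N$, we get $v\le(a+b)\log N$.

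For the lower bound, use $1-\alpha,1-\beta\ge 1/2$ for large $N$, together with $|\gB_t|+|\gR_t|-1=N-1$. This gives
\[
v\ge\tfrac12\min(\alpha,\beta)(N-1)\ge\tfrac{b}{4}\log N,
\]
uniformly over all configurations, because $\min(a,b)=b$. The same bound holds for $v_t^{\gB}$, since the sizes entering it also sum to $N-1$.

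The only place that needs care is this uniformity. The lower bound must not assume that either camp is of linear size, and summing the two sizes as above handles that, including the degenerate case where one camp is empty.
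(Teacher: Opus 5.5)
Your proposal is correct and follows the same direct computation as the paper: linearity of expectation and additivity of variance for the two independent binomials, then rewriting $b|\gR_t|-a|\gB_t|$ through the camp-size identities. For the variance lower bound you use $1-\alpha,1-\beta\ge 1/2$ and the fact that the two trial counts sum to $N-1$, which is a cleaner, explicitly uniform form of the paper's remark that at least one camp has linear size.
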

\begin{proof}[Proof of Lemma \ref{lem:expected-variance-degree-differences}]
    The proof follows directly from linearity of expectation and additivity of
    variance for independent summands.
    Substituting the parameters from
    Assumption~\ref{ass:sparse-assortative-sbm} gives
\begin{align*}
m_{t}^{\gR}
\coloneqq &\,
\E\left[\rD_{t}^{\gR}\mid \rvy_{t}\right]
=
\big[ b|\gB_{t}|-a(|\gR_{t}|-1) \Big]\frac{\log(N)}{N}
=
(\widetilde{\Delta}_{t}+a)\frac{\log (N)}{N},\\
v_{t}^{\gR}
\coloneqq &\,
\Var\left(\rD_{t}^{\gR}\mid \rvy_{t}\right)
=
|\gB_{t}|\beta(1-\beta)
+
\left(|\gR_{t}|-1\right)\alpha(1-\alpha) \notag\\
=&\,
\left[
b|\gB_{t}|(1-\beta)
+
a\left(|\gR_{t}|-1\right)(1-\alpha)
\right]\frac{\log(N)}{N},
\end{align*}
where $v_{t}^{\gR} \asymp \log(N)$ since at least one of $|\gB_{t}|$ or $|\gR_{t}|$ is linear in $N$. Similarly, we have
\begin{align*}
m_{t}^{\gB}
\coloneqq &\,
\E\left[\rD_{t}^{\gB}\mid \rvy_{t}\right]
=
\frac{
b|\gR_{t}|-a(|\gB_{t}|-1)
}{N}\log (N)
=
-(a-b)\log (N)
-
\frac{\widetilde{\Delta}_{t}}{N}\log (N)
+
\frac{a\log (N)}{N},\\
v_{t}^{\gB}
\coloneqq &\,
\Var\left(\rD_{t}^{\gB}\mid \rvy_{t}\right)
=
|\gR_{t}|\beta(1-\beta)
+
\left(|\gB_{t}|-1\right)\alpha(1-\alpha) \notag\\
=&\,
\left[b|\gR_{t}|(1-\beta)
+
a\left(|\gB_{t}|-1\right)(1-\alpha)
\right] \frac{\log (N)}{N},
\end{align*}
where $v_{t}^{\gB} \asymp \log(N)$ follows from the same argument.
\end{proof}

\begin{lemma}[Conditional expectations of camp sizes and advantages after one update]
    \label{lem:conditional-expectations-next-day}
    For all \(t\ge0\), the conditional expectations of the camp sizes at time
    \(t+1\) can be expressed as
    \begin{subequations}
    \begin{align}
        \E\left[ |\gB_{t+1}|\,\middle|\, \rvy_{t} \right]
&\, = |\gR_{t}|p_t^{\gR} + |\gB_{t}|q_t^{\gB}, \label{eq:B-next-expectation}\\
        \E\left[ |\gR_{t+1}|\,\middle|\, \rvy_{t} \right]
&\, = |\gR_{t}|q_t^{\gR} + |\gB_{t}|p_t^{\gB}. \label{eq:R-next-expectation}
    \end{align}
    \end{subequations}
    Indeed, every red-to-blue flip increases \(\Delta_{t}\) by \(2\), while every
    blue-to-red flip decreases it by \(2\).
    \begin{align}
    \E[\Delta_{t+1}\mid\rvy_{t}]
    =\Delta_{t}+2(|\gR_{t}|p_{t}^{\gR}-|\gB_{t}|p_{t}^{\gB}). \label{eqn:Delta-one-step-drift}
    \end{align}
    Furthermore, the conditional expectation of the weighted advantage at
    time \(t+1\) can be expressed as
    \begin{subequations}
        \begin{align}
            \E\left[\widetilde{\Delta}_{t+1} \,\middle|\, \rvy_{t}\right]
            &\, = \widetilde{\Delta}_{t} + (a+b) \left( |\gR_{t}|p_t^{\gR} - |\gB_{t}|p_t^{\gB} \right) \label{eq:gap-next-expectation}\\
            &\, = bN\bigl(p_t^{\gR}-p_t^{\gB}\bigr)
            +
            \widetilde{\Delta}_{t}
            \bigl(1-p_t^{\gR}-p_t^{\gB}\bigr)
            -
            (a-b)N p_t^{\gB}.
            \label{eq:gap-expectation-expanded}
        \end{align}
    \end{subequations}
\end{lemma}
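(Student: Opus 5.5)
The identities follow from linearity of expectation over vertices, conditional on \(\rvy_t\). For \eqref{eq:B-next-expectation}, write
\[
|\gB_{t+1}|=\sum_{v\in\gR_t}\indi{\rD_t^{\gR}(v)>0}+\sum_{u\in\gB_t}\indi{\rD_t^{\gB}(u)\le0}.
\]
By the update rule \eqref{eqn:opinion-update}, with ties keeping the old color, a red vertex becomes blue iff \(\rD_t^\gR(v)>0\), and a blue vertex stays blue iff \(\rD_t^\gB(u)\le0\). By \eqref{eqn:D-Bt-distribution}--\eqref{eqn:D-Rt-distribution}, each indicator has conditional mean \(p_t^\gR\) or \(q_t^\gB\), independent of the vertex label. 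Summing these gives \(|\gR_t|p_t^\gR+|\gB_t|q_t^\gB\). The same argument gives \eqref{eq:R-next-expectation}. No independence between vertices is needed here, only linearity.

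For \eqref{eqn:Delta-one-step-drift}, subtract the two identities and use \(q_t^\gC=1-p_t^\gC\):
\[
\E[\Delta_{t+1}\mid\rvy_t]=|\gB_t|(1-2p_t^\gB)-|\gR_t|(1-2p_t^\gR)=\Delta_t+2\bigl(|\gR_t|p_t^\gR-|\gB_t|p_t^\gB\bigr).
\]

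For \eqref{eq:gap-next-expectation}, apply the affine relation \eqref{eqn:affine-delta-tilde-delta}, \(\widetilde{\Delta}=\frac{a+b}{2}\Delta-\frac{a-b}{2}N\). Taking conditional expectations multiplies the drift \(2(\cdots)\) by \((a+b)/2\), which gives the factor \(a+b\). Equivalently, compute \(b\,\E|\gB_{t+1}|-a\,\E|\gR_{t+1}|\) directly.

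For \eqref{eq:gap-expectation-expanded}, substitute the camp sizes \eqref{eqn:blue-size-from-advantage}--\eqref{eqn:red-size-from-advantage}:
\[
(a+b)|\gR_t|=bN-\widetilde{\Delta}_t,\qquad (a+b)|\gB_t|=aN+\widetilde{\Delta}_t.
\]
Then
\[
\widetilde{\Delta}_t+(bN-\widetilde{\Delta}_t)p_t^\gR-(aN+\widetilde{\Delta}_t)p_t^\gB,
\]
and writing \(aN=bN+(a-b)N\) rearranges this into \(bN(p_t^\gR-p_t^\gB)+\widetilde{\Delta}_t(1-p_t^\gR-p_t^\gB)-(a-b)Np_t^\gB\).

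There is no real obstacle. The only point needing care is the tie-breaking convention: \(q\) must be defined with \(\le0\), so that tied vertices keep their color and are counted in the correct camp.
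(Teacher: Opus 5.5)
Your proof is correct and follows the paper's argument. You decompose the camp sizes into indicators, apply linearity of conditional expectation, and then substitute $(a+b)|\gR_t|=bN-\widetilde{\Delta}_t$ and $(a+b)|\gB_t|=aN+\widetilde{\Delta}_t$. The only cosmetic difference is that you get the weighted identity from the $\Delta$ drift via the affine relation, whereas the paper computes $b\,\E|\gB_{t+1}|-a\,\E|\gR_{t+1}|$ directly, which you also note as equivalent.
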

\begin{proof}[Proof of Lemma~\ref{lem:conditional-expectations-next-day}]
Condition on \(\rvy_t\). By the update rule in \eqref{eqn:opinion-update} and
the degree differences in \eqref{eqn:degree-differences}, a red vertex
\(v\in\gR_t\) contributes to \(\gB_{t+1}\) if and only if
\(\rD_t^{\gR}(v)>0\), while it contributes to \(\gR_{t+1}\) if and only if
\(\rD_t^{\gR}(v)\le0\). Similarly, a blue vertex \(u\in\gB_t\) contributes to
\(\gR_{t+1}\) if and only if \(\rD_t^{\gB}(u)>0\), while it contributes to
\(\gB_{t+1}\) if and only if \(\rD_t^{\gB}(u)\le0\). Therefore,
\begin{align*}
|\gB_{t+1}|
&=
\sum_{v\in\gR_t}\indi{\rD_t^{\gR}(v)>0}
+
\sum_{u\in\gB_t}\indi{\rD_t^{\gB}(u)\le0},\\
|\gR_{t+1}|
&=
\sum_{v\in\gR_t}\indi{\rD_t^{\gR}(v)\le0}
+
\sum_{u\in\gB_t}\indi{\rD_t^{\gB}(u)>0}.
\end{align*}
Taking conditional expectations and using \eqref{eqn:p-q-R} and
\eqref{eqn:p-q-B}, we obtain \eqref{eq:B-next-expectation} and
\eqref{eq:R-next-expectation}.

We now compute the conditional expectation of the weighted advantage. By
\eqref{eqn:weighted-advantage}, \eqref{eq:B-next-expectation}, and
\eqref{eq:R-next-expectation},
\begin{align*}
\E\left[\widetilde{\Delta}_{t+1}\,\middle|\,\rvy_t\right]
&=
\E\left[ b|\gB_{t+1}|-a|\gR_{t+1}|\,\middle|\,\rvy_t\right]\\
&=
b\left(|\gR_t|p_t^{\gR}+|\gB_t|q_t^{\gB}\right)
-
a\left(|\gR_t|q_t^{\gR}+|\gB_t|p_t^{\gB}\right)\\
&=
b|\gB_t|-a|\gR_t|
+
(a+b)|\gR_t|p_t^{\gR}
-
(a+b)|\gB_t|p_t^{\gB} \\
&=
\widetilde{\Delta}_t
+
(a+b)\left(|\gR_t|p_t^{\gR}-|\gB_t|p_t^{\gB}\right),
\end{align*}
where the third equality uses \(q_t^{\gR}=1-p_t^{\gR}\) and
\(q_t^{\gB}=1-p_t^{\gB}\). This proves \eqref{eq:gap-next-expectation}. It remains only to rewrite the
same expression in terms of
\(\widetilde{\Delta}_t\) and \(N\). From
\eqref{eqn:blue-size-from-advantage} and
\eqref{eqn:red-size-from-advantage},
\((a+b)|\gR_t|=bN-\widetilde{\Delta}_t\) and
\((a+b)|\gB_t|=aN+\widetilde{\Delta}_t\). Substituting these identities into
\eqref{eq:gap-next-expectation} yields
\begin{align*}
\E\left[\widetilde{\Delta}_{t+1}\,\middle|\,\rvy_t\right]
&=
\widetilde{\Delta}_t
+
\left(bN-\widetilde{\Delta}_t\right)p_t^{\gR}
-
\left(aN+\widetilde{\Delta}_t\right)p_t^{\gB} \\
&=
bN\bigl(p_t^{\gR}-p_t^{\gB}\bigr)
+
\widetilde{\Delta}_t\bigl(1-p_t^{\gR}-p_t^{\gB}\bigr)
-
(a-b)Np_t^{\gB},
\end{align*}
which is \eqref{eq:gap-expectation-expanded}. This completes the proof.
\end{proof}

With the above lemmas in hand, we are now ready to prove the lemmas in Section~\ref{sec:one-step-evolution-advantages}.

\subsection{Proof of Lemma~\ref{lem:flip-ratio-comparison}}

\begin{proof}
Recall that \(p_{t}^{\gB}\) and \(p_{t}^{\gR}\) are defined in
\eqref{eqn:p-q-B} and \eqref{eqn:p-q-R}. By
\eqref{eqn:D-Bt-distribution} and \eqref{eqn:D-Rt-distribution},
\[
\rD_t^{\gB}\stackrel d=
\Bin(|\gR_t|,\beta)-\Bin(|\gB_t|-1,\alpha),
\qquad
\rD_t^{\gR}\stackrel d=
\Bin(|\gB_t|,\beta)-\Bin(|\gR_t|-1,\alpha).
\]
Thus the flip probabilities are
\begin{align*}
p_t^{\gB}
&=
\sum_{i>j}
\binom{|\gR_t|}{i}
\beta^i(1-\beta)^{|\gR_t|-i}
\binom{|\gB_t|-1}{j}
\alpha^j(1-\alpha)^{|\gB_t|-1-j},\\
p_t^{\gR}
&=
\sum_{i>j}
\binom{|\gB_t|}{i}
\beta^i(1-\beta)^{|\gB_t|-i}
\binom{|\gR_t|-1}{j}
\alpha^j(1-\alpha)^{|\gR_t|-1-j}.
\end{align*}
Here, as usual, binomial coefficients outside their natural range are
interpreted as zero.

We compare the summands term by term. If the corresponding red-to-blue summand
is zero, then the blue-to-red summand is also zero. Indeed, either
\(i>|\gB_t|\), which implies \(i>|\gR_t|\), or \(j>|\gR_t|-1\), which together
with \(i>j\) also implies \(i>|\gR_t|\). Thus it remains to consider pairs
\(i>j\) for which the denominator below is nonzero. For such pairs,
\begin{align*}
\frac{\binom{|\gR_t|}{i}\binom{|\gB_t|-1}{j}}
{\binom{|\gB_t|}{i}\binom{|\gR_t|-1}{j}}
&=
\frac{|\gR_t|}{|\gB_t|}
\prod_{\ell=1}^{i-1}\frac{|\gR_t|-\ell}{|\gB_t|-\ell}
\prod_{\ell=1}^{j}\frac{|\gB_t|-\ell}{|\gR_t|-\ell}\\
&=
\frac{|\gR_t|}{|\gB_t|}
\prod_{\ell=j+1}^{i-1}\frac{|\gR_t|-\ell}{|\gB_t|-\ell}
\le
\frac{|\gR_t|}{|\gB_t|},
\end{align*}
where the product is empty when \(i=j+1\). Therefore each blue-to-red summand is
at most
\[
\frac{|\gR_t|}{|\gB_t|}
\left(\frac{1-\alpha}{1-\beta}\right)^{\Delta_t}
\]
times the corresponding red-to-blue summand. Summing over \(i>j\), we obtain
\[
\frac{|\gB_t|p_t^{\gB}}{|\gR_t|p_t^{\gR}}
\le
\left(\frac{1-\alpha}{1-\beta}\right)^{\Delta_t}.
\]

It remains to express the last bound in terms of \(\theta_t\). Since
\(\alpha=a\log (N)/N\), \(\beta=b\log (N)/N\), and \(a>b\), for all sufficiently
large \(N\), we have
\begin{align*}
\log\frac{1-\alpha}{1-\beta}
&=
\log(1-\alpha)-\log(1-\beta)
\le
-(\alpha-\beta)
=
-(a-b)\frac{\log (N)}{N}.
\end{align*}
Set \(c_0\coloneqq1-e^{-(a-b)}\). If \(\Delta_t\log(N)/N\le1\), then
\(\theta_t=\Delta_t\log(N)/N\), and convexity of \(x\mapsto e^{-(a-b)x}\) on
\([0,1]\) gives
\[
    \left(\frac{1-\alpha}{1-\beta}\right)^{\Delta_t}
    \le
    \exp\left(-(a-b)\frac{\Delta_t\log(N)}{N}\right)
    \le
    1-c_0\theta_t.
\]
If \(\Delta_t\log(N)/N\ge1\), then \(\theta_t=1\), and
\[
    \left(\frac{1-\alpha}{1-\beta}\right)^{\Delta_t}
    \le
    \exp\left(
    -(a-b)\frac{\Delta_t\log(N)}{N}
    \right)
    \le
    e^{-(a-b)}
    =
    1-c_0\theta_t.
\]
The desired result follows by combining the last two cases.
\end{proof}

\subsection{Proof of Lemma~\ref{lem:one-step-difference-bound}}

\begin{proof}
By Lemma~\ref{lem:conditional-expectations-next-day},
\eqref{eqn:gBR-gRB-definition}, \eqref{eqn:p-q-B}, and \eqref{eqn:p-q-R},
\[
\mu_t
=2\left(|\gR_t|p_t^\gR-|\gB_t|p_t^\gB\right),
\qquad
\nu_t
=|\gR_t|p_t^\gR+|\gB_t|p_t^\gB.
\]
The same identities, \eqref{eqn:blue-size-from-advantage}, and
\eqref{eqn:red-size-from-advantage} give
\begin{align}
\widetilde{\mu}_t
&=(a+b)\left(|\gR_t|p_t^\gR-|\gB_t|p_t^\gB\right) \notag\\
&=\left(bN-\widetilde{\Delta}_t\right)p_t^\gR
-\left(aN+\widetilde{\Delta}_t\right)p_t^\gB.
\label{eq:first-day-weighted-drift}
\end{align}
Thus \(\widetilde{\mu}_t=(a+b)\mu_t/2\).

After decreasing the constant in Lemma~\ref{lem:flip-ratio-comparison}, if
necessary, that lemma gives
\[
|\gB_t|p_t^\gB
\le (1-c_0\theta_t)|\gR_t|p_t^\gR
\le |\gR_t|p_t^\gR
\]
for some \(0<c_0\le1\). Therefore
\[
|\gR_t|p_t^\gR-|\gB_t|p_t^\gB
\ge c_0|\gR_t|p_t^\gR\theta_t.
\]
Substitution into the identities for \(\mu_t\) and \(\nu_t\) proves
\eqref{eqn:mu-nu-tail-free-bound}; dividing the lower bound for \(\mu_t\) by
the upper bound for \(\nu_t\) proves \eqref{eqn:mu-nu-comparison-bound}.
\end{proof}

\subsection{Proof of Lemma~\ref{lem:linear-jump-one-step-bound}}

\begin{proof}
Choose a sufficiently small constant \(\eta_0=\eta_0(a,b)>0\). If
\(|\gR_t|\le\eta_0N\), then \(|\gB_t|\ge(1-\eta_0)N\), and
Lemma~\ref{lem:expected-variance-degree-differences} gives
\[
m_t^\gB
=-\big((a-b)N+\widetilde{\Delta}_t-a\big)\frac{\log N}{N}
\le-c\log N.
\]
Lemma~\ref{lem:gaussian-sparse-binomial-difference}, applied to
\(\rD_t^\gB\), therefore gives \(p_t^\gB=o(1)\). By
\eqref{eq:R-next-expectation},
\[
\E[|\gR_{t+1}|\mid\rvy_t]\le\eta_0N+o(N),
\]
and hence
\[
\E[\widetilde{\Delta}_{t+1}\mid\rvy_t]
=bN-(a+b)\E[|\gR_{t+1}|\mid\rvy_t]
\ge cN
\]
after decreasing \(\eta_0\).

Suppose instead that \(|\gR_t|>\eta_0N\). The assumed lower bound on
\(\widetilde{\Delta}_t\) and \eqref{eqn:blue-size-from-advantage} imply
\[
|\gB_t|\ge\frac{aN}{a+b}-o(N),
\qquad
|\gR_t|>\eta_0N.
\]
Thus both camps have linear size. Equation~\eqref{eqn:affine-delta-tilde-delta}
also gives \(\Delta_t\asymp N\), so \(\theta_t=1\). By
Lemma~\ref{lem:one-step-difference-bound},
\[
\widetilde{\mu}_t\ge c|\gR_t|p_t^\gR.
\]
Furthermore, Lemma~\ref{lem:expected-variance-degree-differences} gives
\[
m_t^\gR
=(\widetilde{\Delta}_t+a)\frac{\log N}{N}
\ge-H\sqrt{\log N}+o(\sqrt{\log N}),
\qquad
v_t^\gR\asymp\log N.
\]
Hence \(x_t^\gR\ge-C_H\), and
Corollary~\ref{cor:gaussian-flip-probabilities} yields
\(p_t^\gR\ge c_H\). Consequently,
\(\widetilde{\mu}_t\ge c_HN\). In both cases,
\(\E[\widetilde{\Delta}_{t+1}\mid\rvy_t]\ge c_HN\), which proves the
lower bound for \(\widetilde{\mu}_t\) in
\eqref{eqn:linear-jump-expectation-bound}. Its upper bound follows from the
deterministic inequality \(\widetilde{\Delta}_{t+1}\le bN\). The bounds for
\(\mu_t\) follow from \eqref{eqn:affine-delta-tilde-delta}, and
\(0\le\nu_t\le N\) follows because \(\nu_t\) is the expected number of
vertices that flip.

For the high-probability estimate, Lemma~\ref{lem:R1-read-two-concentration}
gives, for every \(s>0\),
\begin{align}
\P\left(
\left|\widetilde{\Delta}_{t+1}
-\E[\widetilde{\Delta}_{t+1}\mid\rvy_t]\right|\ge s
\,\middle|\,\rvy_t
\right)
\le2\exp\left(-\frac{s^2}{(a+b)^2N}\right).
\label{eq:first-day-gap-concentration}
\end{align}
Taking \(s=c_HN/2\), and decreasing \(c_H\), gives
\(\widetilde{\Delta}_{t+1}\ge c_HN\) except on an event of conditional
probability at most \(2\exp(-c_HN)\). The weighted upper bound is
deterministic, and the bounds for \(\Delta_{t+1}\) follow from
\eqref{eqn:affine-delta-tilde-delta}, after changing \(c_H\).
\end{proof}

\subsection{Proof of Lemma~\ref{lem:almost-linear-jump-one-step-bound}}

\begin{proof}
The definition of \(u_t\), \eqref{eqn:blue-size-from-advantage}, and
\eqref{eqn:red-size-from-advantage} give
\[
|\gB_t|
=\frac{aN}{a+b}-\frac{u_tN}{(a+b)\sqrt{\log N}},
\qquad
|\gR_t|
=\frac{bN}{a+b}+\frac{u_tN}{(a+b)\sqrt{\log N}}.
\]
Thus both camps have linear size. Equation~\eqref{eqn:affine-delta-tilde-delta}
gives \(\Delta_t\asymp N\), and hence \(\theta_t=1\). Moreover,
Lemma~\ref{lem:expected-variance-degree-differences} gives
\[
m_t^\gR=-u_t\sqrt{\log N}+O(\log N/N),
\qquad
v_t^\gR=\left(\frac{2ab}{a+b}+o(1)\right)\log N.
\]
Consequently, \(-x_t^\gR\asymp u_t\). By
Corollary~\ref{cor:moderate-deviation-tail-flip-probabilities},
\[
p_t^\gR
\asymp
\frac{N^{-I(A_t^\gR,B_t^\gB)}}{1+u_t}.
\]
Since
\(I(A_t^\gR,B_t^\gB)\log N=\Theta(u_t^2)\), this becomes
\[
p_t^\gR\asymp\frac{\exp\{-\Theta(u_t^2)\}}{1+u_t}.
\]
Substitution into Lemma~\ref{lem:one-step-difference-bound}, together with
\(|\gR_t|\asymp N\) and \(\theta_t=1\), proves
\eqref{eqn:mu-nu-moderate-bound}.

Apply \eqref{eq:first-day-gap-concentration} with
\[
s=cN\frac{\exp(-Cu_t^2)}{1+u_t},
\]
decreasing \(c\) and increasing \(C\) if necessary. On the resulting event, the
realized increment differs from its conditional mean by at most \(s\), so the
expectation bounds transfer with at most a fixed-factor loss. The complementary
event has conditional probability at most
\[
2\exp\left(-cN\frac{\exp(-Cu_t^2)}{(1+u_t)^2}\right).
\]
This proves the weighted bounds in \eqref{eq:almost-linear-jump-bound}. The
unweighted bounds follow from \eqref{eqn:affine-delta-tilde-delta}.
\end{proof}

\subsection{Proof of Lemma~\ref{lem:sublinear-jump-one-step-bound}}

\begin{proof}
The assumption \(\Delta_t\ge0\) gives \(|\gB_t|\ge|\gR_t|\), while
\eqref{eqn:red-size-from-advantage} and
\eqref{eqn:sublinear-jump-condition} give \(|\gR_t|\asymp N\). Hence both camps
have linear size. Corollary~\ref{cor:large-deviation-tail-flip-probabilities}
gives
\[
p_t^\gR
=
N^{-I(A_t^\gR,B_t^\gB)+o(1)}.
\]
The exponent differs from \(I_t\) only by replacing \(|\gR_t|-1\) with
\(|\gR_t|\). Since \(|\gR_t|\asymp N\),
\[
I(A_t^\gR,B_t^\gB)=I_t+O(N^{-1/2}).
\]
Consequently, for every fixed \(\eta>0\) and all sufficiently large \(N\),
\[
N^{-I_t-\eta}\le p_t^\gR\le N^{-I_t+\eta}.
\]
Substituting this estimate into
Lemma~\ref{lem:one-step-difference-bound}, and using
\(|\gR_t|\asymp N\), proves \eqref{eqn:mu-large-deviation-bound} and
\eqref{eqn:nu-large-deviation-bound}. The relation
\eqref{eqn:mu-tilde-mu-linearity} proves
\eqref{eqn:tilde-mu-large-deviation-bound}.

For the lower bound, apply
\eqref{eq:first-day-gap-concentration} with
\(s=c\theta_t N^{1-I_t-\eta}\), after decreasing \(c\). Its conditional
failure probability is at most
\[
2\exp\left(-c\theta_t^2N^{1-2I_t-2\eta}\right).
\]
For the upper bound, use
\(s=CN^{1-I_t+\eta}\), after increasing \(C\). The corresponding
failure probability is at most
\[
2\exp\left(-cN^{1-2I_t+2\eta}\right).
\]
Combining these events proves the weighted bounds in
\eqref{eq:sublinear-jump-bound}; the unweighted bounds follow from
\eqref{eqn:affine-delta-tilde-delta}.
\end{proof}

\section{Deferred Proofs in Section~\ref{sec:constant-days-to-unanimity}}\label{app:deferred-constant-days}

\subsection{Proof of Lemma~\ref{lem:third-day-extinction}}

\begin{proof}
Condition on \(\rvy_{t}\) and assume
\eqref{eqn:third-day-extinction-condition}. If
\(\gR_{t}=\emptyset\), then no blue vertex has a red neighbor, so
\(\gR_{t+1}=\emptyset\) deterministically. Hence we may assume
\(1\le |\gR_{t}|\le rN\). Then \(|\gB_{t}|\ge(1-r)N\).

Recall the quantities \(A_{t}^{\gR}\), \(B_{t}^{\gR}\),
\(A_{t}^{\gB}\), and \(B_{t}^{\gB}\) from
Corollary~\ref{cor:large-deviation-tail-flip-probabilities}. Under the current notation,
\[
A_{t}^{\gR}=\frac{a(|\gR_{t}|-1)}{N},\qquad
B_{t}^{\gR}=\frac{b|\gR_{t}|}{N},\qquad
A_{t}^{\gB}=\frac{a(|\gB_{t}|-1)}{N},\qquad
B_{t}^{\gB}=\frac{b|\gB_{t}|}{N}.
\]
By \eqref{eqn:D-Rt-distribution}, \eqref{eqn:D-Bt-distribution}, and the
Chernoff upper bounds in the proof of Lemma~\ref{lem:binomial-difference-large-deviation-tail},
\begin{align*}
q_t^{\gR}
&=
\P\left(\rD_t^{\gR}\le0\,\middle|\,\rvy_t\right)
\le
N^{-I(B_t^{\gB},A_t^{\gR})},\\
p_t^{\gB}
&=
\P\left(\rD_t^{\gB}>0\,\middle|\,\rvy_t\right)
\le
N^{-I(A_t^{\gB},B_t^{\gR})}.
\end{align*}
We next lower bound the two exponents. Since \(|\gR_{t}|\le rN\) and
\(|\gB_{t}|\ge(1-r)N\),
\[
B_t^{\gB}\ge b(1-r),
\qquad
A_t^{\gR}\le ar.
\]
Therefore, using the definition of \(I\) in \eqref{eqn:LDP-rate-function},
\[
I(B_t^{\gB},A_t^{\gR})
\ge
\left(\sqrt{b(1-r)}-\sqrt{ar}\right)^2.
\]
Because \(r<r_{*}\), define
\[
\eta\coloneqq
\left(\sqrt{b(1-r)}-\sqrt{ar}\right)^2-1>0.
\]
Thus \(q_t^{\gR}\le N^{-1-\eta}\).

For the blue-to-red probability, the same size bounds give
\[
A_t^{\gB}\ge a\left(1-r-\frac1N\right),
\qquad
B_t^{\gR}\le br.
\]
Since \(a\ge b\) and \(r<r_{*}<1/2\), we have
\begin{align*}
\left(\sqrt{a(1-r)}-\sqrt{br}\right)^2
-
\left(\sqrt{b(1-r)}-\sqrt{ar}\right)^2
=
(a-b)(1-2r)\ge 0.
\end{align*}
Consequently, for all sufficiently large \(N\),
\[
I(A_t^{\gB},B_t^{\gR})\ge 1+\frac{\eta}{2},
\]
and hence \(p_t^{\gB}\le N^{-1-\eta/2}\). Taking
\(\xi=\eta/2\), we have, for all sufficiently large \(N\),
\[
q_t^{\gR}\le N^{-1-\xi},
\qquad
p_t^{\gB}\le N^{-1-\xi}.
\]

Finally, a vertex is red after update \(t+1\) only if either it was red at time
\(t\) and stays red, or it was blue at time \(t\) and flips to red. By the union bound
and the definitions \eqref{eqn:p-q-R} and \eqref{eqn:p-q-B},
\begin{align*}
\P\left(\gR_{t+1}\neq\emptyset\,\middle|\,\rvy_{t}\right)
&\le
|\gR_{t}|q_t^{\gR}
+
|\gB_{t}|p_t^{\gB}\\
&\le
N\cdot N^{-1-\xi}
+
N\cdot N^{-1-\xi}
=
2N^{-\xi}.
\end{align*}
\end{proof}

\subsection{Proof of Lemma~\ref{lem:second-day-reduction}}

\begin{proof}
    Throughout the proof, we condition on \(\rvy_t\) satisfying
    \eqref{eqn:second-day-reduction-condition}, and take \(r\) as in
    \eqref{eqn:second-day-extinction-density-condition}. Define
    \[
    x_K\coloneqq K/\sqrt{2ab/(a+b)},
    \qquad
    \rho\coloneqq \frac{bq_K/(a+b)+r}{2}.
    \]
    By \eqref{eqn:second-day-extinction-density-condition},
    \begin{align*}
    \frac{b}{a+b}q_K<\rho<r<r_{*}<\frac12.
    \end{align*}
    We claim that, uniformly over all configurations satisfying
    \eqref{eqn:second-day-reduction-condition},
    \begin{align}
    \E\left[|\gR_{t+1}|\,\middle|\,\rvy_t\right]\le \rho N+o(N).
    \label{eq:second-day-mean-rho}
    \end{align}
    Assuming \eqref{eq:second-day-mean-rho}, the desired high-probability bound
    follows directly from Lemma~\ref{lem:R1-read-two-concentration}. Applying that
    lemma with \(u=\sqrt N\log(N)\) gives
    \begin{align*}
    \P\left(
    \left||\gR_{t+1}|-\E\left[|\gR_{t+1}|\,\middle|\,\rvy_t\right]\right|
    \ge \sqrt N\log (N)
    \,\middle|\,
    \rvy_t
    \right)
    \le
    2\exp\left(-(\log (N))^2\right).
    \end{align*}
    Since \(\rho<r\) and \(\sqrt N\log (N)=o(N)\),
    \eqref{eq:second-day-mean-rho} implies that \(|\gR_{t+1}|\le rN\) for all
    sufficiently large \(N\), except with conditional probability at most
    \(2\exp\left(-(\log (N))^2\right)\).
    
    It remains to prove \eqref{eq:second-day-mean-rho}. We divide the proof into two
    cases according to the size of \(\gR_t\).
    
    \noindent\textbf{Case 1: \(|\gR_t|\le \rho N\)}. In this case, the red camp is
    already below the intermediate density \(\rho\). Hence
    \(|\gB_t|\ge (1-\rho)N\). For a fixed blue vertex, conditional on \(\rvy_t\),
    \begin{align*}
    \rD_t^{\gB}
    \stackrel d=
    \Bin(|\gR_t|,\beta)-\Bin(|\gB_t|-1,\alpha).
    \end{align*}
    Following the notation of Corollary~\ref{cor:large-deviation-tail-flip-probabilities}, set
    \[
    A_t^{\gB}\coloneqq \frac{a(|\gB_t|-1)}{N},
    \qquad
    B_t^{\gR}\coloneqq \frac{b|\gR_t|}{N}.
    \]
    Applying the Chernoff calculation in the proof of
    Lemma~\ref{lem:binomial-difference-large-deviation-tail} to
    \(\rX\sim\Bin(|\gR_t|,\beta)\) and
    \(\rY\sim\Bin(|\gB_t|-1,\alpha)\), we obtain
    \begin{align*}
    p_t^{\gB}
    =
    \P\left(\rD_t^{\gB}>0\,\middle|\,\rvy_t\right)
    \le
    N^{-I(A_t^{\gB},B_t^{\gR})}.
    \end{align*}
    The exponent \(I(A_t^{\gB},B_t^{\gR})\) is bounded away from zero. Indeed,
    using \(|\gR_t|\le \rho N\), \(|\gB_t|\ge (1-\rho)N\), and
    \eqref{eqn:LDP-rate-function}, for all sufficiently large \(N\), we have
    \begin{align*}
    \sqrt{I(A_t^{\gB},B_t^{\gR})}
    &\ge
    \sqrt{a\left(1-\rho-N^{-1}\right)}
    -
    \sqrt{b\rho}
    \ge
    \sqrt b\left(\sqrt{1-\rho}-\sqrt\rho\right)+o(1) \geq c_1,
    \end{align*}
    where \(c_1>0\) is a positive constant because \(\rho<1/2\). Hence
    \(p_t^{\gB}=o(1)\). Therefore, \eqref{eq:second-day-mean-rho} follows from
    \eqref{eq:R-next-expectation} and the trivial bound \(q_t^{\gR}\le1\):
    \begin{align*}
    \E\left[|\gR_{t+1}|\,\middle|\,\rvy_t\right]
    =
    |\gR_t|q_t^{\gR}+|\gB_t|p_t^{\gB}
    \le
    |\gR_t|+Np_t^{\gB}
    \le
    \rho N+o(N).
    \end{align*}
    
    \noindent\textbf{Case 2: \(|\gR_t|>\rho N\)}. Set
    \(\gamma_N\coloneqq\widetilde{\Delta}_t/N\). The hypothesis
    \eqref{eqn:second-day-reduction-condition} gives
    \begin{align*}
    \gamma_N\sqrt{\log N}\ge K.
    \end{align*}
    In particular, \(\gamma_N\ge K/\sqrt{\log N}\). Together with
    \eqref{eqn:blue-size-from-advantage} and
    \eqref{eqn:red-size-from-advantage}, this gives
    \begin{align*}
    |\gR_t|
    =
    \frac{bN-\widetilde{\Delta}_t}{a+b}
    \le
    \left(
    \frac{b}{a+b}
    +o(1)
    \right)N,
    \qquad
    |\gB_t|
    =
    \frac{aN+\widetilde{\Delta}_t}{a+b}
    \ge
    \left(
    \frac{a}{a+b}
    +o(1)
    \right)N.
    \end{align*}
    Since \(a>b\) and \(|\gR_t|>\rho N\), both camps have linear size.
    Hence Corollary~\ref{cor:gaussian-flip-probabilities} applies uniformly in
    the present case.

    We first bound the probability \(q_t^{\gR}\) that a red vertex remains red. By
    \eqref{eqn:expected-degree-differences-R-B},
    \begin{align*}
    m_t^{\gR}
    =
    (\widetilde{\Delta}_t+a)\frac{\log N}{N}
    =
    \gamma_N\log N+\frac{a\log N}{N}.
    \end{align*}
    Moreover, the variance formula in the proof of
    Lemma~\ref{lem:expected-variance-degree-differences} gives
    \begin{align}
    \frac{v_t^{\gR}}{\log N}
    &=
    \frac{
    b|\gB_t|(1-\beta)
    +
    a(|\gR_t|-1)(1-\alpha)
    }{N} \notag\\
    &=
    \frac{2ab}{a+b}
    -
    \frac{a-b}{a+b}\gamma_N
    +
    O\left(\frac{\log N}{N}\right).
    \label{eq:second-day-red-variance-gamma}
    \end{align}

    If \(\gamma_N\ge0\), then
    \(v_t^{\gR}\le (2ab/(a+b))\log N\), and therefore
    \[
    \frac{m_t^{\gR}}{\sqrt{v_t^{\gR}}}
    \ge
    \frac{\gamma_N\sqrt{\log N}}{\sqrt{2ab/(a+b)}}
    \ge
    \frac{\max\{K,0\}}{\sqrt{2ab/(a+b)}}
    \ge x_K.
    \]
    If \(\gamma_N<0\), then necessarily \(K<0\) and
    \(K\le\gamma_N\sqrt{\log N}<0\). In this case,
    \eqref{eq:second-day-red-variance-gamma} yields
    \[
    v_t^{\gR}
    =
    \left(
    \frac{2ab}{a+b}
    +o(1)
    \right)\log N,
    \]
    uniformly over the present configurations. Consequently,
    \[
    \frac{m_t^{\gR}}{\sqrt{v_t^{\gR}}}
    =
    \frac{\gamma_N\sqrt{\log N}+o(1)}
    {\sqrt{2ab/(a+b)+o(1)}}
    \ge x_K+o(1).
    \]
    Thus, in both cases,
    \[
    \frac{m_t^{\gR}}{\sqrt{v_t^{\gR}}}\ge x_K+o(1).
    \]
    Applying \eqref{eq:gaussian-qR} from
    Corollary~\ref{cor:gaussian-flip-probabilities}, uniformly in the present
    case, gives
    \begin{align*}
    q_t^{\gR}
    &\le
    \Phi\Big(-m_t^{\gR}/\sqrt{v_t^{\gR}}\Big)
    +
    O(1/\sqrt{\log (N)})
    \le
    \Phi(-x_K+o(1)) + O(1/\sqrt{\log (N)})
    =
    q_K+o(1).
    \end{align*}
    
    We next bound \(p_t^{\gB}\), the probability that a blue vertex flips to red.
    By Corollary~\ref{cor:large-deviation-tail-flip-probabilities},
    \[
    p_t^{\gB}=N^{-I(A_t^{\gB},B_t^{\gR})+o(1)}.
    \]
    By the camp-size identities,
    \[
    A_t^{\gB}
    =
    \frac{a(a+\gamma_N)}{a+b}-\frac aN,
    \qquad
    B_t^{\gR}
    =
    \frac{b(b-\gamma_N)}{a+b}.
    \]
    The difference
    \(\sqrt{A_t^{\gB}}-\sqrt{B_t^{\gR}}\) is increasing in
    \(\gamma_N\). Since \(\gamma_N\ge K/\sqrt{\log N}\), it follows uniformly
    that
    \[
    \sqrt{A_t^{\gB}}-\sqrt{B_t^{\gR}}
    \ge
    \frac{a-b}{\sqrt{a+b}}+o(1).
    \]
    Hence \(I(A_t^{\gB},B_t^{\gR})\) is bounded away from zero, and thus
    \(p_t^{\gB}=o(1)\).
    Combining the bounds \(q_t^{\gR}\le q_K+o(1)\) and
    \(p_t^{\gB}=o(1)\) with \eqref{eq:R-next-expectation}, and using
    \(|\gR_t|\le (b/(a+b)+o(1))N\), we obtain
    \begin{align*}
    \E\left[|\gR_{t+1}|\,\middle|\,\rvy_t\right]
    &=
    |\gR_t|q_t^{\gR}+|\gB_t|p_t^{\gB}
    \le
    \left(\frac{b}{a+b}q_K+o(1)\right)N
    \le
    \rho N+o(N),
    \end{align*}
    where the last inequality uses \(bq_K/(a+b)<\rho\). This proves
    \eqref{eq:second-day-mean-rho} in Case 2.
    
    Therefore, \eqref{eq:second-day-mean-rho} holds in both cases, completing the
    proof.
    \end{proof}

\section{Deferred Proofs in Section~\ref{sec:polylogarithmic-days-to-unanimity}}\label{app:deferred-super-polylog}

\subsection{Proof of Lemma~\ref{lem:constant-window-entrance}}

\begin{proof}
    If \(\tau=0\), there is nothing to prove. By
    Lemma~\ref{lem:almost-linear-jump-one-step-bound}, there are constants
    \(c_0,C_0>0\), depending only on \(a,b\), such that the relevant one-step
    lower bound and its conditional failure probability are uniform over
    \(1<u_t\le h_N\), where we denote
    \[
    d_N\coloneqq
    c_0N\frac{\exp(-C_0h_N^2)}{1+h_N},
    \qquad
    q_N\coloneqq
    2\exp\left(
    -c_0N\frac{\exp(-C_0h_N^2)}{(1+h_N)^2}
    \right).
    \]
    
    Consider the process until it enters the constant-time window or the
    lower increment \(d_N\) fails for the first time. Before either event occurs,
    the weighted advantage is nondecreasing, so the hypothesis of
    Theorem~\ref{thm:super-polylog-unanimity} and the condition \(\tau>t\) give
    \(1<u_t\le h_N\). Moreover,
    \eqref{eqn:affine-delta-tilde-delta} and
    \(h_N=o(\sqrt{\log N})\) imply \(\Delta_t>0\) for all sufficiently large
    \(N\). Lemma~\ref{lem:almost-linear-jump-one-step-bound} therefore gives
    \[
    \P\left(
    \widetilde{\Delta}_{t+1}-\widetilde{\Delta}_t<d_N
    \,\middle|\,\mathcal F_t
    \right)
    \le q_N
    \]
    at every such time.
    
    Choose \(A>C_0\) sufficiently large. Then, with \(T_N\) as in the lemma
    statement,
    \[
    T_Nd_N
    \ge
    h_N\frac{N}{\sqrt{\log N}}
    \]
    for all sufficiently large \(N\). Consequently, if none of the first \(T_N\)
    lower-increment bounds fails, the process must enter the
    constant-time window by time \(T_N\). A union bound over the first
    failure time now gives the desired estimate. Indeed, for
    \(0\le t<T_N\), define
    \[
    \mathcal H_t
    \coloneqq
    \{\tau>t\}
    \cap
    \bigcap_{s=0}^{t-1}
    \left\{
    \widetilde{\Delta}_{s+1}-\widetilde{\Delta}_s\ge d_N
    \right\},
    \qquad
    \mathrm{BE}_t
    \coloneqq
    \mathcal H_t
    \cap
    \left\{
    \widetilde{\Delta}_{t+1}-\widetilde{\Delta}_t<d_N
    \right\}.
    \]
    Since \(\mathcal H_t\in\mathcal F_t\), the conditional estimate above
    implies
    \[
    \begin{aligned}
    \P(\mathrm{BE}_t)
    &=
    \E\left[
    \indi{\mathcal H_t}
    \P\left(
    \widetilde{\Delta}_{t+1}-\widetilde{\Delta}_t<d_N
    \,\middle|\,\mathcal F_t
    \right)
    \right] \le q_N\P(\mathcal H_t)
    \le q_N.
    \end{aligned}
    \]
    If \(\tau>T_N\), at least one of the events
    \(\mathrm{BE}_0,\ldots,\mathrm{BE}_{T_N-1}\) must occur. Therefore,
    \[
    \begin{aligned}
    \P(\tau>T_N)
    &\le
    \sum_{t=0}^{T_N-1}\P(\mathrm{BE}_t)
    \le T_Nq_N\\
    &=
    2\left\lceil\exp(Ah_N^2)\right\rceil
    \exp\left(
    -c_0N\frac{\exp(-C_0h_N^2)}{(1+h_N)^2}
    \right)\\
    &\le
    2\exp(Ah_N^2)
    \exp\left(
    -cN\frac{\exp(-C h_N^2)}{(1+h_N)^2}
    \right).
    \end{aligned}
    \]
    The last inequality holds for all sufficiently large \(N\) after decreasing
    \(c\) and increasing \(C\), which absorb the ceiling and fixed numerical
    factors. This proves the lemma.
    \end{proof}

    \subsection{Proof of Lemma~\ref{lem:constant-window-completion}}

    \begin{proof}
        On \(\{\tau<\infty\}\), the defining condition for \(\tau\) is the
        time-shifted form of \eqref{eqn:three-day-unanimity-condition} with \(H=1\).
        Conditional on \(\mathcal F_\tau\), apply
        Theorem~\ref{thm:three-day-unanimity}\textup{(i)} at time \(\tau\). The
        independence of future update graphs and time homogeneity give constants
        \(c=c(a,b)>0\) and \(\xi=\xi(a,b)>0\) such that
        \[
        \P\left(
        \gR_{\tau+3}\neq\emptyset
        \,\middle|\,
        \mathcal F_{\tau}
        \right)
        \le
        2\exp(-cN)+2\exp\left(-(\log N)^2\right)+2N^{-\xi}.
        \]
        Since \(2\exp(-cN)\le2\exp(-(\log N)^2)\) for all sufficiently large \(N\), the
        claimed bound follows.
        \end{proof}

\section{Deferred Proofs in Section~\ref{sec:polynomial-days-to-unanimity}}\label{app:deferred-polynomial-days}

\subsection{Proof of Lemma~\ref{lem:pre-super-polylog-one-step-bound}}

\begin{proof}
Fix \(t<\tau\), condition on \(\mathcal F_t\), and suppose that
\(\Delta_t>0\). By \eqref{eq:tau-def},
\[
\widetilde{\Delta}_t
<
-\frac{N}{\sqrt{\log\log N}}
<0.
\]
The condition \(\Delta_t>0\) gives \(|\gB_t|\ge N/2\), while
\eqref{eqn:red-size-from-advantage} gives
\[
|\gR_t|
=
\frac{bN-\widetilde{\Delta}_t}{a+b}
\ge
\frac{bN}{a+b}.
\]
Thus both camps have linear size, uniformly over all configurations under
consideration.

We first record the rate-form estimate for the red-to-blue flip probability.
By Lemma~\ref{lem:expected-variance-degree-differences},
\[
m_t^\gR
=
(\widetilde{\Delta}_t+a)\frac{\log N}{N},
\qquad
v_t^\gR\asymp\log N.
\]
Consequently, for every \(t<\tau\) and all sufficiently large \(N\),
\[
c\sqrt{\frac{\log N}{\log\log N}}
\le
-x_t^\gR
\le
C\sqrt{\log N}.
\]
In any subrange where
\(-x_t^\gR=o(\sqrt{\log N})\),
Corollary~\ref{cor:moderate-deviation-tail-flip-probabilities} and the sharper
Mills bounds in \eqref{eq:moderate-deviation-tail-general} give
\[
p_t^\gR
=
N^{-I(A_t^\gR,B_t^\gB)+o(1)}.
\]
Indeed, the Mills-ratio factor lies between powers of \(\log N\), and hence is
\(N^{o(1)}\). On the complementary logarithmic scale,
Corollary~\ref{cor:large-deviation-tail-flip-probabilities} gives the same
rate-form estimate directly. That corollary is uniform over the present
linear-camp configurations, so the resulting estimate is uniform for
all \(t<\tau\).

Replacing \(|\gR_t|-1\) by \(|\gR_t|\) changes the rate by
\(O(N^{-1/2})\), so
\[
I(A_t^\gR,B_t^\gB)=I_t+O(N^{-1/2}).
\]
Consequently, for every fixed \(\eta>0\) and all sufficiently large \(N\),
\[
N^{-I_t-\eta}
\le
p_t^\gR
\le
N^{-I_t+\eta}.
\]
Applying Lemma~\ref{lem:one-step-difference-bound} and using
\(|\gR_t|\asymp N\) yields
\[
\mu_t
\ge
c\theta_t N^{1-I_t-\eta},
\qquad
\nu_t
\le
CN^{1-I_t+\eta},
\qquad
\frac{\mu_t}{\nu_t}
\ge
c\theta_t.
\]
This proves \eqref{eqn:pre-super-polylog-rate-bounds}.
\end{proof}

\subsection{Proof of Lemma~\ref{lem:conditional-bernstein-mgf}}

\begin{proof}
Condition on \(\mathcal F_{t}\). By \eqref{eqn:gBR-gRB-definition}, each
red-to-blue flip increases \(\Delta_{t}\) by \(2\), while each blue-to-red flip
decreases \(\Delta_{t}\) by \(2\). Thus
\[
\Delta_{t+1}-\Delta_t=2(\gRB_t-\gBR_t).
\]
Since the conditional law of \(\gG_{t+1}\) given \(\mathcal F_t\) depends only
on \(\rvy_t\), taking conditional expectation with respect to \(\mathcal F_t\)
in this identity and using the definition of \(\mu_t\) in
\eqref{eqn:mu-tilde-mu-definition}, we have
\[
\mu_t
=
2\left[
\E(\gRB_t\mid\mathcal F_t)-\E(\gBR_t\mid\mathcal F_t)
\right].
\]
Consequently,
\[
\xi_{t}
=
2\left[\gRB_{t}-\E(\gRB_{t}\mid\mathcal F_{t})\right]
-
2\left[\gBR_{t}-\E(\gBR_{t}\mid\mathcal F_{t})\right].
\]
Therefore
\(-\lambda \xi_{t}
=2\lambda\left[\gBR_{t}-\E(\gBR_{t}\mid\mathcal F_{t})\right]
-2\lambda\left[\gRB_{t}-\E(\gRB_{t}\mid\mathcal F_{t})\right]\).

Conditional on \(\mathcal F_{t}\), the randomness comes only from the independent
edge indicators of \(\gG_{t+1}\). The flip indicators whose sum is \(\gRB_{t}\)
form a read-\(2\) family, because the update of each red vertex is determined
by the edge indicators incident to that vertex, and each edge can affect only
the two endpoint updates. The same argument applies to the flip indicators
whose sum is \(\gBR_{t}\). Therefore Lemma~\ref{lem:read-k-mgf}, with \(k=2\),
gives constants \(C>0\) and \(\theta_0>0\) such that for every
\(|\theta|\le\theta_0\),
\begin{subequations}
\begin{align}
    \E\left[
        \exp\left(
            \theta
            \left[
                \gRB_{t}-\E(\gRB_{t}\mid \mathcal F_{t})
            \right]
        \right)
        \,\middle|\,
        \mathcal F_{t}
    \right]
    &\le
    \exp\left(
        C\theta^2\E(\gRB_{t}\mid \mathcal F_{t})
    \right),\\
    \E\left[
        \exp\left(
            \theta
            \left[
                \gBR_{t}-\E(\gBR_{t}\mid \mathcal F_{t})
            \right]
        \right)
        \,\middle|\,
        \mathcal F_{t}
    \right]
    &\le
    \exp\left(
        C\theta^2\E(\gBR_{t}\mid \mathcal F_{t})
    \right).
\end{align}
\end{subequations}

Now apply Cauchy--Schwarz:
\[
\begin{aligned}
    &\,\E\left[
        \exp\left(
            -\lambda \xi_{t}
        \right)
        \,\middle|\,
        \mathcal F_{t}
    \right]                                                   \\
    =&\,
    \E\left[
        \exp\left(
            2\lambda
            \left[
                \gBR_{t}-\E(\gBR_{t}\mid \mathcal F_{t})
            \right]
        \right)
        \exp\left(
            -2\lambda
            \left[
                \gRB_{t}-\E(\gRB_{t}\mid \mathcal F_{t})
            \right]
        \right)
        \,\middle|\,
        \mathcal F_{t}
    \right]                                                   \\
    \leq &\,
    \left(
        \E\left[
            \exp\left(
                4\lambda
                \left[
                    \gBR_{t}-\E(\gBR_{t}\mid \mathcal F_{t})
                \right]
            \right)
            \,\middle|\,
            \mathcal F_{t}
        \right]
    \right)^{1/2}
    \left(
        \E\left[
            \exp\left(
                -4\lambda
                \left[
                    \gRB_{t}-\E(\gRB_{t}\mid \mathcal F_{t})
                \right]
            \right)
            \,\middle|\,
            \mathcal F_{t}
        \right]
    \right)^{1/2}.
\end{aligned}
\]
Choose \(\lambda_0\coloneqq\theta_0/4\). Then for
\(0\le \lambda\le \lambda_0\), the two MGF bounds above apply with
\(\theta=4\lambda\) and \(\theta=-4\lambda\). Thus
\[
\begin{aligned}
    \E\left[
        \exp\left(
            -\lambda \xi_{t}
        \right)
        \,\middle|\,
        \mathcal F_{t}
    \right]
    &\le
    \exp\left(
        C\lambda^2
        \E(\gBR_{t}\mid \mathcal F_{t})
    \right)
    \exp\left(
        C\lambda^2
        \E(\gRB_{t}\mid \mathcal F_{t})
    \right)                                                   \\
    &=
    \exp\left(
        C\lambda^2
        \left[
            \E(\gRB_{t}\mid \mathcal F_{t})
            +
            \E(\gBR_{t}\mid \mathcal F_{t})
        \right]
    \right).
\end{aligned}
\]
Since the conditional law of \(\gG_{t+1}\) given \(\mathcal F_{t}\) depends only
on \(\rvy_{t}\), the definition of \(\nu_{t}\) gives
\(\nu_{t}=\E[\gRB_{t}+\gBR_{t}\mid\mathcal F_{t}]\). We obtain
\(\E[\exp(-\lambda \xi_{t})\mid\mathcal F_{t}]\le\exp(K\lambda^2\nu_{t})\)
after renaming the constant. This proves the lemma.
\end{proof}

\subsection{Proof of Lemma~\ref{lem:supermartingale}}

\begin{proof}
Fix \(\ell\ge0\). For \(0\le m\le \ell-1\), the random variable
\(\xi_{s+m}\) is \(\mathcal F_{s+m+1}\)-measurable and \(\nu_{s+m}\) is
\(\mathcal F_{s+m}\)-measurable. Hence \(M_{s+\ell}\),
\(\sum_{m=0}^{\ell-1}\nu_{s+m}\), and \(W_{s+\ell}\) are
\(\mathcal F_{s+\ell}\)-measurable. Also \(W_{s+\ell}\ge0\).

It remains to verify the one-step supermartingale inequality. For every
\(\ell\ge0\),
\[
M_{s+\ell+1}=M_{s+\ell}+\xi_{s+\ell},
\qquad
\sum_{m=0}^{\ell}\nu_{s+m}
=
\sum_{m=0}^{\ell-1}\nu_{s+m}+\nu_{s+\ell}.
\]
Therefore
\[
W_{s+\ell+1}
=
W_{s+\ell}
\exp\left(
-\lambda\xi_{s+\ell}
-K\lambda^2\nu_{s+\ell}
\right).
\]
Since \(W_{s+\ell}\) and \(\nu_{s+\ell}\) are
\(\mathcal F_{s+\ell}\)-measurable, conditioning on \(\mathcal F_{s+\ell}\)
gives
\[
\begin{aligned}
    \E\left[
        W_{s+\ell+1}
        \,\middle|\,
        \mathcal F_{s+\ell}
    \right]
    &=
    W_{s+\ell}
    \exp\left(-K\lambda^2\nu_{s+\ell}\right)
    \E\left[
        \exp\left(
            -\lambda \xi_{s+\ell}
        \right)
        \,\middle|\,
        \mathcal F_{s+\ell}
    \right].
\end{aligned}
\]
The choice \(0\le\lambda\le\lambda_0\) allows us to apply
Lemma~\ref{lem:conditional-bernstein-mgf} at time \(s+\ell\). Thus
\[
\E\left[
\exp\left(-\lambda \xi_{s+\ell}\right)
\,\middle|\,
\mathcal F_{s+\ell}
\right]
\le
\exp\left(K\lambda^2\nu_{s+\ell}\right).
\]
Substituting this estimate in the previous display yields
\[
\E\left[
W_{s+\ell+1}
\,\middle|\,
\mathcal F_{s+\ell}
\right]
\le
W_{s+\ell}.
\]
Hence \((W_{s+\ell})_{\ell\ge0}\) is a nonnegative supermartingale with respect to
\((\mathcal F_{s+\ell})_{\ell\ge0}\).
\end{proof}

\subsection{Proof of Lemma~\ref{lem:no-doubling-no-lower-exit}}

\begin{proof}
Let \(E\coloneqq\{\tau_s^+>L,\,\tau_s^->L\}\). By
\eqref{eqn:tau-s-plus} and \eqref{eqn:tau-s-minus}, on \(E\), for every
integer \(0\le m\le L\),
\[
\chi\Delta_s<\Delta_{s+m}<2\Delta_s,
\qquad
s+m<\tau.
\]
Therefore both color classes have linear size and
\(I_{s+m}\le \overline{I}_{s}\). Moreover, by
\eqref{eqn:theta-definition},
\[
\theta_{s+m}\ge c\theta_s.
\]
By Lemma~\ref{lem:pre-super-polylog-one-step-bound}, applied with parameter
\(\eta\),
\[
\mu_{s+m}
\ge
c\theta_{s+m}N^{1-I_{s+m}-\eta}
\ge
c\theta_s N^{1-\overline{I}_{s}-\eta}.
\]
Hence, on \(E\),
\[
\sum_{m=0}^{L-1}\mu_{s+m}
\ge
cL\theta_s N^{1-\overline{I}_{s}-\eta}
\ge
cA_0\Delta_s.
\]
Here the final inequality follows from the definition of \(L\) in
\eqref{eqn:L-definition}.
Choose \(A_0>0\) sufficiently large so that, on \(E\),
\(\sum_{m=0}^{L-1}\mu_{s+m}\ge10\Delta_s\).

Choose \(\lambda\coloneqq c_0\theta_{s}\), where \(c_0>0\) is small enough that
\(\lambda\le\lambda_0\) and
\(K\lambda^2\nu_{t}\le(\lambda/2)\mu_{t}\)
whenever the event \(E\) occurs and \(s\le t<s+L\). This follows from
Lemma~\ref{lem:pre-super-polylog-one-step-bound}, because on \(E\), again by
\eqref{eqn:theta-definition}, we have \(\theta_t\ge c\theta_s\), and hence
\(\mu_{t}\ge c\theta_{s}\nu_{t}\).
By Lemma~\ref{lem:supermartingale}, \((W_{s+\ell})_{\ell\ge0}\) is a
nonnegative supermartingale. Since \(L\) is deterministic after conditioning
on \(\mathcal F_{s}\), \(\E[W_{s+L}\mid\mathcal F_{s}]\le W_s=1\).

On \(E\), since \(\tau_s^+>L\), \eqref{eqn:tau-s-plus} gives
\(\Delta_{s+L}<2\Delta_s\). Hence
\(\Delta_{s+L}-\Delta_s<\Delta_s\).
Thus
\[
\begin{aligned}
    -\lambda M_{s+L}
    -
    K\lambda^2
    \sum_{m=0}^{L-1}\nu_{s+m}
    &=
    -\lambda
    \left[
        \Delta_{s+L}-\Delta_{s}
        -
        \sum_{m=0}^{L-1}\mu_{s+m}
    \right]
    -
    K\lambda^2
    \sum_{m=0}^{L-1}\nu_{s+m} \\
    &\ge
    -\lambda \Delta_s
    +
    \lambda
    \sum_{m=0}^{L-1}\mu_{s+m}
    -
    \frac{\lambda}{2}
    \sum_{m=0}^{L-1}\mu_{s+m} \\
    &=
    -\lambda \Delta_s
    +
    \frac{\lambda}{2}
    \sum_{m=0}^{L-1}\mu_{s+m} \\
    &\ge
    4\lambda \Delta_s.
\end{aligned}
\]
Therefore, on \(E\), \(W_{s+L}\ge \exp(4\lambda \Delta_s)\). Since
\(\E\left[W_{s+L}\,\middle|\,\mathcal F_{s}\right]\le 1\),
we get \(\P(E\mid\mathcal F_{s})\le\exp(-4\lambda \Delta_s)\). Finally,
\(\lambda=c_0\theta_{s}\), so
\(\P(\tau_s^+>L,\,\tau_s^->L\mid\mathcal F_{s})\le\exp(-c\Delta_s\theta_{s})\).
This proves the lemma.
\end{proof}

\subsection{Proof of Lemma~\ref{lem:lower-exit-estimate}}

\begin{proof}
Condition on \(\mathcal F_s\). Then \(L\) is fixed. Let
\[
T\coloneqq\tau_s^-\wedge \tau_s^+\wedge L.
\]
For every integer \(m<T\), the definitions
\eqref{eqn:tau-s-plus} and \eqref{eqn:tau-s-minus} give
\[
\chi\Delta_s<\Delta_{s+m}<2\Delta_s,
\qquad
s+m<\tau.
\]
In particular, both color classes have linear size along this time interval.
Moreover, by \eqref{eqn:theta-definition},
\[
\theta_{s+m}
=
\min\left\{1,\frac{\Delta_{s+m}\log(N)}{N}\right\}
\ge
    c\theta_s.
\]
Applying Lemma~\ref{lem:pre-super-polylog-one-step-bound} yields
\[
\mu_{s+m}
\ge
c\theta_{s+m}\nu_{s+m}
\ge
c\theta_s\nu_{s+m},
\qquad 0\le m<T.
\]

Choose \(\lambda=c_0\theta_s\), with \(c_0>0\) sufficiently small. Since
\(\theta_s\le1\), we may take \(c_0\le\lambda_0\), so \(\lambda\le\lambda_0\).
After decreasing \(c_0\) further if necessary, the last display implies that,
for every \(m<T\),
\[
K\lambda^2\nu_{s+m}
\le
\frac{\lambda}{2}\mu_{s+m}.
\]
By Lemma~\ref{lem:supermartingale}, \((W_{s+\ell})_{\ell\ge0}\) is a
nonnegative supermartingale. Since \(T\le L\), Doob's optional stopping theorem
gives
\[
\E[W_{s+T}\mid\mathcal F_s]\le W_s=1.
\]

Let \(E_-\coloneqq\{\tau_s^-\le L,\ \tau_s^-<\tau_s^+\}\). On \(E_-\), we have
\(T=\tau_s^-\), and therefore
\[
\Delta_{s+T}-\Delta_s\le -(1-\chi)\Delta_s.
\]
Using the definition of \(M_{s+T}\) in \eqref{eqn:Wsl-Msl} and the definition
of \(\xi_t\) in \eqref{eqn:xi-definition}, we also have
\[
M_{s+T}
=
\sum_{m=0}^{T-1}\xi_{s+m}
=
\Delta_{s+T}-\Delta_s-\sum_{m=0}^{T-1}\mu_{s+m}.
\]
Thus, on \(E_-\),
\[
M_{s+T}
\le
-(1-\chi)\Delta_s-\sum_{m=0}^{T-1}\mu_{s+m}.
\]
Combining this estimate with the choice of \(\lambda\), we get
\[
\begin{aligned}
    -\lambda M_{s+T}
    -
    K\lambda^2
    \sum_{m=0}^{T-1}\nu_{s+m}
    &\ge
    \lambda(1-\chi)\Delta_s
    +
    \lambda
    \sum_{m=0}^{T-1}\mu_{s+m}
    -
    K\lambda^2
    \sum_{m=0}^{T-1}\nu_{s+m} \\
    &\ge
    \lambda(1-\chi)\Delta_s
    +
    \frac{\lambda}{2}
    \sum_{m=0}^{T-1}\mu_{s+m}
    \ge
    \lambda(1-\chi)\Delta_s.
\end{aligned}
\]
Therefore \(W_{s+T}\ge\exp(\lambda(1-\chi)\Delta_s)\) on \(E_-\). Since
\(W_{s+T}\ge0\),
\[
1
\ge
\E[W_{s+T}\mid\mathcal F_s]
\ge
\exp(\lambda(1-\chi)\Delta_s)\P(E_-\mid\mathcal F_s).
\]
Hence
\(\P(E_-\mid\mathcal F_s)
\le\exp(-\lambda(1-\chi)\Delta_s)\). Recalling that
\(\lambda=c_0\theta_s\), and decreasing the constant \(c>0\), gives
\[
\P(\tau_s^-\le L,\,\tau_s^-<\tau_s^+\mid\mathcal F_s)
\le
\exp(-c\Delta_s\theta_s).
\]
\end{proof}

\subsection{Proof of Lemma~\ref{lem:block-doubling}}

\begin{proof}
Condition on \(\mathcal F_{s}\). Since \(s<\tau\), the stopping times
\(\tau_s^+\) and \(\tau_s^-\) are defined from the current configuration.

We use the decomposition
\(\{\tau_s^+>L\}\subseteq
\{\tau_s^+>L,\tau_s^->L\}\cup\{\tau_s^-\le L,\tau_s^-<\tau_s^+\}\).
By Lemma~\ref{lem:no-doubling-no-lower-exit}, with \(L\) defined in
\eqref{eqn:L-definition}, we have
\begin{align}
    \P\left(
        \tau_s^+>L,\tau_s^->L
        \,\middle|\,
        \mathcal F_{s}
    \right)
    &\le
    \exp\left(
        -c \Delta_s\theta_{s}
    \right),\\
\intertext{On the other hand, by Lemma~\ref{lem:lower-exit-estimate},}
    \P\left(
        \tau_s^-\le L,\tau_s^-<\tau_s^+
        \,\middle|\,
        \mathcal F_{s}
    \right)
    &\le
    \exp\left(
        -c \Delta_s\theta_{s}
    \right).
\end{align}
The desired bound follows from the union bound.
\end{proof}

\section{Deferred Proofs in Section~\ref{sec:necessary-conditions}}
\label{app:deferred-polynomial-lower-bound}

\subsection{Proof of Lemma~\ref{lem:local-exponent-control}}

\begin{proof}
Recall that
\[
x_0=\frac{\Delta_0}{N},
\qquad
x_\kappa
=
\frac{a/b-\kappa-1}{a/b-\kappa+1},
\qquad
x_*=\frac{a-b}{a+b}.
\]
The assumption \(1<\rho_0\le a/b-\kappa\) gives
\(0<x_0\le x_\kappa<x_*\). Set
\[
d_\kappa
\coloneqq
\frac{x_*-x_\kappa}{2}>0,
\qquad
\mathcal J_\kappa
\coloneqq
[0,x_\kappa+d_\kappa].
\]
The interval \(\mathcal J_\kappa\) is a compact subset of \((-1,x_*)\). For
every \(-1<x<x_*\), the function \(\mathcal I\) is continuously
differentiable and
\[
\mathcal I'(x)
=
-\left(
\sqrt{\frac{a(1-x)}{2}}
-
\sqrt{\frac{b(1+x)}{2}}
\right)
\left(
\sqrt{\frac{a}{2(1-x)}}
+
\sqrt{\frac{b}{2(1+x)}}
\right)
<0.
\]
Hence \(\mathcal I\) is decreasing on \((-1,x_*)\) and, by continuity, on
\([-1,x_*)\); it is Lipschitz on \(\mathcal J_\kappa\). Let
\(L_\kappa>0\) be a Lipschitz constant there, and
write \(\eta=\varepsilon/2\). Choose
\[
\sigma
\coloneqq
\min\left\{
d_\kappa,
\frac{\eta}{L_\kappa},
\frac{1-x_\kappa}{2}
\right\}.
\]
This constant depends only on \(a,b,\kappa,\varepsilon\), and
\(0<\sigma<1-x_0\). Moreover,
\[
x_0+\sigma
\le
x_\kappa+d_\kappa
<x_*,
\qquad
x_0+\sigma<1.
\]
In particular, \(\Delta_0+\sigma N<N\).

Suppose now that \(\Delta_t\le\Delta_0+\sigma N\), and put
\(x=\Delta_t/N\). By the camp-size identities,
\[
I_t
=
I\left(\frac{a(1-x)}{2},\frac{b(1+x)}{2}\right)
=
\mathcal I(x),
\]
where the last equality holds because \(x\le x_0+\sigma<x_*\). Since
\(\mathcal I\) is decreasing,
\[
I_t
\ge
\mathcal I(x_0+\sigma)
\ge
\mathcal I(x_0)-L_\kappa\sigma
\ge
I_0-\eta.
\]
This proves the lemma.
\end{proof}

\subsection{Proof of Lemma~\ref{lem:red-to-blue-flip-count-before-zeta}}

\begin{proof}
Fix \(0\le t<T\), and condition on \(\mathcal F_t\). On the event
\(\{t<\zeta\}\),
\[
\frac{\Delta_t}{N}
<
x_0+\sigma
\le
x_\kappa+d_\kappa
<x_*.
\]
Consequently,
\[
\frac{a|\gR_t|}{N}
>
\frac{b|\gB_t|}{N},
\qquad
|\gR_t|
=
\frac{N-\Delta_t}{2}
\ge
cN
\]
for a constant \(c=c(a,b,\kappa,\varepsilon)>0\). Recall that the exponent in
the exact red-to-blue flip probability uses
\[
A_t^\gR
=
\frac{a|\gR_t|}{N}-\frac{a}{N},
\qquad
B_t^\gB
=
\frac{b|\gB_t|}{N}.
\]
The strict separation from \(x_*\) ensures that
\(A_t^\gR>B_t^\gB\) for all sufficiently large \(N\). For fixed \(v\ge0\),
the rate function satisfies
\[
\frac{\partial}{\partial u}I(u,v)
=
1-\sqrt{\frac vu}
\in[0,1]
\qquad
\text{whenever }u>v.
\]
The mean-value theorem therefore gives
\[
I(A_t^\gR,B_t^\gB)
\ge
I\left(
\frac{a|\gR_t|}{N},
\frac{b|\gB_t|}{N}
\right)
-
\frac aN
=
I_t-\frac aN.
\]

Applying the exact Chernoff bound
\eqref{eq:binomial-difference-chernoff-upper} to
\eqref{eqn:D-Rt-distribution}, we obtain
\[
p_t^\gR
\le
N^{-I(A_t^\gR,B_t^\gB)}
\le
N^{-I_t+a/N}
\le
CN^{-I_t}.
\]
It follows from the definition of \(\gRB_t\) that, on \(\{t<\zeta\}\),
\[
\E[\gRB_t\mid\mathcal F_t]
=
|\gR_t|p_t^\gR
\le
CN^{1-I_t}.
\]
By Lemma~\ref{lem:local-exponent-control},
\(I_t\ge I_0-\varepsilon/2\) on this event. Since
\(\{t<\zeta\}\in\mathcal F_t\), the tower property yields
\[
\begin{aligned}
\E\left[\gRB_t\indi{t<\zeta}\right]
&=
\E\left[
\indi{t<\zeta}
\E[\gRB_t\mid\mathcal F_t]
\right]\\
&\le
CN^{1-I_0+\varepsilon/2}.
\end{aligned}
\]
Summing over \(0\le t<T\) and using
\(T\le N^{I_0-\varepsilon}\), we conclude that
\[
\begin{aligned}
\E\left[
\sum_{t=0}^{T-1}\gRB_t\indi{t<\zeta}
\right]
&\le
CTN^{1-I_0+\varepsilon/2}\\
&\le
CN^{1-\varepsilon/2}.
\end{aligned}
\]
This proves the lemma.
\end{proof}

\subsection{Proof of Lemma~\ref{lem:zeta-is-late}}

\begin{proof}
On the event \(\{\zeta\le T\}\), the definition of \(\zeta\) gives
\(\Delta_\zeta-\Delta_0\ge\sigma N\). Since every red-to-blue flip increases
the unweighted advantage by \(2\), while every blue-to-red flip decreases it
by \(2\),
\[
\begin{aligned}
\sigma N
&\le
\Delta_\zeta-\Delta_0\\
&=
2\sum_{t=0}^{\zeta-1}(\gRB_t-\gBR_t)\\
&\le
2\sum_{t=0}^{T-1}\gRB_t\indi{t<\zeta}.
\end{aligned}
\]
Therefore,
\[
\{\zeta\le T\}
\subseteq
\left\{
\sum_{t=0}^{T-1}\gRB_t\indi{t<\zeta}
\ge
\frac{\sigma N}{2}
\right\}.
\]
Markov's inequality and
Lemma~\ref{lem:red-to-blue-flip-count-before-zeta} now give
\[
\begin{aligned}
\P(\zeta\le T)
&\le
\frac{2}{\sigma N}
\E\left[
\sum_{t=0}^{T-1}\gRB_t\indi{t<\zeta}
\right]\\
&\le
CN^{-\varepsilon/2}.
\end{aligned}
\]
The constant may depend on \(a,b,\kappa,\varepsilon\), through the choice of
\(\sigma\). This proves the lemma.
\end{proof}

\section{Connectivity of Binary Stochastic Block Models}

\begin{lemma}[Connectivity of Binary Stochastic Block Models]\label{lem:connectivity-sparse-sbm}
Let \(\gG\sim\mathrm{SBM}(N,\alpha,\beta)\) with vertex partition
\(\gV=\gB\cup\gR\), where \(|\gB|=N_{\gB}\), \(|\gR|=N_{\gR}\),
\(N_{\gB}+N_{\gR}=N\), and \(N_{\gB},N_{\gR}\asymp N\). Suppose
\[
\alpha=\frac{a\log N+c_1}{N},
\qquad
\beta=\frac{b\log N+c_2}{N},
\]
where \(a>b>0\) are fixed constants and \(0\le c_1,c_2\le \log N\). Define
\[
\lambda_{\gB,N}\coloneqq \frac{aN_{\gB}+bN_{\gR}}{N},
\qquad
\lambda_{\gR,N}\coloneqq \frac{bN_{\gB}+aN_{\gR}}{N},
\qquad
\lambda_{*,N}\coloneqq\min\{\lambda_{\gB,N},\lambda_{\gR,N}\}.
\]
If \(\lambda_{*,N}\ge1\) for all sufficiently large \(N\), then
\[
\P(\gG\text{ is disconnected})
\le
E_{\gB,N}+E_{\gR,N}+O\left(\frac{\log N}{N}\right),
\]
where
\begin{subequations}
\begin{align}
E_{\gB,N}
&\coloneqq
\exp\left(
(1-\lambda_{\gB,N})\log N
-\frac{N_{\gB}c_1+N_{\gR}c_2}{N}
+o(1)
\right),\\
E_{\gR,N}
&\coloneqq
\exp\left(
(1-\lambda_{\gR,N})\log N
-\frac{N_{\gR}c_1+N_{\gB}c_2}{N}
+o(1)
\right).
\end{align}
\end{subequations}
Consequently, \(\gG\) is connected with high probability if
\(\liminf_{N\to\infty}\lambda_{*,N}>1\), or if \(\lambda_{*,N}\ge1\) for all
sufficiently large \(N\) and \(\min\{c_1,c_2\}\to\infty\). If
\(N_{\gB}/N_{\gR}\to\gamma\ge1\), the leading-order threshold is
\[
\min\left\{\frac{\gamma a+b}{\gamma+1},
\frac{\gamma b+a}{\gamma+1}\right\}\ge1.
\]
\end{lemma}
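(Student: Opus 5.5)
We will use the standard cut-counting argument for connectivity: \(\gG\) is disconnected if and only if some vertex set \(S\) with \(1\le|S|\le N/2\) has no edges to \(S^c\). We split by the size of \(S\) into isolated vertices (\(|S|=1\)), small sets (\(2\le|S|\le k_0\) for a large fixed \(k_0\)), and medium and large sets (\(k_0<|S|\le N/2\)).

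\textbf{Isolated vertices.} This term gives \(E_{\gB,N}+E_{\gR,N}\). A blue vertex \(v\) is isolated with probability
\[
(1-\alpha)^{N_{\gB}-1}(1-\beta)^{N_{\gR}}=\exp\bigl(-\alpha N_{\gB}-\beta N_{\gR}+O(N\alpha^2+\alpha)\bigr).
\]
Since \(\alpha,\beta=O(\log N/N)\), the error term is \(o(1)\). Substituting \(\alpha,\beta\), this equals
\[
N^{-\lambda_{\gB,N}}\exp\bigl(-(N_{\gB}c_1+N_{\gR}c_2)/N+o(1)\bigr).
\]
Multiplying by \(N_{\gB}\le N\) and taking a union bound over blue vertices gives at most \(E_{\gB,N}\). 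The red vertices give \(E_{\gR,N}\) in the same way.

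\textbf{Small sets.} These contribute \(O(\log N/N)\). Fix \(S\) with \(|S|=k\), \(2\le k\le k_0\). Each vertex of \(S\) has at least \(N-k\) potential partners outside \(S\). The within-class partners number about \(N_{\mathcal C}\) and the cross-class partners about \(N_{\mathcal C'}\), so the edge-free probability is at most
\[
\exp\bigl(-k\lambda_{*,N}\log N+O(k^2\log N/N)\bigr)\le N^{-k}e^{o(1)},
\]
using \(\lambda_{*,N}\ge1\) and \(c_1,c_2\ge0\). This estimate alone would be too crude, because there are \(\binom Nk\) sets. We therefore also require \(S\) to be connected: if a disconnected \(\gG\) has a component of size \(k\), that component is a connected set \(S\) with no edge to its complement. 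A connected \(S\) contains a spanning tree, and the number of labeled trees on \(k\) vertices is \(k^{k-2}\). Each tree edge is present with probability at most \(\alpha\le C\log N/N\). Tree edges lie inside \(S\), so they are independent of the absent boundary edges. The expected number of such \(S\) is therefore at most
\[
\binom Nk k^{k-2}(C\log N/N)^{k-1}N^{-k}e^{o(1)}\le C_k(\log N)^{k-1}N^{1-k}.
\]
For \(k\ge2\) this is \(O(\log N/N)\), and summing over \(2\le k\le k_0\) keeps it \(O(\log N/N)\). This spanning-tree refinement is what delivers the stated \(O(\log N/N)\) correction.

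\textbf{Medium and large sets.} This is the main obstacle. For \(k_0<k\le N/2\), the number of boundary pairs is at least \(k(N-k)\ge kN/2\), and each is present with probability at least \(\beta\ge b\log N/N\). So a given \(S\) is edge-free with probability at most \(\exp(-bk\log N/2)=N^{-bk/2}\).

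This bound against \(\binom Nk\le (eN/k)^k\) is too weak when \(b<2\). We therefore refine by the composition of \(S\). Write \(S\) with \(k_1\) blue and \(k_2\) red vertices. The exponent is
\[
k_1(\alpha(N_{\gB}-k_1)+\beta(N_{\gR}-k_2))+k_2(\beta(N_{\gB}-k_1)+\alpha(N_{\gR}-k_2)).
\]
For \(k\le\epsilon N\) this is at least \((1-O(\epsilon))k\lambda_{*,N}\log N\). Combining with the spanning-tree count as above gives terms of order \(\bigl(Ce\log N\cdot N^{-\lambda_*(1-O(\epsilon))}\cdot N^{\,\cdot}/N\bigr)^{k}\)-type bounds. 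More simply, we use the tree bound \(N(Ck\log N\,N^{-(1-O(\epsilon))\lambda_*})^{k-1}\cdot\ldots\), which decays geometrically in \(k\) once \(k\ge k_0\), with \(k_0\) chosen large depending on how close \(\lambda_*\) is to 1.

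For \(\epsilon N<k\le N/2\), \(\binom Nk\le 2^N\) while the edge-free probability is \(\exp(-c\epsilon N\log N)\), which is negligible.

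The delicate point is the intermediate range \(k\le\epsilon N\) when \(\lambda_*\) is exactly near 1. There we would use the tree counting, whose per-vertex factor is \(Ce\log N\cdot N^{-(1-O(\epsilon))\lambda_*+o(1)}\cdot k/k\). The leftover \(N^{O(\epsilon)}\) loss must be beaten by choosing \(\epsilon\) small relative to the spare power gained from \(k\ge k_0\), i.e. a factor \(N^{-(k-1)}\) versus \(N^{O(\epsilon)k}\).

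The consequences then follow directly. If \(\liminf\lambda_*>1\), or if \(\min\{c_1,c_2\}\to\infty\), the \(E\) terms are \(o(1)\). Finally, the limiting threshold formula follows by substituting \(N_{\gB}/N\to\gamma/(\gamma+1)\).
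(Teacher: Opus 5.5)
Your proposal is correct and follows essentially the same route as the paper: a first-moment count of components with $s$ blue and $t$ red vertices, where isolated vertices give $E_{\gB,N}+E_{\gR,N}$, the Cayley spanning-tree bound gives $O(\log N/N)$ for bounded sizes, the same tree bound combined with $\omega(s,t)\ge(1-O(\epsilon))k$ handles $k_0<k\le\epsilon N$, and a crude union bound handles linear sizes. In the medium range, write the count as $\binom Nk k^{k-2}(C\log N/N)^{k-1}\le N(eC\log N)^k$ so that no stray factor $k$ remains in the per-vertex base: your displayed $Ck\log N$ version would fail near $k\asymp\epsilon N$. Since $\lambda_{*,N}\ge1$, the resulting bound $N^{1-(1-O(\epsilon))k}(eC\log N)^k$ is not delicate, so a fixed $k_0$ and small absolute $\epsilon$ suffice, with no dependence on how close $\lambda_{*,N}$ is to $1$.
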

\begin{proof}[Proof of Lemma~\ref{lem:connectivity-sparse-sbm}]
Let \(\rX_{s,t}\) be the number of connected components of \(\gG\) with
exactly \(s\) vertices in \(\gB\) and \(t\) vertices in \(\gR\). If \(\gG\)
is disconnected, then it has a connected component with at most \(N/2\)
vertices. Hence
\begin{align*}
\P(\gG\text{ is not connected})
&\le
\E\rX_{1,0}+\E\rX_{0,1}
+\sum_{2\le s+t\le N/2}\E\rX_{s,t}.
\end{align*}

We first estimate the isolated vertices. For a blue vertex to be isolated,
all \(N_{\gB}-1\) possible same-color edges and all \(N_{\gR}\) possible
cross-color edges must be absent. Therefore
\begin{align*}
\E\rX_{1,0}
&=
N_{\gB}(1-\alpha)^{N_{\gB}-1}(1-\beta)^{N_{\gR}}\\
&\le
\exp\left\{
\log N
-\frac{(N_{\gB}-1)(a\log N+c_1)+N_{\gR}(b\log N+c_2)}{N}
\right\}\\
&\le
\exp\left(
(1-\lambda_{\gB,N})\log N
-\frac{N_{\gB}c_1+N_{\gR}c_2}{N}
+o(1)
\right),
\end{align*}
where we used \(N_{\gB}\le N\), \((a\log N+c_1)/N=o(1)\), and
\(\log(1-x)\le -x\). Similarly,
\[
\E\rX_{0,1}
\le
\exp\left(
(1-\lambda_{\gR,N})\log N
-\frac{N_{\gR}c_1+N_{\gB}c_2}{N}
+o(1)
\right).
\]

It remains to rule out non-isolated connected components. Fix
\(s,t\ge0\), put \(k=s+t\), and assume \(2\le k\le N/2\). Let
\[
\omega(s,t)\coloneqq
\frac{
a\{s(N_{\gB}-s)+t(N_{\gR}-t)\}
+b\{s(N_{\gR}-t)+t(N_{\gB}-s)\}
}{N}.
\]
The event counted by \(\rX_{s,t}\) requires the chosen \(k\) vertices to
contain a spanning tree and to have no edges to their complement. Since
\(\max\{\alpha,\beta\}\le C\log N/N\) for a constant \(C=C(a,b)\), Cayley's
bound and \(1-x\le e^{-x}\) give
\begin{align*}
\E\rX_{s,t}
&\le
\binom{N_{\gB}}{s}\binom{N_{\gR}}{t}
k^{k-2}
\left(\frac{C\log N}{N}\right)^{k-1}
\exp\{-\omega(s,t)\log N\}.
\end{align*}
Using
\(\binom{N_{\gB}}{s}\binom{N_{\gR}}{t}\le \binom Nk\le (eN/k)^k\), we obtain
\[
\log \E\rX_{s,t}
\le
\log N+Ck\log\log N-\omega(s,t)\log N.
\]

The threshold condition controls the small components. Indeed,
\[
\omega(s,t)
=s\lambda_{\gB,N}+t\lambda_{\gR,N}
-\frac{a(s^2+t^2)+2bst}{N}
\ge
k\lambda_{*,N}-O(k^2/N).
\]
Thus, for each fixed \(K\), if \(2\le k\le K\), then
\[
\E\rX_{s,t}
\le
C_K(\log N)^{k-1}N^{1-k}.
\]
After summing over the finitely many pairs \((s,t)\) with \(2\le s+t\le K\),
we obtain
\[
\sum_{2\le s+t\le K}\E\rX_{s,t}
=O\left(\frac{\log N}{N}\right).
\]

For the remaining component sizes, set \(\delta=\min\{a,b\}>0\). Since every
edge from a candidate component to its complement has logarithmic coefficient
at least \(\delta\),
\[
\omega(s,t)\ge \frac{\delta k(N-k)}{N}.
\]
Choose \(\eta>0\) small enough that the exact expansion above gives
\(\omega(s,t)\ge k/2\) whenever \(k\le\eta N\). Then, after increasing the
fixed constant \(K\), every \(K<k\le\eta N\) satisfies
\(\log\E\rX_{s,t}\le -3\log N\) for all sufficiently large \(N\). If
\(\eta N<k\le N/2\), then \(\omega(s,t)\ge\delta\eta N/2\), and the same
displayed bound again gives \(\log\E\rX_{s,t}\le -3\log N\). There are at
most \(N^2\) possible pairs \((s,t)\), so the contribution of all components
with \(K<s+t\le N/2\) is \(O(N^{-1})\). Combining this estimate with the two
isolated-vertex estimates proves the stated disconnected-probability bound.

If \(\liminf_{N\to\infty}\lambda_{*,N}>1\), then the two isolated-vertex
terms tend to zero even with \(c_1=c_2=0\). If \(\lambda_{*,N}\ge1\) and
\(\min\{c_1,c_2\}\to\infty\), then the isolated-vertex terms tend to zero
because \(N_{\gB},N_{\gR}\asymp N\). The final threshold formula follows by
taking the limit in \(\lambda_{\gB,N}\) and \(\lambda_{\gR,N}\) when
\(N_{\gB}/N_{\gR}\to\gamma\).
\end{proof}

\section{Technical Lemmas}

\begin{lemma}[Bennett's inequality, {\cite[Theorem $2.9.2$]{vershynin2018high} }]\label{lem:Bennett}
Let $\rX_1,\dots, \rX_n$ be independent random variables. Assume that
$|\rX_i-\E \rX_i|\le K$ almost surely for every \(i\), and put
$\sigma^2=\sum_{i=1}^{n}\Var(\rX_i)$ and
$h(u)\coloneqq(1+u)\log(1+u)-u$. Then, for any \(t>0\),
\begin{subequations}
\begin{align}
\P \Bigg( \sum_{i=1}^{n} (\rX_i-\E \rX_i) \geq t \Bigg)
&\leq
\exp \Bigg( - \frac{\sigma^2}{K^2} h \bigg( \frac{Kt}{\sigma^2} \bigg)\Bigg),
\\
\P \Bigg( \sum_{i=1}^{n} (\rX_i-\E \rX_i) \leq -t \Bigg)
&\leq
\exp \Bigg( - \frac{\sigma^2}{K^2} h \bigg( \frac{Kt}{\sigma^2} \bigg)\Bigg).
\end{align}
\end{subequations}
The lower-tail bound follows from the upper-tail bound applied to
\(\rY_i=-\rX_i\).
\end{lemma}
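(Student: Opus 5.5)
The plan is the classical exponential-moment (Chernoff) argument, organized around a one-variable MGF bound for each centered summand. Write \(\rY_i\coloneqq\rX_i-\E\rX_i\), so that \(\E\rY_i=0\), \(|\rY_i|\le K\) almost surely, and \(\sum_i\E\rY_i^2=\sigma^2\). The upper tail is the main case. The lower tail then follows verbatim by applying the upper-tail bound to \(-\rX_i\), which have the same bound \(K\) and the same variances, as already indicated in the statement.

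The first step is a pointwise inequality. Let \(\varphi(x)\coloneqq(e^x-1-x)/x^2\) for \(x\neq0\) and \(\varphi(0)=1/2\). I will show that \(\varphi\) is nondecreasing on all of \(\mathbb R\). For this I plan to use the integral representation \(\varphi(x)=\int_0^1(1-u)e^{ux}\,\de u\), the same Taylor formula used in the proof of Lemma~\ref{lem:read-k-mgf}. Since each integrand \(e^{ux}\) is nondecreasing in \(x\), so is \(\varphi\). Hence, for \(\lambda\ge0\) and \(y\le K\), we have \(\varphi(\lambda y)\le\varphi(\lambda K)\), that is,
\[
e^{\lambda y}\le 1+\lambda y+\frac{y^2}{K^2}\bigl(e^{\lambda K}-1-\lambda K\bigr).
\]
Substituting \(y=\rY_i\), taking expectations, and using \(\E\rY_i=0\) and \(1+u\le e^u\) gives
\[
\E e^{\lambda\rY_i}\le\exp\Bigl(\frac{\Var(\rX_i)}{K^2}\bigl(e^{\lambda K}-1-\lambda K\bigr)\Bigr).
\]
I expect this monotonicity step, which must hold for negative arguments as well, to be the only real point requiring care. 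The integral representation settles it cleanly.

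The second step multiplies these bounds using independence and applies Markov's inequality to \(e^{\lambda\sum_i\rY_i}\). This gives
\[
\P\Bigl(\sum_i\rY_i\ge t\Bigr)\le\exp\Bigl(-\lambda t+\frac{\sigma^2}{K^2}\bigl(e^{\lambda K}-1-\lambda K\bigr)\Bigr)
\]
for every \(\lambda\ge0\). The third step optimizes over \(\lambda\). The minimizer is \(\lambda=K^{-1}\log(1+Kt/\sigma^2)\ge0\). Writing \(u=Kt/\sigma^2\), the exponent becomes
\[
\frac{\sigma^2}{K^2}\bigl(u-(1+u)\log(1+u)\bigr)=-\frac{\sigma^2}{K^2}h(u),
\]
which is exactly the claimed upper-tail bound. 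Finally, applying the same bound to \(-\rX_i\) gives the lower tail, which completes the proof.
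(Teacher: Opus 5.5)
Your argument is correct and complete. The monotonicity of $\varphi(x)=\int_0^1(1-u)e^{ux}\,\mathrm{d}u$ yields the MGF bound, which in fact only needs the one-sided condition $\rX_i-\E\rX_i\le K$; Markov's inequality and the choice $\lambda=K^{-1}\log(1+Kt/\sigma^2)$ then give exactly the stated exponent. The paper itself does not prove the upper tail but cites Vershynin's Theorem~2.9.2, whose proof is this same Chernoff argument, and it obtains the lower tail by the same reflection $\rX_i\mapsto-\rX_i$ that you use.
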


\begin{corollary}[Bennett--Bernstein form]
\label{cor:Bennett-Bernstein}
Let \(\rX_1,\dots,\rX_n\) be independent random variables such that
\(|\rX_i-\E \rX_i|\le K\) almost surely for every \(i\). Let
\(\sigma^2=\sum_{i=1}^n \Var(\rX_i)\).
Then for every \(t>0\),
\(\P(\sum_{i=1}^n(\rX_i-\E\rX_i)\ge t)
\le\exp(-t^2/[2(\sigma^2+Kt/3)])\).
The same bound holds for the lower tail.
\end{corollary}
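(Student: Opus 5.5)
The plan is to derive the Bernstein form directly from Lemma~\ref{lem:Bennett} by bounding the Bennett function from below by a rational function. The inequality we want is
\[
h(u)\ge \frac{u^2}{2(1+u/3)}\qquad (u\ge0).
\]
If it holds, substitute \(u=Kt/\sigma^2\) into the exponent of Lemma~\ref{lem:Bennett}:
\[
\frac{\sigma^2}{K^2}\,h\!\left(\frac{Kt}{\sigma^2}\right)
\ \ge\ \frac{\sigma^2}{K^2}\cdot\frac{K^2t^2/\sigma^4}{2\left(1+Kt/(3\sigma^2)\right)}
=\frac{t^2}{2(\sigma^2+Kt/3)}.
\]
This gives the upper tail. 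The lower tail follows in the same way from the second display of Lemma~\ref{lem:Bennett}, or equivalently by applying the upper-tail bound to \(-\rX_i\).

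To prove the elementary inequality, set
\[
f(u)\coloneqq \frac{u^2}{2(1+u/3)}=\frac{3u^2}{2(3+u)},
\qquad g\coloneqq h-f .
\]
We have \(h(0)=f(0)=0\) and \(h'(0)=f'(0)=0\), since \(h'(u)=\log(1+u)\). A direct computation gives
\[
h''(u)=\frac1{1+u},\qquad f''(u)=\frac{27}{(3+u)^3}.
\]
So \(g''\ge0\) on \([0,\infty)\) is equivalent to \((3+u)^3\ge 27(1+u)\). Expanding the left side gives \(27+27u+9u^2+u^3\), so this holds for \(u\ge0\). Convexity of \(g\), together with \(g(0)=g'(0)=0\), then gives \(g\ge0\) on \([0,\infty)\).

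Two degenerate cases need separate handling. If \(\sigma^2=0\), each \(\rX_i\) is almost surely constant. The centered sum is then almost surely \(0\), the event \(\{\cdot\ge t\}\) with \(t>0\) has probability zero, and the bound is trivial. If \(K=0\), the same reasoning applies. There is no real obstacle here; the only step needing care is the computation of \(f''\) and the polynomial comparison, both of which are routine.
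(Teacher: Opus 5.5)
Your proposal is correct and follows the paper's proof: substitute $u=Kt/\sigma^2$ into Bennett's bound, use $h(u)\ge u^2/[2(1+u/3)]$, and get the lower tail by negating the variables. The only differences are that the paper cites the elementary inequality as standard, while you prove it through the comparison $(3+u)^3\ge 27(1+u)$, and you also handle the degenerate cases $\sigma^2=0$ and $K=0$, which the paper leaves implicit.
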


\begin{proof}
By Lemma~\ref{lem:Bennett},
\(\P(\sum_{i=1}^n(\rX_i-\E\rX_i)\ge t)
\le \exp(-(\sigma^2/K^2)h(Kt/\sigma^2))\),
where \(h(u)=(1+u)\log(1+u)-u\).
Using the standard bound \(h(u)\ge u^2/[2(1+u/3)]\) for \(u\ge0\), we obtain
\((\sigma^2/K^2)h(Kt/\sigma^2)\ge t^2/[2(\sigma^2+Kt/3)]\).
This proves the upper-tail bound. The lower-tail bound follows by applying the same
argument to \(-\rX_i\).
\end{proof}

\begin{lemma}[Berry--Esseen \cite{esseen1956moment}]\label{lem:berry-esseen}
    Let $\rX_{1}, \rX_{2}, \ldots, \rX_{n}$ be independent random variables with zero means, variances $\sigma_{1}^{2}, \sigma_{2}^{2}, \ldots, \sigma_{n}^{2}$, respectively, and finite absolute third moments $\E[|\rX_{j}|^{3}] = \rho_{j} < \infty$. Define the normalized random variable \(\rS_{n}\coloneqq\sum_{j=1}^{n}\rX_{j}/\sqrt{\sum_{j=1}^{n}\sigma_{j}^{2}}\). Let $F_{n}(x)$ and $\Phi(x)$ denote the cumulative distribution functions \emph{(CDFs)} of $\rS_{n}$ and the standard normal distribution, respectively. There exists a universal constant $\gC_{\mathrm{BE}}$ such that, for any \(n\),
    \[\sup_{x\in \R} |F_{n}(x) - \Phi(x)| \leq \gC_{\mathrm{BE}} \sum_{j=1}^{n} \rho_{j} ( \sum_{j=1}^{n} \sigma_{j}^{2})^{-3/2},
    \]
    where the original result of Esseen \cite{esseen1956moment} gives
    \(\gC_{\mathrm{BE}}=7.59\). Subsequent work improved this constant; we use
    the bound \(\gC_{\mathrm{BE}}=0.56\) from
    \cite[Theorem~1]{tyurin2009new} throughout this paper.
\end{lemma}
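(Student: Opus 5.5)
This is a classical result, and in the paper it is invoked as a black box from \cite{esseen1956moment} and \cite{tyurin2009new}. If I were to prove it (with some universal constant, not the sharp value \(0.56\)), I would follow Esseen's Fourier-analytic route. Write \(B_n^2\coloneqq\sum_j\sigma_j^2\), \(L_n\coloneqq\sum_j\rho_j/B_n^3\) (the Lyapunov ratio), and let \(f_n\) be the characteristic function of \(\rS_n\). Two reductions come first. If \(L_n\ge 1/4\), say, the claim is trivial, because \(\sup_x|F_n(x)-\Phi(x)|\le1\); so I may assume \(L_n\) is small. By Lyapunov's inequality \(\sigma_j^3\le\rho_j\), hence \(\max_j\sigma_j/B_n\le L_n^{1/3}\), so no single summand carries a macroscopic share of the variance.

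The backbone is Esseen's smoothing inequality. For any distribution function \(F\) and any \(G\) with bounded derivative \(|G'|\le m\), and any \(T>0\),
\[
\sup_x|F(x)-G(x)|\le\frac1\pi\int_{-T}^{T}\frac{|f(u)-g(u)|}{|u|}\,\de u+\frac{24m}{\pi T}.
\]
I would prove this by convolving both functions with a Fejér-type kernel whose Fourier transform is supported in \([-T,T]\), and then using the Lipschitz bound on \(G\) to undo the smoothing. Applied with \(G=\Phi\), \(g(u)=e^{-u^2/2}\) and \(T\asymp 1/L_n\), the smoothing term is already \(O(L_n)\).

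The main step is the characteristic-function estimate
\[
|f_n(u)-e^{-u^2/2}|\le C L_n|u|^3e^{-u^2/3}\qquad\text{for }|u|\le c/L_n.
\]
It is integrable against \(1/|u|\) and gives \(O(L_n)\). Write \(f_n(u)=\prod_j\varphi_j(u/B_n)\). A third-order Taylor expansion gives
\[
\varphi_j(v)=1-\tfrac{\sigma_j^2v^2}{2}+\theta\,\tfrac{\rho_jv^3}{6},\qquad |\theta|\le1.
\]
The plan is then to split \(|u|\) into two ranges.
\begin{enumerate}
\item For \(|u|\le c L_n^{-1/3}\), each factor is close to \(1\), so I can take logarithms and sum. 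This gives \(\log f_n(u)=-u^2/2+O(L_n|u|^3)\), and the claim follows from \(|e^z-1|\le|z|e^{|z|}\).
\item For \(cL_n^{-1/3}\le|u|\le c/L_n\), I would symmetrize. The bound \(|\varphi_j(v)|^2\le\exp(-\sigma_j^2v^2+\tfrac43\rho_j|v|^3)\) gives \(|f_n(u)|\le e^{-u^2/3}\). In this range \(L_n|u|^3\ge c^3\), so the same inequality trivially bounds both \(|f_n|\) and \(e^{-u^2/2}\) by \(C L_n|u|^3e^{-u^2/3}\).
\end{enumerate}

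I expect the main obstacle to be this intermediate range together with the non-identically-distributed setting. The logarithm expansion of the product needs every factor to stay away from zero uniformly in \(j\), and that is exactly where the bound \(\max_j\sigma_j/B_n\le L_n^{1/3}\) and careful bookkeeping of the Taylor remainders are essential. Obtaining an explicit constant such as \(7.59\), let alone \(0.56\), would need sharper smoothing kernels and optimized numerics. For the paper's purposes, any universal constant \(\gC_{\mathrm{BE}}\) suffices.
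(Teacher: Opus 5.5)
Your argument is correct, but note that the paper does not prove this lemma at all. It is imported from \cite{esseen1956moment} and \cite[Theorem~1]{tyurin2009new}, so the paper's only ``route'' is citation.

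What you give is the classical self-contained proof: Esseen's smoothing inequality plus the characteristic-function bound $|f_n(u)-e^{-u^2/2}|\le CL_n|u|^3e^{-u^2/3}$ for $|u|\le 1/(4L_n)$, in the form found in Petrov's or Feller's treatment. The points you flag do check out.
\begin{itemize}
\item In the small range $|u|\le cL_n^{-1/3}$, the quadratic log-remainder is controlled by $\sum_j\sigma_j^4u^4/B_n^4\le L_n^{1/3}\cdot L_nu^4\le cL_n|u|^3$ (using $\sigma_j^3\le\rho_j$) together with $\sum_j\rho_j^2u^6/B_n^6\le c^3L_n|u|^3$.
\item In the large range, $\E|X_j-X_j'|^3\le 8\rho_j$ gives your coefficient $\tfrac43$.
\item The bound $|f_n(u)|\le e^{-u^2/3}$ needs $|u|\le 1/(4L_n)$, so you must take $c\le 1/4$ in $T=c/L_n$.
\end{itemize}

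The trade-off is the constant. Your proof yields an unspecified universal $\gC_{\mathrm{BE}}$; the reduction step for $L_n\ge 1/4$ alone already uses a constant of at least $4$. The values $7.59$ and $0.56$ in the statement come only from the cited works, which need sharper smoothing kernels and optimized numerics.

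This costs nothing for the paper. Every downstream use, namely Lemma~\ref{lem:gaussian-sparse-binomial-difference} and the tilted Berry--Esseen step in Lemma~\ref{lem:moderate-deviation-tail-approximation}, uses the bound only in the form $\gC_{\mathrm{BE}}(\beta s+\alpha r)/\sigma_N^3\le C/\sqrt{\log N}$, so any universal constant suffices.
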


\begin{lemma}[G\"artner--Ellis left-tail bound, {\cite[Lemma~H.5]{Abbe2022LPT}}]
\label{lem:gartner-ellis-left-tail}
Let \(\{S_n\}_{n\ge1}\) be random variables such that
\(\Lambda_n(t)\coloneqq\log \E e^{tS_n}\) exists for every
\(t\in[-R_n,R_n]\), where \(\{R_n\}_{n\ge1}\) is a positive sequence
with \(R_n\to\infty\). Suppose that there is a convex function
\(\Lambda:\R\to\R\) and a positive sequence \(\{a_n\}_{n\ge1}\) with
\(a_n\to\infty\) such that, for every \(t\in\R\),
\[
\lim_{n\to\infty}\frac{\Lambda_n(t)}{a_n}=\Lambda(t).
\]
Then, for every \(c<\Lambda'(0)\),
\[
\lim_{n\to\infty}\frac{1}{a_n}\log \P(S_n\le c a_n)
=
-\sup_{t\in\R}\{ct-\Lambda(t)\}.
\]
\end{lemma}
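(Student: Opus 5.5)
We would prove the two inequalities separately: an exponential Chebyshev bound for the upper bound and a change of measure for the matching lower bound. Write \(J(c)\coloneqq\sup_{t\in\R}\{ct-\Lambda(t)\}\).

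\emph{Reduction to \(t\le 0\).} Since \(\Lambda_n(0)=0\), we have \(\Lambda(0)=0\), so \(J(c)\ge0\). For \(t\ge0\), convexity gives \(\Lambda(t)\ge t\Lambda'(0)\ge ct\), so \(ct-\Lambda(t)\le0\). Hence
\[
J(c)=\sup_{t\le0}\{ct-\Lambda(t)\}.
\]
Here \(\Lambda'(0)\) is read as in the statement; if only one-sided derivatives exist, the same inequality holds with the right derivative.

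\emph{Upper bound.} Fix \(t\le0\). Since \(R_n\to\infty\), we have \(t\in[-R_n,R_n]\) for large \(n\). On \(\{S_n\le ca_n\}\) we have \(e^{tS_n}\ge e^{tca_n}\), so Markov's inequality gives
\[
\P(S_n\le ca_n)\le\exp\bigl(-tca_n+\Lambda_n(t)\bigr).
\]
Dividing by \(a_n\) and using \(\Lambda_n(t)/a_n\to\Lambda(t)\) yields \(\limsup a_n^{-1}\log\P(S_n\le ca_n)\le-(ct-\Lambda(t))\). Optimizing over \(t\le0\) gives the bound \(-J(c)\). If \(J(c)=\infty\), this already gives the limit \(-\infty\), and nothing more is needed.

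\emph{Lower bound.} Fix \(\delta>0\). It suffices to bound \(\P\bigl(S_n\in((c-\delta)a_n,ca_n]\bigr)\) from below. Take \(t_*\le0\) with \(\Lambda'(t_*)=c'\in(c-\delta,c)\). Such a point should exist by the intermediate value property of the monotone derivative, since \(\Lambda'(0)>c\). The degenerate case where \(\Lambda'\) stays above \(c'\) on all of \((-\infty,0]\) must be handled separately; there \(ct-\Lambda(t)\) is monotone in \(t\), and the supremum is approached as \(t\to-\infty\). Define the tilted law
\[
\frac{d\P_n^*}{d\P}=\frac{e^{t_*S_n}}{\E e^{t_*S_n}}.
\]
Its normalized log-MGF converges to \(\Lambda^*(s)\coloneqq\Lambda(t_*+s)-\Lambda(t_*)\), with \((\Lambda^*)'(0)=c'\). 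Applying the upper-bound argument on both tails under \(\P_n^*\) shows that \(\P_n^*\bigl(S_n\in((c-\delta)a_n,ca_n]\bigr)\to1\). Undoing the tilt, and using \(t_*\le0\) so that \(e^{-t_*S_n}\ge e^{-t_*(c-\delta)a_n}\) on this window, gives
\[
\P(S_n\le ca_n)\ge\exp\bigl(\Lambda_n(t_*)-t_*(c-\delta)a_n\bigr)(1-o(1)).
\]
Hence \(\liminf a_n^{-1}\log\P(S_n\le ca_n)\ge\Lambda(t_*)-t_*(c-\delta)\ge-J(c)-|t_*|\delta\). Letting \(\delta\downarrow0\), with \(t_*=t_*(\delta)\) staying bounded, finishes the proof.

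\emph{Main obstacle.} The delicate step is choosing exposed points \(t_*\) when \(\Lambda\) is not strictly convex or not differentiable. The concentration under \(\P_n^*\) needs \(\Lambda^*\) to have one-sided derivatives at \(0\) that bracket a point of the window, so \(t_*\) must be chosen where \(\Lambda'(t_*^-)\) and \(\Lambda'(t_*^+)\) both lie in \((c-\delta,c)\). We also need \(t_*(\delta)\) to stay bounded as \(\delta\downarrow0\), which uses \(J(c)<\infty\). In the application in this paper, \(\Lambda(\theta)=A(e^\theta-1)+B(e^{-\theta}-1)\) is smooth and strictly convex, so these issues do not arise there.
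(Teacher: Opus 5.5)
The paper gives no proof of this lemma; it cites \cite{Abbe2022LPT}. Your Chernoff upper bound, including the reduction to $t\le 0$, is correct. Your tilting argument is also correct \emph{provided} that for all small $\delta$ there is a $t_*\le 0$ with $\Lambda'(t_*^-),\Lambda'(t_*^+)\in(c-\delta,c)$ and $t_*(\delta)$ stays bounded.

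The gap is the existence of that $t_*$, and it cannot be closed. The derivative of a convex function has no intermediate value property across kinks, and in that situation the statement as written is false. Take $a_n=n$, let $Z$ be standard normal, and let $S_n=\sqrt n\,Z$ with probability $1-e^{-n}$ and $S_n=-2n$ with probability $e^{-n}$.
\begin{itemize}
\item Then $\Lambda(t)=\max\{t^2/2,\,-1-2t\}$ is finite and convex with $\Lambda'(0)=0$.
\item But $\Lambda'$ jumps from $-2$ to $\sqrt2-2$ at $t=\sqrt2-2$ and takes no value in between.
\item For $c=-1$ the claimed limit is $-\sup_t\{-t-\Lambda(t)\}=1-\sqrt2$.
\item In fact $n^{-1}\log\P(S_n\le -n)\to-1/2$, because the Gaussian part dominates.
\end{itemize}

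Your ``degenerate case'' cannot be rescued by $J(c)<\infty$ either. Let $S_n$ be a sum of $n$ i.i.d.\ variables uniform on $\{1/n,1\}$. Then $\Lambda(t)=\log\frac{1+e^t}{2}$ is smooth and strictly convex with $\Lambda'(0)=1/2$. At $c=0=\inf_t\Lambda'(t)$ the right-hand side is $-\log 2$, while $\P(S_n\le 0)=0$. Here no admissible $t_*$ exists at all, and $J(c)$ is finite.

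What your argument does prove is the lemma under two extra hypotheses: $\Lambda$ is differentiable on $(-\infty,0]$, and $c\neq\inf_t\Lambda'(t)$.
\begin{itemize}
\item Differentiability makes $\Lambda'$ continuous on $(-\infty,0]$.
\item If $c>\inf_t\Lambda'(t)$, the intermediate value theorem gives $t_*(\delta)\le 0$ with $\Lambda'(t_*)=c-\delta/2$. It can be chosen monotone in $\delta$, hence bounded, and your tilting bound then finishes the proof.
\item If $c<\inf_t\Lambda'(t)$, then $J(c)=\infty$ and your upper bound alone suffices.
\end{itemize}

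In the only place the paper uses the lemma, $\Lambda(\theta)=A(e^\theta-1)+B(e^{-\theta}-1)$ with $A,B>0$. This is $C^\infty$ and $\Lambda'$ maps $\R$ onto $\R$, so your proof covers the application. As a proof of the lemma as literally stated, however, the lower bound has a gap that no argument can fill; the statement needs these hypotheses.
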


\begin{theorem}[Generalized H\"older inequality, {\cite[Theorem~2.1]{finner1992generalization}}]
\label{thm:finner-generalized-holder}
Let \(n,m\in\mathbb N\), \(I_n\coloneqq\{1,\ldots,n\}\), and
\(M\coloneqq\{1,\ldots,m\}\). For each \(i\in I_n\), let
\((\Omega_i,\mathcal A_i,\mu_i)\) be a measure space. For
\(\varnothing\ne S\subseteq I_n\), write
\[
\Omega_S\coloneqq\prod_{i\in S}\Omega_i,\qquad
\mathcal A_S\coloneqq\bigotimes_{i\in S}\mathcal A_i,\qquad
\mu_S\coloneqq\bigotimes_{i\in S}\mu_i.
\]
Let \(\varnothing\ne S_j\subseteq I_n\) and \(p_j\ge1\) for \(j\in M\). For
each \(i\in I_n\), put \(M_i\coloneqq\{j\in M:i\in S_j\}\), and assume that
\(\sum_{j\in M_i}p_j^{-1}=1\). If
\(f_j\in L^{p_j}(\Omega_{S_j},\mathcal A_{S_j},\mu_{S_j})\) for every
\(j\in M\), then
\(\prod_{j\in M}\abs{f_j}\in
L^1(\Omega_{I_n},\mathcal A_{I_n},\mu_{I_n})\), and
\begin{align}
\int_{\Omega_{I_n}}\prod_{j\in M}\abs{f_j}\,\de\mu_{I_n}
\le
\prod_{j\in M}
\left(\int_{\Omega_{S_j}}\abs{f_j}^{p_j}\,\de\mu_{S_j}\right)^{1/p_j}.
\label{eq:finner-generalized-holder}
\end{align}

To characterize equality in \eqref{eq:finner-generalized-holder}, assume without loss of generality and for the sake of simplicity that \(M_r\ne M_s\) for all \(r,s\in I_n\) with
\(r\ne s\), and that
\(\int_{\Omega_{S_j}}\abs{f_j}^{p_j}\,\de\mu_{S_j}>0\) for every \(j\in M\).
Then equality holds in \eqref{eq:finner-generalized-holder} if and only if, for every \(j\in M\) and \(i\in S_j\), there exist functions
\(f_{ji}\in L^{p_j}(\Omega_i,\mathcal A_i,\mu_i)\) and constants
\(A_{ji}>0\) such that
\begin{enumerate}[label=(\alph*)]
\item \(\abs{f_j}=\prod_{i\in S_j}\abs{f_{ji}}\), \(\mu_{S_j}\)-a.e.,
for every \(j\in M\);
\item \(A_{ri}\abs{f_{ri}}^{p_r}=A_{si}\abs{f_{si}}^{p_s}\),
\(\mu_i\)-a.e., for every \(i\in I_n\) and every \(r,s\in M_i\) with
\(r\ne s\).
\end{enumerate}
\end{theorem}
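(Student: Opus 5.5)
We prove the inequality \eqref{eq:finner-generalized-holder} by induction on the number \(n\) of coordinates. At each step we integrate out a single coordinate with the classical multi-factor H\"older inequality. The key observation is that the hypothesis \(\sum_{j\in M_i}p_j^{-1}=1\) is exactly the exponent condition that H\"older needs at coordinate \(i\). Throughout, all functions are nonnegative (replace \(f_j\) by \(|f_j|\)), so Tonelli's theorem justifies every interchange of integrals and all measurability claims. We assume the measures are \(\sigma\)-finite, which is the standing convention of \cite{finner1992generalization}.

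\emph{Base case \(n=1\).} Every \(S_j=\{1\}\), and \(\sum_{j\in M}p_j^{-1}=1\). The claim is then the standard H\"older inequality for finitely many factors, which follows by iterating the two-factor version.

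\emph{Inductive step.} Write \(x=(x',x_n)\) with \(x'\in\Omega_{I_{n-1}}\).
\begin{enumerate}[label=\textup{(\roman*)}]
\item Split \(M=M_n\cup(M\setminus M_n)\). The factors indexed by \(M\setminus M_n\) do not depend on \(x_n\).
\item Fix \(x'\) and apply H\"older on \(\Omega_n\) with exponents \((p_j)_{j\in M_n}\), which are admissible because \(\sum_{j\in M_n}p_j^{-1}=1\). This gives
\[
\int\prod_{j\in M_n}f_j\,\de\mu_n\le\prod_{j\in M_n}g_j,
\qquad
g_j\coloneqq\Bigl(\int|f_j|^{p_j}\,\de\mu_n\Bigr)^{1/p_j}.
\]
Here \(g_j\) is a function on \(\Omega_{S_j\setminus\{n\}}\).
\item By Tonelli, \(\|g_j\|_{L^{p_j}(\Omega_{S_j\setminus\{n\}})}=\|f_j\|_{L^{p_j}(\Omega_{S_j})}\).
\item If \(S_j=\{n\}\), then \(g_j\) is a constant equal to this norm. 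We pull it out of the remaining integral and drop the index \(j\).
\item The remaining family consists of \(f_j\) for \(j\notin M_n\) and \(g_j\) for the other \(j\in M_n\), all living on \(I_{n-1}\) with unchanged exponents. Removing \(n\) does not change the sets \(M_i\) for \(i<n\), so this family still satisfies \(\sum_{j\in M_i}p_j^{-1}=1\) for every \(i\le n-1\).
\item Integrate the bound from (ii) over \(x'\) using Fubini, then apply the induction hypothesis to the family in (v). This yields \eqref{eq:finner-generalized-holder}, and in particular integrability of \(\prod_{j\in M}|f_j|\).
\end{enumerate}

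\emph{Equality case.} For the "if" direction, substitute the product forms (a) and (b) directly. Condition (b) makes the functions \(A_{ji}|f_{ji}|^{p_j}\), \(j\in M_i\), coincide at each coordinate, so every H\"older step in the induction holds with equality. The integrals factor coordinatewise by Fubini, and both sides agree.

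For the "only if" direction, equality in the full chain forces equality at every H\"older step. At step (ii) this means that for \(\mu_{I_{n-1}}\)-a.e.\ \(x'\), the functions \(|f_j(x',\cdot)|^{p_j}\), \(j\in M_n\), are proportional on \(\Omega_n\), with proportionality constants depending on \(x'\). Separately, equality must hold in the induction hypothesis applied to the reduced family.

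\emph{Main obstacle.} The hardest step is turning these "a.e.\ proportional, with constants depending on \(x'\)" statements into the clean product structure (a) with constants \(A_{ji}\) independent of everything. The first thing we will try is the following.
\begin{enumerate}[label=\textup{(\roman*)}]
\item Normalize each \(f_j\), which is possible because each \(\int|f_j|^{p_j}>0\).
\item Use the assumption \(M_r\ne M_s\) for \(r\ne s\). This guarantees that each coordinate is distinguished by its set of factors, so two coordinates cannot play the same role and mask each other.
\item Show, coordinate by coordinate, that \(|f_j|^{p_j}\) separates into \(\prod_{i\in S_j}h_{ji}(x_i)\). The tool is the standard characterization of equality in H\"older's inequality together with the product rule for conditional densities.
\end{enumerate}
Carrying out step (iii) cleanly, including handling the null sets that arise when different \(x'\) are fixed, is where we expect most of the work to lie.
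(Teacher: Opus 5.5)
The paper gives no proof of this theorem. It quotes Finner, and the only part it ever uses is the inequality \eqref{eq:finner-generalized-holder}, in the proof of Lemma~\ref{lem:read-k-mgf} on a product of probability spaces.

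\textbf{What is correct.} Your induction on $n$ for the inequality is correct and complete, under the $\sigma$-finiteness you assume (which is needed for $\mu_S$ and Tonelli anyway):
\begin{itemize}
\item the exponent condition at coordinate $n$ is exactly what one-variable H\"older needs;
\item Tonelli gives $\|g_j\|_{p_j}=\|f_j\|_{p_j}$;
\item the sets $M_i$ for $i<n$ are unchanged in the reduced family;
\item the hypothesis forces every $M_i\neq\varnothing$, so the reduced family is nonempty when $n\ge2$.
\end{itemize}
The ``if'' half of the equality statement is also fine. By (b), $\prod_{j\in M_i}|f_{ji}|$ is a fixed multiple of one common function $h_i$, and both sides factor over coordinates using $\sum_{j\in M_i}p_j^{-1}=1$.

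\textbf{The gap.} The ``only if'' half is left as a plan, and the plan does not contain the mechanism that makes it true. Equality at step (ii) gives one common normalized density:
\[
|f_j(x',x_n)|^{p_j}=g_j(x')^{p_j}\varphi(x',x_n)\quad\text{for all } j\in M_n.
\]
The left side depends only on $x_{S_j}$. So $\varphi$ is a.e.\ a function of $(x_T,x_n)$, where
\[
T=\bigcap_{j\in M_n}(S_j\setminus\{n\})=\{i\neq n:\ M_i\supseteq M_n\}.
\]
This step needs the lemma that, under a product measure, a function that is a.e.\ a function of $x_A$ and a.e.\ a function of $x_B$ is a.e.\ a function of $x_{A\cap B}$.

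The key step is $T=\varnothing$. If $M_i\supseteq M_n$, then $\sum_{j\in M_i\setminus M_n}p_j^{-1}=1-1=0$. Since every $p_j<\infty$, this gives $M_i=M_n$, contradicting distinctness. Hence $\varphi=\varphi(x_n)$. You can then set $f_{jn}\coloneqq\varphi^{1/p_j}$ and $A_{jn}\coloneqq1$, and finish with the equality case of the induction hypothesis for the reduced family. That family still has distinct $M_i$ and positive norms.

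This is precisely where $M_r\neq M_s$ enters, jointly with the exponent condition. It is not a matter of coordinates ``masking'' each other. For $S_1=S_2=\{1,2\}$ and $p_1=p_2=2$ one has $T=\{1\}$, $\varphi$ can genuinely depend on $x_1$, and (a) fails.

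\textbf{A second issue.} You claim that equality forces H\"older equality at step (ii) for $\mu_{I_{n-1}}$-a.e.\ $x'$. That is too strong as stated. Equality is forced only where the weight $\prod_{j\notin M_n}|f_j(x')|$ and all $g_j(x')$, $j\in M_n$, are nonzero. Elsewhere $f_j(x',\cdot)$ is unconstrained by that step. To extend the factorization off this set you need three ingredients:
\begin{itemize}
\item the support information from the induction hypothesis: by (b), all one-variable factors indexed by $M_i$ share one support $U_i$ of positive measure, so the good set is essentially $\prod_{i<n}U_i$;
\item a Fubini argument;
\item the fact that $f_j$ does not depend on coordinates outside $S_j$.
\end{itemize}
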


\printbibliography
\end{document}